\documentclass[11pt,a4paper]{article}
\usepackage[naustrian,UKenglish]{babel}
\usepackage[utf8x]{inputenc}
\usepackage[T1]{fontenc}
\usepackage[shortcuts]{extdash}
\usepackage{latexsym,amsfonts,amsmath,amsthm,amssymb}
\usepackage{fullpage}
\usepackage{graphicx}
\usepackage{xcolor,bm,bbm}
\usepackage{array}
\usepackage{paralist}
\usepackage{mathrsfs}
%\usepackage{fourier}

% % % % % % % % % % % % % % % % % % % % % % % % % % % % % %
% % % % % % % % % % % % % % % % % % % % % % % % % % % % % %

% New symbols and commands
 % Zero Width Space
\newcommand*{\zalmeng}[1]{\mathbb{#1}}
\newcommand*{\NZ}[0]{\zalmeng{N}}

\newcommand*{\RZ}[0]{\zalmeng{R}}
\newcommand*{\CZ}[0]{\zalmeng{C}}
\newcommand*{\HZ}[0]{\zalmeng{H}}
\newcommand*{\KZ}[0]{\zalmeng{K}}
\newcommand*{\Kug}[2]{\mathbb{B}_{#1}^{#2}} 

\newcommand*{\Sph}[2]{\mathbb{S}_{#1}^{#2}}
\newcommand*{\ez}[0]{\mathsf{e}}
\newcommand*{\ie}[0]{\mathsf{i}}

\newcommand*{\Kfs}[1]{\xrightarrow[#1]{\text{\upshape a.s.}}}
\newcommand*{\KiWsk}[1]{\xrightarrow[#1]{\mathbb{P}}}
\newcommand*{\KiVert}[1]{\xrightarrow[#1]{\text{\upshape d}}}
\newcommand*{\GlVert}[0]{\stackrel{\text{\upshape d}}{=}}
\newcommand*{\ellpe}[2]{\ell_{#1}^{#2}}
\newcommand*{\KugVol}[2]{\omega_{#1}^{#2}}
\newcommand*{\KegM}[2]{\kappa_{#1}^{#2}}
\newcommand*{\Normal}[1]{\mathcal{N}_{#1}}

\newcommand*{\trapo}[1]{#1^{\mathsf{T}}}
\newcommand*{\adj}[1]{#1^*}
\newcommand*{\inv}[1]{#1^{-1}}
\newcommand*{\konj}[1]{\overline{#1}}
\newcommand*{\Colon}[0]{\,:\,} % Doppelpunkt in Mengen per Aussonderung oder Ersetzung
\newcommand*{\Rand}[1]{\partial #1}
\newcommand*{\diff}[0]{\mathrm{d}}

\newcommand*{\Schatten}[2]{S_{#1}^{#2}}
\newcommand*{\BetaMatr}[1]{B_{#1}^\text{\upshape I}}
\newcommand*{\DirichletMatr}[1]{D_{#1}^\text{\upshape I}}
\newcommand*{\Perm}[1]{\mathfrak{S}_{#1}}
\newcommand*{\offen}[1]{#1^{\circ}}
\newcommand*{\schluss}[1]{\overline{#1}}
\newcommand*{\WMasz}[0]{\mathcal{M}_1}
\newcommand*{\CeBe}[0]{C_{\text{\upshape b}}}
\newcommand*{\CeCe}[0]{C_{\text{\upshape c}}}
\newcommand*{\dw}[0]{d_{\text{\upshape w}}}
\newcommand*{\Sym}[0]{\text{\upshape Sym}}
\newcommand*{\Pos}[0]{\text{\upshape Pos}}
\newcommand*{\Orth}[0]{\mathrm{U}}
\newcommand*{\OrthVol}[0]{\Omega}
\newcommand*{\Borel}[0]{\mathcal{B}}

\renewcommand{\epsilon}{\varepsilon}
\renewcommand{\phi}{\varphi}

%%%%%%%%%%%%%%%%%%%%%%%%%
% New operators
\DeclareMathOperator{\Spur}{tr}

\DeclareMathOperator{\diag}{diag}
\DeclareMathOperator{\Wsk}{\mathbb{P}}
\DeclareMathOperator{\Erw}{\mathbb{E}}

\DeclareMathOperator{\Cov}{Cov}
\DeclareMathOperator{\Rel}{Rel}
\DeclareMathOperator{\vol}{\mathit{v}}
\DeclareMathOperator{\BigO}{O}
\DeclareMathOperator{\smallO}{o}

\DeclareMathOperator{\Hausd}{\mathcal{H}}
\DeclareMathOperator{\Adj}{adj}
\DeclareMathOperator{\Gleichv}{\mathcal{U}}
\DeclareMathOperator{\Vek}{vec}
\DeclareMathOperator{\supp}{supp}
\DeclareMathOperator{\Ind}{1}

\newlength{\absatz}
\setlength{\absatz}{\parindent}
\newcommand*{\Absatz}[0]{\hspace*{\absatz}}

%%%%%%%%%%%%%%%%%%%%%%%%%
% New theorem-like environments
\theoremstyle{plain}
\newtheorem{Sa}{Proposition}[section]

\newtheorem{Lem}[Sa]{Lemma}
\newtheorem{Thm}{Theorem}

\theoremstyle{definition}
\newtheorem{Def}[Sa]{Definition}

\theoremstyle{remark}
\newtheorem{Bem}[Sa]{Remark}

\makeatletter
\renewcommand{\@upn}{} % to use the same font for the number as for the head
\makeatother

%% % % % % % % % % % % % % % % % % % % % % % % % %
% Nomenclature
%\usepackage{nomencl}
%\makenomenclature
%% % % % % % % % % % % % % % % % % % % % % % % % %

\allowdisplaybreaks
%\setlength{\parindent}{0pt}
%%%%%%%%%%%%%%%%%%%%%%%%%%%%%%%%%%%%%%%%%%%%%

\begin{document}

%\nocite{*}

\title{\bfseries Asymptotic theory of Schatten classes}

\author{Michael L.\ Juhos, Zakhar Kabluchko, and Joscha Prochno}

%\thanks{~}

%\keywords{}
%\subjclass{}
%% NB There should be only one primary classification, and zero or
%more secondary classifications.

\date{}

\maketitle

\begin{abstract}
\small
The study of Schatten classes has a long tradition in geometric functional analysis and related fields. In this paper we study a variety of geometric and probabilistic aspects of finite-dimensional Schatten classes of not necessarily square matrices. Among the main results are the exact and asymptotic volume of the Schatten\-/\(\infty\) unit ball, the boundedness of its isotropy constant, a Poincaré\--Maxwell\--Borel lemma for the uniform distribution on the Schatten\-/\(\infty\) ball, and Sanov\-/type large deviations principles for the singular values of matrices sampled uniformly from the Schatten\-/\(p\) unit ball for any \(0 < p \leq \infty\).

\vspace{0.5\baselineskip}
\noindent\textbf{Keywords.} {Isotropic constant, large deviations principle, Poincaré\--Maxwell\--Borel lemma, Schatten class, volume}\\
\textbf{MSC.} Primary: 46B06, 52A23, 60F05, 60F10 Secondary: 52A22, 60D05
\end{abstract}

\tableofcontents

% % % % % % % % % % % % % % % % % % % % % % % % % % % % % % % % %
\section{Introduction}
\label{sec:einfuehrung}
% % % % % % % % % % % % % % % % % % % % % % % % % % % % % % % % %

\subsection{General overview}
\label{sec:ueberblick}

The Schatten classes (or Schatten\--von Neumann classes) of compact operators are a classical object of study in functional analysis, and their finite\-/dimensional counterparts, that is, spaces of matrices endowed with the \(\ellpe{p}{}\)\=/norm of their singular values, have attracted much attention within the last few decades and many results have been produced in a number of disciplines, among them local theory of Banach spaces, random matrix theory, and (asymptotic) convex geometry. 

The Schatten\=/\(p\) class \(\Schatten{p}{}\), where \(p \in (0, \infty]\), consists of all compact linear operators between two given Hilbert spaces whose sequence of singular values lies in the sequence space \(\ellpe{p}{}\), and thus it subsumes the special cases of nuclear or trace\-/class operators (\(p = 1\)) and of Hilbert\--Schmidt operators (\(p = 2\)). It was introduced by Schatten in~\cite[Chapter~6, pp.\ 71]{Schatten10960} (though he said ``completely continuous'' for ``compact''), having its roots in earlier works by himself~\cite{Schatten1946} and with von Neumann~\cite{SchN1946, SchN1948} concerning nuclear operators on Hilbert spaces; those works were generalized by Ruston~\cite{Ruston1951} to Banach spaces and by Grothendieck~\cite{Groth1951} to locally convex spaces.

From the point of view of the geometry of Banach spaces, Gordon and Lewis~\cite{GordonLewis1974} obtained that the space \(\Schatten{p}{}\) for \(p \neq 2\) does not have local unconditional structure and therefore does not admit an unconditional basis,
Tomczak\-/Jaegerman~\cite{Tomczak1974} showed that this space has Rademacher cotype~\(2\), and
Szarek and Tomczak\-/Jaegerman~\cite{SzarTomcz1980} provided bounds for the volume ratio of \(\Schatten{1}{n}\).
Saint Raymond~\cite{Saintraymond1984} computed the asymptotic volumes of real and complex Schatten balls up to a yet undetermined factor,
and König, Meyer, and Pajor~\cite{KMP1998} established boundedness of the isotropic constant for all \(p \in [1, \infty)\).
Guédon and Paouris~\cite{GuedonPaouris2007} proved concentration of mass properties,
Barthe and Cordero\-/Erausquin~\cite{BartheCordero2013} studied variance estimates,
Chávez\-/Domínguez and Kuzarova~\cite{ChavezKutzarova2015} determined the Gelfand widths of certain identity mappings between finite\-/dimensional Schatten classes,
Radtke and Vritsiou~\cite{RadtkeVritsiou2020} proved the thin\-/shell conjecture and
Vritsiou~\cite{Vritsiou2018} the variance conjecture for the spectral norm, i.e.\ \(p = \infty\);
Hinrichs, Prochno, and Vybíral~\cite{HinProVyb2017} computed the entropy numbers for natural embeddings \(\Schatten{p}{n} \hookrightarrow \Schatten{q}{n}\), and in \cite{HinProVyb2021} the same authors determined the corresponding Gelfand numbers;
Prochno and Strzelecki~\cite{ProStrz2022} improved the last stated results and complemented them by the approximation and Kolmogorov numbers.
In a series of papers, Kabluchko, Prochno, and Thäle \cite{KPTh2020_3, KPTh2020_1, KPTh2020_2} refined some of the previous works by giving the precise asymptotic volume of Schatten balls, they computed the volume ratio, considered the volume of intersections in the spirit of Schechtman and Schmuckenschläger~\cite{SS1991}, and derived Sanov\-/type large deviations principles for the empirical measures of singular values of random matrices sampled from Schatten balls.
This last results stands in line with the study of large deviations for spectral measures of random matrices initiated by Ben Arous and Guionnet~\cite{BenAG1997}, and Kaufmann and Thäle~\cite{KaufThaele2022} have generalized the underlying distribution to a certain mixture of uniform distribution and cone measure on the sphere in the spirit of Barthe, Guédon, Mendelson, and Naor~\cite{BGMN2005}.
The question of volume of intersections has also been taken up by Sonnleitner and Thäle~\cite{STh2023}.
Very recently Dadoun, Fradelizi, Guédon, and Zitt~\cite{DFGZ2023} determined the asymptotics of the moments of inertia of Schatten balls, thereby also providing the limit of the isotropic constant, and confirmed the variance conjecture for \(p > 3\).

One of the main deficiencies of most results mentioned above is that they have only been obtained for Schatten classes of \emph{square} matrices, some even more specially for Hermitian matrices, which is in a sense quite restrictive. The aim of the present article is to extend the scope of some of the results to Schatten classes of \(m\)\-/by\-/\(n\) matrices for arbitrary \(m, n \in \NZ\), in as far as that is possible or sensible, thereby complementing and generalizing the existing literature. As we shall see, this is far from being trivial in several situations. 

Let us be more specific about the results obtained in this paper. First we provide the \emph{exact} and asymptotic volume of the Schatten\-/\(\infty\) unit ball of \(m\)\-/by\-/\(n\) matrices (Theorem~\ref{sa:volumen_b}); the former is done via a Weyl\-/type integral transformation formula (similar to the one already used by Saint Raymond~\cite{Saintraymond1984}) and evaluating the Selberg integral that appears. 
%; it seems like a mere oversight that until now no one has written down the explicit value. 
Next we provide bounds for its isotropic constant (Theorem~\ref{sa:lb_beschraenkt}), thereby adding to the work of König, Meyer, and Pajor~\cite{KMP1998}; notably, we are also able to give the precise limit as \(n \to \infty\) in the case when \(\lim_{n \to \infty} \frac{m}{n}\) exists. As the last major purely geometrical result, we clarify the relation between the Hausdorff and cone measures on the Schatten\-/\(p\) unit sphere for arbitrary \(p\) (Theorem~\ref{sa:tildes_volumen}); in the case of the classical \(\ellpe{p}{}\)\-/spheres this was done by Naor and Romik~\cite[Lemma~2]{NR2003}, and, interestingly, the formulas for the Radon\--Nikod\'ym derivatives turn out to be formally equal.

We then continue to explore asymptotic properties of the Schatten\=/\(\infty\) balls via a probabilistic representation of the uniform distribution on the ball (Proposition~\ref{sa:gleichv_b_probdarst1}) in the spirit of Schechtman and Schmuckenschläger~\cite{SS1991} and of Rachev and Rüschendorf~\cite{RR1991}, who introduced those for the classical \(\ellpe{p}{}\)\-/balls, leading us to a Poincaré\--Maxwell\--Borel result (Theorem~\ref{sa:pmb}) stating that, fixing the number \(m\) of rows and letting the number \(n\) of columns tend to infinity, the projection onto the first \(k\) columns converges to an \(m\)\-/by\-/\(k\) matrix with i.i.d.\ Gaussian components. Under the same premises we also prove a central limit theorem for the inner product of two points sampled independently from the unit ball or sphere (Theorem~\ref{sa:zgs_inneresprodukt}).

Then we introduce an apparently new notion of symmetry of (probability) measures w.r.t.\ a given convex body, and apply this notion, together with two\-/sided unitary invariance, to Schatten classes. As the main result (Theorem~\ref{sa:gleichvert_schattenq_kugel_sphaere}), we produce a probabilistic representation of those random matrices whose distribution follows this new symmetry w.r.t.\ Schatten\=/\(p\) balls, thereby subsuming the representations for the singular values applied by Kabluchko, Prochno, and Thäle~\cite{KPTh2020_3, KPTh2020_1, KPTh2020_2} in the non\-/Hermitian case.

Finally, we present Sanov\-/type large deviations principles for random matrices sampled from Schatten\-/\(p\) balls for any \(p \in (0, \infty]\) (Theorems~\ref{sa:ldp_singulaerwerte} and \ref{sa:ldp_singulaerwerte_unendl}). This extends the result for non\-/Hermitian random matrices given in \cite{KPTh2020_2}. It is important to note here that for non\-/square matrices the joint density of singular values contains an additional dimension\-/dependent nonconstant factor, which renders the adaption of the proof in places highly nontrivial.

%%%%%%%%%%%%%%%%%%%%%%%%%%%%%
%%%%%%%%%%%%%%%%%%%%%%%%%%%%%
\subsection{Organization of the paper}
\label{sec:organisation}
%%%%%%%%%%%%%%%%%%%%%%%%%%%%%
%%%%%%%%%%%%%%%%%%%%%%%%%%%%%

Since the present paper contains a wide range of results it is structured according to the general themes of those results, and the (mostly short) proofs directly follow their propositions; the exception is Section~\ref{sec:sanovldp} where the central results are collected at the beginning and the corresponding (extensive) proofs are deferred to subsections.

Section~\ref{sec:setup} lays out the mathematical setup including the norms and spaces, as well as frequently used auxiliary results.

Section~\ref{sec:geometrie} treats the elementary geometric properties of Schatten balls, including Theorems~\ref{sa:volumen_b}, \ref{sa:lb_beschraenkt}, and \ref{sa:tildes_volumen}.

Section~\ref{sec:wahrscheinlichkeit} contains the results pertaining to the weak limit theorems for the Schatten\=/\(\infty\) ball, that is Proposition~\ref{sa:gleichv_b_probdarst1} and Theorems~\ref{sa:pmb} and~\ref{sa:zgs_inneresprodukt}.

Section~\ref{sec:symmetrischemasze} deals with \(K\)\=/symmetric measures and concludes with Theorem~\ref{sa:gleichvert_schattenq_kugel_sphaere}.

Finally Section~\ref{sec:sanovldp} is dedicated to the Sanov\-/type large deviations principles for the empirical spectral measures stated in Theorems~\ref{sa:ldp_singulaerwerte} and \ref{sa:ldp_singulaerwerte_unendl}.

% % % % % % % % % % % % % % % % % % % % % % % % % % % % % % % % %
\section{Mathematical setup}
\label{sec:setup}
% % % % % % % % % % % % % % % % % % % % % % % % % % % % % % % % %

\subsection{Norms and spaces}
\label{sec:normenraeume}

Let \(\KZ \in \{\RZ, \CZ\}\), and by \(\beta \in \{1, 2\}\) denote the \(\RZ\)\-/dimension of \(\KZ\). (Most of the present results remain valid even for \(\KZ = \HZ\), the skew field of quaternions; then \(\beta = 4\).) The nonnegative and positive real numbers are written \(\RZ_{\geq 0}\) and \(\RZ_{> 0}\), resp.; \(\ie\) is the imaginary unit, and \(\Re(x)\), \(\Im(x)\), \(\konj{x}\) denote the real part, imaginary part, and complex conjugate of \(x\) (applied entrywise for vectors and matrices). For the \(d\)\-/dimensional Lebesgue measure we write \(\vol_d\). For \(m, n \in \NZ\) with \(m \leq n\) consider \(\KZ^{m \times n}\), the \(\beta m n\)\-/dimensional real vector space of \((m \times n)\)\-/matrices. This is endowed with a Euclidean structure via the inner product (the \emph{Hilbert\--Schmidt} or \emph{Frobenius inner product})
\begin{equation*}
\langle x, y \rangle := \Re\Spur(x \adj{y}) = \Re\biggl( \sum_{i = 1}^m \sum_{j = 1}^n x_{i, j} \konj{y_{i, j}} \biggr)\!,
\end{equation*}
where \(\Spur\) means the trace of a square matrix. \(I_m\) denotes the \((m \times m)\)\-/identity matrix. The collection of self\-/adjoint \((m \times m)\)\-/matrices is denoted by \(\Sym_{m, \beta}\), and that of self\-/adjoint and positive\-/definite \((m \times m)\)\-/matrices by \(\Pos_{m, \beta}\). On \(\Sym_{m, \beta}\) define the usual partial order (also called \emph{Loewner\-/order}): for \(a, b \in \Sym_{m, \beta}\), \(a \leq b\) iff \(b - a\) is positive\-/semidefinite, or equivalently \(\adj{\xi} a \xi \leq \adj{\xi} b \xi\) for all \(\xi \in \KZ^m\). (Elements of \(\KZ^m\) are always considered column\-/vectors here.)

Recall that for any \(x \in \KZ^{m \times n}\) the \((m \times m)\)\-/matrix \(x \adj{x}\) is symmetric and positive\-/semidefinite, and hence all the latter's eigenvalues are nonnegative real numbers, whose nonnegative square\-/roots are called the \emph{singular values} of \(x\). Let \(s(x) := (s_i(x))_{i \leq m} \in \RZ^m\) be the vector of the singular values of \(x\), arranged non\-/increasingly.

For \(p \in (0, \infty]\) call \(\lVert \cdot \rVert_p\) the \(\ellpe{p}{}\)\-/norm on \(\KZ^m\) (which is a proper quasinorm iff \(p < 1\)), i.e., for \(\xi = (\xi_i)_{i \leq m}\),
\begin{equation*}
\lVert \xi \rVert_p := \begin{cases} \bigl( \sum_{i = 1}^m \lvert \xi_i \rvert^p \bigr)^{1/p} & \text{if } p < \infty, \\ \max\{\lvert \xi_i \rvert \Colon i \leq m\} & \text{if } p = \infty; \end{cases}
\end{equation*}
the corresponding space is written \(\ellpe{p, \beta}{m} := (\KZ^m, \lVert \cdot \rVert_p)\), the unit ball is denoted by \(\Kug{p, \beta}{m}\), the unit sphere by \(\Sph{p, \beta}{m}\),\footnote{Our notation differs from the convention of writing \(\Sph{p}{m - 1}\), where in the real case \(m - 1\) is the dimension indeed. But already in the comlex case this is contradictory because \(\Rand{\Kug{p, 2}{m}}\) has real dimension \(2 m - 1\). Therefore we have opted to only notate the number of components.} and \(\KugVol{p, \beta}{m} := \vol_{\beta m}(\Kug{p, \beta}{m})\). On \(\KZ^{m \times n}\) define the \emph{Schatten\=/\(p\)\-/norm} \(\lVert \cdot \rVert_{\Schatten{p}{}}\) by
\begin{equation*}
\lVert x \rVert_{\Schatten{p}{}} := \lVert s(x) \rVert_p = \Spur((x \adj{x})^{p/2})^{1/p} \quad \text{for \(x \in \KZ^{m \times n}\).}
\end{equation*}
In particular, \(\lVert x \rVert_{\Schatten{\infty}{}} = s_1(x)\) is the \emph{spectral norm} of \(x\) and \(\lVert x \rVert_{\Schatten{2}{}} = \lVert s(x) \rVert_2 = \langle x, x \rangle^{1/2}\) is the \emph{Hilbert\--Schmidt} or \emph{Frobenius norm.} The space \(\Schatten{p}{} := \Schatten{p, \beta}{m \times n} := (\KZ^{m \times n}, \lVert \cdot \rVert_{\Schatten{p}{}})\) is called a \emph{(finite\-/dimensional) Schatten\=/\(p\) class;} we denote its unit ball by \(\Kug{\Schatten{p}{}, \beta}{m \times n}\) and its unit sphere by \(\Sph{\Schatten{p}{}, \beta}{m, n}\); additionally define \(\KugVol{\Schatten{p}{}, \beta}{m \times n} := \vol_{\beta m n}(\Kug{\Schatten{p}{}, \beta}{m \times n})\).

\begin{Bem}\label{rem:dimensionen}
The assumption \(m \leq n\) is no severe restriction: given arbitrary \(m, n \in \NZ\) and \(x \in \KZ^{m \times n}\), the Hermitian positive\-/semidefinite matrices \(\adj{x} x\) and \(x \adj{x}\) have the same positive eigenvalues with matching multiplicities and the rest are zero, hence singular values are defined also for ``tall'' matrices, and we have \((s_i(\adj{x}))_{i \leq r} = (s_i(x))_{i \leq r}\) and consequently \(\lVert \adj{x} \rVert_{\Schatten{p}{}} = \lVert x \rVert_{\Schatten{p}{}}\), where \(r = \min\{m, n\}\). There are results in the literature that also take the zero singular values into account, e.g.\ \cite[Theorem~5.5.10]{HiaiPetz2000}.
\end{Bem}

\subsection{Auxiliary notions and results}
\label{sec:auxiliar}

Recall that a matrix \(u \in \KZ^{n \times m}\) is called \emph{semiunitary} (\emph{semiorthogonal} if \(\KZ = \RZ\)) iff \(\adj{u} u = I_m\). The collection of semiunitary (semiorthogonal) \((n \times m)\)\-/matrices is called the \emph{Stiefel\-/manifold} and notated by \(\Orth_{n, m; \beta}\) (it is indeed a submanifold of \(\KZ^{n \times m}\)); in particular \(\Orth_{m; \beta} := \Orth_{m, m; \beta}\) denotes the set of unitary (orthogonal) matrices, they satisfy \(\adj{u} u = u \adj{u} = I_m\).

For any \(x \in \KZ^{m \times n}\) there exist \(u \in \Orth_{n; \beta}\) and \(v \in \Orth_{m; \beta}\) such that \(x = v \diag(s(x)) \adj{u}\), the so\-/called \emph{singular\-/value\-/decomposition} of \(x\); by slight abuse of notation, here we mean \(\diag(\xi) := (\xi_i \delta_{i, j})_{i \leq m, j \leq n} \in \KZ^{m \times n}\) for a vector \(\KZ^m\), i.e., the matrix whose entries on the main diagonal are the components of \(\xi\) and are zero else.

The conventions for integration (which can be justified by exploiting that all domains of integration concerned are Riemannian submanifolds of some full matrix space) are as follows, where for \(\beta = 1\) ignore all imaginary parts: on \(\KZ^{m \times n}\) the volume form is (up to sign, as are the subsequent) \(\bigwedge_{i = 1}^m \bigwedge_{j = 1}^n (\diff \Re(x_{i, j}) \wedge \diff \Im(x_{i, j}))\) (which corresponds to \(\beta m n\)\=/dimensional Lebesgue measure); on \(\Sym_{m, \beta}\) and \(\Pos_{m, \beta}\) it is \(\bigwedge_{i = 1}^m \bigl( \bigwedge_{j = 1}^i \diff\Re(x_{i, j}) \wedge \bigwedge_{j = 1}^{i - 1} \diff\Im(x_{i, j}) \bigr)\); and on \(\Orth_{n, m; \beta}\), \(\bigwedge_{j = 1}^m \bigl( \bigwedge_{i = j + 1}^n \Re\bigl(\sum_{k = 1}^n \konj{x_{k, i}} \, \diff x_{k, j} \bigr) \wedge \bigwedge_{i = j}^n \Im\bigl( \sum_{k = 1}^n \konj{x_{k, i}} \, \diff x_{k, j} \bigr) \bigr)\) (where \(x_{i, j}\) for \(j \geq m + 1\) are such that for the augmented matrix, \((x_{i, j})_{i, j \leq n} \in \Orth_{n; \beta}\)); in all cases we write shortly \(\diff x\). Note that the induced measure on \(\Orth_{n, m; \beta}\) is invariant under left and right multiplication with unitary matrices; in particular, on \(\Orth_{m; \beta}\) this gives a Haar measure.

For \(z \in \CZ\) with \(\Re(z) > \frac{\beta (m - 1)}{2}\) define the multivariate gamma function \(\Gamma_{m, \beta}\) by
\begin{equation*}
\Gamma_{m, \beta}(z) = \int_{\Pos_{m, \beta}} \det(x)^{z - 1 - \beta (m - 1)/2} \, \ez^{-\Spur(x)} \, \diff x = \pi^{\beta m (m - 1)/4} \prod_{k = 0}^{m - 1} \Gamma\Bigl( z - \frac{\beta k}{2} \Bigr).
\end{equation*}

The Riemannian volume of \(\Orth_{n; \beta}\) is known at least since Hurwitz~\cite{Hurwitz1897} (note that actually Hurwitz considers \(\{u \in \Orth_{n; \beta} \Colon \det(u) = 1\}\), the special unitary group) and is a special case of the Riemannian volume of \(\Orth_{n, m; \beta}\) which equals
\begin{equation}\label{eq:vol_stiefel}
\begin{split}
\OrthVol_{n, m; \beta} &:= 2^{\beta m (m - 1)/4} \int_{\Orth_{n, m; \beta}} \diff u\\
&= \frac{2^{m + \beta m (m - 1)/4} \, \pi^{\beta (2 m n - m (m - 1))/4}}{\prod_{k = 0}^{m - 1} \Gamma\bigl( \frac{\beta (n - k)}{2} \bigr)} = \frac{2^{m + \beta m (m - 1)/4} \, \pi^{\beta m n/2}}{\Gamma_{m, \beta}\bigl( \frac{\beta n}{2} \bigr)}.
\end{split}
\end{equation}
In particular we set \(\OrthVol_{n; \beta} := \OrthVol_{n, n; \beta}\), the volume of \(\Orth_{n; \beta}\); then \(\OrthVol_{n, m; \beta} = 2^{-\beta m (n - m)/2} \frac{\OrthVol_{n; \beta}}{\OrthVol_{n - m; \beta}}\).

The \emph{Selberg integral} (see~\cite{Selberg1944}) will be of some importance to our results: let \(m \in \NZ\) and let \(\alpha, \beta, \gamma \in \CZ\) with \(\Re(\alpha), \Re(\beta) > 0\) and \(\Re(\gamma) > -\min\bigl\{ \frac{1}{m}, \frac{\Re(\alpha)}{m - 1}, \frac{\Re(\beta)}{m - 1} \bigr\}\), then
\begin{equation*}
\int_{(0, 1)^m} \prod_{i = 1}^m \bigl( x_i^{\alpha - 1} \, (1 - x_i)^{\beta - 1} \bigr) \prod_{1 \leq i < j \leq m} \lvert x_i - x_j \rvert^{2 \gamma} \, \diff x = \prod_{k = 0}^{m - 1} \frac{\Gamma(\alpha + k \gamma) \Gamma(\beta + k \gamma) \Gamma(\gamma + 1 + k \gamma)}{\Gamma(\alpha + \beta + (m - 1) \gamma + k \gamma) \Gamma(\gamma + 1)}.
\end{equation*}

Often we are going to need the asymptotics of the product of gamma\-/functions as they appear in \(\OrthVol_{n, m; \beta}\) and the Selberg integral. Here we also remind the reader of the \emph{Landau symbols:} for sequences \((a_n)_{n \in \NZ} \in \KZ^\NZ\) and \((b_n)_{n \in \NZ} \in \RZ_{> 0}^\NZ\), we write \(a_n = \BigO(b_n)\) to mean, \(\lvert a_n \rvert \leq C b_n\) for eventually all \(n \in \NZ\), with some \(C \in \RZ_{> 0}\); and \(a_n = \smallO(b_n)\) to mean \(\lim_{n \to \infty} \frac{a_n}{b_n} = 0\).

\begin{Lem}\label{lem:produkt_gamma}
For any \(\beta \in \RZ_{> 0}\) we have
\begin{equation*}
\begin{split}
\log\biggl( \prod_{k = 1}^n \Gamma\Bigl( \frac{\beta k}{2} \Bigr) \biggr) &= \frac{\beta n^2}{4} \log\Bigl( \frac{\beta n}{2} \Bigr) - \frac{3 \beta n^2}{8} + \frac{(\beta - 2) n}{4} \log\Bigl( \frac{\beta n}{2} \Bigr)\\
&\quad + \frac{n}{4} \log(4 \pi^2 \, \ez^{2 - \beta}) + \frac{\beta^2 - 6 \beta + 4}{24 \beta} \log\Bigl( \frac{\beta n}{2} \Bigr) + A_\beta + \BigO\Bigl( \frac{1}{n} \Bigr),
\end{split}
\end{equation*}
where \(A_\beta \in \RZ\) is some constant.
\end{Lem}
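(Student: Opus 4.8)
The plan is to take logarithms and reduce everything to the classical Stirling expansion for the ordinary gamma function, keeping track of all terms up to and including the constant order. Write $L_n := \log\bigl( \prod_{k=1}^n \Gamma(\beta k/2) \bigr) = \sum_{k=1}^n \log\Gamma(\beta k/2)$. For each $k$ I would insert the Stirling series in the form
\begin{equation*}
\log\Gamma(z) = \Bigl( z - \tfrac12 \Bigr)\log z - z + \tfrac12\log(2\pi) + \BigO\Bigl( \tfrac1z \Bigr), \qquad z \to \infty,
\end{equation*}
applied with $z = \beta k/2$. This is legitimate for all but finitely many $k$ (those with $\beta k/2$ too small are absorbed into the constant $A_\beta$), so that
\begin{equation*}
L_n = \sum_{k=1}^n \Bigl( \tfrac{\beta k}{2} - \tfrac12 \Bigr)\log\Bigl( \tfrac{\beta k}{2} \Bigr) - \sum_{k=1}^n \tfrac{\beta k}{2} + \tfrac n2\log(2\pi) + (\text{const}) + \BigO\Bigl( \textstyle\sum_{k=1}^n \tfrac1k \Bigr).
\end{equation*}
The linear sum is elementary, $\sum_{k=1}^n \beta k/2 = \beta n(n+1)/4$, and $\sum 1/k = \log n + \BigO(1)$ folds into the $\BigO(1/n)$-plus-constant once the finer expansion below is carried out; so the whole problem is the asymptotics of $S_n := \sum_{k=1}^n (k - 1/\beta)\log(\beta k/2)$, up to the factor $\beta/2$.

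The key step is therefore an Euler--Maclaurin analysis of $S_n$. Splitting $\log(\beta k/2) = \log k + \log(\beta/2)$, I reduce to the two sums $\sum_{k=1}^n k\log k$, $\sum_{k=1}^n \log k = \log n!$, and the trivial $\sum_{k=1}^n k$, $\sum_{k=1}^n 1$. For $\sum_{k=1}^n \log k$ I use Stirling's formula $\log n! = n\log n - n + \tfrac12\log(2\pi n) + \BigO(1/n)$, and for $T_n := \sum_{k=1}^n k\log k$ I apply Euler--Maclaurin to $f(x) = x\log x$, which gives
\begin{equation*}
T_n = \frac{n^2}{2}\log n - \frac{n^2}{4} + \frac n2\log n + \frac{1}{12}\log n + C + \BigO\Bigl( \frac1n \Bigr)
\end{equation*}
for a universal constant $C$ (this constant is related to the Glaisher--Kinkelin constant, but its exact value is irrelevant — it is swallowed by $A_\beta$). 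Assembling $S_n = T_n - \tfrac1\beta\log n! + \log(\beta/2)\bigl( \tfrac{n(n+1)}{2} - \tfrac n\beta \bigr)$ and then $L_n = \tfrac\beta2 S_n - \tfrac{\beta n(n+1)}{4} + \tfrac n2\log(2\pi) + A'_\beta + \BigO(1/n)$, I collect coefficients of $n^2\log(\beta n/2)$, $n^2$, $n\log(\beta n/2)$, $n$, $\log(\beta n/2)$, and the constant. A useful bookkeeping device is to re-express everything in the variable $\beta n/2$ from the start: since $\log k = \log(\beta k/2) - \log(\beta/2)$, writing the target in powers of $\log(\beta n/2)$ is natural, and one checks that the coefficient of $n^2\log(\beta n/2)$ is $\beta/4$, that of $n\log(\beta n/2)$ is $(\beta-2)/4$, and — the one genuinely delicate coefficient — that of $\log(\beta n/2)$ comes out to $\tfrac{\beta}{2}\cdot\tfrac{1}{12} - \tfrac{1}{\beta}\cdot\tfrac12 + (\text{contribution from }{-1/\beta}\text{ terms}) = \tfrac{\beta^2 - 6\beta + 4}{24\beta}$, matching the statement. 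The non-logarithmic linear term: from $-\tfrac\beta2\cdot\tfrac{n^2}{4}$ (already accounted), the $\tfrac\beta2\cdot\tfrac n2\log n$-type pieces, the $-\tfrac1\beta(-n)$, the $-\tfrac{\beta n^2}{4}$, the $\tfrac n2\log(2\pi)$, and the $\tfrac\beta2\log(\beta/2)\cdot\bigl(\tfrac{n}{2}-\tfrac1\beta+\ldots\bigr)$, one must verify the coefficient of $n$ collapses to $\tfrac14\log(4\pi^2 e^{2-\beta})$; and similarly $-\tfrac{3\beta n^2}{8}$ for the pure $n^2$ term.

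The main obstacle is purely organizational: there is a real risk of sign errors or off-by-one slips when matching the six separate coefficients, especially the $\log(\beta n/2)$ and the linear-in-$n$ terms, where contributions from $T_n$, from $\log n!$, from the $-1/\beta$ shift, and from the $\log(\beta/2)$ splitting all overlap. I would control this by doing the computation once symbolically with $\beta$ kept general, and then sanity-checking against $\beta = 2$, where $\prod_{k=1}^n\Gamma(k) = \prod_{k=1}^{n-1} k!$ is the superfactorial, whose asymptotics are classically known in terms of the Glaisher--Kinkelin constant; the formula should reduce to $\tfrac{n^2}{2}\log n - \tfrac{3n^2}{4} + \tfrac12 n\log n + \tfrac n2\log(2\pi) + \BigO(\log n)$-type behaviour with the stated constants, and one checks the $\log n$ coefficient becomes $-\tfrac{1}{12}$ at $\beta=2$, which agrees with $\tfrac{\beta^2-6\beta+4}{24\beta}\big|_{\beta=2} = \tfrac{4-12+4}{48} = -\tfrac{1}{12}$. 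A further check at, say, $\beta=1$ against the known asymptotics of $\prod_{k=1}^n\Gamma(k/2)$ (which factors into products of $\Gamma$ at integer and half-integer points, hence into $(n/2-1)!$-type and double-factorial-type pieces) pins down any remaining ambiguity. Once the coefficients are confirmed, the error term $\BigO(1/n)$ follows because every application of Stirling and of Euler--Maclaurin (with one correction term, $\tfrac{B_2}{2}f'(n)$, made explicit and the remainder $\int_1^n |f''(x)|\,\diff x$-bounded) contributes an error of that order, and the finitely many small-$k$ terms we excised contribute only to $A_\beta$.
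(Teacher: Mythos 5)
Your proof is correct in outline and reaches the right answer, but it reverses the order of operations relative to the paper. The paper applies Euler--Maclaurin \emph{directly} to $f(x) = \log\Gamma(\beta x/2)$, so the integral $\int_1^n f$ and the boundary terms $f(n)/2$, $f'(n)/12$ are then evaluated via Stirling and the polygamma asymptotics. You instead apply Stirling \emph{termwise} to each $\log\Gamma(\beta k/2)$, which reduces the whole problem to the elementary sums $\sum k\log k$ and $\log n!$, and only then invoke Euler--Maclaurin on the much simpler function $x\log x$. Both routes are valid. Yours has the advantage that the only nontrivial Euler--Maclaurin analysis is for $x\log x$, with a completely explicit remainder and no polygamma asymptotics needed; the paper's has the advantage of tracking a single application of Euler--Maclaurin, with the trade-off of needing $\psi_0$ and $\psi_2$ asymptotics. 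Your sanity checks at $\beta=2$ (superfactorial / Glaisher--Kinkelin) and $\beta=1$ are a nice touch and would indeed catch coefficient errors.

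One point you should fix to make the argument airtight. When you first insert the crude Stirling form with error $O(1/z)$, the accumulated error $\sum_{k\le n} O(1/k) = O(\log n)$ is \emph{not} negligible --- it is precisely the source of a $\log n$-size term. You do note that ``the finer expansion'' is needed, but your displayed recombination
\begin{equation*}
L_n = \tfrac\beta2 S_n - \tfrac{\beta n(n+1)}{4} + \tfrac n2\log(2\pi) + A'_\beta + \BigO(1/n)
\end{equation*}
does not carry the extra term. The correct version requires the Stirling expansion with the $\tfrac{1}{12z}$ term retained,
\begin{equation*}
\log\Gamma(z) = \Bigl( z - \tfrac12 \Bigr)\log z - z + \tfrac12\log(2\pi) + \tfrac{1}{12 z} + \BigO(z^{-3}),
\end{equation*}
which contributes $\tfrac{1}{6\beta}\sum_{k\le n}\tfrac1k = \tfrac{1}{6\beta}\log n + \text{const} + \BigO(1/n)$. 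Without it, the coefficient of $\log(\beta n/2)$ would read $\tfrac\beta2\bigl(\tfrac1{12} - \tfrac1{2\beta}\bigr) = \tfrac{\beta}{24} - \tfrac14$, which is wrong; adding $\tfrac{1}{6\beta}$ recovers $\tfrac{\beta^2 - 6\beta + 4}{24\beta}$. Your final answer implicitly uses this, so the fix is a one-line insertion, not a change of strategy.
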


\begin{proof}
Let \(\beta \in \RZ_{> 0}\). We notice \(\log\bigl( \prod_{k = 1}^n \Gamma(\frac{\beta n}{2}) \bigr) = \sum_{k = 1}^n \log(\Gamma(\frac{\beta k}{2}))\), hence we apply the Euler\--Maclaurin summation formula to the function \(f \colon \RZ_{> 0} \to \RZ\), \(x \mapsto \log(\Gamma(\frac{\beta x}{2}))\), that is,
\begin{equation*}
\sum_{k = 1}^n f(k) = \int_1^n f(x) \, \diff x + \frac{f(n) + f(1)}{2} + \frac{f'(n) - f'(1)}{12} + \int_1^n H_3(x) f'''(x) \, \diff x,
\end{equation*}
where \(H_3(x) := \frac{1}{6} B_3(x - \lfloor x \rfloor)\) with the Bernoulli\-/polynomial \(B_3(x) = x^3 - \frac{3}{2} x^2 + \frac{1}{2} x\).

We already know the asymptotics of the gamma\-/function through Stirling's formula,
\begin{equation*}
\log(\Gamma(x)) = x \log(x) - x - \frac{1}{2} \log(x) + \frac{1}{2} \log(2 \pi) + \frac{1}{12 x} + R(x),
\end{equation*}
where \(R\) satisfies \(R(x) = \BigO(x^{-3})\) as \(x \to \infty\). Therewith we get
\begin{equation*}
\begin{split}
\int_1^n f(x) \, \diff x &= \frac{\beta n^2}{4} \log\Bigl( \frac{\beta n}{2} \Bigr) - \frac{\beta n^2}{8} - \frac{\beta n^2}{4} - \frac{n}{2} \log\Bigl( \frac{\beta n}{2} \Bigr) + \frac{n}{2}\\
&\quad + \frac{n}{2} \log(2 \pi) + \frac{1}{6 \beta} \log\Bigl( \frac{\beta n}{2} \Bigr) + c + \int_1^n R(x) \, \diff x,
\end{split}
\end{equation*}
where we already have gathered all constant terms. Now since \(\lvert R(x) \rvert \leq C x^{-3}\), \(R\) is integrable over \([1, \infty)\), hence the limit \(c_1 := \lim_{n \to \infty} \bigl( c + \int_1^n R(x) \, \diff x \bigr)\) exists in \(\RZ\), and
\begin{equation*}
\biggl\lvert c_1 - c - \int_1^n R(x) \, \diff x \biggr\rvert = \biggl\lvert \int_n^\infty R(x) \, \diff x \biggr\rvert \leq \frac{C}{2 n^2};
\end{equation*}
this results in
\begin{equation*}
\begin{split}
\int_1^n f(x) \, \diff x &= \frac{\beta n^2}{2} \log\Bigl( \frac{\beta n}{2} \Bigr) - \frac{3 \beta n^2}{8} - \frac{n}{2} \log\Bigl( \frac{\beta n}{2} \Bigr)\\
&\quad + \frac{n}{2} \log(2 \pi \ez) + \frac{1}{6 \beta} \log\Bigl( \frac{\beta n}{2} \Bigr) + c_1 + \BigO\Bigl( \frac{1}{n^2} \Bigr).
\end{split}
\end{equation*}
Obviously
\begin{equation*}
\frac{f(n)}{2} = \frac{\beta n}{4} \log\Bigl( \frac{\beta n}{2} \Bigr) - \frac{\beta n}{4} - \frac{1}{4} \log\Bigl( \frac{\beta n}{2} \Bigr) + c_2 + \BigO\Bigl( \frac{1}{n} \Bigr).
\end{equation*}
We also have, via the polygamma\-/functions,
\begin{equation*}
\frac{f'(n)}{12} = \frac{\beta}{24} \, \psi_0\Bigl( \frac{\beta n}{2} \Bigr) = \frac{\beta}{24} \log\Bigl( \frac{\beta n}{2} \Bigr) + \BigO\Bigl( \frac{1}{n} \Bigr),
\end{equation*}
because of \(\psi_0(x) = \log(x) - \frac{1}{2x} + \BigO(x^{-2})\). Lastly we have \(f'''(x) = (\frac{\beta}{2})^3 \psi_2(\frac{\beta x}{2})\); as \(\psi_2(x) = \BigO(x^{-2})\) and \(H_3\) is bounded, the function \(H_3 f'''\) is integrable over \([1, \infty)\), thus the limit \(c_3 := \lim_{n \to \infty} \int_1^n H_3(x) f'''(x) \, \diff x\) exists in \(\RZ\), and the error satisfies
\begin{equation*}
\biggl\lvert c_3 - \int_1^n H_3(x) f'''(x) \, \diff x \biggr\rvert = \biggl\lvert \int_n^\infty H_3(x) f'''(x) \, \diff x \biggr\rvert \leq \frac{C}{n}
\end{equation*}
with some constant \(C\) (different from the previous occurrence). The statement of the lemma now follows by gathering terms and defining \(A_\beta := c_1 + c_3 + c_3\).
\end{proof}

\begin{Bem}
For \(\beta \in \{1, 2\}\) the statement of Lemma~\ref{lem:produkt_gamma} could also be obtained by expressing the product in terms of the \emph{Barnes\-/\(G\)\=/function} whose asymptotics are well\-/known. But this approach fails for general \(\beta\), and even for \(\beta = 1\) the calculations are tedious.
\end{Bem}

For the present investigations the following transformation formulae for integrals with respect to two important matrix factorizations turn out to be useful. (Others exist that we do not need.)

\begin{Sa}\label{sa:integraltrafo}
Let \(f \colon \KZ^{m \times n} \to \KZ\) be measurable and either nonnegative or integrable.
\begin{compactenum}
\item (Polar decomposition~\cite[Lemma~2.1]{Chikuse1990}, also~\cite[Proposition~2.27]{Zhang2017})\footnote{\label{fu:integraltrafo}Note that~\cite{Chikuse1990} works with ``tall'' matrices, i.e.\ \(m \geq n\), so the representation needs to be transposed; also the measure on \(\Orth_{n, m; \beta}\) is normalized differently.}
\begin{equation*}
\int_{\KZ^{m \times n}} f(x) \, \diff x = 2^{-m} \int_{\Pos_{m, \beta}} \int_{\Orth_{n, m; \beta}} f(r^{1/2} \, \adj{u}) \, \diff u \, \det(r)^{\beta (n - m + 1)/2 - 1} \, \diff r.
\end{equation*}
\item (Singular value decomposition~\cite[Lemma~1.5.3, (i)]{Chikuse2003})
\begin{align*}
\int_{\KZ^{m \times n}} f(x) \, \diff x &= \frac{2^{\beta n (n - 1)/4 + \beta m (m - 1)/4 + \beta m/2}}{2^{\beta m n/2} \, m! \OrthVol_{1; \beta}^m \OrthVol_{n - m; \beta}} \int_{\RZ_{> 0}^m} \int_{\Orth_{m; \beta}} \int_{\Orth_{n; \beta}} f(v \diag(s) \adj{u}) \, \diff u \, \diff v\\
&\mspace{230mu} \cdot \prod_{i = 1}^m s_i^{\beta (n - m + 1) - 1} \prod_{1 \leq i < j \leq m} \lvert s_i^2 -s_j^2 \rvert^\beta \, \diff s.
\end{align*}

In particular, if \(f = F \circ s\), where \(F \colon \RZ^m \to \KZ\) (``\(f\) depends only on the singular values of \(x\)''), then~\cite[Proposition~4.1.3]{AGZ2010}
\begin{equation*}
\int_{\KZ^{m \times n}} F(s(x)) \, \diff x = \frac{2^{\beta m/2} \, \OrthVol_{m; \beta} \OrthVol_{n; \beta}}{2^{\beta m n/2} \, m! \OrthVol_{1; \beta}^m \OrthVol_{n - m; \beta}} \int_{\RZ_{> 0}^m} F(s) \prod_{i = 1}^m s_i^{\beta (n - m + 1) - 1} \prod_{1 \leq i < j \leq m} \lvert s_i^2 - s_j^2 \rvert^\beta \, \diff s.
\end{equation*}
\end{compactenum}
\end{Sa}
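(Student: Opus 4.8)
The plan is to obtain both identities from the standard matrix polar\-/decomposition and singular\-/value\-/decomposition integration formulae available in the literature, so that the actual work reduces to matching conventions and verifying the constants. Since the references in question work with ``tall'' matrices (\(m \geq n\)) and normalise the invariant measure on the Stiefel manifold differently from the Riemannian volume fixed above, I would first record that transposing changes none of the quantities involved (Remark~\ref{rem:dimensionen}), and make explicit the conversion \(\int_{\Orth_{n, m; \beta}} \diff u = 2^{-\beta m (m - 1)/4} \OrthVol_{n, m; \beta}\) between the two normalisations of the Stiefel measure, together with its analogues for \(\Orth_{m; \beta}\) and \(\Orth_{n; \beta}\). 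I would also note at the outset that it suffices to establish the Jacobian identities on the full\-/measure open set of rank\-/\(m\) matrices — and, for~(ii), additionally with pairwise distinct positive singular values — after which the case of nonnegative measurable \(f\) follows by monotone approximation and that of integrable \(f\) by splitting into real/imaginary and positive/negative parts.

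For part~(i): on the rank\-/\(m\) locus the map \((r, u) \mapsto r^{1/2} \adj{u}\), with \(r = x \adj{x} \in \Pos_{m, \beta}\) and \(u = \adj{\bigl( \inv{(r^{1/2})} x \bigr)} \in \Orth_{n, m; \beta}\), is a bijection onto that locus, and its Jacobian is precisely what is computed in~\cite[Lemma~2.1]{Chikuse1990} (equivalently~\cite[Proposition~2.27]{Zhang2017}), up to the conventions discussed above. Transcribing it yields an identity of the shape \(\diff x = c \int_{\Orth_{n, m; \beta}} (\,\cdot\,) \, \diff u \, \det(r)^{\beta (n - m + 1)/2 - 1} \, \diff r\) with some dimension\-/dependent constant \(c\). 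To pin down \(c = 2^{-m}\) — and to guard against a slip in the translation — I would test this against \(f(x) = \ez^{-\Spur(x \adj{x})}\): since \(\Spur(x \adj{x}) = \Spur(r)\), the left\-/hand side equals \(\pi^{\beta m n/2}\), while on the right\-/hand side the Stiefel integral contributes \(2^{-\beta m (m - 1)/4} \OrthVol_{n, m; \beta}\) and the \(\Pos_{m, \beta}\)\-/integral contributes \(\Gamma_{m, \beta}(\beta n/2)\) by the definition of the multivariate gamma function (valid since \(n \geq m\)); inserting the value of \(\OrthVol_{n, m; \beta}\) from~\eqref{eq:vol_stiefel} makes the right\-/hand side equal \(c \cdot 2^{m} \cdot \pi^{\beta m n/2}\), forcing \(c = 2^{-m}\).

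For part~(ii): I would proceed in the same way, now invoking the singular\-/value\-/decomposition Jacobian~\cite[Lemma~1.5.3~(i)]{Chikuse2003} for the map \((s, v, u) \mapsto v \diag(s) \adj{u}\) on the locus of matrices with distinct positive singular values — the denominator factors \(m!\) and \(\OrthVol_{1; \beta}^m\) accounting for the residual freedom (reordering of the singular values, and multiplication of \(u\) and \(v\) by a common diagonal unitary matrix) in the decomposition — and transcribing it into our normalisation. The displayed special case then drops out immediately: for \(f = F \circ s\) the integrand \(f(v \diag(s) \adj{u}) = F(s)\) does not depend on \(u, v\), so the two group integrals separate and evaluate to \(\int_{\Orth_{m; \beta}} \diff v = 2^{-\beta m (m - 1)/4} \OrthVol_{m; \beta}\) and \(\int_{\Orth_{n; \beta}} \diff u = 2^{-\beta n (n - 1)/4} \OrthVol_{n; \beta}\); absorbing these into the prefactor and cancelling powers of~\(2\) leaves exactly \(\frac{2^{\beta m/2} \OrthVol_{m; \beta} \OrthVol_{n; \beta}}{2^{\beta m n/2} \, m! \OrthVol_{1; \beta}^m \OrthVol_{n - m; \beta}}\), which is also recorded in~\cite[Proposition~4.1.3]{AGZ2010}.

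I expect the sole genuine difficulty to be the bookkeeping of the powers of~\(2\): the references scale the invariant measure on the Stiefel manifold differently from the Riemannian volume used here (which is why \(\OrthVol_{n, m; \beta}\) carries the extra factor \(2^{\beta m (m - 1)/4}\)), so a careless transcription will be off by exactly such a power, and the same hazard recurs at every conversion of a Stiefel\-- or unitary\-/group integral. The Gaussian test computation in part~(i), and the analogous consistency check in part~(ii), are the cleanest safeguards. If one prefers a self\-/contained treatment, one can instead prove~(i) by an exterior\-/algebra computation of \(\diff x\) in terms of \(\diff r\) and the horizontal \(1\)\-/forms on \(\Orth_{n, m; \beta}\), and then deduce~(ii) by applying to the positive\-/definite factor \(r\) the eigenvalue\--eigenvector decomposition on \(\Pos_{m, \beta}\) — which produces the Vandermonde factor \(\prod_{i < j} \lvert s_i^2 - s_j^2 \rvert^\beta\) in the eigenvalues \(s_i^2\) of \(r\) — together with the right \(\Orth_{m; \beta}\)\-/invariance of the Stiefel measure and the extension of a semiunitary matrix to a unitary one; but this route only relocates the constant\-/tracking rather than removing it.
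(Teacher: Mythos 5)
Your proposal is correct and takes essentially the same route as the paper, namely transcribing the polar and singular\-/value decomposition Jacobians from the cited references and reconciling the normalization of the Stiefel measure and the ordering convention for the singular values (the latter is exactly what the paper's Remark~\ref{bem:weylkammer} works out in detail, producing the factor \(m!\)). Your Gaussian consistency check pinning down \(c = 2^{-m}\) in part~1 is a sound safeguard not spelled out in the paper, and it verifies readily from the definition of the multivariate gamma function together with the Stiefel volume formula~\eqref{eq:vol_stiefel}.
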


\begin{Bem}\label{bem:weylkammer}
Some comments on Proposition~\ref{sa:integraltrafo}, part~2, seem in place. The form given in the present work does not completely match the one given in~\cite{Chikuse2003}. Apart from the same notational differences as mentioned in Footnote~\ref{fu:integraltrafo}, the major discrepancy is the domain of integration for the singular values: Chikuse assumes them to be ordered decreasingly, whereas we impose no such restriction. But this poses no serious obstacle, for the following argument: define \(W^m := \{x \in \RZ_{> 0}^m \Colon x_1 > \dotsb > x_m\}\), then almost every \(s \in \RZ_{> 0}^m\) can be uniquely written \(s = \tau s'\), where \(\tau \in \Orth_{m; \beta}\) is a permutation matrix and \(s' \in W^m\); therewith \(v \diag(s) \adj{u} = (v \tau) \diag(s') \adj{(u \diag(\tau, I_{n - m}))}\); also write \(h(s) := \prod_{i = 1}^m s_i^{\beta (n - m + 1) - 1} \prod_{i < j} \lvert s_i^2 - s_j^2 \rvert^\beta\), then clearly \(h(\tau s) = h(s)\). Now decompose \(\RZ_{> 0}^m = \bigcup_{\tau \text{ perm.\ m.}} \tau W^m\), neglecting the set of vectors for which at least two coordinates are equal. Then for any permutation matrix \(\tau\),
\begin{multline*}
\int_{\tau W^m} \int_{\Orth_{m; \beta}} \int_{\Orth_{n; \beta}} f(v \diag(s) \adj{u}) h(s) \, \diff u \, \diff v \, \diff s\\
\begin{aligned}
&= \int_{W^m} \int_{\Orth_{m; \beta}} \int_{\Orth_{n; \beta}} f(v \diag(\trapo{\tau} s) \adj{u}) h(\trapo{\tau} s) \, \diff u \, \diff v \, \diff s\\
&= \int_{W^m} \int_{\Orth_{m; \beta}} \int_{\Orth_{n; \beta}} f(v \trapo{\tau} \diag(s) \adj{(u \diag(\trapo{\tau}, I_{n - m}))}) h(s) \, \diff u \, \diff v \, \diff s\\
&= \int_{W^m} \int_{\Orth_{m; \beta}} \int_{\Orth_{n; \beta}} f(v \diag(s) \adj{u}) h(s) \, \diff u \, \diff v \, \diff s,
\end{aligned}
\end{multline*}
where we also have used that on \(\Orth_{n; \beta}\) and \(\Orth_{m; \beta}\) we are integrating with respect to the (scaled) Haar measures. This leads to
\begin{multline*}
\int_{\RZ_{> 0}^m} \int_{\Orth_{m; \beta}} \int_{\Orth_{n; \beta}} f(v \diag(s) \adj{u}) h(s) \, \diff u \, \diff v \, \diff s\\
\begin{aligned}
&= \sum_{\tau \text{ perm.\ m.}} \int_{\tau W^m} \int_{\Orth_{m; \beta}} \int_{\Orth_{n; \beta}} f(v \diag(s) \adj{u}) h(s) \, \diff u \, \diff v \, \diff s\\
&= m! \int_{W^m} \int_{\Orth_{m; \beta}} \int_{\Orth_{n; \beta}} f(v \diag(s) \adj{u}) h(s) \, \diff u \, \diff v \, \diff s.
\end{aligned}
\end{multline*}
Summing up, an additional factor \(m!\) has to be taken into account.
\end{Bem}

Lastly, for any set \(A\) we denote the (set\-/theoretic) indicator function by \(\Ind_A\), that is, \(\Ind_A(x) = 1\) if \(x \in A\), and \(\Ind_A(x) = 0\) else. Further notions and notations shall be introduced on the fly as needed.

% % % % % % % % % % % % % % % % % % % % % % % % % % % % % % % % %
\section{Geometry of Schatten\=/\(p\) balls}
\label{sec:geometrie}
% % % % % % % % % % % % % % % % % % % % % % % % % % % % % % % % %

The first result is an explicit expression for the exact and asymptotic \(\beta m n\)\=/dimensional Lebesgue volume of \(\Kug{\Schatten{\infty}{}, \beta}{m \times n}\). In the case \(m = n\) this has already been accomplished in principle by Saint Raymond~\cite[Th\'eor\`eme~1]{Saintraymond1984}, though he does not evaluate the Selberg integral encountered. A statement concerning \(\Kug{\Schatten{p}{}, \beta}{m \times n}\) for \(p < \infty\) is found in Proposition~\ref{sa:volumen_bsp}.

\begin{Thm}\label{sa:volumen_b}
The exact volume of \(\Kug{\Schatten{\infty}{}, \beta}{m \times n}\) is given by
\begin{equation*}
\begin{split}
\KugVol{\Schatten{\infty}{}, \beta}{m \times n} &= \frac{\prod_{k = 0}^{m - 1} \Gamma(1 + \frac{\beta k}{2}) \prod_{k = 0}^{n - 1} \Gamma(1 + \frac{\beta k}{2})}{\prod_{k = 0}^{m + n - 1} \Gamma(1 + \frac{\beta k}{2})} \, \pi^{\beta m n/2}\\
&= \frac{\Gamma_{m, \beta}(1 + \frac{\beta (m - 1)}{2}) \, \Gamma_{n, \beta}(1 + \frac{\beta (n - 1)}{2})}{\Gamma_{m + n, \beta}(1 + \frac{\beta (m + n - 1)}{2})} \, \pi^{\beta m n}.
\end{split}
\end{equation*}
Hence follow the asymptotics of the volume radius,
\begin{equation*}
(\KugVol{\Schatten{\infty}{}, \beta}{m \times n})^{1/(\beta m n)} = \Bigl( \frac{2 \pi \ez^{3/2}}{\beta (m + n)} \Bigr)^{1/2} \Bigl( \frac{m}{m + n} \Bigr)^{(m + 2/\beta - 1)/(4 n)} \Bigl( \frac{n}{m + n} \Bigr)^{(n + 2/\beta - 1)/(4 m)} \bigl( 1 + r_\beta(m, n) \bigr),
\end{equation*}
where \(r_\beta \colon \NZ^2 \to \RZ\) satisfies \(C_1 \leq m n \, r_\beta(m, n) + \frac{\beta^2 - 6 \beta + 4}{24 \beta^2} \log\bigl( \frac{1}{m} + \frac{1}{n} \bigr) \leq C_2\) for all \(m, n \in \NZ\) and \(\beta \in \{1, 2\}\), with global constants \(C_1, C_2 \in \RZ\).

In particular, consider \(m\) as depending on \(n\), then if \(c := \lim_{n \to \infty} \frac{m}{n} \in [0, 1]\) exists, then
\begin{equation*}
\lim_{n \to \infty} (\beta n)^{1/2} (\KugVol{\Schatten{\infty}{}, \beta}{m \times n})^{1/(\beta m n)} = \Bigl( \frac{2 \pi \ez^{3/2}}{1 + c} \Bigr)^{1/2} (1 + c)^{-1/(4 c)} \Bigl( 1 + \frac{1}{c} \Bigr)^{-c/4},
\end{equation*}
where in the case \(c = 0\) we interpret \((1 + c)^{1/c} = \ez\) and \((1 + \frac{1}{c})^c = 1\).
\end{Thm}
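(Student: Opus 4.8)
The plan is to establish the three assertions in sequence: the exact volume, its $(\beta m n)$-th root asymptotics, and the limit under $m/n \to c$.

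\emph{Exact volume.} I would apply the special case $f = F \circ s$ of the singular-value transformation formula in Proposition~\ref{sa:integraltrafo} to $F = \Ind_{[0,1]^m}$. Since $F(s(x)) = 1$ precisely when $s_1(x) \le 1$, i.e.\ when $x \in \Kug{\Schatten{\infty}{}, \beta}{m \times n}$, this writes $\KugVol{\Schatten{\infty}{}, \beta}{m \times n}$ as an explicit constant (built from Stiefel volumes $\OrthVol$, powers of $2$ and a factor $m!$) times $\int_{(0,1)^m} \prod_{i=1}^m s_i^{\beta(n-m+1)-1} \prod_{i<j} \lvert s_i^2 - s_j^2\rvert^\beta \, \diff s$. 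The substitution $t_i = s_i^2$ turns the latter into $2^{-m}$ times a Selberg integral whose three parameters, in the order in which they appear in the Selberg formula of Section~\ref{sec:auxiliar}, are $\beta(n-m+1)/2$, $1$, and $\beta/2$; the hypotheses hold because $m \le n$ makes the first parameter $\ge \beta/2 > 0$ while the other two are $1$ and $\beta/2 > 0$. Evaluating the Selberg integral, reindexing the products of $\Gamma$-values that appear, and inserting~\eqref{eq:vol_stiefel} for $\OrthVol_{N;\beta}$ (together with $\OrthVol_{1;\beta} = 2\pi^{\beta/2}/\Gamma(\tfrac\beta2)$ and $\Gamma_{N,\beta}(\tfrac{\beta N}{2}) = \pi^{\beta N(N-1)/4}\prod_{k=1}^N \Gamma(\tfrac{\beta k}{2})$), the powers of $2$ and $\pi$ and the factor $m!$ all cancel, leaving the first displayed formula. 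The second form follows from $\Gamma_{N,\beta}(1 + \tfrac{\beta(N-1)}{2}) = \pi^{\beta N(N-1)/4}\prod_{k=0}^{N-1}\Gamma(1 + \tfrac{\beta k}{2})$ and the identity $m(m-1) + n(n-1) - (m+n)(m+n-1) = -2mn$, which reconciles the exponents of $\pi$.

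\emph{Volume-radius asymptotics.} Taking logarithms and dividing by $\beta m n$ reduces the claim to the asymptotics of $H(N) := \log\prod_{k=0}^{N-1}\Gamma(1 + \tfrac{\beta k}{2})$. Using $\Gamma(1 + \tfrac{\beta k}{2}) = \tfrac{\beta k}{2}\Gamma(\tfrac{\beta k}{2})$ for $k \ge 1$, one has $H(N) = (N-1)\log\tfrac\beta2 + \log\Gamma(N) + \log\prod_{k=1}^{N-1}\Gamma(\tfrac{\beta k}{2})$, so Lemma~\ref{lem:produkt_gamma} (with $N-1$ in place of $n$) and Stirling's formula for $\log\Gamma(N)$ give an expansion $H(N) = \mathrm{main}(N) + A'_\beta + \rho_\beta(N)$, where $\mathrm{main}(N)$ is an explicit combination of $N^2\log\tfrac{\beta N}{2}$, $N^2$, $N\log\tfrac{\beta N}{2}$, $N$ and $\log\tfrac{\beta N}{2}$ (the coefficients obtained by expanding, in particular, $\tfrac{\beta(N-1)^2}{4}\log\tfrac{\beta(N-1)}{2}$ to the needed order), $A'_\beta$ is a constant, and $\lvert\rho_\beta(N)\rvert \le C_\beta$ for all $N \ge 1$ — uniformly, since the remainders in Lemma~\ref{lem:produkt_gamma} and Stirling are uniform and the finitely many small $N$ are absorbed into $C_\beta$. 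Now form $H(m) + H(n) - H(m+n)$: the $N$-linear terms vanish ($m + n - (m+n) = 0$); the $N^2$ and $N^2\log\tfrac{\beta N}{2}$ terms, using $m^2 + n^2 - (m+n)^2 = -2mn$, produce (after division by $\beta m n$ and addition of $\tfrac12\log\pi$) exactly $\tfrac12\log\tfrac{2\pi\ez^{3/2}}{\beta(m+n)} + \tfrac{m}{4n}\log\tfrac{m}{m+n} + \tfrac{n}{4m}\log\tfrac{n}{m+n}$, the $\ez^{3/2}$ coming from the $-\tfrac{3\beta N^2}{8}$ of Lemma~\ref{lem:produkt_gamma}; the $N\log\tfrac{\beta N}{2}$ terms supply the corrections $\tfrac{2/\beta-1}{4n}\log\tfrac{m}{m+n} + \tfrac{2/\beta-1}{4m}\log\tfrac{n}{m+n}$; the $\log\tfrac{\beta N}{2}$ terms, via $\log\tfrac{\beta m}{2} + \log\tfrac{\beta n}{2} - \log\tfrac{\beta(m+n)}{2} = \log\tfrac\beta2 - \log(\tfrac1m + \tfrac1n)$, yield $-\tfrac{\beta^2 - 6\beta + 4}{24\beta^2\,mn}\log(\tfrac1m + \tfrac1n)$; and $A'_\beta$, $\rho_\beta$ and the remaining constants contribute $\BigO(1/(mn))$. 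Exponentiating and writing $\log(1 + r_\beta) = r_\beta + \BigO(r_\beta^2)$ with $r_\beta = \BigO((\log mn)/(mn))$ then gives the asserted identity together with $m n\, r_\beta(m,n) + \tfrac{\beta^2 - 6\beta + 4}{24\beta^2}\log(\tfrac1m + \tfrac1n) = \BigO(1)$ uniformly in $m, n \in \NZ$; as $\beta$ ranges over the two-element set $\{1,2\}$, this $\BigO(1)$ is captured by global constants $C_1, C_2$.

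\emph{The limit $m/n \to c$.} Substituting the identity just proved into $(\beta n)^{1/2}(\KugVol{\Schatten{\infty}{}, \beta}{m \times n})^{1/(\beta m n)}$, the prefactor becomes $\bigl(2\pi\ez^{3/2}/(1 + \tfrac mn)\bigr)^{1/2} \to \bigl(2\pi\ez^{3/2}/(1+c)\bigr)^{1/2}$. When $c > 0$ one has $m, n \to \infty$, $m/n \to c$, $n/m \to 1/c$; in $(\tfrac{m}{m+n})^{(m+2/\beta-1)/(4n)}$ the base tends to $\tfrac c{1+c}$ and the exponent to $\tfrac c4$, so this factor tends to $(\tfrac c{1+c})^{c/4} = (1 + \tfrac1c)^{-c/4}$; in $(\tfrac{n}{m+n})^{(n+2/\beta-1)/(4m)}$ the base tends to $\tfrac1{1+c}$ and the exponent to $\tfrac1{4c}$, so this factor tends to $(1+c)^{-1/(4c)}$; and $r_\beta \to 0$ since $mn \to \infty$ while $\lvert\log(\tfrac1m + \tfrac1n)\rvert = \BigO(\log n)$. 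When $c = 0$, so $n/m \to \infty$ and $m/n \to 0$: the factor $(\tfrac{m}{m+n})^{(m+2/\beta-1)/(4n)}$ has logarithm of modulus $\tfrac{\log(1 + n/m)}{4\,(n/m)} + \smallO(1) \to 0$ (since $\tfrac{\log(1+u)}{u} \to 0$ as $u \to \infty$), hence tends to $1$; and $(\tfrac{n}{m+n})^{(n+2/\beta-1)/(4m)}$ has logarithm $\tfrac{n+2/\beta-1}{4m}\log\tfrac1{1+m/n} = -\tfrac{n+2/\beta-1}{4n} + \BigO(\tfrac mn) \to -\tfrac14$, hence tends to $\ez^{-1/4}$. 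These are exactly the values prescribed by the conventions $(1+c)^{1/c} = \ez$ and $(1+\tfrac1c)^c = 1$, so in every case the limit is as stated.

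\emph{Main obstacle.} The conceptual structure is routine; the real work is in the second step, where one must expand $H(N)$ — above all $\tfrac{\beta(N-1)^2}{4}\log\tfrac{\beta(N-1)}{2}$, whose sub-leading pieces feed into the $N\log\tfrac{\beta N}{2}$, $N$ and $\log\tfrac{\beta N}{2}$ coefficients — and then verify that after the cancellation $m^2 + n^2 - (m+n)^2 = -2mn$ the somewhat unusual coefficients $2/\beta - 1$ and $\tfrac{\beta^2 - 6\beta + 4}{24\beta^2}$ emerge exactly as claimed, all the while keeping every remainder uniform in $m$ and $n$. The first step is conceptually straightforward but is a long, purely mechanical simplification of a product of Stiefel volumes against the Selberg factor, and the third step is routine once the asymptotic identity is in hand.
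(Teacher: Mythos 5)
The proposal is correct and follows essentially the same route as the paper: it reduces the exact volume to a Selberg integral via the singular-value transformation in Proposition~\ref{sa:integraltrafo}, then obtains the asymptotics of the $\Gamma$-products from Lemma~\ref{lem:produkt_gamma} (differing only in applying it with $N-1$ rather than shifting the index range up to $N$), and handles the $m/n \to c$ limit, including the $c = 0$ edge case, by the same factor-by-factor analysis. The minor presentational differences in the intermediate bookkeeping do not change the substance of the argument.
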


\begin{proof}
We apply Proposition~\ref{sa:integraltrafo}, part~2, to \(f := \Ind_{\Kug{\Schatten{\infty}{}, \beta}{m \times n}} = \Ind_{\Kug{\infty, 1}{m}}{} \circ s\) and get
\begin{equation*}
\KugVol{\Schatten{\infty}{}, \beta}{m \times n} = \frac{2^{\beta m/2} \, \OrthVol_{m; \beta} \OrthVol_{n; \beta}}{2^{\beta m n/2} \, m! \OrthVol_{1; \beta}^m \OrthVol_{n - m; \beta}} \int_{[0, 1]^m} \prod_{i = 1}^m s_i^{\beta (n - m + 1) - 1} \prod_{1 \leq i < j \leq m} \lvert s_i^2 - s_j^2 \rvert^\beta \, \diff s.
\end{equation*}
Via the transformation \(s_i \mapsto s_i^{1/2}\) the last integral is identified as Selberg's integral whose exact value equals
\begin{align*}
\int_{[0, 1]^m} \prod_{i = 1}^m s_i^{\beta (n - m + 1) - 1} \prod_{1 \leq i < j \leq m} \lvert s_i^2 - s_j^2 \rvert^\beta \, \diff s &= 2^{-m} \prod_{k = 0}^{m - 1} \frac{\Gamma(\frac{\beta (n - m + 1)}{2} + \frac{\beta k}{2}) \Gamma(1 + \frac{\beta k}{2}) \Gamma(\frac{\beta}{2} + 1 + \frac{\beta k}{2})}{\Gamma(\frac{\beta n}{2} + 1 + \frac{\beta k}{2}) \Gamma(\frac{\beta}{2} + 1)}.
\end{align*}
Plugging in and rearranging terms yields the claimed result.

The asymptotic formulas are an application of Lemma~\ref{lem:produkt_gamma}; in order to apply it, it helps to rearrange terms a bit via \(\Gamma(x + 1) = x \Gamma(x)\), like this,
\begin{equation*}
\frac{\prod_{k = 0}^{m - 1} \Gamma(1 + \frac{\beta k}{2}) \prod_{k = 0}^{n - 1} \Gamma(1 + \frac{\beta k}{2})}{\prod_{k = 0}^{m + n - 1} \Gamma(1 + \frac{\beta k}{2})} = \frac{2}{\beta} \, \frac{\Gamma(m) \Gamma(n) \Gamma(\frac{\beta (m + n)}{2})}{\Gamma(\frac{\beta m}{2}) \Gamma(\frac{\beta n}{2}) \Gamma(m + n)} \, \frac{\prod_{k = 1}^m \Gamma(\frac{\beta k}{2}) \prod_{k = 1}^n \Gamma(\frac{\beta k}{2})}{\prod_{k = 1}^{m + n} \Gamma(\frac{\beta k}{2})}.
\end{equation*}
Now thence follows, after gathering terms,
\begin{equation*}
\begin{split}
(\KugVol{\Schatten{\infty}{}, \beta}{m \times n})^{1/(\beta m n)} &= \Bigl( \frac{2 \pi \ez^{3/2}}{\beta (m + n)} \Bigr)^{1/2} \Bigl( \frac{m}{m + n} \Bigr)^{(m + 2/\beta - 1)/(4 n)} \Bigl( \frac{n}{m + n} \Bigr)^{(n + 2/\beta - 1)/(4 m)}\\
&\quad \cdot \Bigl( \frac{m n}{m + n} \Bigr)^{(\beta^2 - 6 \beta + 4)/(24 \beta^2 m n)} \ez^{(c_\beta + \BigO(1/m + 1/n))/(m n)},
\end{split}
\end{equation*}
where \(c_\beta := \frac{A_\beta}{\beta} + \frac{\beta^2 - 18 \beta + 4}{24 \beta^2} \log\bigl( \frac{\beta}{2} \bigr)\). Note \(\frac{m + n}{m n} = \frac{1}{m} + \frac{1}{n} \leq 2\) because of \(m, n \geq 1\); furthermore by the inequality of arithmetic and geometric means \(\frac{1}{m} + \frac{1}{n} \geq 2 (m n)^{-1/2}\) and hence \((\frac{1}{n} + \frac{1}{m})^{1/(m n)} \to 1\) as \(n \to \infty\). Therefore
\begin{multline*}
\Bigl( \frac{m n}{m + n} \Bigr)^{(\beta^2 - 6 \beta + 4)/(24 \beta^2 m n)} \ez^{(c_\beta + \BigO(1/m + 1/n))/(m n)}\\
= 1 + \frac{1}{m n} \Bigl\{ -\frac{\beta^2 - 6 \beta + 4}{24 \beta^2} \log\Bigl( \frac{1}{m} + \frac{1}{n} \Bigr) + c_\beta + \BigO\Bigl( \frac{1}{m} + \frac{1}{n} \Bigr) \Bigr\} + R_\beta(m, n),
\end{multline*}
where \(R_\beta(m, n)\) is of the order of the square of the first\-/order term; from this it also follows that \(m n \, R_\beta(m, n)\) is bounded. Naming
\begin{equation*}
r_\beta(m, n) := \frac{1}{m n} \Bigl\{ -\frac{\beta^2 - 6 \beta + 4}{24 \beta^2} \log\Bigl( \frac{1}{m} + \frac{1}{n} \Bigr) + c_\beta + \BigO\Bigl( \frac{1}{m} + \frac{1}{n} \Bigr) \Bigr\} + R_\beta(m, n),
\end{equation*}
we arrive at the claimed result.

Assume now that \(c = \lim_{n \to \infty} \frac{m}{n} \in [0, 1]\) exists, then we define \(c_n := \frac{m}{n} \in (0, 1)\) and we can write \(m = c_n n\). Using this yields
\begin{equation*}
\Bigl( \frac{m}{m + n} \Bigr)^{(m + 2/\beta - 1)/n} \Bigl( \frac{n}{m + n} \Bigr)^{(n + 2/\beta - 1)/m} = \Bigl( 1 + \frac{1}{c_n} \Bigr)^{-c_n + (1 - 2/\beta)/n} (1 + c_n)^{-1/c_n + (1 - 2/\beta)/m}.
\end{equation*}
Now \(\lim_{n \to \infty} (1 + c_n)^{-1/c_n} = (1 + c)^{-1/c}\) (\(= \ez^{-1}\) if \(c = 0\)); furthermore \(\lim_{n \to \infty} (1 + c_n)^{(1 - 2/\beta)/m} = 1\), which is clear if \(c = 0\), irrespective of whether \(m\) is bounded or not, and in the case \(c > 0\) we have \(\lim_{n \to \infty} m = \infty\) and the result follows again; the limit \(\lim_{n \to \infty} (1 + \frac{1}{c_n})^{-c_n} = (1 + \frac{1}{c})^{-c}\) (\(= 1\) if \(c = 0\)) is unremarkable; and concerning the remaining term observe \(c_n n = m \geq 1\), thus \(1 \leq 1 + \frac{1}{c_n} \leq 1 + n\); hence irrespective of the value of \(\beta\), \((1 + \frac{1}{c_n})^{(1 - 2/\beta)/n}\) is sandwiched between \(1\) and \((1 + n)^{(1 - 2/\beta)/n}\), either of which converges to \(1\) as \(n \to \infty\), and therefore also
\begin{equation*}
\Bigl( 1 + \frac{1}{c_n} \Bigr)^{(1 - 2/\beta)/n} \xrightarrow[n \to \infty]{} 1. \qedhere
\end{equation*}
\end{proof}

\begin{Bem}
In the quadratic case (i.e.\ \(m = n\)) Saint Raymond has derived the following asymptotics~\cite[Corollaire~4]{Saintraymond1984}:
\begin{equation*}
(\KugVol{\Schatten{\infty}{}, \beta}{n \times n})^{1/(\beta n^2)} \sim \frac{1}{2} \sqrt{\frac{2 \pi \ez^{3/2}}{\beta n}},
\end{equation*}
where \(a_n \sim b_n\) means \(\lim_{n \to \infty} \frac{a_n}{b_n} = 1\). It is easily seen that Theorem~\ref{sa:volumen_b} reproduces his result when taking \(c = 1\).
\end{Bem}

Recall that a \emph{convex body} is a subset \(K \subset \RZ^d\) (\(d \in \NZ\)) that is compact, convex, and has nonempty interior. A convex body \(K\) is said to be \emph{isotropic} iff \(\vol_d(K) = 1\), it is centred, i.e.,
\begin{equation*}
\int_K x \, \diff x = 0,
\end{equation*}
and
\begin{equation*}
\exists L \in [0, \infty) \forall \theta \in \RZ^d \colon \int_K \langle x, \theta \rangle^2 \, \diff x = L^2 \lVert \theta \rVert_2^2.
\end{equation*}
The last condition can be expressed equivalently as
\begin{equation*}
\exists L \in [0, \infty) \forall i \in [1, d] \colon \int_K x_i^2 \, \diff x = L^2 \quad \wedge \quad \forall i, j \in [1, d] \colon \biggl( i \neq j \Longrightarrow \int_K x_i x_j \, \diff x = 0 \biggr).
\end{equation*}
For convex bodies in \(\CZ^d\) one would have to check the above conditions for the \(2 d\) real components \(\Re(x_1), \Im(x_1), \dotsc, \Re(x_d), \Im(x_d)\). We provide below an equivalent characterization using directly the complex components.

\begin{Lem}\label{lem:komplexe_isotropie}
A centred convex body \(K \subset \CZ^d\) with unit volume is isotropic iff both of the following conditions are satisfied:
\begin{equation}\label{eq:komplexe_momente1}
\exists L \in \RZ \forall i \in [1, d] \colon \int_K \lvert x_i \rvert^2 \, \diff x = L^2
\end{equation}
and
\begin{equation}\label{eq:komplexe_momente2}
\forall i, j \in [1, d], i \neq j \colon \int_K x_i^2 \, \diff x = \int_K x_i x_j \, \diff x = \int_K x_i \konj{x_j} \, \diff x = 0.
\end{equation}
\end{Lem}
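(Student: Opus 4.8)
The plan is to pass to the $2d$ real coordinates of $\CZ^d$ and translate both the real definition of isotropy (recalled just before the lemma) and the two stated conditions into statements about real second moments; once everything is expressed in those terms the equivalence is elementary linear algebra. Write $x_k = a_k + \ie b_k$ with $a_k := \Re(x_k)$, $b_k := \Im(x_k)$, so that $K$ becomes a convex body in $\RZ^{2d}$ with real coordinates $a_1, b_1, \dotsc, a_d, b_d$. Since $K$ is already assumed centred and of unit volume, by the characterization recalled above it is isotropic if and only if there is some $L_\RZ \geq 0$ with $\int_K a_k^2 \, \diff x = \int_K b_k^2 \, \diff x = L_\RZ^2$ for every $k$, and every mixed second moment among two distinct coordinates from the list $a_1, b_1, \dotsc, a_d, b_d$ vanishes; that is, $\int_K a_k b_k \, \diff x = 0$ for all $k$, and $\int_K a_k a_l \, \diff x = \int_K b_k b_l \, \diff x = \int_K a_k b_l \, \diff x = 0$ whenever $k \neq l$.

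Next I would record the algebraic identities
\begin{align*}
\lvert x_k \rvert^2 &= a_k^2 + b_k^2, & x_k^2 &= (a_k^2 - b_k^2) + 2\ie\, a_k b_k,\\
x_k x_l &= (a_k a_l - b_k b_l) + \ie(a_k b_l + a_l b_k), & x_k \konj{x_l} &= (a_k a_l + b_k b_l) + \ie(a_l b_k - a_k b_l),
\end{align*}
integrate them over $K$, and split into real and imaginary parts. For the direction ``isotropic $\Rightarrow$ \eqref{eq:komplexe_momente1}, \eqref{eq:komplexe_momente2}'' one then reads off immediately that $\int_K \lvert x_k \rvert^2 \, \diff x = 2 L_\RZ^2$ is independent of $k$, so \eqref{eq:komplexe_momente1} holds with $L^2 = 2 L_\RZ^2$; and that $\int_K x_k^2 \, \diff x = (L_\RZ^2 - L_\RZ^2) + 2\ie \cdot 0 = 0$, while $\int_K x_k x_l \, \diff x = \int_K x_k \konj{x_l} \, \diff x = 0$ for $k \neq l$ because every real second moment appearing on the right vanishes.

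For the converse I would invert these relations. From $\int_K x_k^2 \, \diff x = 0$ the real part gives $\int_K a_k^2 \, \diff x = \int_K b_k^2 \, \diff x$ and the imaginary part gives $\int_K a_k b_k \, \diff x = 0$; combining the first equality with \eqref{eq:komplexe_momente1} forces $\int_K a_k^2 \, \diff x = \int_K b_k^2 \, \diff x = L^2/2$ for every $k$. For $k \neq l$, adding and subtracting the real parts of $\int_K x_k x_l \, \diff x = 0$ and $\int_K x_k \konj{x_l} \, \diff x = 0$ yields $\int_K a_k a_l \, \diff x = \int_K b_k b_l \, \diff x = 0$, while their imaginary parts give $\int_K a_k b_l \, \diff x + \int_K a_l b_k \, \diff x = 0$ together with $\int_K a_l b_k \, \diff x - \int_K a_k b_l \, \diff x = 0$, hence $\int_K a_k b_l \, \diff x = \int_K a_l b_k \, \diff x = 0$. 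This accounts for all $2d$ diagonal and all $\binom{2d}{2}$ off-diagonal real second moments, so $K$ is isotropic with $L_\RZ = L/\sqrt{2}$.

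There is no genuine obstacle here; the proof is a bookkeeping exercise. The only points that need a little care are: checking that every one of the $2d$ real coordinates and every unordered pair of them is indeed controlled by the complex conditions (in particular that the combination of $\int_K x_k x_l \, \diff x = 0$ and $\int_K x_k \konj{x_l} \, \diff x = 0$ is exactly what kills all four mixed moments $\int_K a_k a_l$, $\int_K b_k b_l$, $\int_K a_k b_l$, $\int_K a_l b_k$), and keeping track of the harmless factor in the relation $L^2 = 2 L_\RZ^2$ between the complex and real isotropy constants. One should also note that the equation $\int_K x_i^2 \, \diff x = 0$ in \eqref{eq:komplexe_momente2}, written under the hypothesis $i \neq j$, is to be read as holding for all $i$ (which is what the displayed derivation uses); for $d \geq 2$ this is automatic, and for $d = 1$ it is the natural reading.
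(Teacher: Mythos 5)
Your proof is correct and takes essentially the same approach as the paper: both reduce the lemma to an algebraic correspondence between the $2d$ real second moments and the complex quantities $\int_K |x_i|^2$, $\int_K x_i^2$, $\int_K x_i x_j$, $\int_K x_i \konj{x_j}$. The only cosmetic difference is the direction of the substitution --- you expand $x_k = a_k + \ie b_k$ and read off real and imaginary parts, whereas the paper writes $\Re(x) = \tfrac{x+\konj{x}}{2}$, $\Im(x) = \tfrac{x-\konj{x}}{2\ie}$ and expresses each real mixed moment as a linear combination of the complex ones; these are the same computation run in opposite directions, and both give the same system of linear relations (with the same factor $L^2 = 2L_\RZ^2$). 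Your closing remark about the awkward quantifier scoping of the $\int_K x_i^2 \, \diff x = 0$ condition is a fair observation about the statement, and your handling of it (the condition is meant for all $i$) matches what the argument actually needs.
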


\begin{proof}
Let \(i, j \in [1, d]\) with \(i \neq j\), then using \(\Re(x) = \frac{x + \konj{x}}{2}\) and \(\Im(x) = \frac{x - \konj{x}}{2 \ie}\) we can rewrite,
\begin{align}
\int_K \Re(x_i)^2 \, \diff x &= \int_K \frac{1}{4} (x_i^2 + 2 x_i \konj{x_i} + \konj{x_i^2}) \, \diff x = \int_K \frac{\Re(x_i^2)}{2} \, \diff x + \int_K \frac{\lvert x_i \rvert^2}{2} \, \diff x, \label{eq:gem_moment1}\\
\intertext{and similarly,}
\int_K \Re(x_i) \Re(x_j) \, \diff x &= \int_K \frac{\Re(x_i x_j)}{2} \, \diff x + \int_K \frac{\Re(x_i \konj{x_j})}{2} \, \diff x, \label{eq:gem_moment2}\\
\int_K \Im(x_i)^2 \, \diff x &= \int_K \frac{1}{-4} (x_i^2 - 2 x_i \konj{x_i} + \konj{x_i^2}) \, \diff x = -\int_K \frac{\Re(x_i^2)}{2} \, \diff x + \int_K \frac{\lvert x_i \rvert^2}{2} \, \diff x, \label{eq:gem_moment3}\\
\int_K \Im(x_i) \Im(x_j) \, \diff x &= -\int_K \frac{\Re(x_i x_j)}{2} \, \diff x + \int_K \frac{\Re(x_i \konj{x_j})}{2} \, \diff x, \label{eq:gem_moment4}\\
\intertext{and without the restriction \(i \neq j\),}
\int_K \Re(x_i) \Im(x_j) \, \diff x &= \int_K \frac{\Im(x_i x_j)}{2} \, \diff x - \int_K \frac{\Im(x_i \konj{x_j})}{2} \, \diff x. \label{eq:gem_moment5}
\end{align}

\(\Rightarrow\): Let \(K\) be isotropic, then we know from \eqref{eq:gem_moment1} and \eqref{eq:gem_moment3}, for some \(L \in \RZ\),
\begin{equation*}
\int_K \frac{\Re(x_i^2)}{2} \, \diff x + \int_K \frac{\lvert x_i \rvert^2}{2} \, \diff x = L^2 = -\int_K \frac{\Re(x_i^2)}{2} \, \diff x + \int_K \frac{\lvert x_i \rvert^2}{2} \, \diff x;
\end{equation*}
substract the two equations to get
\begin{equation*}
\int_K \Re(x_i^2) \, \diff x = 0,
\end{equation*}
and add them to get
\begin{equation*}
\int_K \lvert x_i \rvert^2 \, \diff x = 2 L^2.
\end{equation*}
In like manner we obtain from \eqref{eq:gem_moment2} and \eqref{eq:gem_moment4} that
\begin{equation*}
\int_K \frac{\Re(x_i x_j)}{2} \, \diff x + \int_K \frac{\Re(x_i \konj{x_j})}{2} \, \diff x = 0 = -\int_K \frac{\Re(x_i x_j)}{2} \, \diff x + \int_K \frac{\Re(x_i \konj{x_j})}{2} \, \diff x,
\end{equation*}
hence
\begin{equation*}
\int_K \Re(x_i x_j) \, \diff x = 0 = \int_K \Re(x_i \konj{x_j}) \, \diff x.
\end{equation*}
Swap \(i\) and \(j\) in Equation~\eqref{eq:gem_moment5}, then \(\Im(x_j \konj{x_i}) = \Im(\konj{x_i \konj{x_j}}) = -\Im(x_i \konj{x_j})\), and therefore
\begin{equation*}
\int_K \frac{\Im(x_i x_j)}{2} \, \diff x - \int_K \frac{\Im(x_i \konj{x_j})}{2} \, \diff x = 0 = \int_K \frac{\Im(x_i x_j)}{2} \, \diff x + \int_K \frac{\Im(x_i \konj{x_j})}{2} \, \diff x,
\end{equation*}
and again from both subtracting and adding there follow
\begin{equation*}
\int_K \Im(x_i x_j) \, \diff x = 0 = \int_K \Im(x_i \konj{x_j}) \, \diff x.
\end{equation*}
Putting all things together yields the claim.

\(\Leftarrow\): Separate the real and imaginary parts in Condition~\eqref{eq:komplexe_momente2} (all are zero), and together with \eqref{eq:komplexe_momente1} substitute them in Equations~\eqref{eq:gem_moment1}\--\eqref{eq:gem_moment5} to get the desired isotropy.
\end{proof}

\begin{Bem}
In relatively recent years interest in the study of complex convex bodies has grown, in the sense that the ambient space \(\CZ^d\) is considered over \(\CZ\), not over \(\RZ\); examples are the works \cite{KolZy2003, KoKoeZy2008, Abardia2012, HuangHeWang2012, KoPaZy2013, HuangHe2015, EllHof2023}. This also leads to a different notion of isotropy; more specifically, a compact convex set \(K \subset \CZ^d\) is called \emph{(complex) isotropic} iff it has unit volume, it is centred, and it possesses a constant \(L \in [0, \infty)\) such that,
\begin{equation*}
\forall \theta \in \CZ^d \colon \int_K \lvert \adj{x} \theta \rvert^2 \, \diff x = L^2 \lVert \theta \rVert_2^2;
\end{equation*}
contrast this to \emph{real isotropy} where the last condition reads,
\begin{equation*}
\forall \theta \in \CZ^d \colon \int_K \Re(\adj{x} \theta)^2 \, \diff x = L^2 \lVert \theta \rVert_2^2.
\end{equation*}
These two notions are not equivalent, the class of complex isotropic bodies is strictly larger; see \cite[Theorem~2.1]{DucHuangHe2014}.

The authors stress that the present article considers \emph{real} isotropy only.
\end{Bem}

\begin{Sa}\label{sa:b_isotropisch}
The volume\-/normalized ball, \((\KugVol{\Schatten{p}{}, \beta}{m \times n})^{-1/(\beta m n)} \, \Kug{\Schatten{p}{}, \beta}{m \times n}\), is isotropic.
\end{Sa}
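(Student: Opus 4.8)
The plan is to push everything through the two-sided unitary invariance of Schatten balls. Abbreviate \(B := \Kug{\Schatten{p}{}, \beta}{m \times n}\) and \(c := (\KugVol{\Schatten{p}{}, \beta}{m \times n})^{-1/(\beta m n)}\), so that \(K := c B\) has \(\vol_{\beta m n}(K) = 1\). The substitution \(x = c y\) gives \(\int_K x \, \diff x = c^{\beta m n + 1} \int_B y \, \diff y\) and, for any two \(\RZ\)\-/linear functionals \(\phi, \psi\) on \(\KZ^{m \times n}\), \(\int_K \phi(x) \psi(x) \, \diff x = c^{\beta m n + 2} \int_B \phi(y) \psi(y) \, \diff y\); since \(c > 0\), it therefore suffices to prove centredness and the second\-/moment identities with \(B\) in place of \(K\). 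The decisive point is that for every \(v \in \Orth_{m; \beta}\) and \(u \in \Orth_{n; \beta}\) the linear map \(T_{v, u} \colon x \mapsto v x \adj{u}\) is an isometry of \((\KZ^{m \times n}, \langle \cdot, \cdot \rangle)\), because \(\langle v x \adj{u}, v y \adj{u} \rangle = \Re \Spur(v x \adj{u} u \adj{y} \adj{v}) = \Re \Spur(x \adj{y}) = \langle x, y \rangle\) by \(\adj{u} u = I_n\), \(\adj{v} v = I_m\), and cyclicity; hence \(T_{v, u}\) is volume\-/preserving on \(\KZ^{m \times n} \cong \RZ^{\beta m n}\), and it maps \(B\) onto \(B\) since \(s(v x \adj{u}) = s(x)\). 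Consequently \(\int_B P(x) \, \diff x = \int_B P(v x \adj{u}) \, \diff x\) for every nonnegative (or integrable) \(P\). In particular, taking \(v = -I_m\), \(u = I_n\) gives \(\int_B x \, \diff x = -\int_B x \, \diff x\), so \(B\), and hence \(K\), is centred.

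For the second moments in the \emph{real} case \(\KZ = \RZ\) one must check that \(\int_B x_{i, j}^2 \, \diff x\) is independent of \((i, j)\) and that \(\int_B x_{i, j} x_{k, l} \, \diff x = 0\) whenever \((i, j) \neq (k, l)\). For the first, every permutation matrix lies in \(\Orth_{m; \beta}\), resp.\ \(\Orth_{n; \beta}\); with \(v = P_\sigma\), \(u = P_\tau\) for \(\sigma \in \Perm{m}\), \(\tau \in \Perm{n}\), the invariance above yields \(\int_B x_{i, j}^2 \, \diff x = \int_B x_{\sigma(i), \tau(j)}^2 \, \diff x\), and since \(\{1, \dots, m\} \times \{1, \dots, n\}\) is a single orbit of \(\Perm{m} \times \Perm{n}\), all these integrals agree. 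For the second, if \(i \neq k\) choose for \(v\) the diagonal matrix with entry \(-1\) in position \((i, i)\) and \(1\) elsewhere and \(u = I_n\); then \(T_{v, u}\) flips the sign of \(x_{i, j}\) but fixes \(x_{k, l}\), so \(\int_B x_{i, j} x_{k, l} \, \diff x = -\int_B x_{i, j} x_{k, l} \, \diff x = 0\); if instead \(i = k\) and \(j \neq l\), use the analogous sign change on the \(l\)\-/th column through \(u \in \Orth_{n; \beta}\). This is precisely (real) isotropy of \(B\), and hence of \(K\).

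In the \emph{complex} case \(\KZ = \CZ\) we verify the two conditions of Lemma~\ref{lem:komplexe_isotropie} with \(d = m n\) and the entries \(x_{i, j}\) as coordinates. Condition~\eqref{eq:komplexe_momente1} follows from permutation invariance exactly as above. For Condition~\eqref{eq:komplexe_momente2}: the scalar matrix \(\ez^{\ie \phi} I_m\) lies in \(\Orth_{m; 2}\) for every \(\phi \in \RZ\), so \(x \mapsto \ez^{\ie \phi} x\) preserves \(B\), whence \(\int_B x_{i, j} x_{k, l} \, \diff x = \ez^{2 \ie \phi} \int_B x_{i, j} x_{k, l} \, \diff x\) for all \(\phi\); picking \(\phi\) with \(\ez^{2 \ie \phi} \neq 1\) gives \(\int_B x_{i, j} x_{k, l} \, \diff x = 0\) for \emph{all} \((i, j), (k, l)\) (in particular \(\int_B x_{i, j}^2 \, \diff x = 0\)). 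Finally, for \((i, j) \neq (k, l)\), the real sign\-/change matrix used in the real case (which is also (semi)unitary) yields \(\int_B x_{i, j} \konj{x_{k, l}} \, \diff x = -\int_B x_{i, j} \konj{x_{k, l}} \, \diff x = 0\). By Lemma~\ref{lem:komplexe_isotropie}, \(B\), and therefore \(K\), is isotropic.

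There is no genuine obstacle here: the only care needed is in the bookkeeping of how a left or right unitary factor permutes or reflects the matrix entries, and, in the complex case, in invoking exactly the mixed moments appearing in Lemma~\ref{lem:komplexe_isotropie}. Conceptually the reason the argument is so short is that the inertia bilinear form of \(B\) is invariant under the two\-/sided action of \(\Orth_{m; \beta} \times \Orth_{n; \beta}\), and that this group already acts transitively enough on coordinates — and, for \(\KZ = \CZ\), additionally forces complex\-/linearity of the associated operator via the scalar rotations \(\ez^{\ie\phi} I_m\) — to pin the form down to a scalar multiple of the Hilbert\--Schmidt inner product.
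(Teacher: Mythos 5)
Your proof is correct and follows essentially the same strategy as the paper's: reduce to the unscaled ball, exploit two‐sided unitary invariance of $\Kug{\Schatten{p}{}, \beta}{m \times n}$ under $\Orth_{m;\beta} \times \Orth_{n;\beta}$, and use permutations and sign changes for the real second moments, together with Lemma~\ref{lem:komplexe_isotropie} in the complex case. The only difference is cosmetic: where you use the global phase rotation $x \mapsto \ez^{\ie\phi} x$ to annihilate $\int_B x_{i,j} x_{k,l}\,\diff x$ for all index pairs simultaneously, the paper first kills the off-diagonal products (holomorphic and conjugated alike) by a single-row or single-column sign flip, then handles $\int_B x_{1,1}^2\,\diff x$ separately by multiplying the first row by $\ie$; both are equally valid instances of the same invariance argument.
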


\begin{proof}
Since \((\KugVol{\Schatten{p}{}, \beta}{m \times n})^{-1/(\beta m n)} \, \Kug{\Schatten{p}{}, \beta}{m \times n}\) differs from \(\Kug{\Schatten{p}{}, \beta}{m \times n}\) only by a dilation, it suffices to consider the latter. Being a norm unit ball, \(\Kug{\Schatten{p}{}, \beta}{m \times n}\) is centred; thence it remains to check the second moments, and we will exploit the unitary invariance of \(\Kug{\Schatten{p}{}, \beta}{m \times n}\) throughout (inherited from the singular values). Let \((i, j) \in m \times n\), then let \(v \in \Orth_{m; \beta}\) swap the first and \(i\)\textsuperscript{th} row and \(u \in \Orth_{n; \beta}\) swap the first and \(j\)\textsuperscript{th} column, effecting
\begin{equation*}
\int_{\Kug{\Schatten{p}{}, \beta}{m \times n}} x_{i, j}^2 \, \diff x = \int_{v \Kug{\Schatten{p}{}, \beta}{m \times n} u} x_{1, 1}^2 \, \diff x = \int_{\Kug{\Schatten{p}{}, \beta}{m \times n}} x_{1, 1}^2 \, \diff x,
\end{equation*}
so the diagonal moments are equal to a common value.

Now let \((i, j), (i', j') \in m \times n\) with \((i, j) \neq (i', j')\). If \(i \neq i'\), define \(v \in \Orth_{m; \beta}\) to multiply the \(i'\)\textsuperscript{th} row by \(-1\); therewith
\begin{equation*}
\int_{\Kug{\Schatten{p}{}, \beta}{m \times n}} x_{i, j} x_{i', j'} \, \diff x = \int_{v \Kug{\Schatten{p}{}, \beta}{m \times n}} x_{i, j} (-x_{i', j'}) \, \diff x = -\int_{\Kug{\Schatten{p}{}, \beta}{m \times n}} x_{i, j} x_{i', j'} \, \diff x,
\end{equation*}
which results in the integral being zero. If \(i = i'\), then \(j \neq j'\) by assumption; and then multiply the \(j'\)\textsuperscript{th} column by \(-1\). Eventually all off\-/diagonal moments are zero, and in the case \(\beta = 1\) this finishes the proof.

In the case \(\beta = 2\) we apply Lemma~\ref{lem:komplexe_isotropie}; performing the same steps as before with the integral \(\int_{\Kug{\Schatten{p}{}, 2}{m \times n}} x_{i, j} \konj{x_{i', j'}} \, \diff x\) we see, for \((i, j) = (i', j')\),
\begin{equation*}
\int_{\Kug{\Schatten{p}{}, 2}{m \times n}} \lvert x_{i, j} \rvert^2 \, \diff x = \int_{\Kug{\Schatten{p}{}, 2}{m \times n}} \lvert x_{1, 1} \rvert^2 \, \diff x,
\end{equation*}
and for \((i, j) \neq (i', j')\),
\begin{equation*}
\int_{\Kug{\Schatten{p}{}, 2}{m \times n}} x_{i, j} \konj{x_{i', j'}} \, \diff x = 0.
\end{equation*}
It remains to show \(\int_{\Kug{\Schatten{p}{}, 2}{m \times n}} x_{1, 1}^2 \, \diff x = 0\), but in order to do so let \(v \in \Orth_{m; 2}\) multiply the first row by \(\ie\), therewith
\begin{equation*}
\int_{\Kug{\Schatten{p}{}, 2}{m \times n}} x_{1, 1}^2 \, \diff x = \int_{\adj{v} \Kug{\Schatten{p}{}, 2}{m \times n}} (\ie x_{1, 1})^2 \, \diff x = -\int_{\Kug{\Schatten{p}{}, 2}{m \times n}} x_{1, 1}^2 \, \diff x,
\end{equation*}
which leads to the desired statement.
\end{proof}

For a convex body \(K \subset \RZ^d\) which is isotropic up to dilation, the isotropy constant \(L_K \in (0, \infty)\) is defined via
\begin{align*}
L_K^2 &= \frac{1}{d \vol_d(K)^{1 + 2/d}} \int_K \lVert x \rVert_2^2 \, \diff x\\
&= \frac{1}{d \vol_d(K)^{2/d}} \, \frac{\int_K \lVert x \rVert_2^2 \, \diff x}{\int_K \diff x}.
\end{align*}

\begin{Thm}\label{sa:lb_beschraenkt}
The isotropy constant \(L_{\Kug{\Schatten{\infty}{}, \beta}{m \times n}}\) remains bounded in \(m\) and \(n\); to be precise,
\begin{equation*}
L_{\Kug{\Schatten{\infty}{}, \beta}{m \times n}}^2 = \frac{1}{2 \pi \ez^{3/2}} \, c_\beta(m, n),
\end{equation*}
where we know
\begin{equation*}
\ez^{1/2} \Bigl( 1 + \frac{C_1}{m n} \Bigr) \leq c_\beta(m, n) \leq \ez^2 \Bigl( 1 + C_2 \Bigl( \frac{1}{m} + \frac{1}{n} \Bigr) \Bigr)
\end{equation*}
for all \(m, n \in \NZ\), with global constants \(C_1, C_2 \in \RZ\).

In particular, consider \(m\) as depending on \(n\), then if \(c := \lim_{n \to \infty} \frac{m}{n} \in [0, 1]\) exists, then
\begin{equation*}
\lim_{n \to \infty} L_{\Kug{\Schatten{\infty}{}, \beta}{m \times n}}^2 = \frac{1}{2 \pi \ez^{3/2}} (1 + c)^{1/(2 c)} \Bigl( 1 + \frac{1}{c} \Bigr)^{c/2},
\end{equation*}
with the same interpretations for the case \(c = 0\) as in Theorem~\ref{sa:volumen_b}.
\end{Thm}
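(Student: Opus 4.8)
The plan is to derive a closed form for $L_{\Kug{\Schatten{\infty}{}, \beta}{m \times n}}^2$ and then read off the bounds and the limit from it together with Theorem~\ref{sa:volumen_b}. Write $K := \Kug{\Schatten{\infty}{}, \beta}{m \times n}$ and $d := \beta m n$, and recall $\lVert x \rVert_2 = \lVert x \rVert_{\Schatten{2}{}} = \lVert s(x) \rVert_2$, so that $\lVert x \rVert_2^2 = \sum_{i = 1}^m s_i(x)^2$ depends only on the singular values. Applying the ``$f = F \circ s$'' form of Proposition~\ref{sa:integraltrafo}, part~2, to $F = \Ind_{[0,1]^m}$ and to $F(s) = \bigl( \sum_{i = 1}^m s_i^2 \bigr) \Ind_{[0,1]^m}(s)$, the dimension\-/dependent prefactor cancels in the quotient; after the substitution $s_i \mapsto s_i^{1/2}$, and using the symmetry of the integrand to replace $\sum_i t_i$ by $m t_1$, one is left with
\begin{equation*}
\frac{\int_K \lVert x \rVert_2^2 \, \diff x}{\int_K \diff x} = m \cdot \frac{\int_{[0,1]^m} t_1 \prod_{i = 1}^m t_i^{\alpha - 1} \prod_{1 \leq i < j \leq m} \lvert t_i - t_j \rvert^\beta \, \diff t}{\int_{[0,1]^m} \prod_{i = 1}^m t_i^{\alpha - 1} \prod_{1 \leq i < j \leq m} \lvert t_i - t_j \rvert^\beta \, \diff t}, \qquad \alpha := \frac{\beta(n - m + 1)}{2}.
\end{equation*}

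The key step is to evaluate this quotient, which is $m$ times the first moment of one coordinate under a Jacobi\-/type Selberg density (with the Selberg exponents being $\alpha$ as above, middle parameter $1$, and $\gamma = \beta/2$). By Aomoto's extension of the Selberg integral — deducible from Selberg's formula as recalled in Section~\ref{sec:auxiliar} by an integration\-/by\-/parts/contiguity argument — this moment equals $\frac{\alpha + (m - 1)\beta/2}{\alpha + 1 + (m - 1)\beta}$, which with $\alpha = \beta(n - m + 1)/2$ collapses to $\frac{\beta n}{\beta(m + n - 1) + 2}$; hence $\int_K \lVert x \rVert_2^2 \, \diff x / \int_K \diff x = \frac{\beta m n}{\beta(m + n - 1) + 2}$. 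Inserting this into the definition of $L_K$, the factor $d = \beta m n$ cancels, giving the exact identity
\begin{equation*}
L_{\Kug{\Schatten{\infty}{}, \beta}{m \times n}}^2 = \frac{1}{(\KugVol{\Schatten{\infty}{}, \beta}{m \times n})^{2/(\beta m n)} \, (\beta(m + n - 1) + 2)}.
\end{equation*}

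Next I would substitute the asymptotics of $(\KugVol{\Schatten{\infty}{}, \beta}{m \times n})^{1/(\beta m n)}$ from Theorem~\ref{sa:volumen_b}: its leading factor $\bigl( \frac{2 \pi \ez^{3/2}}{\beta(m + n)} \bigr)^{1/2}$ squares to $\frac{2 \pi \ez^{3/2}}{\beta(m + n)}$, which, up to the harmless correction $\beta(m + n - 1) + 2 = \beta(m + n)\bigl( 1 - \frac{\beta - 2}{\beta(m + n)} \bigr)$, cancels the remaining denominator, leaving $L_K^2 = \frac{1}{2 \pi \ez^{3/2}} c_\beta(m, n)$ with
\begin{equation*}
c_\beta(m, n) = \frac{\beta(m + n)}{\beta(m + n - 1) + 2} \Bigl( \frac{m + n}{m} \Bigr)^{(m + 2/\beta - 1)/(2 n)} \Bigl( \frac{m + n}{n} \Bigr)^{(n + 2/\beta - 1)/(2 m)} (1 + r_\beta(m, n))^{-2}.
\end{equation*}
For the lower bound, $\log(1 + x) \geq \frac{x}{1 + x}$ gives that the product of the two power factors is at least $\ez^{1/2 + (2/\beta - 1)/(m + n)}$; for $\beta = 2$ the prefactor $\frac{\beta(m + n)}{\beta(m + n - 1) + 2}$ equals $1$, while for $\beta = 1$ it equals $\frac{m + n}{m + n + 1}$ and is compensated by the extra $\ez^{1/(m + n)} \geq 1 + \frac{1}{m + n}$, so that together with the estimate $r_\beta(m, n) \leq C/(m n)$ supplied by Theorem~\ref{sa:volumen_b} (the logarithm in that error term only pushes $r_\beta$ downward, which is favourable here) one obtains $c_\beta(m, n) \geq \ez^{1/2}(1 + C_1/(m n))$. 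For the upper bound, $\log(1 + x) \leq x$ gives that the product of power factors is at most $\ez^{1 + 1/(2 m) + 1/(2 n)}$, and with $\frac{\beta(m + n)}{\beta(m + n - 1) + 2} \leq 1$ and $\lvert r_\beta(m, n) \rvert = \BigO(1/m + 1/n)$ one gets $c_\beta(m, n) \leq \ez^2(1 + C_2(1/m + 1/n))$. The limit statement follows by exactly the elementary computations at the end of the proof of Theorem~\ref{sa:volumen_b}: with $c_n := m/n$ one has $\frac{\beta(m + n)}{\beta(m + n - 1) + 2} \to 1$ and $r_\beta(m, n) \to 0$, while the two power factors tend to $(1 + \frac{1}{c})^{c/2}$ and $(1 + c)^{1/(2 c)}$, with the stated conventions when $c = 0$.

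The step I expect to be the main obstacle is the exact evaluation of the first\-/moment Selberg integral, i.e.\ establishing (or carefully citing) Aomoto's formula and checking that its right\-/hand side collapses to $\frac{\beta n}{\beta(m + n - 1) + 2}$; the remainder is bookkeeping with Theorem~\ref{sa:volumen_b} and elementary inequalities.
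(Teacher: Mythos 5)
Your proposal is correct and follows essentially the same route as the paper: apply Proposition~\ref{sa:integraltrafo}, part~2, so that both integrals reduce to Selberg‑type integrals over $[0,1]^m$ after the substitution $s_i \mapsto s_i^{1/2}$; symmetrize to reduce the numerator to a single‑coordinate first moment; evaluate that moment by Aomoto's extension of Selberg's formula (which the paper simply cites as \cite{Aomoto1987}); and finally combine the exact moment $\frac{mn}{m + n + 2/\beta - 1}$ with the volume asymptotics of Theorem~\ref{sa:volumen_b} and elementary logarithm inequalities. Your exact closed form $L_K^2 = \bigl( (\KugVol{\Schatten{\infty}{}, \beta}{m\times n})^{2/(\beta mn)}(\beta(m+n-1)+2) \bigr)^{-1}$ is just a tidier packaging of the paper's expression, and your handling of the prefactor $\frac{\beta(m+n)}{\beta(m+n-1)+2} \leq 1$ (for $\beta \in \{1,2\}$) actually yields a marginally sharper upper bound than the paper's cruder $\leq 2$ before exponentiating; but since the theorem only asserts $c_\beta \leq \ez^2(1 + C_2(1/m + 1/n))$ this is merely cosmetic.
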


\begin{proof}
Note that in our case \(\langle x, x \rangle = \lVert s(x) \rVert_2^2\) and \(\Ind_{\Kug{\Schatten{\infty}{}, \beta}{m \times n}} = \Ind_{\Kug{\infty, 1}{m}} \circ s\), therefore
\begin{equation*}
\frac{\int_{\Kug{\Schatten{\infty}{}, \beta}{m \times n}} \langle x, x \rangle \, \diff x}{\int_{\Kug{\Schatten{\infty}{}, \beta}{m \times n}} \diff x} = \frac{\int_{\RZ_{> 0}^m} \Ind_{\Kug{\infty, 1}{m}}(s) \lVert s \rVert_2^2 \prod_{i = 1}^m s_i^{\beta (n - m + 1) - 1} \prod_{i < j} \lvert s_i^2 - s_j^2 \rvert^\beta \, \diff s}{\int_{\RZ_{> 0}^m} \Ind_{\Kug{\infty, 1}{m}}(s) \prod_{i = 1}^m s_i^{\beta (n - m + 1) - 1} \prod_{i < j} \lvert s_i^2 - s_j^2 \rvert^\beta \, \diff s};
\end{equation*}
but via the transformation \(s_i \mapsto s_i^{1/2}\) this quotient equals
\begin{equation*}
\frac{\int_{[0, 1]^m} \lVert s \rVert_1 \prod_{i = 1}^m s_i^{\beta (n - m + 1)/2 - 1} \prod_{i < j} \lvert s_i - s_j \rvert^\beta \, \diff s}{\int_{[0, 1]^m} \prod_{i = 1}^m s_i^{\beta (n - m + 1)/2 - 1} \prod_{i < j} \lvert s_i - s_j \rvert^\beta \, \diff s},
\end{equation*}
and after observing that the integrand of the enumerator is symmetric, we can apply a theorem of Aomoto's~\cite{Aomoto1987} and obtain the exact value
\begin{equation*}
\frac{\int_{\Kug{\Schatten{\infty}{}, \beta}{m \times n}} \langle x, x \rangle \, \diff x}{\int_{\Kug{\Schatten{\infty}{}, \beta}{m \times n}} \diff x} = \frac{m n}{m + n + \frac{2}{\beta} - 1}.
\end{equation*}
Using this together with the asymptotic volume from Theorem~\ref{sa:volumen_b}, we have
\begin{equation*}
L_{\Kug{\Schatten{\infty}{}, \beta}{m \times n}}^2 = \frac{m + n}{2 \ez^{3/2} \, \pi (m + n + \frac{2}{\beta} - 1)} \Bigl( \frac{m + n}{m} \Bigr)^{(m + 2/\beta - 1)/(2 n)} \Bigl( \frac{m + n}{n} \Bigr)^{(n + 2/\beta - 1)/(2 m)} \bigl( 1 + r_\beta(m, n) \bigr)^{-2},
\end{equation*}
with \(r_\beta(m, n)\) as in Theorem~\ref{sa:volumen_b}. In order to find suitable bounds, we consider
\begin{equation*}
\begin{split}
\log(c_\beta(m, n)) &:= \log\Bigl( \frac{m + n}{m + n + \frac{2}{\beta} - 1} \Bigr) + \frac{m + \frac{2}{\beta} - 1}{2 n} \log\Bigl( \frac{m + n}{m} \Bigr)\\
&\quad + \frac{n + \frac{2}{\beta} - 1}{2 m} \log\Bigl( \frac{m + n}{n} \Bigr) - 2 \log\bigl(1 + r_\beta(m, n)\bigr).
\end{split}
\end{equation*}
Recall the general estimates for the logarithm: for any \(x > 0\) we have
\begin{equation*}
1 - \frac{1}{x} \leq \log(x) \leq x - 1.
\end{equation*}
On the one hand, together with \(\frac{c_1}{m n} \log(\frac{1}{m} + \frac{1}{n}) \leq \log(1 + r_\beta(m, n)) \leq \frac{c_2}{m n}\) with suitable global constants \(c_1, c_2 \in \RZ\), this yields the lower bound
\begin{align*}
\log(c_\beta(m, n)) &\geq 1 - \frac{m + n + \frac{2}{\beta} - 1}{m + n} + \frac{m + \frac{2}{\beta} - 1}{2 n} \Bigl( 1 - \frac{m}{m + n} \Bigr) + \frac{n + \frac{2}{\beta} - 1}{2 m} \Bigl( 1 - \frac{n}{m + n} \Bigr) - \frac{2 c_2}{m n} \\
&= \frac{1}{2} - \frac{2 c_2}{m n}.
\end{align*}
On the other hand for an upper bound we get
\begin{align*}
\log(c_\beta(m, n)) &\leq \frac{m + n}{m + n + \frac{2}{\beta} - 1} - 1 + \frac{m + \frac{2}{\beta} - 1}{2 n} \Bigl( \frac{m + n}{m} - 1 \Bigr)\\
&\quad + \frac{n + \frac{2}{\beta} - 1}{2 m} \Bigl( \frac{m + n}{n} - 1 \Bigr) - \frac{2 c_1}{m n} \log\Bigl( \frac{1}{m} + \frac{1}{n} \Bigr)\\
&= \frac{m + n}{m + n + \frac{2}{\beta} - 1} + \frac{2 - \beta}{2 \beta m} + \frac{2 - \beta}{2 \beta n} - \frac{2 c_1}{m n} \log\Bigl( \frac{1}{m} + \frac{1}{n} \Bigr);
\end{align*}
obviously \(\frac{m + n}{m + n + \frac{2}{\beta} - 1} \leq \frac{m + n}{m + n - 1} \leq 2\), and it remains to show that \(\frac{1}{m} + \frac{1}{n}\) dominates \(\frac{1}{m n} \log(\frac{1}{m} + \frac{1}{n})\) when at least \(n\) grows indefinitely. But we have
\begin{equation*}
\frac{\frac{1}{m n} \log(\frac{1}{m} + \frac{1}{n})}{\frac{1}{m} + \frac{1}{n}} = \frac{\log(m + n)}{m + n} - \frac{\log(m n)}{m + n};
\end{equation*}
clearly the first summand is bounded, and via the inequality of arithmetic and geometric means we know \(0 \leq \log(m n) \leq 2 \log(m + n) - 2 \log(2)\) and therefore also the second summand is bounded.

The value of \(\lim_{n \to \infty} L_{\Kug{\Schatten{\infty}{}, \beta}{m \times n}}^2\) in the case \(\lim_{n \to \infty} \frac{m}{n} = c\) is a direct consequence of the corresponding values of \(\lim_{n \to \infty} (\beta n)^{1/2} (\KugVol{\Schatten{\infty}{}, \beta}{m \times n})^{1/(\beta m n)}\) given in Theorem~\ref{sa:volumen_b}.
\end{proof}

\begin{Bem}
Although K\"onig, Meyer, Pajor~\cite{KMP1998} claim that their theorem includes the case \(p = \infty\) (that is, the spectral norm), their method of proof actually does not cover that, because the integral in question contains the exponential \(\ez^{-\lVert x \rVert_p^p}\) which in the limit case equals just \(\Ind_{\Kug{\infty, 1}{d}}(x)\), being an exponential no longer and rendering their integral transfrom moot.
\end{Bem}

Define a \emph{star body} to be a compact set \(K \subset \RZ^d\) with zero in its interior which is star\-/shaped w.r.t.\ zero, i.e., \(t K \subset K\) for all \(t \in [0, 1]\), and whose Minkowski\-/functional \(\lvert \cdot \rvert_K\) is continuous. Any convex body with zero in its interior is a star body. For such a star body \(K\) define the associated \emph{(normalized) cone measure} \(\KegM{}{} = \KegM{K}{}\) on its boundary \(\Rand{K}\) by
\begin{equation*}
\KegM{}{}(A) := \frac{\vol_d\bigl( \{t x \Colon t \in [0, 1], x \in A\} \bigr)}{\vol_d(K)}
\end{equation*}
for any measurable \(A \subset \Rand{K}\); it is the unique measure on \(\Rand{K}\) which satisfies the following polar integration formula (see~\cite[Proposition~1]{NR2003}): for any \(f \colon \RZ^d \to \RZ\) measurable and nonnegative,
\begin{equation}\label{eq:kegmasz_polarint}
\int_{\RZ^d} f(x) \, \diff x = d \vol_d(K) \int_{\RZ_{\geq 0}} \int_{\Rand{K}} f(r \theta) \, \diff\KegM{}{}(\theta) \, r^{d - 1} \, \diff r.
\end{equation}
A useful identity for evaluating integrals w.r.t.\ the cone measure can be obtained by integrating \(x \mapsto f(\frac{x}{\lvert x \rvert_K})\) over \(K\), to wit,
\begin{equation}\label{eq:kegmasz_int}
\int_{\Rand{K}} f(\theta) \, \diff\KegM{}{}(\theta) = \frac{1}{\vol_d(K)} \int_K f\Bigl( \frac{x}{\lvert x \rvert_K} \Bigr) \, \diff x.
\end{equation}
The surface measure on \(\Rand{K}\) is defined to be the \((d - 1)\)\=/dimensional Hausdorff measure restricted to \(\Rand{K}\). The two measures are related according to the following statement.

\begin{Lem}\label{lem:kegel_gg_hausdorff}
Let \(\nu(x)\) be the outer unit normal vector in \(x \in \Rand K\), then for \(\Hausd^{d - 1}\)\=/almost every \(x \in \Rand K\),
\begin{equation*}
\diff\!\Hausd^{d - 1}(x) = \frac{d \vol_d(K)}{\langle \nu(x), x \rangle} \, \diff\KegM{}{}(x).
\end{equation*}
\end{Lem}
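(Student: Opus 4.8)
The plan is to parametrise \(\RZ^d \setminus \{0\}\) by polar coordinates adapted to \(K\) and to apply the area formula; the Jacobian that appears will be exactly \(r^{d-1} \langle \nu(x), x \rangle\), and a comparison with the polar integration formula~\eqref{eq:kegmasz_polarint} will then force the asserted Radon--Nikod\'ym identity.

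First I would introduce the polar map \(\Phi \colon \Rand K \times \RZ_{> 0} \to \RZ^d \setminus \{0\}\), \(\Phi(x, r) := r x\), which is a bijection (with inverse \(y \mapsto (y/\lvert y \rvert_K, \lvert y \rvert_K)\)) because \(K\) is a star body. For \(\Hausd^{d - 1}\)\=/almost every \(x \in \Rand K\) the boundary has a tangent hyperplane \(T_x \Rand K\) with unit outer normal \(\nu(x)\), and there I would compute the Jacobian of \(\Phi\): its differential maps an orthonormal basis \(e_1, \dotsc, e_{d - 1}\) of \(T_x \Rand K\) to \(r e_1, \dotsc, r e_{d - 1}\) and the radial direction to \(x\); writing \(x = \sum_{i} \langle x, e_i \rangle e_i + \langle x, \nu(x) \rangle \, \nu(x)\), the image parallelepiped has \(d\)\=/dimensional volume \(r^{d - 1} \lvert \langle x, \nu(x) \rangle \rvert\). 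Since \(0 \in \offen K\), the outer normal obeys \(\langle \nu(x), x \rangle > 0\), so \(J\Phi(x, r) = r^{d - 1} \langle \nu(x), x \rangle\). The area formula then gives, for every nonnegative measurable \(f \colon \RZ^d \to \RZ\),
\begin{equation*}
\int_{\RZ^d} f(y) \, \diff y = \int_{\RZ_{> 0}} \int_{\Rand K} f(r x) \, \langle \nu(x), x \rangle \, \diff\!\Hausd^{d - 1}(x) \, r^{d - 1} \, \diff r.
\end{equation*}

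Next I would compare this with~\eqref{eq:kegmasz_polarint}. Testing both identities against functions of the product form \(f(y) = h(\lvert y \rvert_K) \, g(y/\lvert y \rvert_K)\), each side splits off the common factor \(\int_{\RZ_{> 0}} h(r) \, r^{d - 1} \, \diff r\), and after choosing \(h\) so that this factor is finite and nonzero and cancelling it one is left with
\begin{equation*}
\int_{\Rand K} g(x) \, \langle \nu(x), x \rangle \, \diff\!\Hausd^{d - 1}(x) = d \vol_d(K) \int_{\Rand K} g(\theta) \, \diff\KegM{}{}(\theta)
\end{equation*}
for all nonnegative measurable \(g \colon \Rand K \to \RZ\). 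This is the identity of measures \(\langle \nu(x), x \rangle \, \diff\!\Hausd^{d - 1}(x) = d \vol_d(K) \, \diff\KegM{}{}(x)\), and dividing through by \(\langle \nu(x), x \rangle\), which is \(\Hausd^{d - 1}\)\=/a.e.\ positive and finite, gives the claimed formula.

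The main obstacle is purely a matter of regularity: to run the area formula one needs the tangent hyperplane and outer normal of \(\Rand K\) to exist \(\Hausd^{d - 1}\)\=/almost everywhere and \(\Phi\) to be locally Lipschitz. For the bodies we really use (convex bodies, and more generally star bodies with sufficiently regular boundary) this is classical; in general, since the assertion is only made \(\Hausd^{d - 1}\)\=/almost everywhere, it is enough to work on the \((d - 1)\)\=/rectifiable part of \(\Rand K\), on which these facts hold.
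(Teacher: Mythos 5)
Your argument is correct, and it is worth noting that it is essentially the proof \emph{underneath} the reference the paper leans on, rather than the paper's own (much shorter) argument. The paper simply invokes Naor--Romik \cite[Lemma~1]{NR2003}, which expresses the Radon--Nikod\'ym derivative in terms of \(\lVert \nabla \lvert \cdot \rvert_K(x) \rVert_2\) and the normalized surface measure \(\sigma_K\), and then performs a two-line conversion using the Euler identity \(\langle \nabla \lvert \cdot \rvert_K(x), x \rangle = 1\) to rewrite \(\lVert \nabla \lvert \cdot \rvert_K(x) \rVert_2\) as \(\langle \nu(x), x \rangle^{-1}\). You instead reprove the underlying statement from scratch: polar map \(\Phi(x, r) = rx\), Jacobian \(r^{d-1}\langle \nu(x), x\rangle\), area formula, then cancel the radial factor against the polar integration formula~\eqref{eq:kegmasz_polarint}. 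This is precisely the Naor--Romik computation inlined and reparametrised to give the answer directly in terms of \(\nu(x)\); it is more self-contained and makes the geometric content (the Jacobian as height of the infinitesimal cone) explicit, at the cost of being longer and requiring the regularity discussion you append at the end, which the paper sidesteps by citation. One small stylistic remark: rather than appealing to ``\(0 \in \offen K\) forces \(\langle\nu(x),x\rangle > 0\)'', for a general star body it is cleaner to derive positivity from the same Euler identity \(\langle \nabla \lvert \cdot \rvert_K(x), x\rangle = 1\) (with \(\nu(x)\) the normalised gradient), which is also what the paper uses.
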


\begin{proof}
This is a rewriting of Naor and Romik \cite[Lemma~1]{NR2003}. Adapting their notation, note that \(\Hausd^{d - 1} = \operatorname{area}(\Rand{K}) \sigma_K\); it remains to treat \(\lVert \nabla \lvert \cdot \rvert_K(x) \rVert_2\). But in their proof they derive \(\langle \nabla \lvert \cdot \rvert_K(x), x \rangle = 1\), and therewith
\begin{equation*}
\lVert \nabla \lvert \cdot \rvert_K(x) \rVert_2 = \frac{\lVert \nabla \lvert \cdot \rvert_K(x) \rVert_2}{\langle \nabla \lvert \cdot \rvert_K(x), x \rangle} = \frac{1}{\bigl\langle \frac{\nabla \lvert \cdot \rvert_K(x)}{\lVert \nabla \lvert \cdot \rvert_K(x) \rVert_2}, x \bigr\rangle},
\end{equation*}
from which the claim follows because of \(\nu(x) = \frac{\nabla \lvert \cdot \rvert_K(x)}{\lVert \nabla \lvert \cdot \rvert_K(x) \rVert_2}\).
\end{proof}

\begin{Bem}
Rewritten as \(\vol_d(K) \, \diff\KegM{}{}(x) = \frac{1}{d} \langle \nu(x), x \rangle \, \diff\!\Hausd^{d - 1}(x)\), the formula has a nice elementary interpretation: fix a point \(x \in \Rand{K}\), then this determines an ``infinitesimal cone'' with apex \(o\) and an ``infinitesimal base'' tangent to \(\Rand{K}\) at \(x\); the height of this cone equals \(\lVert \langle \nu(x), x \rangle \nu(x) \rVert_2 = \langle \nu(x), x \rangle\) (the length of the orthogonal projection of \(x\) onto \(\RZ \nu(x)\)), the base measures \(\diff\!\Hausd^{d - 1}(x)\), and therefore the cone has volume \(\frac{1}{d} \langle \nu(x), x \rangle \, \diff\!\Hausd^{d - 1}(x)\). But by the very definition of the cone measure this also equals \(\vol_d(K) \, \diff\KegM{}{}(x)\).
\end{Bem}

As a consequence of Lemma~\ref{lem:kegel_gg_hausdorff}, if \(x \mapsto \langle \nu(x), x \rangle\) is constant on \(\Rand K\), then the normalized measures coincide. This is known to be the case for \(\Sph{p, \beta}{m}\) iff \(p \in \{1, 2, \infty\}\), going back to~\cite[p.~1314]{RR1991}. Let us fix the notation \(\KegM{p, \beta}{m} := \KegM{\Kug{p, \beta}{m}}{}\) and \(\KegM{\Schatten{p}{}, \beta}{m \times n} := \KegM{\Kug{\Schatten{p}{}, \beta}{m \times n}}{}\).

\begin{Lem}\label{lem:potenz_ableitung}
Let \(p \in \RZ\), then the trace power map
\begin{equation*}
\tau_p \colon \left\{ \begin{aligned} \Pos_{m, \beta} &\to \RZ \\ x &\mapsto \Spur(x^p) \end{aligned} \right.
\end{equation*}
is continuously differentiable and for any \(x \in \Pos_{m, \beta}\),
\begin{equation*}
\nabla \tau_p(x) = p x^{p - 1}.
\end{equation*}
\end{Lem}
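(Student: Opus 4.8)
The plan is to exploit that $\Pos_{m,\beta}$ is an open subset of the Euclidean space $\Sym_{m,\beta}$, so that differentiability is meant in the ordinary Fréchet sense and the gradient is taken with respect to the Hilbert--Schmidt inner product. Since for self\-/adjoint $a,h$ one has $\langle a,h\rangle = \Re\Spur(a\adj h) = \Spur(ah)$, it is enough to show that the Fréchet derivative of $\tau_p$ at $x$ is the linear form $h \mapsto p\,\Spur(x^{p-1}h)$ and that the map $x\mapsto p\,x^{p-1}$ is continuous $\Pos_{m,\beta}\to\Sym_{m,\beta}$ (note that $x^{p-1}$ is again positive\-/definite, hence self\-/adjoint).

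I would first dispatch the case $p\in\NZ$. Expanding $(x+h)^p$ and collecting the term linear in $h$ gives $\sum_{k=0}^{p-1} x^{k}h\,x^{p-1-k}$, whose trace equals $p\,\Spur(x^{p-1}h)$ by cyclicity, while the remaining terms are $\BigO(\lVert h\rVert^2)$; thus $\nabla\tau_p(x)=p\,x^{p-1}$. Negative integer powers are handled identically starting from $(x+h)^{-1}=x^{-1}-x^{-1}h\,x^{-1}+\BigO(\lVert h\rVert^2)$, and by linearity one obtains, for every polynomial $Q$, that $x\mapsto\Spur(Q(x))$ is smooth with gradient $Q'(x)$.

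For general real $p$ the idea is polynomial approximation inside the continuous functional calculus. Fix $x_0\in\Pos_{m,\beta}$ and a compact interval $[a,b]\subset(0,\infty)$ whose interior contains $\mathrm{spec}(x_0)$; since eigenvalues depend continuously on the matrix, there is a neighbourhood $U$ of $x_0$ in $\Pos_{m,\beta}$ with $\mathrm{spec}(x)\subset[a,b]$ for all $x\in U$. By the Weierstrass theorem pick polynomials $q_n$ converging uniformly on $[a,b]$ to $t\mapsto p\,t^{p-1}$, and set $Q_n(t):=a^{p}+\int_a^t q_n(s)\,\diff s$, so that $Q_n\to(t\mapsto t^{p})$ and $Q_n'=q_n\to(t\mapsto p\,t^{p-1})$ uniformly on $[a,b]$. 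Because $\lVert g(x)\rVert_{\mathrm{op}}\le\sup_{[a,b]}\lvert g\rvert$ whenever $\mathrm{spec}(x)\subset[a,b]$, it follows that $\Spur(Q_n(x))\to\Spur(x^{p})=\tau_p(x)$ and $q_n(x)\to p\,x^{p-1}$ in Hilbert--Schmidt norm, both uniformly for $x\in U$; in particular $x\mapsto p\,x^{p-1}$, being a uniform limit of continuous maps, is continuous on $U$. Now apply the standard fact that if a sequence of $C^{1}$ functions on a convex open set converges pointwise and their gradients converge uniformly, the limit is $C^{1}$ with gradient the limit of the gradients: applied to $x\mapsto\Spur(Q_n(x))$ on $U$, whose gradients are $Q_n'(x)=q_n(x)$ by the polynomial case, this gives that $\tau_p$ is continuously differentiable on $U$ with $\nabla\tau_p(x)=p\,x^{p-1}$. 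Since $x_0$ was arbitrary, the claim follows.

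The only genuinely delicate point is this last, non\-/integer case: the eigenvalues of $x$ fail to be differentiable functions of $x$ where they coincide, so one must not differentiate them. This is circumvented by transporting the elementary polynomial computation through the continuous functional calculus and justifying the interchange of limit and derivative via the uniform convergence of the approximating gradients on a fixed neighbourhood; everything else is routine.
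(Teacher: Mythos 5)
Your argument is correct, and it takes a genuinely different route from the paper. The paper writes $x^p = \ez^{p\log(x)}$, expands the exponential into a series $\tau_p(x) = \sum_{k\ge0}\frac{p^k}{k!}\Spur(\log(x)^k)$, computes $\diff\log(x)(h)$ by inverting $\diff\exp$ on a spectral basis (only the diagonal components of the transported increment are needed, since they are the only ones surviving the trace), and then shows the series of gradients converges locally uniformly. You instead localize to a neighbourhood $U$ of an arbitrary $x_0$ where the spectrum stays in a fixed compact $[a,b]\subset(0,\infty)$, approximate $t\mapsto p\,t^{p-1}$ by polynomials $q_n$ via Weierstrass, integrate to get $Q_n\to t^p$ uniformly, pass both through the continuous functional calculus using the bound $\lVert g(x)\rVert_{\mathrm{op}}\le\sup_{[a,b]}\lvert g\rvert$, and invoke the classical interchange of limit and derivative under uniform convergence of gradients. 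Your approach is arguably cleaner in that it sidesteps the computation of $\diff\log(x)(h)$ entirely and never has to diagonalize; what it pays for is reliance on two black boxes (Weierstrass approximation and the uniform-convergence-of-derivatives theorem, the latter requiring you to shrink $U$ to a convex set, which you should state explicitly), whereas the paper's series argument is more self-contained. One minor point worth tightening: the passage from positive to negative integer powers and then to "every polynomial $Q$" should restrict to Laurent polynomials with integer exponents only, since that is all you use and all that is literal; as written "every polynomial" is what you mean, and the remark about negative powers is in fact not needed for the Weierstrass step since $q_n$ and $Q_n$ can be taken to be genuine polynomials.
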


\begin{proof}
\textit{Case \(p \in \NZ_0\):} Continuous differentiability is obvious, as is the gradient, for \(p \in \{0, 1\}\). So let \(p \geq 2\), then the map \(f \colon \Pos_{m, \beta} \to \Pos_{m, \beta}\), \(x \mapsto x^p\), is continuously differentiable with derivative \(\diff f(x)(h) = \sum_{k = 1}^p x^{k - 1} h x^{p - k}\) for any \(x \in \Pos_{m, \beta}\) and \(h \in \Sym_{m, \beta}\). This implies, because of \(\tau_p = \Spur \circ f\),
\begin{align*}
\diff \tau_p(x)(h) &= \Spur(\diff f(x)(h)) = \Spur\biggl( \sum_{k = 1}^p x^{k - 1} h x^{p - k} \biggr)\\
&= \sum_{k = 1}^p \Spur(x^{k - 1} h x^{p - k}) = \sum_{k = 1}^p \Spur(x^{p - 1} h)\\
&= p \Spur(x^{p - 1} h) = \langle p x^{p - 1}, h \rangle.
\end{align*}
(Note \(\Spur(a b) \in \RZ\) for any \(a, b \in \Sym_{m, \beta}\).)

\textit{Case \(p\) arbitrary:} Write \(x^p = \ez^{p \log(x)}\), then this already suggests continuous differentiability, and expanding the exponential we have
\begin{equation*}
\tau_p(x) = \sum_{k = 0}^\infty \frac{p^k}{k!} \Spur(\log(x)^k).
\end{equation*}
Call \(f_k(x) := \Spur(\log(x)^k)\), then we have to determine \(\nabla f_k(x)\) and show that \(\sum_{k \geq 0} \frac{p^k}{k!} \nabla f_k\) converges locally uniformly on \(\Pos_{m, \beta}\); this will yield \(\nabla \tau_p = \sum_{k = 0}^\infty \frac{p^k}{k!} \nabla f_k\).

From the case of integer powers and the chain rule we know \(\diff f_k(x)(h) = \langle k \log(x)^{k - 1}, \diff \log(x)(h) \rangle\) for any \(x \in \Pos_{m, \beta}\) and \(h \in \Sym_{m, \beta}\). In order to determine \(\diff \log(x)(h)\) consider the exponential map again, that is, \(\ez^x = \sum_{k = 0}^\infty \frac{1}{k!} x^k\). Call \(g_k(x) := x^k\), then each \(g_k\) is differentiable with derivative \(\diff g_k(x)(h) = \sum_{l = 1}^k x^{l - 1} h x^{k - l}\); therefore \(\lVert \diff g_k(x)(h) \rVert_{\Schatten{2}{}} \leq k \lVert x \rVert_{\Schatten{2}{}}^{k - 1} \lVert h \rVert_{\Schatten{2}{}}\) (any submultiplicative norm would do) and for the operator norm we get \(\lVert \diff g_k(x) \rVert \leq k \lVert x \rVert_{\Schatten{2}{}}^{k - 1}\). Now given any \(R \in (0, \infty)\) we find
\begin{equation*}
\sum_{k = 0}^\infty \frac{1}{k!} \lVert \diff g_k(x) \rVert \leq \sum_{k = 0}^\infty \frac{1}{k!} \, k \lVert x \rVert_{\Schatten{2}{}}^{k - 1} \leq \sum_{k = 0}^\infty \frac{1}{k!} R^k = \ez^R < \infty
\end{equation*}
for all \(x \in \Sym_{m, \beta}\) with \(\lVert x \rVert_{\Schatten{2}{}} \leq R\); thus \(\sum_{k \geq 0} \frac{1}{k!} \diff g_k\) is locally uniformly convergent on \(\Sym_{m, \beta}\) and hence \(\diff \exp = \sum_{k = 0}^\infty \frac{1}{k!} \diff g_k\). From that and from the identity \(x = \ez^{\log(x)}\) follows, for any \(x \in \Pos_{m, \beta}\) and \(h \in \Sym_{m, \beta}\),
\begin{equation*}
h = \diff \exp(\log(x))(\diff \log(x)(h)) = \sum_{k = 0}^\infty \frac{1}{k!} \sum_{l = 1}^k \log(x)^{l - 1} \diff \log(x)(h) \log(x)^{k - l}.
\end{equation*}
Fix a spectral decomposition \(x = u \diag(\lambda) \adj{u}\) with \(\lambda \in \RZ_{> 0}^m\) and \(u \in \Orth_{m; \beta}\) and define \(\xi := \adj{u} h u\) and \(\eta := \adj{u} \diff \log(x)(h) u\), then the above equation is equivalent to
\begin{equation*}
\sum_{k = 0}^\infty \frac{1}{k!} \sum_{l = 1}^k \log(\diag(\lambda))^{l - 1} \eta \log(\diag(\lambda))^{k - l} = \xi.
\end{equation*}
Let \(i \in [1, n]\), then the \((i, i)\)\-/component of the equation reads
\begin{equation*}
\xi_{i, i} = \sum_{k = 0}^\infty \frac{1}{k!} \sum_{l = 1}^k \log(\lambda_i)^{l - 1} \eta_{i, i} \log(\lambda_i)^{k - l} = \eta_{i, i} \sum_{k = 0}^{\infty} \frac{1}{k!} \, k \log(\lambda_i)^{k - 1} = \eta_{i, i} \ez^{\log(\lambda_i)} = \lambda_i \eta_{i, i}
\end{equation*}
which yields the solution
\begin{equation*}
\eta_{i, i} = \frac{\xi_{i, i}}{\lambda_i}.
\end{equation*}
Therewith we can continue,
\begin{align*}
\diff f_k(x)(h) &= \langle k \log(x)^{k - 1}, \diff \log(x)(h) \rangle\\
&= \langle k \log(\diag(\lambda))^{k - 1}, \eta \rangle = \sum_{i = 1}^m k \log(\lambda_i)^{k - 1} \, \eta_{i, i}\\
&= \sum_{i = 1}^m k \log(\lambda_i)^{k - 1} \inv{\lambda_i} \, \xi_{i, i} = \langle k \log(\diag(\lambda))^{k - 1} \inv{\diag(\lambda)}, \xi \rangle\\
&= \langle k \log(x)^{k - 1} \inv{x}, h \rangle,
\end{align*}
and so \(\nabla f_k(x) = k \log(x)^{k - 1} \inv{x}\). Finally let \(R \in (1, \infty)\), then we have, for all \(x \in \Pos_{m, \beta}\) such that \(\frac{1}{R} \leq \lambda_m(x) \leq \lambda_1(x) \leq R\),
\begin{equation*}
\lVert \nabla f_k(x) \rVert_{\Schatten{2}{}} \leq k \lVert \log(x) \rVert_{\Schatten{2}{}}^{k - 1} \lVert \inv{x} \rVert_{\Schatten{2}{}} \leq k (m^{1/2} \log(R))^{k - 1} (m^{1/2} R),
\end{equation*}
and since
\begin{equation*}
\sum_{k = 0}^\infty \frac{p^k}{k!} \, k (m^{1/2} \log(R))^{k - 1} (m^{1/2} R) = p m^{1/2} R \ez^{p m^{1/2} \log(R)} = p m^{1/2} \, R^{p m^{1/2} + 1} < \infty,
\end{equation*}
the series \(\sum_{k \geq 0} \frac{p^k}{k!} \nabla f_k\) converges locally uniformly on \(\Pos_{m, \beta}\) as desired, so we obtain, for any \(x \in \Pos_{m, \beta}\),
\begin{equation*}
\nabla \tau_p(x) = \sum_{k = 0}^\infty \frac{p^k}{k!} k \log(x)^{k - 1} \inv{x} = p \ez^{p \log(x)} \inv{x} = p x^{p - 1}. \qedhere
\end{equation*}
\end{proof}

\begin{Thm}\label{sa:tildes_volumen}
For \(\Hausd^{\beta m n - 1}\)\-/almost all \(x \in \Sph{\Schatten{p}{}, \beta}{m, n}\),
\begin{equation*}
\langle \nu(x), x \rangle = \begin{cases} \lVert x \rVert_{\Schatten{2 p - 2}{}}^{1 - p} & \textit{if } p < \infty, \\ 1 & \textit{if } p = \infty; \end{cases}
\end{equation*}
in particular the cone measure \(\KegM{\Schatten{p}{}, \beta}{m \times n}\) coincides with the normalized Hausdorff\-/measure on \(\Sph{\Schatten{p}{}, \beta}{m, n}\) iff \(p \in \{1, 2, \infty\}\). Thereby also,
\begin{equation*}
\Hausd^{\beta m n - 1}(\Sph{\Schatten{p}{}, \beta}{m, n}) = \begin{cases} \beta m^{3/2} n \KugVol{\Schatten{1}{}, \beta}{m \times n} & \text{if } p = 1, \\ \beta m n \KugVol{\Schatten{p}{}, \beta}{m \times n} & \text{if } p \in \{2, \infty\}. \end{cases}
\end{equation*}
\end{Thm}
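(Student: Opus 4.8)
The plan is to compute the outer unit normal $\nu(x)$ of $\Sph{\Schatten{p}{},\beta}{m,n}$ from the Minkowski functional $\lvert\cdot\rvert_K=\lVert\cdot\rVert_{\Schatten{p}{}}$ of $K:=\Kug{\Schatten{p}{},\beta}{m\times n}$ and then to feed $\langle\nu(x),x\rangle$ into Lemma~\ref{lem:kegel_gg_hausdorff}. First I would discard a $\Hausd^{\beta m n-1}$-null subset of the sphere: the matrices of rank $<m$ (those with $\det(x\adj{x})=0$) and, when $p=\infty$, also those whose largest singular value is not simple; these are proper (semi)algebraic subsets of $\Sph{\Schatten{p}{},\beta}{m,n}$, hence of positive codimension and so $\Hausd^{\beta m n-1}$-null. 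On the complementary good set $\lvert\cdot\rvert_K$ is $C^1$, so $\nu(x)=\nabla\lvert\cdot\rvert_K(x)/\lVert\nabla\lvert\cdot\rvert_K(x)\rVert_2$, and $\langle\nabla\lvert\cdot\rvert_K(x),x\rangle=\lvert x\rvert_K=1$ (established in the proof of Lemma~\ref{lem:kegel_gg_hausdorff}, and immediate from $1$-homogeneity), whence
\begin{equation*}
\langle\nu(x),x\rangle=\frac{1}{\lVert\nabla\lvert\cdot\rvert_K(x)\rVert_2}.
\end{equation*}
Everything thus reduces to computing the Frobenius norm of $\nabla\lvert\cdot\rvert_K(x)$.

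For $p<\infty$ I would write $\lVert x\rVert_{\Schatten{p}{}}^{p}=\Spur((x\adj{x})^{p/2})=\tau_{p/2}(x\adj{x})$; at a full-rank $x$ one has $x\adj{x}\in\Pos_{m,\beta}$, so Lemma~\ref{lem:potenz_ableitung} gives $\nabla\tau_{p/2}(x\adj{x})=\frac{p}{2}(x\adj{x})^{p/2-1}$. Composing with the differential $h\mapsto h\adj{x}+x\adj{h}$ of $x\mapsto x\adj{x}$ and using that $\langle A,h\adj{x}+x\adj{h}\rangle=2\langle Ax,h\rangle$ for Hermitian $A$ (a one-line computation with $\Re\Spur$ and cyclicity of the trace) yields $\nabla(\lVert\cdot\rVert_{\Schatten{p}{}}^{p})(x)=p\,(x\adj{x})^{p/2-1}x$; hence on the sphere $\nabla\lvert\cdot\rvert_K(x)=\frac{1}{p}\nabla(\lVert\cdot\rVert_{\Schatten{p}{}}^{p})(x)=(x\adj{x})^{p/2-1}x$, whose squared Frobenius norm equals
\begin{equation*}
\Spur\bigl((x\adj{x})^{p/2-1}(x\adj{x})(x\adj{x})^{p/2-1}\bigr)=\Spur((x\adj{x})^{p-1})=\lVert x\rVert_{\Schatten{2p-2}{}}^{2(p-1)}.
\end{equation*}
This gives $\langle\nu(x),x\rangle=\lVert x\rVert_{\Schatten{2p-2}{}}^{1-p}$; here I would point out that, read correctly as $\Spur((x\adj{x})^{p-1})^{-1/2}$, this equals $\Spur(I_m)^{-1/2}=m^{-1/2}$ at $p=1$, which is what produces the extra factor $m^{1/2}$ in the $p=1$ case of the last display.

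For $p=\infty$ the Minkowski functional is $x\mapsto s_1(x)$, which on the good set (largest singular value simple) is $C^1$ with $\nabla s_1(x)=v_1\adj{u_1}$, where $u_1\in\KZ^{n}$ and $v_1\in\KZ^{m}$ are unit right and left singular vectors for $s_1$ — a standard fact, e.g.\ from $s_1(x)=\max_{\lVert\xi\rVert_2=\lVert\eta\rVert_2=1}\Re(\adj{\eta}x\xi)$ and the envelope theorem. Since $v_1\adj{u_1}$ is rank one, $\lVert v_1\adj{u_1}\rVert_2=\lVert v_1\rVert_2\lVert u_1\rVert_2=1$, so $\nu(x)=v_1\adj{u_1}$, and using $xu_1=s_1(x)v_1$ with $s_1(x)=1$ on the sphere, $\langle\nu(x),x\rangle=\langle v_1\adj{u_1},x\rangle=\Re(\adj{v_1}xu_1)=s_1(x)=1$.

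With $\langle\nu(\cdot),\cdot\rangle$ in hand, Lemma~\ref{lem:kegel_gg_hausdorff} reads $\diff\Hausd^{\beta m n-1}(x)=\frac{\beta m n\,\KugVol{\Schatten{p}{},\beta}{m\times n}}{\langle\nu(x),x\rangle}\,\diff\KegM{\Schatten{p}{},\beta}{m\times n}(x)$ for a.e.\ $x$, so the normalized Hausdorff measure and the cone measure coincide iff $x\mapsto\langle\nu(x),x\rangle$ is a.e.\ constant on $\Sph{\Schatten{p}{},\beta}{m,n}$; for $p\in\{1,2,\infty\}$ that value is $m^{-1/2},1,1$ respectively, so they coincide. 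For $p\notin\{1,2,\infty\}$ and $m\ge2$ (the case $m=1$ being degenerate, as then $\lVert\cdot\rVert_{\Schatten{p}{}}=\lVert\cdot\rVert_2$ for every $p$) I would exhibit non-constancy by comparing the value $m^{1/2-1/p}$ at the balanced matrix with all singular values equal to $m^{-1/p}$ against the value along full-rank matrices on the sphere with singular values $(s_1,\delta,\dotsc,\delta)$, which tends to $1$ as $\delta\downarrow0$ if $p>1$ and to $0$ if $p<1$. Finally, for $p\in\{1,2,\infty\}$, integrating the displayed identity over the sphere and using $\KegM{\Schatten{p}{},\beta}{m\times n}(\Sph{\Schatten{p}{},\beta}{m,n})=1$ gives $\Hausd^{\beta m n-1}(\Sph{\Schatten{p}{},\beta}{m,n})=\beta m n\,\KugVol{\Schatten{p}{},\beta}{m\times n}/c$ with $c$ the constant value of $\langle\nu(\cdot),\cdot\rangle$, i.e.\ $\beta m^{3/2}n\,\KugVol{\Schatten{1}{},\beta}{m\times n}$ for $p=1$ and $\beta m n\,\KugVol{\Schatten{p}{},\beta}{m\times n}$ for $p\in\{2,\infty\}$. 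I expect the main obstacle to be the matrix-calculus gradient computation — keeping the adjoints and the Hermitian symmetrization straight — together with the (routine but necessary) verification that the excluded sets are $\Hausd^{\beta m n-1}$-null and the care needed for the $p=1$ reading of $\lVert\cdot\rVert_{\Schatten{2p-2}{}}^{1-p}$.
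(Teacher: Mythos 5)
Your argument is correct, and for $p<\infty$ it is essentially identical to the paper's: you both reduce to the gradient of $\Spur((x\adj{x})^{p/2})$ via Lemma~\ref{lem:potenz_ableitung} and the Euler/homogeneity identity, obtaining $\langle\nu(x),x\rangle = \lVert x\rVert_{\Schatten{2p-2}{}}^{1-p}$ after excluding the rank-deficient matrices (which form a null set). The only cosmetic difference is that you normalize the degree-one functional $\lvert\cdot\rvert_K$ while the paper works with the degree-$p$ functional $g=\lVert\cdot\rVert_{\Schatten{p}{}}^p$; the resulting calculation is the same.

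For $p=\infty$ you take a genuinely shorter route. You invoke the standard envelope/Rayleigh formula $\nabla s_1(x)=v_1\adj{u_1}$ at points where $s_1(x)$ is simple, giving $\langle\nu(x),x\rangle=1$ in one line. The paper instead avoids citing this fact: it sets $g(x)=\det(I_m-x\adj{x})$ on $\{s_2(x)<1\}$, computes $\nabla g$ via Jacobi's formula and the adjugate, and arrives at $\nabla g(x)=-2\alpha\,v_1\adj{u_1}$ with $\alpha=\prod_{j\ge 2}(1-s_j(x)^2)$. The determinant version has the advantage that $g$ is a polynomial, so its $C^1$ smoothness is trivial and no external perturbation result is needed; your version is more direct but relies on the (true, but not entirely elementary) differentiability of $s_1$ at simple maxima. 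Your non-constancy argument for $p\notin\{1,2,\infty\}$ (letting the smaller singular values $\delta\downarrow 0$ and showing the normal inner product tends to $1$ for $p>1$ or to $0$ for $p<1$) is a limiting argument where the paper simply evaluates at two explicit full-rank points, namely $m^{-1/p}\diag(1,\dotsc,1)$ and $(m-\tfrac12)^{-1/p}\diag(1,\dotsc,1,2^{-1/p})$; both are correct, and the paper's is marginally more elementary since it avoids taking a limit. Everything else (the null-set bookkeeping, the reading of $\lVert\cdot\rVert_{\Schatten{2p-2}{}}^{1-p}$ as $\Spur((x\adj{x})^{p-1})^{-1/2}$ at $p=1$, and the final integration of Lemma~\ref{lem:kegel_gg_hausdorff}) matches the paper.
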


\begin{proof}
\textit{Case \(p < \infty\):} Define the map
\begin{equation*}
g \colon \left\{ \begin{aligned} \KZ^{m \times n} &\to \RZ \\ x &\mapsto \Spur((x \adj{x})^{p/2}) \end{aligned} \right.,
\end{equation*}
then \(\Sph{\Schatten{p}{}, \beta}{m, n} = \inv{g}\{1\}\), and via Lemma~\ref{lem:potenz_ableitung} \(g\) is differentiable at any \(x \in \KZ^{m \times n}\) such that \(x \adj{x} \in \Pos_{m, \beta}\), which is almost everywhere, and then, for any \(h \in \KZ^{m \times n}\),
\begin{align*}
\diff g(x)(h) &= \Bigl\langle \frac{p}{2} (x \adj{x})^{p/2 - 1}, h \adj{x} + x \adj{h} \Bigr\rangle\\
&= \langle p (x \adj{x})^{p/2 - 1} x, h \rangle,
\end{align*}
which means
\begin{equation*}
\nabla g(x) = p (x \adj{x})^{p/2 - 1} x.
\end{equation*}
From this follow, for almost any \(x \in \Sph{\Schatten{p}{}, \beta}{m, n}\), both
\begin{align*}
\langle \nabla g(x), x \rangle &= p \langle (x \adj{x})^{p/2 - 1} x, x \rangle = p \langle (x \adj{x})^{p/2 - 1} x \adj{x}, I_m \rangle\\
&= p \langle (x \adj{x})^{p/2}, I_m \rangle = p \Spur((x \adj{x})^{p/2}) = p,
\end{align*}
and
\begin{align*}
\lVert \nabla g(x) \rVert_{\Schatten{2}{}}^2 &= p^2 \langle (x \adj{x})^{p/2 - 1} x, (x \adj{x})^{p/2 - 1} x \rangle = p^2 \langle (x \adj{x})^{p - 2} x \adj{x}, I_m \rangle\\
&= p^2 \Spur((x \adj{x})^{p - 1}) = p^2 \lVert x \rVert_{\Schatten{2 p - 2}{}}^{2 p - 2},
\end{align*}
which imply
\begin{equation*}
\langle \nu(x), x \rangle = \frac{\langle \nabla g(x), x \rangle}{\lVert \nabla g(x) \rVert_{\Schatten{2}{}}} = \frac{1}{\lVert x \rVert_{\Schatten{2 p - 2}{}}^{p - 1}},
\end{equation*}
as claimed.

\textit{Case \(p = \infty\):} Define \(D := \{x \in \KZ^{m \times n} \Colon s_2(x) < 1\}\), then \(D\) is an open subset of \(\KZ^{m \times n}\), and we can write \(\Sph{\Schatten{\infty}{}, \beta}{m, n} = D \cap \{x \in \KZ^{m \times n} \Colon \det(I_m - x \adj{x}) = 0\}\) up to the null set \(\{x \in \KZ^{m \times n} \Colon s_1(x) = s_2(x) = 1\}\). Define the map
\begin{equation*}
g \colon \left\{ \begin{aligned} D &\to \RZ \\ x &\mapsto \det(I_m - x \adj{x}) \end{aligned} \right.,
\end{equation*}
then \(g\) is continuously differentiable, \(\Sph{\Schatten{\infty}{}, \beta}{m, n} = \inv{g}\{0\}\), and for any \(x, h \in \KZ^{m \times n}\) the derivative is
\begin{align*}
\diff g(x)(h) &= \Spur\bigl( \Adj(I_d - x \adj{x}) (-h \adj{x} - x \adj{h}) \bigr)\\
&= -2 \langle \Adj(I_m - x \adj{x}) x, h \rangle,
\end{align*}
where \(\Adj\) denotes the adjugate of a matrix, and hence,
\begin{equation*}
\nabla g(x) = -2 \Adj(I_m - x \adj{x}) x.
\end{equation*}
For any \(x \in \KZ^{m \times n}\) with singular value decomposition \(x = v \diag(s(x)) \adj{u}\) we have
\begin{equation*}
\Adj(I_m - x \adj{x}) = \sum_{i = 1}^m \biggl( \prod_{\substack{j = 1 \\ j \neq i}}^m \bigl( 1 - s_j(x)^2 \bigr) \biggr) v_i \adj{v_i},
\end{equation*}
where \(v_1, \dotsc, v_m \in \KZ^m\) are the columns of \(v\); in particular for all \(x \in \Sph{\Schatten{\infty}{}, \beta}{m, n}\) we get
\begin{equation*}
\Adj(I_m - x \adj{x}) x = \biggl( \prod_{j = 2}^m \bigl( 1 - s_j(x)^2 \bigr) \biggr) v_1 \adj{(\adj{x} v_1)} = \alpha v_1 \adj{u_1},
\end{equation*}
with \(\alpha := \prod_{j = 2}^m (1 - s_j(x)^2)\) and \(u_1 \in \KZ^n\) the first column of \(u\). Thus \(\nabla g(x) \neq 0\) for all \(x \in \Sph{\Schatten{\infty}{}, \beta}{m, n}\) which have simple maximal singular value \(1\), and this means that \(\Sph{\Schatten{\infty}{}, \beta}{m, n}\) has a continuous outer unit normal vector field almost everywhere and up to sign it is given by \(\frac{\nabla g(x)}{\lVert \nabla g(x) \rVert_{\Schatten{2}{}}}\) at \(x \in \Sph{\Schatten{\infty}{}, \beta}{m, n}\).

Let \(x \in \Sph{\Schatten{\infty}{}, \beta}{m, n}\) with simple singular value \(1\), then
\begin{align*}
\langle \nabla g(x), x \rangle &= -2 \alpha \langle v_1 \adj{u_1}, x \rangle\\
&= -2 \alpha \langle v_1, x u_1 \rangle\\
&= -2 \alpha \langle v_1, v_1 \rangle = -2 \alpha,
\end{align*}
and similarly,
\begin{align*}
\lVert \nabla g(x) \rVert_{\Schatten{2}{}}^2 &= 4 \alpha^2 \langle v_1 \adj{u_1}, v_1 \adj{u_1} \rangle\\
&= 4 \alpha^2 \langle \adj{v_1} v_1, \adj{u_1} u_1 \rangle\\
&= 4 \alpha^2 \langle 1, 1 \rangle = 4 \alpha^2,
\end{align*}
and this yields
\begin{equation*}
\frac{\lvert \langle \nabla g(x), x \rangle \rvert}{\lVert \nabla g(x) \rVert_{\Schatten{2}{}}} = 1.
\end{equation*}
This also implies that the cone and normalized Hausdorff\-/measures conincide.

In the case \(p = 1\) we have
\begin{equation*}
\Spur((x \adj{x})^{p - 1}) = \Spur(I_m) = m
\end{equation*}
which leads to \(\langle \nu(x), x \rangle = m^{-1/2}\), which is constant; and in the case \(p = 2\) we see
\begin{equation*}
\lVert x \rVert_{\Schatten{2 p - 2}{}}^{p - 1} = \lVert x \rVert_{\Schatten{2}{}} = 1
\end{equation*}
for all \(x \in \Sph{\Schatten{2}{}, \beta}{m, n}\), which is constant too. (Also notice that \(\Schatten{2, \beta}{m \times n}\) and \(\ellpe{2, 1}{\beta m n}\) are isomorphic Euclidean spaces.) For \(p \notin \{1, 2, \infty\}\) it suffices to show that \(\lVert \cdot \rVert_{\Schatten{2 p - 2}{}}^{2 p - 2}\) attains at least two different values on \(\Sph{\Schatten{p}{}, \beta}{m, n}\) at points with full rank, since then by continuity it cannot be constant outside of a null set. But this is achieved with, say, \(m^{-1/p} \diag(1, \dotsc, 1)\) (\(m\) ones) and \((m - \frac{1}{2})^{-1/p} \diag(1, \dotsc, 1, 2^{-1/p})\) (\(m - 1\) ones).

The volume of \(\Sph{\Schatten{p}{}, \beta}{m, n}\) follows from Lemma~\ref{lem:kegel_gg_hausdorff} and the concrete values of \(\langle \nu(x), x \rangle\).
\end{proof}

The ball \(\Kug{\Schatten{\infty}{}, \beta}{m \times n}\) can be described without direct reference to singular values, as the following proposition states.

\begin{Sa}\label{sa:b_gleich_spektralkugel}
The following identity holds true,
\begin{equation*}
\Kug{\Schatten{\infty}{}, \beta}{m \times n} = \{x \in \KZ^{m \times n} \Colon x \adj{x} \leq I_m\}.
\end{equation*}
\end{Sa}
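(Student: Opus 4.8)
The plan is to unwind both sides to a statement about the eigenvalues of the positive\-/semidefinite matrix \(x \adj{x}\). By definition the left\-/hand side is \(\{x \in \KZ^{m \times n} \Colon \lVert x \rVert_{\Schatten{\infty}{}} \leq 1\}\), and since \(\lVert x \rVert_{\Schatten{\infty}{}} = s_1(x) = \max_{i \leq m} s_i(x)\), membership in the Schatten\=/\(\infty\) ball is equivalent to \(s_i(x) \leq 1\) for all \(i \leq m\), that is, to all eigenvalues of \(x \adj{x}\) — which are exactly the \(s_i(x)^2\) — being bounded by \(1\).

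For the right\-/hand side I would invoke the characterization of the Loewner order recalled in Section~\ref{sec:normenraeume}: \(x \adj{x} \leq I_m\) holds iff \(\adj{\xi} (x \adj{x}) \xi \leq \adj{\xi} \xi\) for every \(\xi \in \KZ^m\). Fixing a singular\-/value decomposition \(x = v \diag(s(x)) \adj{u}\) with \(v \in \Orth_{m; \beta}\) and \(u \in \Orth_{n; \beta}\), one gets \(x \adj{x} = v \diag(s_1(x)^2, \dotsc, s_m(x)^2) \adj{v}\); substituting \(\xi = v \eta\) (which ranges over all of \(\KZ^m\) as \(\eta\) does, since \(v\) is unitary), the Loewner condition becomes \(\sum_{i \leq m} s_i(x)^2 \lvert \eta_i \rvert^2 \leq \sum_{i \leq m} \lvert \eta_i \rvert^2\) for all \(\eta \in \KZ^m\), which evidently holds iff \(s_i(x)^2 \leq 1\) for every \(i \leq m\). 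Equivalently, one may simply observe that \(I_m - x \adj{x} = v \bigl( I_m - \diag(s_1(x)^2, \dotsc, s_m(x)^2) \bigr) \adj{v}\) is positive\-/semidefinite iff each \(1 - s_i(x)^2 \geq 0\).

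Comparing the two reformulations — \(x\) lies in \(\Kug{\Schatten{\infty}{}, \beta}{m \times n}\) iff \(s_i(x) \leq 1\) for all \(i\), and \(x \adj{x} \leq I_m\) iff \(s_i(x)^2 \leq 1\) for all \(i\) — yields the asserted set equality. There is essentially no obstacle here; the only point requiring a word of care is the passage between the matrix inequality \(x \adj{x} \leq I_m\) and the scalar bounds on the \(s_i(x)\), which is handled cleanly by the unitary diagonalization of \(x \adj{x}\) obtained from the singular\-/value decomposition.
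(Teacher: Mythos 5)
Your proof is correct and takes essentially the same route as the paper: both start from the Loewner\-/order characterization \(x \adj{x} \leq I_m \Leftrightarrow \adj{\xi} x \adj{x} \xi \leq \adj{\xi} \xi\) for all \(\xi\), and reduce to the condition \(s_1(x) \leq 1\). The only difference is stylistic: you pass through the singular\-/value decomposition to diagonalize \(x \adj{x}\) and compare eigenvalues term by term, whereas the paper short\-/circuits this by reading \(\adj{\xi} x \adj{x} \xi \leq \adj{\xi} \xi\) directly as \(\lVert \adj{x} \xi \rVert_2 \leq \lVert \xi \rVert_2\), which is precisely the statement that the operator norm of \(\adj{x}\) is at most \(1\).
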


\begin{proof}
Call \(B := \{x \in \KZ^{m \times n} \Colon x \adj{x} \leq I_m\}\). Let \(x \in \KZ^{m \times n}\), then \(x \in B\) iff for all \(\xi \in \KZ^m\), \(\adj{\xi} x \adj{x} \xi \leq \adj{\xi} \xi\), equivalently \(\lVert \adj{x} \xi \rVert_2^2 \leq \lVert \xi \rVert_2^2\), this itself means \(s_1(\adj{x}) = s_1(x) \leq 1\), which is equivalent to \(x \in \Kug{\Schatten{\infty}{}, \beta}{m \times n}\).
\end{proof}

The ``boundary'' \(S_\beta^{m, n} := \{x \in \KZ^{m \times n} \Colon x \adj{x} = I_m\}\) is easy to identify: a close glance at the definition of the Stiefel manifold (see beginning of Section~\ref{sec:auxiliar}) reveals \(S_\beta^{m, n} = \{\adj{x} \Colon x \in \Orth_{n, m; \beta}\} = \{\trapo{x} \Colon x \in \Orth_{n, m; \beta}\}\). Also note the chain of inclusions \(S_\beta^{m, n} \subset \Sph{\Schatten{\infty}{}, \beta}{m, n} \subset \Kug{\Schatten{\infty}{}, \beta}{m \times n}\) (concerning the first inclusion observe \(s(x) = (1)_{i \leq m}\) for any \(x \in S_\beta^{m, n}\)).

\begin{Sa}\label{sa:skalierte_volumen}
Let \(r \in \KZ^{m \times m}\) be regular, then the following hold true.
\begin{compactenum}
\item \(\{x \in \KZ^{m \times n} \Colon x \adj{x} \leq r \adj{r}\} = r \Kug{\Schatten{\infty}{}, \beta}{m \times n}\), and \(\vol_{\beta m n}(r \Kug{\Schatten{\infty}{}, \beta}{m \times n}) = \lvert \det(r) \rvert^{\beta n} \KugVol{\Schatten{\infty}{}, \beta}{m \times n}\).
\item \(\Rand{(r \Kug{\Schatten{\infty}{}, \beta}{m \times n})} = r \Sph{\Schatten{\infty}{}, \beta}{m \times n}\), and
\begin{equation*}
\Hausd^{\beta m n - 1}(r \Sph{\Schatten{\infty}{}, \beta}{m, n}) = \frac{\lvert \det(r) \rvert^{\beta (n - 1)} \Hausd^{\beta m - 1}(r \Sph{2, \beta}{m})}{\Hausd^{\beta m - 1}(\Sph{2, \beta}{m})} \Hausd^{\beta m n - 1}(\Sph{\Schatten{\infty}{}, \beta}{m, n}).
\end{equation*}
\item \(\{x \in \KZ^{m \times n} \Colon x \adj{x} = r \adj{r}\} = r S_\beta^{m, n}\), and
\begin{equation*}
\Hausd^d(r S_\beta^{m, n}) = 2^{-\beta m (m - 1)/4} \lvert \det(r) \rvert^{\beta (n - m + 1) - 1} \prod_{1 \leq j < i \leq m} (s_j(r)^2 + s_i(r)^2)^{\beta/2} \cdot \OrthVol_{n, m; \beta},
\end{equation*}
where \(d = \frac{\beta m (2 n - m + 1)}{2} - m\).
\end{compactenum}
\end{Sa}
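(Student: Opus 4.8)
The three set identities are immediate consequences of $r$ being regular. For Part~1, Proposition~\ref{sa:b_gleich_spektralkugel} gives $\Kug{\Schatten{\infty}{}, \beta}{m \times n} = \{x \Colon x \adj{x} \leq I_m\}$, and since $a \mapsto r a \adj{r}$ is an order isomorphism of $\Sym_{m, \beta}$ (with inverse $a \mapsto \inv{r} a \adj{(\inv{r})}$), one has $x \adj{x} \leq r \adj{r}$ iff $(\inv{r} x)(\inv{r} x)^* \leq I_m$; likewise for Part~3, $x \adj{x} = r \adj{r}$ iff $(\inv{r} x)(\inv{r} x)^* = I_m$, together with $S_\beta^{m, n} = \{\adj{u} \Colon u \in \Orth_{n, m; \beta}\}$; and for Part~2, $x \mapsto r x$ is a homeomorphism of $\KZ^{m \times n}$ and hence maps $\Rand{\Kug{\Schatten{\infty}{}, \beta}{m \times n}} = \Sph{\Schatten{\infty}{}, \beta}{m, n}$ onto the boundary of the dilate. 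The volume identity in Part~1 is the observation that left multiplication by $r$ acts on each of the $n$ columns as multiplication by $r$ on $\KZ^m$, whose real Jacobian determinant is $\lvert \det r \rvert^\beta$, for a total of $\lvert \det r \rvert^{\beta n}$.

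For the Hausdorff measure in Part~2, I would apply the area formula to the injective linear map $T \colon x \mapsto r x$ restricted to the hypersurface $\Sph{\Schatten{\infty}{}, \beta}{m, n}$: its tangential Jacobian at a point $x$ with outer unit normal $\nu(x)$ equals $\lvert \det T \rvert \, \lVert \inv{(\adj{T})} \nu(x) \rVert_{\Schatten{2}{}}$ (for a linear $T$, ambient volume scales by $\lvert \det T \rvert$ while the normal to the image tangent hyperplane points along $\inv{(\adj{T})} \nu(x)$, so the height of $T \nu(x)$ above it is $\lVert \inv{(\adj{T})} \nu(x) \rVert^{-1}$). Here $\adj{T}$ is left multiplication by $\adj{r}$, $\lvert \det T \rvert = \lvert \det r \rvert^{\beta n}$ by Part~1, and by the computation in the proof of Theorem~\ref{sa:tildes_volumen} the outer unit normal at $x$ is $\pm v_1 \adj{u_1}$ with $v_1 \in \KZ^m$, $u_1 \in \KZ^n$ the leading left and right singular vectors of $x$, whence $\lVert \adj{(\inv{r})} v_1 \adj{u_1} \rVert_{\Schatten{2}{}} = \lVert \adj{(\inv{r})} v_1 \rVert_2$. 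Integrating, $\Hausd^{\beta m n - 1}(r \Sph{\Schatten{\infty}{}, \beta}{m, n}) = \lvert \det r \rvert^{\beta n} \int_{\Sph{\Schatten{\infty}{}, \beta}{m, n}} \lVert \adj{(\inv{r})} v_1(x) \rVert_2 \, \diff\!\Hausd^{\beta m n - 1}(x)$. By Theorem~\ref{sa:tildes_volumen} the cone measure $\KegM{\Schatten{\infty}{}, \beta}{m \times n}$ is the normalized Hausdorff measure here, it is invariant under $x \mapsto v x \adj{u}$ for $v \in \Orth_{m; \beta}$, $u \in \Orth_{n; \beta}$, and the top singular value of $x$ is simple for a.e.\ $x$; hence the law of $v_1(x)$ (defined up to a unit scalar a.e.) under $\KegM{\Schatten{\infty}{}, \beta}{m \times n}$ is the $\Orth_{m; \beta}$\-/invariant probability on $\Sph{2, \beta}{m}$, i.e.\ normalized $\Hausd^{\beta m - 1}$, and since $\theta \mapsto \lVert \adj{(\inv{r})} \theta \rVert_2$ is invariant under multiplying $\theta$ by unit scalars, the integral equals $\frac{\Hausd^{\beta m n - 1}(\Sph{\Schatten{\infty}{}, \beta}{m, n})}{\Hausd^{\beta m - 1}(\Sph{2, \beta}{m})} \int_{\Sph{2, \beta}{m}} \lVert \adj{(\inv{r})} \theta \rVert_2 \, \diff\!\Hausd^{\beta m - 1}(\theta)$. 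Applying the very same hypersurface area formula to $\Sph{2, \beta}{m}$ and left multiplication by $r$ on $\KZ^m$ (where the normal at $\theta$ is $\theta$ itself) turns the last integral into $\lvert \det r \rvert^{-\beta} \Hausd^{\beta m - 1}(r \Sph{2, \beta}{m})$, and collecting the powers $\lvert \det r \rvert^{\beta n - \beta} = \lvert \det r \rvert^{\beta(n - 1)}$ yields the stated formula.

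For Part~3, I would first reduce to $r = \diag(s_1(r), \dotsc, s_m(r))$: with a singular value decomposition $r = a \diag(s(r)) \adj{b}$, $a, b \in \Orth_{m; \beta}$, the maps $x \mapsto a x$ and $x \mapsto \adj{b} x$ are isometries of $\KZ^{m \times n}$ fixing $\Hausd^d$, $\adj{b} S_\beta^{m, n} = S_\beta^{m, n}$, and both sides of the claimed identity depend on $r$ only through $s(r)$. Then parametrize $r S_\beta^{m, n}$ by the diffeomorphism $\phi \colon \Orth_{n, m; \beta} \to r S_\beta^{m, n}$, $u \mapsto r \adj{u}$, and compute its tangential Jacobian. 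Writing the tangent space as $T_u \Orth_{n, m; \beta} = \{u A + u_\perp B \Colon \adj{A} = -A,\ B \in \KZ^{(n - m) \times m}\}$ for an orthonormal completion $[u \mid u_\perp] \in \Orth_{n; \beta}$, one finds, for $r = \diag(s_1, \dotsc, s_m)$, that $\lVert D\phi(u)(u A + u_\perp B) \rVert_{\Schatten{2}{}}^2 = \Spur\bigl(r(\adj{A} A + \adj{B} B) r\bigr) = \sum_{i < j} (s_i^2 + s_j^2) \lvert A_{i, j} \rvert^2 + \sum_i s_i^2 \lvert A_{i, i} \rvert^2 + \sum_{k, i} s_i^2 \lvert B_{k, i} \rvert^2$, which is to be compared against the induced Riemannian metric $\lVert u A + u_\perp B \rVert_{\Schatten{2}{}}^2 = 2 \sum_{i < j} \lvert A_{i, j} \rvert^2 + \sum_i \lvert A_{i, i} \rvert^2 + \sum_{k, i} \lvert B_{k, i} \rvert^2$; the quotient of the two determinants is the constant $2^{-\beta m (m - 1)/4} \lvert \det r \rvert^{\beta(n - m + 1) - 1} \prod_{1 \leq i < j \leq m} (s_i^2 + s_j^2)^{\beta/2}$. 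Since this is independent of $u$, the area formula gives $\Hausd^d(r S_\beta^{m, n})$ as this constant times $\Hausd^d(\Orth_{n, m; \beta}) = \OrthVol_{n, m; \beta}$, which is the assertion.

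The main obstacle is the tangential Jacobian in Part~3: one must parametrize the tangent space of the Stiefel manifold explicitly and keep track of the factor $2^{\beta m (m - 1)/4} = \sqrt{\det g}$ by which the induced metric $g$ on $\Orth_{n, m; \beta}$ differs, in these natural coordinates, from the flat one — this is the source of the $2^{-\beta m (m - 1)/4}$ in the formula, and it is exactly the factor that the definition of $\OrthVol_{n, m; \beta}$ in Section~\ref{sec:auxiliar} uses to pass from the coordinate volume form to the Riemannian one — and then verify that the resulting density is genuinely constant over $\Orth_{n, m; \beta}$. The analogous step in Part~2 is lighter but still relies on the a.e.\ simplicity of the top singular value and on the explicit shape of the normal taken from the proof of Theorem~\ref{sa:tildes_volumen}.
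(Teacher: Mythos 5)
Your proof is correct, and in Part~2 it takes a genuinely different --- and arguably cleaner --- route than the paper. In Part~1 and the set identities you agree with the paper in substance (your column\-/wise Jacobian argument for the volume is slightly more direct than the paper's, which conjugates the singular value decomposition of $r$ through an orthonormal basis of $\KZ^{m \times n}$). For Part~2 you apply the classical hypersurface Jacobian formula $J(x) = \lvert \det T \rvert \, \lVert \adj{(\inv{T})} \nu(x) \rVert_{\Schatten{2}{}}$ to the linear map $T \colon x \mapsto r x$, once on $\Sph{\Schatten{\infty}{}, \beta}{m, n}$ and once on $\Sph{2, \beta}{m}$, and bridge the two integrals via the pushforward of the cone measure under $x \mapsto v_1(x)$, which by $\Orth_{m; \beta}$\-/equivariance is the normalized Hausdorff measure on $\Sph{2, \beta}{m}$; the phase ambiguity of $v_1$ is harmless, as you note, since the integrand depends only on $\lVert \adj{(\inv{r})} v_1 \rVert_2$. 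The paper instead expresses the tangential Jacobian as an explicit Gramian $G(x)$ in block notation, reduces the Hausdorff integral to cone measure via Theorem~\ref{sa:tildes_volumen}, applies the singular value decomposition integral of Proposition~\ref{sa:integraltrafo}, reduces further from $\Orth_{m; \beta}$ to $\Sph{2, \beta}{m}$ using the coarea formula~\eqref{eq:orth_und_sfaere}, and only then identifies the remaining integral as $\Hausd^{\beta m - 1}(r \Sph{2, \beta}{m})$ by a parallel computation it sketches. Your version replaces the Gramian, the auxiliary coarea identity, and the sketched parallel computation with two instances of one classical formula --- a noticeable gain in uniformity. In Part~3 the two arguments share the key step (compute the constant tangential Jacobian of the parametrization $\Orth_{n, m; \beta} \ni u \mapsto r \adj{u}$ of $r S_\beta^{m, n}$), but you first reduce to diagonal $r$ by singular value decomposition, which tidies the bookkeeping, whereas the paper carries the SVD factors of a general $r$ through the computation and transposes at the end; the factor $2^{-\beta m (m - 1)/4}$ that you extract from $\sqrt{\det g}$ appears in the paper as the $\frac{1}{\sqrt{2}}$ normalization built into the off\-/diagonal orthonormal tangent vectors $E_{i, j}$, the same quantity in two guises.
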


\begin{proof}
Before proving the volumes, we argue the claimed set identities, and we fix the map \(f \colon \KZ^{m \times n} \to \KZ^{m \times n}\), \(x \mapsto r x\). \(f\) is linear and bijective, hence a homeomorphism, and this immediately gives \(\Rand{(r \Kug{\Schatten{\infty}{}, \beta}{m \times n})} = r \Rand{\Kug{\Schatten{\infty}{}, \beta}{m \times n}} = r \Sph{\Schatten{\infty}{}, \beta}{m \times n}\). Concerning \(r \Kug{\Schatten{\infty}{}, \beta}{m \times n}\) note that \(x \adj{x} \leq r \adj{r}\) iff \(\adj{\xi} x \adj{x} \xi \leq \adj{\xi} r \adj{r} \xi\) for all \(\xi \in \KZ^m\); now substitute \(\eta := \adj{r} \xi\), which is bijective, and then \(\adj{\eta} \inv{r} x \adj{x} \inv{(\adj{r})} \eta \leq \adj{\eta} \eta\) for all \(\eta \in \KZ^m\), which means \((\inv{r} x) \adj{(\inv{r} x)} \leq I_m\) and so, by Proposition~\ref{sa:b_gleich_spektralkugel}, \(\inv{r} x \in \Kug{\Schatten{\infty}{}, \beta}{m \times n}\), hence the claim. Concerning \(r S_\beta^{m, n}\), note \(x \adj{x} = r \adj{r}\) iff \(\inv{r} x \adj{x} \inv{(\adj{r})} = I_m\), and the claim follows.

We now turn to the volumes. The general tool is to calculate the determinant of \(f\) restricted to the tangent spaces of the respective manifolds; to that end we are going to choose orthonormal bases as fits our need.

\begin{asparaenum}
\item
Fix a singular value decomposition \(r = v \diag(s(r)) \adj{u}\). Here \(f\) is acting on the whole space \(\KZ^{m \times n}\) which is its own tangent space, and we choose as basis elements the matrices \(u e_i \trapo{e_j}\), additionally \(\ie u e_i \trapo{e_j}\) in the complex case, then
\begin{equation*}
f(u e_i \trapo{e_j}) = r u e_i \trapo{e_j} = s_i(r) v e_i \trapo{e_j}, \quad f(\ie u e_i \trapo{e_j}) = \ie r u e_i \trapo{e_j} = s_i(r) \ie v e_i \trapo{e_j}
\end{equation*}
for all \(i \in [1, m], j \in [1, n]\); and because also the \(v e_i \trapo{e_j}\)\---and \(\ie v e_i \trapo{e_j}\)\---together constitute an orthonormal basis of \(\KZ^{m \times n}\), we get
\begin{equation*}
\det(f) = \biggl( \prod_{i = 1}^m \prod_{j = 1}^n s_i(r) \biggr)^\beta = \biggl( \prod_{i = 1}^m s_i(r) \biggr)^{\beta n} = \lvert \det(r) \rvert^{\beta n},
\end{equation*}
concluding this part.

\item
From the proof of Theorem~\ref{sa:tildes_volumen} we know \(\nu(x) = v_1 \adj{u_1} = v e_1 \trapo{e_1} \adj{u}\) for \(x \in \Sph{\Schatten{\infty}{}, \beta}{m, n}\) with singular value decomposition \(x = v \diag(s(x)) \adj{u}\) such that \(s_1(x) = 1\) is isolated. Therefore an orthonormal base of the tangent space at \(x\) is given by \(v_i \adj{u_j}\) for all \((i, j) \in [1, m] \times [1, n]\) except \((i, j) = (1, 1)\)\---in the complex case additionally \(\ie v_i \adj{u_j}\) for all \((i, j) \in [1, m] \times [1, n]\). Because \(u\), \(v\) depend on \(x\) it makes no sense to fix a decomposition of \(r\) beforehand. Instead, given \(x\), define \(t := \adj{v} r v \in \KZ^{m \times m}\); then \(r v_i = \sum_{k = 1}^m t_{k, i} v_k\) for any \(i \in [1, m]\). Let \(j \in [1, n]\) and \(i \in [1, m]\), then
\begin{equation*}
f(v_i \adj{u_j}) = r v_i \adj{u_j} = \sum_{k = 1}^m t_{k, i} v_k \adj{u_j} = \sum_{k = 1}^m \Re(t_{k, i}) v_k \adj{u_j} + \sum_{k = 1}^m \Im(t_{k, i}) \ie v_k \adj{u_j},
\end{equation*}
and specially for the complex case,
\begin{equation*}
f(\ie v_i \adj{u_j}) = r v_i \adj{u_j} = -\sum_{k = 1}^m \Im(t_{k, i}) v_k \adj{u_j} + \sum_{k = 1}^m \Re(t_{k, i}) \ie v_k \adj{u_j}.
\end{equation*}
This means that for any \(j \in [1, n]\), the subspace spanned by \(v_i \adj{u_j}\) with \(i \in [1, m]\)\---additionally \(\ie v_i \adj{u_j}\) in the complex case\---is \(f\)\=/invariant and with respect to that orthonormal basis the matrix of \(f\) is given by \(t\)\---by \(\bigl( \begin{smallmatrix} \Re(t) & -\Im(t) \\ \Im(t) & \Re(t) \end{smallmatrix} \bigr)\) in the complex case. Also note that for distinct \(j\) those subspaces are mutually orthogonal. Thus for any \(j \in [2, n]\) the associated subspace makes the contribution \(\lvert \det(t) \rvert^\beta = \lvert \det(r) \rvert^\beta\) to the functional determinant, yielding a total of \(\lvert \det(r) \rvert^{\beta (n - 1)}\).

For \(j = 1\) though we lack \(f(v_1 \adj{u_1})\), therefore the matrix of \(f\) is the (\(\beta m \times (\beta m - 1)\))\-/matrix \(\mathbf{t} \bigl( \begin{smallmatrix} 0 \\ I_{\beta m - 1} \end{smallmatrix} \bigr)\), that is \(\mathbf{t}\) sans the first column; here we mean \(\mathbf{t} := t\) in the real case and \(\mathbf{t} := \bigl( \begin{smallmatrix} \Re(t) & -\Im(t) \\ \Im(t) & \Re(t) \end{smallmatrix} \bigr)\) in the complex case. The contribution to the functional determinant in this case is the Gramian
\begin{equation*}
G(x) := \det\left( (0, I_{\beta m - 1}) \trapo{\mathbf{t}} \mathbf{t} \begin{pmatrix} 0 \\ I_{\beta m - 1} \end{pmatrix} \right)^{1/2} = \det\left( (0, I_{\beta m - 1}) \trapo{\mathbf{v}} \trapo{\mathbf{r}} \mathbf{r} \mathbf{v} \begin{pmatrix} 0 \\ I_{\beta m - 1} \end{pmatrix} \right)^{1/2},
\end{equation*}
where we have used that \(a b\) corresponds to \(\mathbf{a} \mathbf{b}\), that \(\adj{a}\) corresponds to \(\trapo{\mathbf{a}}\), and that \(v \in \Orth_{m; \beta}\) corresponds to \(\mathbf{v} \in \Orth_{\beta m; 1}\). Summing up, the functional determinant of \(f\) at \(x\) is given by
\begin{equation*}
D(x) := \lvert \det(r) \rvert^{\beta (n - 1)} G(x);
\end{equation*}
note that it depends on \(x\) only through \(v\), not \(u\) nor \(s(x)\). This allows us to compute the volume of \(r \Sph{\Schatten{\infty}{}, \beta}{m, n}\) as follows, where we use Theorem~\ref{sa:tildes_volumen} to transform from Hausdorff- to cone measure,
\begin{align*}
\Hausd^{\beta m n - 1}(r \Sph{\Schatten{\infty}{}, \beta}{m, n}) &= \int_{\Sph{\Schatten{\infty}{}, \beta}{m, n}} D(x) \, \diff\!\Hausd^{\beta m n - 1}(x)\\
&= \Hausd^{\beta m n - 1}(\Sph{\Schatten{\infty}{}, \beta}{m, n}) \int_{\Sph{\Schatten{\infty}{}, \beta}{m, n}} D(x) \, \diff\KegM{\Schatten{\infty}{}, \beta}{m \times n}(x)\\
&= \frac{\Hausd^{\beta m n - 1}(\Sph{\Schatten{\infty}{}, \beta}{m, n})}{\KugVol{\Schatten{\infty}{}, \beta}{m \times n}} \int_{\Kug{\Schatten{\infty}{}, \beta}{m \times n}} D\Bigl( \frac{x}{\lVert x \rVert_{\Schatten{\infty}{}}} \Bigr) \, \diff x\\
&= \frac{\Hausd^{\beta m n - 1}(\Sph{\Schatten{\infty}{}, \beta}{m, n})}{\KugVol{\Schatten{\infty}{}, \beta}{m \times n}} \, C m! \int_{W^m} \int_{\Orth_{n; \beta}} \int_{\Orth_{m; \beta}} \Ind_{\Kug{\Schatten{\infty}{}, \beta}{m \times n}}(v \diag(\sigma) \adj{u})\\
&\qquad \cdot D\Bigl( \frac{v \diag(\sigma) \adj{u}}{\lVert v \diag(\sigma) \adj{u} \rVert_{\Schatten{\infty}{}}} \Bigr) h(\sigma) \, \diff v \, \diff u \, \diff\sigma\\
&= \Hausd^{\beta m n - 1}(\Sph{\Schatten{\infty}{}, \beta}{m, n}) \, \frac{2^{\beta m (m - 1)/4}}{\OrthVol_{m; \beta}} \, \lvert \det(r) \rvert^{\beta (n - 1)} \int_{\Orth_{m; \beta}} G(v \diag(e_1)) \, \diff v;
\end{align*}
we note a few points: in the fourth line we have abbreviated \(C := \frac{2^{\beta m/2 + \beta m (m - 1)/4 + \beta n (n - 1)/4}}{2^{\beta m n/2} m! \OrthVol_{1; \beta}^m \OrthVol_{n - m; \beta}}\) and \(h(\sigma) := \prod_{j = 1}^m \sigma_j^{\beta (n - m + 1) - 1} \prod_{1 \leq j < i \leq m} \lvert \sigma_i^2 - \sigma_j^2 \rvert^\beta\), and as we need the singular values to be in decreasing order for \(G\) we have integrated over \(W^m\), cf.\ Remark~\ref{bem:weylkammer}; for the last line we have invested the structure of \(D\) and the knowledge that \(G\) only depends on \(v\), therefore we have inserted some constant values for \(\sigma\) and \(u\), namely \(e_1\) and \(I_n\), resp. In order to identify the last integral we rely on the following transformation formula, which can be derived from the coarea formula in considering the map \(\Orth_{m; \beta} \to \Sph{2, \beta}{m}\) given by \(v \mapsto v e_1\):
\begin{equation}\label{eq:orth_und_sfaere}
\int_{\Orth_{m; \beta}} f(v) \, \diff v = \int_{\Sph{2, \beta}{m}} \int_{\Orth_{m - 1; \beta}} f\left( \hat{\theta} \begin{pmatrix} 1 & 0 \\ 0 & w \end{pmatrix} \right) \, \diff w \, \diff\!\Hausd^{\beta m - 1}(\theta),
\end{equation}
where \(\hat{\theta} \in \Orth_{m; \beta}\) is such that \(\hat{\theta} e_1 = \theta\). In our situation we need to consider, in the real case,
\begin{equation*}
\hat{\theta} \begin{pmatrix} 1 & 0 \\ 0 & w \end{pmatrix} \begin{pmatrix} 0 \\ I_{m - 1} \end{pmatrix} = \hat{\theta} \begin{pmatrix} 0 \\ I_{m - 1} \end{pmatrix} w,
\end{equation*}
and similarly in the complex case,
\begin{equation*}
\hat{\boldsymbol{\theta}} \begin{pmatrix} 1 & 0 & 0 & 0 \\ 0 & \Re(w) & 0 & -\Im(w) \\ 0 & 0 & 1 & 0 \\ 0 & \Im(w) & 0 & \Re(w) \end{pmatrix} \begin{pmatrix} 0 \\ I_{2 m - 1} \end{pmatrix} = \hat{\boldsymbol{\theta}} \begin{pmatrix} 0 \\ I_{2 m - 1} \end{pmatrix} \begin{pmatrix} \Re(w) & 0 & -\Im(w) \\ 0 & 1 & 0 \\ \Im(w) & 0 & \Re(w) \end{pmatrix}\!;
\end{equation*}
now we recall that a Gramian is orthogonally invariant, hence
\begin{align*}
\int_{\Orth_{m; \beta}} G(v \diag(e_1)) \, \diff v &= \int_{\Sph{2, \beta}{m}} \int_{\Orth_{m - 1; \beta}} \det\left( (0, I_{\beta m - 1}) \trapo{\hat{\boldsymbol{\theta}}} \trapo{\mathbf{r}} \mathbf{r} \hat{\boldsymbol{\theta}} \begin{pmatrix} 0 \\ I_{\beta m - 1} \end{pmatrix} \right)^{1/2} \, \diff w \, \diff\!\Hausd^{\beta m - 1}(\theta)\\
&= \frac{\OrthVol_{m - 1; \beta}}{2^{\beta (m - 1) (m - 2)/4}} \int_{\Sph{2, \beta}{m}} \det\left( (0, I_{\beta m - 1}) \trapo{\hat{\boldsymbol{\theta}}} \trapo{\mathbf{r}} \mathbf{r} \hat{\boldsymbol{\theta}} \begin{pmatrix} 0 \\ I_{\beta m - 1} \end{pmatrix} \right)^{1/2} \diff\!\Hausd^{\beta m - 1}(\theta).
\end{align*}
The conclusion of this part is reached through the identities
\begin{equation*}
\int_{\Sph{2, \beta}{m}} \det\left (0, I_{\beta m - 1}) \trapo{\hat{\boldsymbol{\theta}}} \trapo{\mathbf{r}} \mathbf{r} \hat{\boldsymbol{\theta}} \begin{pmatrix} 0 \\ I_{\beta m - 1} \end{pmatrix} \right)^{1/2} \diff\!\Hausd^{\beta m - 1}(\theta) = \Hausd^{\beta m - 1}(r \Sph{2, \beta}{m})
\end{equation*}
and
\begin{equation*}
\OrthVol_{m; \beta} = 2^{\beta (m - 1)/2} \, \OrthVol_{m - 1; \beta} \Hausd^{\beta m - 1}(\Sph{2, \beta}{m}),
\end{equation*}
the first of which can be proved by considering the map \(\Sph{2, \beta}{m} \to \KZ^m\) given by \(\theta \mapsto r \theta\), leading to calculations which essentially are the same as those above for \(\Sph{\Schatten{\infty}{}, \beta}{m, n}\) without writing the \(u_j\) and only considering \(j = 1\); the latter can be proved either from the volume formula~\eqref{eq:vol_stiefel} or from the transformation formula~\eqref{eq:orth_und_sfaere}.

\item
Fix a singular value decomposition \(r = v \diag(s(r)) \adj{u}\). We already know \(S_\beta^{m, n} = \{\adj{w} \Colon w \in \Orth_{n, m; \beta}\}\) and hence \(r S_\beta^{m, n} = \{\adj{(w \adj{r})} \Colon w \in \Orth_{n, m; \beta}\}\), and for any \(w \in \Orth_{n, m; \beta}\) the tangent space is spanned by the matrices \(E_{i, j}\)\---additionally \(F_{i, j}\) in the complex case\---given by
\begin{alignat*}{2}
F_{j, j} &:= \ie w u e_j \trapo{e_j} \adj{u} && \text{ for } j \in [1, m],\\
E_{i, j} &:= \frac{w}{\sqrt{2}} \, u (e_i \trapo{e_j} - e_j \trapo{e_i}) \adj{u} && \text{ for } 1 \leq j < i \leq m,\\
F_{i, j} &:= \frac{\ie w}{\sqrt{2}} \, u (e_i \trapo{e_j} + e_j \trapo{e_i}) \adj{u} && \text{ for } 1 \leq j < i \leq m,\\
E_{i, j} &:= \hat{w} e_i \trapo{e_j} \adj{u} && \text{ for } i \in [m + 1, n], j \in [1, m],\\
F_{i, j} &:= \ie \hat{w} e_i \trapo{e_j} \adj{u} && \text{ for } i \in [m + 1, n], j \in [1, m],
\end{alignat*}
where \(\hat{w} \in \Orth_{n, \beta}\) is an extension of \(w\) with suitable \(n - m\) columns. Their respective images are, in the same order,
\begin{align*}
f(F_{j, j}) &= s_j(r) \ie w u e_j \trapo{e_j} \adj{v},\\
f(E_{i, j}) &= \frac{w}{\sqrt{2}} \, u (s_j(r) e_i \trapo{e_j} - s_i(r) e_j \trapo{e_i}) \adj{v},\\
f(F_{i, j}) &= \frac{\ie w}{\sqrt{2}} \, u (s_j(r) e_i \trapo{e_j} + s_i(r) e_j \trapo{e_i}) \adj{v},\\
f(E_{i, j}) &= s_j(r) \hat{w} e_i \trapo{e_j} \adj{v},\\
f(F_{i, j}) &= s_j(r) \ie \hat{w} e_i \trapo{e_j} \adj{v}.
\end{align*}
Clearly the elements of each line are orthogonal to the other lines, and the elements of the first, fourth and fifth lines also are orthogonal among themselves and their norms are the indicated singular values. Orthogonality is even true for the second and third lines, although less obvious: let \(1 \leq j < i \leq m\) and \(1 \leq j' < i' \leq m\), then
\begin{equation*}
\langle f(E_{i, j}), f(E_{i', j'}) \rangle = \frac{s_i(r) s_{i'}(r) + s_j(r) s_{j'}(r)}{2} \delta_{i, i'} \delta_{j, j'} - \frac{s_i(r) s_{j'}(r) + s_{i'}(r) s_j(r)}{2} \delta_{i, j'} \delta_{i', j};
\end{equation*}
but for our choice of indices always \(\delta_{i, j'} \delta_{i', j} = 0\), and so orthgonality follows. The analogous calculation goes through for \(f(F_{i, j})\). Therefore the functional determinant equals
\begin{multline*}
\biggl( \prod_{j = 1}^m s_j(r) \biggr)^{\beta - 1} \biggl( \prod_{1 \leq j < i \leq m} \frac{(s_j(r)^2 + s_i(r)^2)^{1/2}}{\sqrt{2}} \biggr)^\beta \biggl( \prod_{1 \leq j \leq m < i \leq n} s_j(r) \biggr)^\beta\\
= 2^{-\beta m (m - 1)/4} \lvert \det(r) \rvert^{\beta (n - m + 1) - 1} \prod_{1 \leq j < i \leq m} (s_j(r)^2 + s_i(r))^{\beta/2}.
\end{multline*}
Since this is indpendent of \(w\), this implies directly
\begin{equation*}
\Hausd^d(\Orth_{n, m; \beta} \adj{r}) = 2^{-\beta m (m - 1)/4} \lvert \det(r) \rvert^{\beta (n - m + 1) - 1} \prod_{1 \leq j < i \leq m} (s_j(r)^2 + s_i(r))^{\beta/2} \cdot \OrthVol_{n, m; \beta},
\end{equation*}
and because adjoining is an isometry the result for \(S_\beta^{m, n}\) follows. \qedhere
\end{asparaenum}
\end{proof}

\begin{Bem}
The factor \(\Hausd^{\beta m - 1}(r \Sph{2, \beta}{m})\) in Proposition~\ref{sa:skalierte_volumen}, part~2, is the surface area of an ellipsoid, as such it cannot be expressed in terms of elementary functions of \(s(r)\) (the semiaxes of the ellipsoid).
\end{Bem}

% % % % % % % % % % % % % % % % % % % % % % % % % % % % % % % % %
\section{Probabilistic results and weak limit theorems for \(p = \infty\)}
\label{sec:wahrscheinlichkeit}
% % % % % % % % % % % % % % % % % % % % % % % % % % % % % % % % %

For all subsequent probabilistic results we suppose a sufficiently rich probability space \((\Omega, \mathcal{A}, \Wsk)\) on which all random variables that appear in the present paper are defined. The expected value of a random variable \(X\) w.r.t.\ \(\Wsk\) is denoted \(\Erw[X]\) and is to be understood component\-/wise for vector\-/valued \(X\). \(X \sim \mu\) means the law, or distribution, of \(X\) is \(\mu\), and \(\GlVert\) denotes equality in law. For \(A \subset \RZ^d\) with \(\vol_d(A) \in \RZ_{> 0}\) the uniform distribution on \(A\) (w.r.t.\ Lebesgue measure) is \(\Gleichv(A)\), and \(\Gleichv(\Orth_{n, m; \beta})\) means the normalized invariant measure on \(\Orth_{n, m; \beta}\) (see also Section~\ref{sec:auxiliar}). Convergence in law, in probability, and almost surely are denoted by \(\KiVert{}\), \(\KiWsk{}\), and \(\Kfs{}\) respectively.

The following can be considered a variant of the Schechtman\--Zinn representation of the uniform distribution on the \(\ellpe{p}{}\)\-/balls. It uses the \emph{matrix\-/variate type\=/I beta\-/distribution,} written \(\BetaMatr{m, \beta}(a, b)\) and defined as follows (see~\cite[Definition~5.2.1]{GN2000}): suppose \(a, b > \frac{\beta (m - 1)}{2}\), then \(\BetaMatr{m, \beta}(a, b)\) is the probability distribution on \(\Sym_{m, \beta}\) with density
\begin{equation*}
x \mapsto \frac{\Gamma_{m, \beta}(a + b)}{\Gamma_{m, \beta}(a) \Gamma_{m, \beta}(b)} \det(x)^{a - 1 - \beta (m - 1)/2} \det(I_m - x)^{b - 1 - \beta (m - 1)/2} \Ind_{(0, I_m)}(x),
\end{equation*}
where of course \((0, I_m) := \{x \in \Sym_{m, \beta} \Colon 0 < x < I_m\}\).

\begin{Sa}\label{sa:gleichv_b_probdarst1}
A random variable \(X \in \KZ^{m \times n}\) is distributed uniformly on \(\Kug{\Schatten{\infty}{}, \beta}{m \times n}\) iff there exist independent random variables \(R \in \KZ^{m \times m}\) and \(U \in \KZ^{n \times m}\) such that \(R \sim \BetaMatr{m, \beta}(\frac{\beta n}{2}, \frac{\beta (m - 1)}{2} + 1)\), \(U \sim \Gleichv(\Orth_{n, m; \beta})\), and
\begin{equation*}
X \GlVert R^{1/2} \, \adj{U}.
\end{equation*}
\end{Sa}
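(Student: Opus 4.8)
The plan is to derive both directions of the equivalence from a single identity of laws: the distribution of a uniform $X$ equals the image of the product measure $\BetaMatr{m,\beta}(\tfrac{\beta n}{2},\tfrac{\beta(m-1)}{2}+1)\otimes\Gleichv(\Orth_{n,m;\beta})$ under the map $(r,u)\mapsto r^{1/2}\adj{u}$. The engine is Proposition~\ref{sa:integraltrafo}, part~1 (the polar decomposition transformation). Fix a bounded measurable $g\colon\KZ^{m\times n}\to\RZ$ and apply that formula to $f:=g\cdot\Ind_{\Kug{\Schatten{\infty}{},\beta}{m\times n}}$, which is integrable because the ball has finite volume. The key simplification is that for $u\in\Orth_{n,m;\beta}$ one has $\adj{u}u=I_m$, whence $(r^{1/2}\adj{u})(r^{1/2}\adj{u})^{*}=r^{1/2}\adj{u}u\,r^{1/2}=r$; therefore, by Proposition~\ref{sa:b_gleich_spektralkugel}, the matrix $r^{1/2}\adj{u}$ lies in $\Kug{\Schatten{\infty}{},\beta}{m\times n}$ precisely when $r\leq I_m$, that is, up to the null set of matrices having a unit eigenvalue, precisely when $r\in(0,I_m)$. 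Hence
\begin{equation*}
\int_{\Kug{\Schatten{\infty}{},\beta}{m\times n}}g(x)\,\diff x=2^{-m}\int_{(0,I_m)}\biggl(\int_{\Orth_{n,m;\beta}}g(r^{1/2}\adj{u})\,\diff u\biggr)\det(r)^{\beta(n-m+1)/2-1}\,\diff r.
\end{equation*}

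Next I would replace the inner integral by an expectation against $\Gleichv(\Orth_{n,m;\beta})$ and divide by $\KugVol{\Schatten{\infty}{},\beta}{m\times n}=\int_{\Kug{\Schatten{\infty}{},\beta}{m\times n}}\diff x$, which shows that $\Erw[g(X)]$ is a fixed constant times $\int_{(0,I_m)}\Erw[g(r^{1/2}\adj{U})]\,\det(r)^{\beta(n-m+1)/2-1}\,\diff r$ with $U\sim\Gleichv(\Orth_{n,m;\beta})$. Taking $g\equiv1$ identifies that constant as the reciprocal of $\int_{(0,I_m)}\det(r)^{\beta(n-m+1)/2-1}\,\diff r$, so the law of $X$ is the pushforward under $(r,u)\mapsto r^{1/2}\adj{u}$ of the product of $\Gleichv(\Orth_{n,m;\beta})$ with the probability measure on $\Sym_{m,\beta}$ whose density is proportional to $\det(r)^{\beta(n-m+1)/2-1}\Ind_{(0,I_m)}(r)$. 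Comparing this with the displayed density of $\BetaMatr{m,\beta}(a,b)$ forces $b-1-\tfrac{\beta(m-1)}{2}=0$ and $a-1-\tfrac{\beta(m-1)}{2}=\tfrac{\beta(n-m+1)}{2}-1$, i.e.\ $b=\tfrac{\beta(m-1)}{2}+1$ and $a=\tfrac{\beta n}{2}$, and one checks $a,b>\tfrac{\beta(m-1)}{2}$ from $n\geq m$. Since proportional probability densities coincide, the asserted identity of laws holds.

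The two implications then follow immediately. If $X\sim\Gleichv(\Kug{\Schatten{\infty}{},\beta}{m\times n})$, pick independent $R\sim\BetaMatr{m,\beta}(\tfrac{\beta n}{2},\tfrac{\beta(m-1)}{2}+1)$ and $U\sim\Gleichv(\Orth_{n,m;\beta})$ on the given probability space; then $R^{1/2}\adj{U}\GlVert X$. Conversely, any such independent pair produces, by the same identity, a matrix $R^{1/2}\adj{U}$ whose law is exactly $\Gleichv(\Kug{\Schatten{\infty}{},\beta}{m\times n})$. I expect the only real difficulty to be bookkeeping: checking that $r$ is positive definite (hence $r^{1/2}$ well defined and the transformation measurable) for almost every $r$, verifying that the ``$r$ has a unit eigenvalue'' boundary is genuinely negligible, and matching the $\det(r)$-exponent emerging from the polar-decomposition formula to the normalization of the matrix-variate beta distribution; none of these is conceptually serious once written out.
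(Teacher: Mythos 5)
Your proof is correct and follows essentially the same route as the paper: both arguments hinge on Proposition~\ref{sa:integraltrafo}, part~1, use the observation $(r^{1/2}\adj{u})(r^{1/2}\adj{u})^{*}=r$ to reduce the indicator of the ball to $r\leq I_m$, and then identify the resulting density with that of $\BetaMatr{m,\beta}(\frac{\beta n}{2},\frac{\beta(m-1)}{2}+1)$ by matching exponents. You merely spell out a few bookkeeping points (the explicit invocation of Proposition~\ref{sa:b_gleich_spektralkugel}, the parameter check $a,b>\frac{\beta(m-1)}{2}$, the negligibility of the boundary $r\in\partial(0,I_m)$) that the paper leaves implicit.
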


\begin{Bem}\label{bem:prob_darst}
Note that from \(X \GlVert R^{1/2} \, \adj{U}\) follow \(R \GlVert X \adj{X}\) and \(U \GlVert \adj{X} (X \adj{X})^{-1/2}\).
\end{Bem}

\begin{proof}
The statement is a simple consequence of Proposition~\ref{sa:integraltrafo}, part~1. We have \(X \sim \Gleichv(\Kug{\Schatten{\infty}{}, \beta}{m \times n})\) iff, for any \(f \colon \KZ^{m \times n} \to \RZ\) nonnegative and measurable,
\begin{align*}
\Erw[f(X)] &= \frac{1}{\KugVol{\Schatten{\infty}{}, \beta}{m \times n}} \int_{\KZ^{m \times n}} \Ind_{\Kug{\Schatten{\infty}{}, \beta}{m \times n}}(x) f(x) \, \diff x\\
&= \frac{2^{-m}}{\KugVol{\Schatten{\infty}{}, \beta}{m \times n}} \int_{\Pos_{m, \beta}} \int_{\Orth_{n, m; \beta}} \Ind_{\Kug{\Schatten{\infty}{}, \beta}{m \times n}}(r^{1/2} \, \adj{u}) f(r^{1/2} \, \adj{u}) \, \diff u \det(r)^{\beta (n - m + 1)/2 - 1} \, \diff r\\
&= \frac{2^{-m}}{\KugVol{\Schatten{\infty}{}, \beta}{m \times n}} \int_{\Pos_{m, \beta}} \int_{\Orth_{n, m; \beta}} f(r^{1/2} \, \adj{u}) \, \diff u \Ind_{(0, I_m]}(r) \det(r)^{\beta (n - m + 1)/2 - 1} \, \diff r\\
&= \Erw[f(R^{1/2} \, \adj{U})],
\end{align*}
where \(R\) and \(U\) have the claimed joint distribution. (This is obvious for \(U\); concerning \(R\) one may compare our result with the known density.) In the last integral, \(r^{1/2} \, \adj{u} \in \Kug{\Schatten{\infty}{}, \beta}{m \times n}\) iff \(r \leq I_m\), and note that \((0, I_m] \setminus (0, I_m)\) is negligible.
\end{proof}

Another representation, related to the first one, can be obtained in the special case \(n = d m\) for some \(d \in \NZ\). Here we may identify \(\KZ^{m \times d m} \cong (\KZ^{m \times m})^d\), constisting of \(d\)\=/dimensional row vectors whose entries are from \(\KZ^{m \times m}\). Then, for \(x = (x_1, \dotsc, x_d), y = (y_1, \dotsc, y_d) \in \KZ^{m \times d m}\), \(x \adj{y} = \sum_{i = 1}^d x_i \adj{y_i}\), and hence also \(\langle x, y \rangle = \sum_{i = 1}^d \langle x_i, y_i \rangle\). It uses the \emph{matrix\-/variate type\=/I Dirichlet\-/distribution} \(\DirichletMatr{m, \beta}(a_1, \dotsc, a_d; a_{d + 1})\), with \(a_1, \dotsc, a_{d + 1} > \frac{\beta (m - 1)}{2}\), defined to be the probability distribution on \(\Sym_{m, \beta}^d\) with density (see~\cite[Definition~6.2.1]{GN2000})
\begin{equation*}
\begin{split}
(x_1, \dotsc, x_d) &\mapsto \frac{\Gamma_{m, \beta}\bigl( \sum_{i = 1}^{d + 1} a_i \bigr)}{\prod_{i = 1}^{d + 1} \Gamma_{m, \beta}(a_i)} \prod_{i = 1}^d \det(x_i)^{a_i - 1 - \beta (m - 1)/2}\\
&\qquad \cdot \det\biggl(I_m - \sum_{i = 1}^d x_i \biggr)^{a_{d + 1} - 1 - \beta (m - 1)/2} \Ind_{\Delta_{m, \beta}^d}(x_1, \dotsc, x_d),
\end{split}
\end{equation*}
where \(\Delta_{m, \beta}^d := \bigl\{ (x_1, \dotsc, x_d) \in \Sym_{m, \beta}^d \Colon x_1, \dotsc, x_d > 0 \wedge \sum_{i = 1}^d x_i < I_m \bigr\}\).

\begin{Bem}
With this componentwise interpretation the spectral norm unit ball can be written \(\Kug{\Schatten{\infty}{}, \beta}{m \times d m} = \bigl\{ (x_i)_{i \leq d} \in (\KZ^{m \times m})^d \Colon \sum_{i = 1}^d x_i \adj{x_i} \leq I_m \bigr\}\), which looks like a natural generalization of the usual \(\ellpe{2}{}\)\=/ball \(\Kug{2, \beta}{d} = \bigl\{ (x_i)_{i \leq d} \in \KZ^d \Colon \sum_{i = 1}^d \lvert x_i \rvert^2 \leq 1 \bigr\}\), replacing the scalar components with matrix\-/valued ones. In truth this was the starting point of the present investigations.
\end{Bem}

\begin{Sa}\label{sa:gleichv_b_probdarst2}
A random vector \(X \in \KZ^{m \times d m}\) is distributed uniformly on \(\Kug{\Schatten{\infty}{}, \beta}{m \times d m}\) iff there exist independent random variables \(R = (R_1, \dotsc, R_d) \in \KZ^{m \times d m}\) and \(U_1, \dotsc, U_d \in \KZ^{m \times m}\) such that \(R \sim \DirichletMatr{m, \beta}(\frac{\beta m}{2}^{[d]}; \frac{\beta (m - 1)}{2} + 1)\) (i.e., the parameter \(\frac{m}{2}\) occurs \(d\) times), \(U_i \sim \Gleichv(\Orth_{m; \beta})\) for each \(i \in \{1, \dotsc, d\}\), and
\begin{equation*}
X \GlVert (R_1^{1/2} \, U_1, \dotsc, R_d^{1/2} \, U_d).
\end{equation*}
\end{Sa}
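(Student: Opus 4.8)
The plan is to mimic the proof of Proposition~\ref{sa:gleichv_b_probdarst1}, except that now the polar\-/decomposition transformation formula (Proposition~\ref{sa:integraltrafo}, part~1) is applied, via Fubini, separately in each of the $d$ square factors of the identification $\KZ^{m \times dm} \cong (\KZ^{m \times m})^d$. For nonnegative measurable $f \colon \KZ^{m \times dm} \to \RZ$ I would start from
\[
\Erw[f(X)] = \frac{1}{\KugVol{\Schatten{\infty}{}, \beta}{m \times dm}} \int_{(\KZ^{m \times m})^d} \Ind_{\Kug{\Schatten{\infty}{}, \beta}{m \times dm}}(x_1, \dotsc, x_d) \, f(x_1, \dotsc, x_d) \, \diff x_1 \dotsm \diff x_d,
\]
and substitute $x_i = r_i^{1/2} \adj{u_i}$ with $r_i \in \Pos_{m, \beta}$ and $u_i \in \Orth_{m; \beta}$ in each factor; since here the relevant case of Proposition~\ref{sa:integraltrafo}, part~1, is ``$n = m$'', the per\-/factor Jacobian weight is $2^{-m} \det(r_i)^{\beta/2 - 1}$ and the $u_i$\-/integration runs against a Haar measure on $\Orth_{m; \beta}$.

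The key observation is that $x_i \adj{x_i} = r_i^{1/2} \adj{u_i} u_i r_i^{1/2} = r_i$, so the defining constraint $\sum_{i = 1}^d x_i \adj{x_i} \leq I_m$ of $\Kug{\Schatten{\infty}{}, \beta}{m \times dm}$ turns into $r_1, \dotsc, r_d > 0$ together with $\sum_{i = 1}^d r_i \leq I_m$, i.e.\ $(r_1, \dotsc, r_d) \in \schluss{\Delta_{m, \beta}^d}$ up to a negligible boundary. Hence $\Erw[f(X)]$ is a constant multiple of
\[
\int_{\Delta_{m, \beta}^d} \int_{\Orth_{m; \beta}^d} f(r_1^{1/2} \adj{u_1}, \dotsc, r_d^{1/2} \adj{u_d}) \, \prod_{i = 1}^d \diff u_i \, \prod_{i = 1}^d \det(r_i)^{\beta/2 - 1} \, \diff r_1 \dotsm \diff r_d.
\]
I would then compare with the matrix\-/variate Dirichlet density: for $a_1 = \dotsb = a_d = \tfrac{\beta m}{2}$ and $a_{d + 1} = \tfrac{\beta (m - 1)}{2} + 1$ one has $a_i - 1 - \tfrac{\beta (m - 1)}{2} = \tfrac{\beta}{2} - 1$ and $a_{d + 1} - 1 - \tfrac{\beta (m - 1)}{2} = 0$, so the density of $\DirichletMatr{m, \beta}(\tfrac{\beta m}{2}^{[d]}; \tfrac{\beta (m - 1)}{2} + 1)$ is proportional to $\prod_{i = 1}^d \det(r_i)^{\beta/2 - 1} \, \Ind_{\Delta_{m, \beta}^d}$, which is precisely the $r$\-/factor appearing above, while the $\Orth_{m; \beta}^d$\-/integral is with respect to $d$ independent copies of the Haar probability measure after normalization. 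Thus the image of $\Gleichv(\Kug{\Schatten{\infty}{}, \beta}{m \times dm})$ under the (a.e.\ bijective) map sending $(x_i)_i$ to its tuple of positive parts and polar unitaries has a density that factorizes into one piece proportional to the Dirichlet density and one proportional to the product Haar measure; since the whole thing is a probability measure, the constants are forced, so these pieces are independent with the asserted laws, giving $X \GlVert (R_1^{1/2} \adj{U_1}, \dotsc, R_d^{1/2} \adj{U_d})$. Because each $U_i$ is Haar on $\Orth_{m; \beta}$ we may replace $\adj{U_i}$ by $U_i$, obtaining the stated form, and running the same chain of identities backwards yields the converse.

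The only genuinely delicate point is the bookkeeping of normalizing constants --- one would have to check that $2^{dm}$, the appropriate power of the Haar mass of $\Orth_{m; \beta}$, and the ratio of multivariate gamma functions $\Gamma_{m, \beta}$ combine to reproduce $\KugVol{\Schatten{\infty}{}, \beta}{m \times dm}$. As in the proof of Proposition~\ref{sa:gleichv_b_probdarst1}, however, this can be bypassed entirely: once the two candidate marginals are pinned down up to multiplicative constants and their product is known to integrate to $1$, uniqueness of probability densities fixes everything, so no explicit evaluation of $\KugVol{\Schatten{\infty}{}, \beta}{m \times dm}$ is needed.
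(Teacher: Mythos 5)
Your proposal is correct and follows essentially the same route as the paper: applying the polar-decomposition transformation (Proposition~\ref{sa:integraltrafo}, part~1) separately to each of the $d$ square blocks, observing that the spectral-norm constraint collapses to $\sum_i r_i \leq I_m$, matching the resulting $r$-density against the matrix-variate Dirichlet density, and absorbing the $\adj{U_i}$ by Haar invariance. The remark that the normalization constants need not be computed explicitly because the pieces must form a probability density is also how the paper handles it.
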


\begin{proof}
Again we apply Proposition~\ref{sa:integraltrafo}, part~1, this time separately to each of the \(d\) components of \(\KZ^{m \times d m} = \KZ^{m \times m} \times \dotsm \times \KZ^{m \times m}\). We have \(X \sim \Gleichv(\Kug{\Schatten{\infty}{}, \beta}{m \times d m})\) iff, for any \(f \colon \KZ^{m \times d m} \to \RZ\) measurable and nonnegative,
\begin{align*}
\Erw[f(X)] &= \frac{1}{\KugVol{\Schatten{\infty}{}, \beta}{m \times d m}} \int_{\RZ^{m \times d m}} \Ind_{\Kug{\Schatten{\infty}{}, \beta}{m \times d m}}(x) f(x) \, \diff x\\
&= \frac{(2^{-m})^d}{\KugVol{\Schatten{\infty}{}, \beta}{m \times d m}} \int_{\Pos_{m, \beta}} \int_{\Orth_{m; \beta}} \dotsi \int_{\Pos_{m, \beta}} \int_{\Orth_{m; \beta}} \Ind_{\Kug{\Schatten{\infty}{}, \beta}{m \times d m}}(r_1^{1/2} \, \adj{u_1}, \dotsc, r_d^{1/2} \, \adj{u_d})\\
&\mspace{100mu} \cdot f(r_1^{1/2} \, \adj{u_1}, \dotsc, r_d^{1/2} \, \adj{u_d}) \prod_{i = 1}^d \det(r_i)^{\beta/2 - 1} \prod_{i = 1}^d (\diff u_i \, \diff r_i)\\
&= \frac{2^{-d m}}{\KugVol{\Schatten{\infty}{}, \beta}{m \times d m}} \int_{\Delta_{m, \beta}^d} \int_{\Orth_{m; \beta}^d} f(r_1^{1/2} \, u_1, \dotsc, r_d^{1/2} \, u_d) \prod_{i = 1}^d \diff u_i \prod_{i = 1}^d \det(r_i)^{\beta/2 - 1} \, \diff r\\
&= \Erw[f(R_1^{1/2} \, U_1, \dotsc, R_d^{1/2} \, U_d)],
\end{align*}
where the desired conclusion is reached by noticing a few points: firstly, \((r_1^{1/2} \, u_1, \dotsc, r_d^{1/2} \, u_d) \in \Kug{\Schatten{\infty}{}, \beta }{m \times d m}\) iff \(\sum_{i = 1}^d r_i \leq I_m\); secondly, we have transformed \(u_i \mapsto \adj{u_i}\), which has no import since we are integrating with respect to the (scaled) Haar measure; thirdly and lastly, the distribution of the \(U_i\) is obvious again, and that of \((R_1, \dotsc, R_d)\) can be read off by comparing with the known density.
\end{proof}

Next we present a Poincar\'e\--Maxwell\--Borel\-/type result. It is useful to keep the following relationships in mind. The standard Gaussian distribution on \(\RZ\) is denoted by \(\Normal{1}\), defined by its Lebesgue\-/density \(x \mapsto (2 \pi)^{-1/2} \ez^{-x^2/2}\). The standard Gaussian distribution on \(\CZ\) is denoted by \(\Normal{2}\) and defined to be the law of \(2^{-1/2} (x + \ie y)\), where \((x, y) \sim \Normal{1} \otimes \Normal{1}\). Note \(\Erw[\lvert X \rvert^2] = 1\) for \(X \sim \Normal{\beta}\). The content of the following proposition is standard from random matrix theory, especially as used in statistics; textbook references are \cite{GN2000, MathProvHau2022}.

\begin{Sa}\label{sa:gausz_gamma_beta_dirichlet}
\begin{compactenum}
\item
Let \(X \sim \Normal{\beta}^{\otimes (m \times n)}\), then \(X \adj{X} \sim \Gamma_{m, \beta}(\frac{\beta n}{2}, \frac{2}{\beta} I_m)\), where the matrix\-/variate gamma\-/distribution \(\Gamma_{m, \beta}(\alpha, b)\) with \(\alpha > \frac{\beta (m - 1)}{2}\) and \(b \in \Pos_{m, \beta}\) is defined on \(\Pos_{m, \beta}\) by its density
\begin{equation*}
x \mapsto \frac{\det(x)^{\alpha - 1 - \beta (m - 1)/2}}{\det(b)^\alpha \Gamma_{m, \beta}(\alpha)} \, \ez^{-\Spur(\inv{b} x)}.
\end{equation*}
\item
\(\BetaMatr{m, \beta}(\alpha_1, \alpha_2) = \DirichletMatr{m, \beta}(\alpha_1; \alpha_2)\).
\item
Let \(d \in \NZ\), \(\alpha_1, \dotsc, \alpha_{d + 1} > \frac{\beta (m - 1)}{2}\), \(b \in \Pos_{m, \beta}\), and \((X_i)_{i \leq d + 1} \sim \bigotimes_{i = 1}^{d + 1} \Gamma_{m, \beta}(\alpha_i, b)\). Define \(Y_i := \bigl( \sum_{j = 1}^{d + 1} X_j \bigr)^{-1/2} X_i \bigl( \sum_{j = 1}^{d + 1} X_j \bigr)^{-1/2}\) for \(i \in [1, d]\) and \(Y_{d + 1} := \sum_{i = 1}^{d + 1} X_i\), then \((Y_i)_{i \leq d}\) and \(Y_{d + 1}\) are independent, and \((Y_i)_{i \leq d} \sim \DirichletMatr{m, \beta}(\alpha_1, \dotsc, \alpha_d; \alpha_{d + 1})\) and \(Y_{d + 1} \sim \Gamma_{m, \beta}\bigl( \sum_{i = 1}^{d + 1} \alpha_i, b \bigr)\).
\item
Consequently, given \(n_1, \dotsc, n_{d + 1} \in \NZ_{\geq m}\), then \((X_i)_{i \leq d} \sim \DirichletMatr{m, \beta}(\frac{\beta n_1}{2}, \dotsc, \frac{\beta n_d}{2}; \frac{\beta n_{d + 1}}{2})\) iff there exists \((G_i)_{i \leq d + 1} \sim \bigotimes_{i = 1}^{d + 1} \Normal{\beta}^{\otimes (m \times n_i)}\) such that \(X_i \GlVert \bigl( \sum_{j = 1}^{d + 1} G_j \adj{G_j} \bigr)^{-1/2} G_i \adj{G_i} \bigl( \sum_{j = 1}^{d + 1} G_j \adj{G_j} \bigr)^{-1/2}\) for each \(i \in [1, d]\).
\end{compactenum}
\end{Sa}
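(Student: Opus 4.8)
\textit{Overview.} The plan is to prove the four parts in order, with parts~(1) and~(3) carrying the real content and parts~(2) and~(4) following as bookkeeping. Throughout recall that the density of \(\Normal{\beta}^{\otimes (m \times n)}\) at \(x \in \KZ^{m \times n}\) is \(\bigl(\tfrac{\beta}{2\pi}\bigr)^{\beta m n/2}\,\ez^{-\frac{\beta}{2}\Spur(x \adj{x})}\) — checked directly for \(\beta \in \{1,2\}\) from the definitions of \(\Normal{1}\) and \(\Normal{2}\), the point being that it depends on \(x\) only through \(\Spur(x \adj{x}) = \lVert x \rVert_{\Schatten{2}{}}^2\).

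For part~(1): let \(\phi \colon \Pos_{m,\beta} \to \RZ\) be nonnegative and measurable and apply Proposition~\ref{sa:integraltrafo}, part~1, to \(f(x) := \phi(x \adj{x})\bigl(\tfrac{\beta}{2\pi}\bigr)^{\beta m n/2}\ez^{-\frac{\beta}{2}\Spur(x \adj{x})}\). Since \(\bigl(r^{1/2} \adj{u}\bigr)\adj{\bigl(r^{1/2} \adj{u}\bigr)} = r^{1/2}(\adj{u} u) r^{1/2} = r\) for every \(u \in \Orth_{n,m;\beta}\), the integrand drops its dependence on \(u\), the inner integral is just \(\int_{\Orth_{n,m;\beta}} \diff u = 2^{-\beta m(m-1)/4}\OrthVol_{n,m;\beta}\) by~\eqref{eq:vol_stiefel}, and what remains is \(C \int_{\Pos_{m,\beta}} \phi(r) \det(r)^{\beta(n-m+1)/2 - 1}\ez^{-\frac{\beta}{2}\Spur(r)}\,\diff r\) for a constant \(C\). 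Using \(\tfrac{\beta n}{2} - 1 - \tfrac{\beta(m-1)}{2} = \tfrac{\beta(n-m+1)}{2} - 1\) and \(\Spur\bigl((\tfrac{2}{\beta} I_m)^{-1} r\bigr) = \tfrac{\beta}{2}\Spur(r)\), this integrand is a constant multiple of the \(\Gamma_{m,\beta}(\tfrac{\beta n}{2}, \tfrac{2}{\beta} I_m)\)-density; taking \(\phi \equiv 1\) (or a short computation with~\eqref{eq:vol_stiefel}) forces \(C\) to be the normalizing constant, so \(\Erw[\phi(X \adj{X})] = \int \phi \,\diff\Gamma_{m,\beta}(\tfrac{\beta n}{2}, \tfrac{2}{\beta} I_m)\). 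Here \(X \adj{X}\) is almost surely positive definite since \(m \le n\), so this determines its law.

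Part~(2) is immediate: writing out the \(\DirichletMatr{m,\beta}(\alpha_1;\alpha_2)\)-density with \(d = 1\) and noting \(\Delta_{m,\beta}^1 = (0, I_m)\) reproduces verbatim the \(\BetaMatr{m,\beta}(\alpha_1,\alpha_2)\)-density. For part~(3) the plan is a change of variables. Set \(S := \sum_{j=1}^{d+1} X_j\) and consider \(\Phi \colon (\Pos_{m,\beta})^{d+1} \to \Delta_{m,\beta}^d \times \Pos_{m,\beta}\), \((x_1,\dots,x_{d+1}) \mapsto \bigl((S^{-1/2} x_i S^{-1/2})_{i \le d}, S\bigr)\), a diffeomorphism with inverse \(\Psi(y_1,\dots,y_d,s) = \bigl(s^{1/2} y_1 s^{1/2},\dots, s^{1/2} y_d s^{1/2},\, s - \sum_{i \le d} s^{1/2} y_i s^{1/2}\bigr)\). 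To get \(\lvert \det \diff\Psi \rvert\), perform the determinant-preserving block-row operation replacing the block-row for \(x_{d+1}\) by the sum of all block-rows, which equals the block-row for \(s\); the resulting block form of \(\diff\Psi\) is block-upper-triangular with diagonal blocks the \(d\) copies of the congruence map \(y \mapsto s^{1/2} y s^{1/2}\) on \(\Sym_{m,\beta}\) and one identity block, and since that congruence map has Jacobian \(\det(s)^{\beta(m-1)/2 + 1}\) (a standard matrix-calculus fact, see~\cite{GN2000}), \(\lvert \det \diff\Psi(y,s) \rvert = \det(s)^{d(\beta(m-1)/2 + 1)}\). Pushing \(\bigotimes_{i=1}^{d+1} \Gamma_{m,\beta}(\alpha_i, b)\) forward through \(\Phi\) and using \(\det(s^{1/2} y_i s^{1/2}) = \det(s)\det(y_i)\), \(\det(x_{d+1}) = \det(s)\det(I_m - \sum_{i \le d} y_i)\), and \(\sum_i \Spur(b^{-1} x_i) = \Spur(b^{-1} s)\), the total exponent of \(\det(s)\) coming from the \(d+1\) gamma densities together with the Jacobian collapses to \(\bigl(\sum_i \alpha_i\bigr) - 1 - \tfrac{\beta(m-1)}{2}\), while the prefactors rearrange via \(\prod_i \Gamma_{m,\beta}(\alpha_i)^{-1} = \tfrac{\Gamma_{m,\beta}(\sum_i \alpha_i)}{\prod_i \Gamma_{m,\beta}(\alpha_i)}\,\Gamma_{m,\beta}(\sum_i \alpha_i)^{-1}\) and \(\prod_i \det(b)^{-\alpha_i} = \det(b)^{-\sum_i \alpha_i}\). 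Hence the joint density of \((Y_1,\dots,Y_d,Y_{d+1})\) factors as the \(\Gamma_{m,\beta}(\sum_i \alpha_i, b)\)-density in \(s\) times the \(\DirichletMatr{m,\beta}(\alpha_1,\dots,\alpha_d;\alpha_{d+1})\)-density in \((y_1,\dots,y_d)\), which at once gives independence of \((Y_i)_{i \le d}\) and \(Y_{d+1}\) and their stated marginals.

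Part~(4) follows by combining (1) and (3): for the implication \(\Leftarrow\), part~(1) gives \(G_i \adj{G_i} \sim \Gamma_{m,\beta}(\tfrac{\beta n_i}{2}, \tfrac{2}{\beta} I_m)\) independently, so part~(3) with \(\alpha_i = \tfrac{\beta n_i}{2}\) and \(b = \tfrac{2}{\beta} I_m\) yields \(\bigl((\sum_j G_j \adj{G_j})^{-1/2} G_i \adj{G_i} (\sum_j G_j \adj{G_j})^{-1/2}\bigr)_{i \le d} \sim \DirichletMatr{m,\beta}(\tfrac{\beta n_1}{2},\dots,\tfrac{\beta n_d}{2};\tfrac{\beta n_{d+1}}{2})\); for \(\Rightarrow\), pick on the underlying probability space any independent Gaussians \(G_i \sim \Normal{\beta}^{\otimes (m \times n_i)}\), and the forward direction shows the associated tuple has the same law as \((X_i)_{i \le d}\), so the displayed identities (read jointly in \(i\), which is what the argument delivers) hold. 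The one genuinely technical step is the change of variables in part~(3): keeping track of the block-triangular reduction of \(\diff\Psi\), invoking the congruence Jacobian \(\det(s)^{\beta(m-1)/2 + 1}\), and bookkeeping the powers of \(\det(s)\) so that they cancel into clean gamma and Dirichlet factors; parts~(1), (2) and~(4) are essentially mechanical once this is in place.
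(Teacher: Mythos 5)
Your proof is correct, but note that the paper does not actually prove this proposition: the surrounding text states it as ``standard from random matrix theory'' and defers to the textbook references \cite{GN2000, MathProvHau2022}. So rather than comparing against an argument in the paper, the relevant comparison is against the standard textbook derivation, and your proof is in fact that derivation, spelled out self-containedly within the paper's own notational framework.

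A few details worth confirming explicitly, which all check out. In part~(1) the exponent arithmetic \(\tfrac{\beta n}{2} - 1 - \tfrac{\beta(m-1)}{2} = \tfrac{\beta(n-m+1)}{2} - 1\) matches the power of \(\det(r)\) produced by Proposition~\ref{sa:integraltrafo}, and almost-sure invertibility of \(X\adj{X}\) (which you use implicitly to identify the law on \(\Pos_{m,\beta}\)) does indeed hold because the Gaussian law is absolutely continuous and \(m \le n\). In part~(3), the congruence Jacobian on \(\Sym_{m,\beta}\) is \(\lvert\det(a)\rvert^{\beta(m-1)+2}\), so with \(a = s^{1/2}\) you correctly get \(\det(s)^{\beta(m-1)/2 + 1}\) per copy; adding all block-rows into the last one replaces that row with \((0,\dotsc,0,I)\) because \(\sum_i x_i = s\) identically, and the resulting block structure is genuinely upper-triangular with block-diagonal \(d\times d\) leading block, so the determinant is the product you claim. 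The bookkeeping of the total power of \(\det(s)\), \(\sum_i \alpha_i - (d+1)\bigl(1 + \tfrac{\beta(m-1)}{2}\bigr) + d\bigl(\tfrac{\beta(m-1)}{2} + 1\bigr) = \sum_i\alpha_i - 1 - \tfrac{\beta(m-1)}{2}\), is right, and the factorization of normalizing constants is correct. Your reading of the ``\(X_i \GlVert \dotsb\) for each \(i\)'' in part~(4) as a joint distributional equality is the right one for the iff to hold, and it is what parts~(1) and~(3) deliver. This is a sound and complete proof, well aligned with the textbook treatment the paper points to.
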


\begin{Thm}\label{sa:pmb}
Consider \(m\) fixed, let \(k \in \NZ\), and let \((X_n)_{n \geq m}\) be a sequence of random variables such that either
\begin{compactitem}
\item \(X_n \sim \Gleichv(\Kug{\Schatten{\infty}{}, \beta}{m \times n})\) for each \(n \geq m\), or
\item \(X_n \sim \Gleichv(\Sph{\Schatten{\infty}{}, \beta}{m, n})\) for each \(n \geq m\), or
\item \(X_n \sim \Gleichv(S_\beta^{m, n})\) for each \(n \geq m\).
\end{compactitem}
Denote the projection of \(X_n\) onto the first \(k\) columns by \(X_n^{(k)}\). Then in any case
\begin{equation*}
(n^{1/2} X_n^{(k)})_{n \geq \max\{m, k\}} \KiVert{} \Normal{\beta}^{\otimes (m \times k)}.
\end{equation*}
\end{Thm}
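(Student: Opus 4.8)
The plan is to handle all three cases at once via the probabilistic representation of Proposition~\ref{sa:gleichv_b_probdarst1}, writing each $X_n$ as a uniformly distributed point on the Stiefel manifold left\-/multiplied by a positive\-/semidefinite ``nuisance'' factor that, after the $\sqrt n$\-/rescaling, degenerates to the identity.

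\emph{Unified representation.} First I would put all three cases into the shape $X_n \GlVert A_n \adj{U_n}$ with $U_n \sim \Gleichv(\Orth_{n, m; \beta})$ and a positive\-/semidefinite $A_n \in \KZ^{m \times m}$. For the ball, Proposition~\ref{sa:gleichv_b_probdarst1} gives this with $A_n = R_n^{1/2}$, $R_n \sim \BetaMatr{m, \beta}(\tfrac{\beta n}{2}, \tfrac{\beta(m - 1)}{2} + 1)$ independent of $U_n$. For $S_\beta^{m, n} = \{\adj u : u \in \Orth_{n, m; \beta}\}$: since $u \mapsto \adj u$ is a linear isometry $\KZ^{n \times m} \to \KZ^{m \times n}$ (an immediate computation with real and imaginary parts of the entries), it pushes the normalized invariant measure forward to $\Gleichv(S_\beta^{m,n})$, so $X_n \GlVert \adj{U_n}$, i.e.\ $A_n = I_m$. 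For the sphere, Theorem~\ref{sa:tildes_volumen} (valid since $\infty \in \{1, 2, \infty\}$) identifies $\Gleichv(\Sph{\Schatten{\infty}{}, \beta}{m, n})$ with the cone measure $\KegM{\Schatten{\infty}{}, \beta}{m \times n}$, and by~\eqref{eq:kegmasz_int} this is the law of $Y_n / \lVert Y_n \rVert_{\Schatten{\infty}{}}$ for $Y_n \sim \Gleichv(\Kug{\Schatten{\infty}{}, \beta}{m \times n})$; writing $Y_n \GlVert R_n^{1/2} \adj{U_n}$ and using $(R_n^{1/2} \adj{U_n})\adj{(R_n^{1/2} \adj{U_n})} = R_n^{1/2}(\adj{U_n} U_n)R_n^{1/2} = R_n$, hence $\lVert R_n^{1/2} \adj{U_n} \rVert_{\Schatten{\infty}{}} = \lambda_{\max}(R_n)^{1/2}$, yields $X_n \GlVert \tilde R_n^{1/2} \adj{U_n}$ with $\tilde R_n := R_n / \lambda_{\max}(R_n)$. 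In all cases $X_n^{(k)} \GlVert A_n (\adj{U_n})^{(k)}$, where $(\cdot)^{(k)}$ restricts to the first $k$ columns.

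\emph{The Stiefel part.} Next I would realize $U_n$ as $G_n (\adj{G_n} G_n)^{-1/2}$ with $G_n \sim \Normal{\beta}^{\otimes(n \times m)}$: this matrix lies in $\Orth_{n, m; \beta}$ a.s.\ (as $n \ge m$) and its law is $\Orth_{n; \beta}$\-/left\-/invariant because $G_n$ is, so by transitivity of that action it is $\Gleichv(\Orth_{n, m; \beta})$. Couple the $G_n$ over $n$ by fixing one infinite i.i.d.\ array of $\Normal{\beta}$\-/entries and letting $G_n$ be its top $n$ rows, with rows $g_1, g_2, \dots \in \KZ^{1 \times m}$; then the first $k$ rows $G_n^{(k)}$ equal a fixed matrix $G^{(k)}$ for $n \ge k$, and since the first $k$ columns of $\adj{U_n}$ are $\adj{(U_n^{(k)})} = (\adj{G_n} G_n)^{-1/2} \adj{(G^{(k)})}$,
\begin{equation*}
n^{1/2} (\adj{U_n})^{(k)} = \Bigl( \tfrac1n \textstyle\sum_{i = 1}^n \adj{g_i} g_i \Bigr)^{-1/2} \adj{(G^{(k)})} \Kfs{n \to \infty} I_m \cdot \adj{(G^{(k)})} = \adj{(G^{(k)})},
\end{equation*}
by the strong law of large numbers ($\Erw[\adj{g_i} g_i] = I_m$) together with continuity of $x \mapsto x^{-1/2}$ on $\Pos_{m, \beta}$; and $\adj{(G^{(k)})} \sim \Normal{\beta}^{\otimes(m \times k)}$ because conjugation preserves the standard (real or complex) Gaussian law.

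\emph{The nuisance factor and conclusion.} It remains to show $A_n \to I_m$ a.s.\ in the ball and sphere cases. Using the Gaussian representation of the matrix\-/variate beta distribution (Proposition~\ref{sa:gausz_gamma_beta_dirichlet}, parts~2 and~4), $R_n \GlVert (H_n \adj{H_n} + H' \adj{H'})^{-1/2} H_n \adj{H_n} (H_n \adj{H_n} + H' \adj{H'})^{-1/2}$ with $H_n \sim \Normal{\beta}^{\otimes(m \times n)}$ and $H'$ of fixed size, whence $\tfrac1n H_n \adj{H_n} \to I_m$ and $\tfrac1n H' \adj{H'} \to 0$ a.s.\ give $R_n \to I_m$ a.s., hence also $R_n^{1/2} \to I_m$, $\lambda_{\max}(R_n) \to 1$, and $\tilde R_n^{1/2} \to I_m$ a.s. Placing the independent arrays defining $R_n$ and $U_n$ on one probability space, the matrix $A_n \cdot n^{1/2}(\adj{U_n})^{(k)}$ has, for each $n$, the law of $n^{1/2} X_n^{(k)}$ and converges a.s.\ to $I_m \cdot \adj{(G^{(k)})}$, so $n^{1/2} X_n^{(k)} \KiVert{} \Normal{\beta}^{\otimes(m \times k)}$. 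The only genuinely analytic ingredient is the law of large numbers that flattens $R_n$ and $\tfrac1n \adj{G_n} G_n$ to $I_m$; the main obstacle is organizational — reconciling the invariant normalizations across the three cases and checking that taking square roots and extracting columns interacts correctly with the decomposition $X_n \GlVert A_n \adj{U_n}$.
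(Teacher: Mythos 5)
Your proposal is correct and follows essentially the same route as the paper: realize \(U_n \sim \Gleichv(\Orth_{n,m;\beta})\) via the Gaussian representation \(G_n(\adj{G_n}G_n)^{-1/2}\) (the paper's \(\adj{G_n}(G_n\adj{G_n})^{-1/2}\) is its adjoint), couple over \(n\) through one infinite Gaussian array so the strong law of large numbers applies, and use the Gaussian representation of the matrix\-/variate beta distribution from Proposition~\ref{sa:gausz_gamma_beta_dirichlet} to push the nuisance factor \(R_n\) to \(I_m\). The one genuine variation is your treatment of the sphere: the paper extracts the radial density of \(\lVert Y_n \rVert_{\Schatten{\infty}{}}\) from \(K\)\-/symmetry (Proposition~\ref{sa:k_symm_masz_unabh}, Remark~\ref{bem:k_symm}) and then runs a distributional\-/to\-/almost\-/sure argument via Borel--Cantelli, whereas you bypass this by observing \(\lVert Y_n \rVert_{\Schatten{\infty}{}} = \lambda_{\max}(R_n)^{1/2}\) and reusing the a.s.\ convergence \(R_n \to I_m\) already obtained for the ball, which is slightly more direct. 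Otherwise the unified framing \(X_n \GlVert A_n \adj{U_n}\) is cosmetic: it packages the paper's three consecutive cases (\(S_\beta^{m,n}\), ball, sphere) into one statement without changing the ingredients.
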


\begin{proof}
We begin with the case \(X_n \sim \Gleichv(S_\beta^{m, n})\), then \(\adj{X_n} \sim \Gleichv(\Orth_{n, m; \beta})\). Let \((g_n)_{n \geq 1}\) be an i.i.d.\-/sequence of \(\Normal{\beta}^{\otimes m}\)\-/distributed vectors and set \(G_n := (g_1, \dotsc, g_n)\) for \(n \geq 1\), then \(G_n \sim \Normal{\beta}^{\otimes (m \times n)}\), and by Proposition~\ref{sa:simul_gleich_stiefel} we have \(\adj{X_n} \GlVert \adj{G_n} (G_n \adj{G_n})^{-1/2}\). It follows that
\begin{equation*}
X_n^{(k)} \GlVert (G_n \adj{G_n})^{-1/2} G_k.
\end{equation*}
By the strong law of large numbers,
\begin{equation}\label{eq:pmb_stggz}
\frac{1}{n} \, G_n \adj{G_n} = \frac{1}{n} \sum_{i = 1}^n g_i \adj{g_i} \Kfs{n \to \infty} \Erw[g_1 \adj{g_1}] = I_m,
\end{equation}
and therefore,
\begin{equation*}
n^{1/2} X_n^{(k)} \GlVert \Bigl( \frac{1}{n} \, G_n \adj{G_n} \Bigr)^{-1/2} G_k \Kfs{n \to \infty} G_k,
\end{equation*}
which proves the claim.

We proceed with the case \(X_n \sim \Gleichv(\Kug{\Schatten{\infty}{}, \beta}{m \times n})\), then by Proposition~\ref{sa:gleichv_b_probdarst1}
\begin{equation*}
X_n \GlVert R_n^{1/2} \, \adj{U_n}
\end{equation*}
with independent random variables \(R_n \sim \BetaMatr{m, \beta}(\frac{\beta n}{2}, \frac{\beta (m - 1)}{2} + 1)\) and \(U_n \sim \Gleichv(\Orth_{n, m; \beta})\). The first case implies
\begin{equation*}
(n^{1/2} {\adj{U_n}}^{(k)})_{n \geq \max\{m, k\}} \KiVert{} G_k.
\end{equation*}
Using the probabilistic representation of \(\BetaMatr{m, \beta}(\frac{\beta n}{2}, \frac{\beta (m - 1)}{2} + 1)\) from Proposition~\ref{sa:gausz_gamma_beta_dirichlet} we can write
\begin{equation*}
R_n \GlVert (G_n \adj{G_n} + H)^{-1/2} G_n \adj{G_n} (G_n \adj{G_n} + H)^{-1/2},
\end{equation*}
where \(H \sim \Gamma_{m, \beta}(\frac{\beta (m - 1)}{2} + 1, \frac{2}{\beta} I_m)\) is independent from \((g_j)_{j \in \NZ}\). Since \((n^{-1} H)_{n \geq 1} \Kfs{} 0\), together with Equation~\eqref{eq:pmb_stggz} we infer
\begin{equation*}
(R_n)_{n \geq m} \KiWsk{} I_m,
\end{equation*}
and therewith we conclude
\begin{equation*}
n^{1/2} X_n^{(k)} \GlVert R_n^{1/2} \, n^{1/2} {\adj{U_n}}^{(k)} \KiVert{n \to \infty} G_k,
\end{equation*}
as desired.

Lastly we consider the case \(X_n \sim \Gleichv(\Sph{\Schatten{\infty}{}, \beta}{m, n}) = \KegM{\Schatten{\infty}{}, \beta}{m \times n}\). Let \(Y_n \sim \Gleichv(\Kug{\Schatten{\infty}{}, \beta}{m \times n})\), then by Proposition~\ref{sa:k_symm_masz_unabh},
\begin{equation*}
X_n \GlVert \frac{Y_n}{\lVert Y_n \rVert_{\Schatten{\infty}{}}}
\end{equation*}
and by the same proposition and by Remark~\ref{bem:k_symm}, part~1, the distribution of \(\lVert Y_n \rVert_{\Schatten{\infty}{}}\) has density \(r \mapsto \beta m n r^{\beta m n - 1} \Ind_{[0, 1]}(r)\). The latter fact implies \(\lVert Y_n \rVert_{\Schatten{\infty}{}}^{\beta m n} \sim \Gleichv([0, 1])\) and therefore
\begin{equation*}
(\lVert Y_n \rVert_{\Schatten{\infty}{}})_{n \geq m} \Kfs{} 1.
\end{equation*}
(First use \(\lVert Y_n \rVert_{\Schatten{\infty}{}} \GlVert U^{1/(\beta m n)}\) with \(U \sim \Gleichv([0, 1])\) to get convergence in distribution; because the limit is constant, promote to convergence in probability; use Borel\--Cantelli to obtain almost sure convergence.) This finally leads to
\begin{equation*}
n^{1/2} X_n^{(k)} \GlVert \frac{n^{1/2} Y_n^{(k)}}{\lVert Y_n \rVert_{\Schatten{\infty}{}}} \KiVert{n \to \infty} G_k,
\end{equation*}
and the proof is complete.
\end{proof}

\begin{Bem}
Essentially the same result, but using a very different method, for the case of \(\Gleichv(S_\beta^{m, n})\) has been obtained recently by Petrov and Vershik~\cite[Theorem~3.1]{PetrovVershik2021UE}.
\end{Bem}

The next proposition has a nice geometric interpretation: pick two points independently from either \(\Kug{\Schatten{\infty}{}, \beta}{m \times n}\), \(\Sph{\Schatten{\infty}{}, \beta}{m, n}\), or \(S_\beta^{m, n}\), then they will be nearly orthogonal, when the dimension is high. This will be formulated as a distributional limit theorem.

\begin{Thm}\label{sa:zgs_inneresprodukt}
Let \((X_n)_{n \geq m}\) and \((Y_n)_{n \geq m}\) be sequences of random variables such that individually their distributions satisfy the same conditions as in Theorem~\ref{sa:pmb}, and additionally let \(\{X_n, Y_n\}\) be independent for each \(n \geq m\). Then
\begin{equation*}
(n^{1/2} X_n \adj{Y_n})_{n \geq m} \KiVert{} \Normal{\beta}^{\otimes (m \times m)},
\end{equation*}
and
\begin{equation*}
((\beta \inv{m} \, n)^{1/2} \langle X_n, Y_n \rangle)_{n \geq m} \KiVert{} \Normal{1}.
\end{equation*}
\end{Thm}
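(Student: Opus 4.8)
The plan is to reduce the entire statement to a multivariate central limit theorem for sums of independent rank-one Gaussian matrices, following the same chain of reductions already used in the proof of Theorem~\ref{sa:pmb}.

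First I would observe that, among the nine combinations of hypotheses on $(X_n)_{n \geq m}$ and $(Y_n)_{n \geq m}$, it suffices to treat the single case $X_n, Y_n \sim \Gleichv(S_\beta^{m, n})$. Indeed, on the ball Proposition~\ref{sa:gleichv_b_probdarst1} gives $X_n \GlVert R_n^{1/2}\,\adj{U_n}$ with $R_n$ and $U_n$ independent, $R_n \KiWsk{n \to \infty} I_m$ (this was shown in the proof of Theorem~\ref{sa:pmb}), and $\adj{U_n}$ distributed as $\Gleichv(S_\beta^{m, n})$; on the sphere, $X_n \GlVert Y_n/\lVert Y_n \rVert_{\Schatten{\infty}{}}$ with $\lVert Y_n \rVert_{\Schatten{\infty}{}} \Kfs{n \to \infty} 1$. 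Applying these representations to $X_n$ and $Y_n$ separately (they are independent), the correction factors are scalars or $(m \times m)$-matrices --- recall $m$ is fixed --- converging in probability to $1$, resp.\ $I_m$, and since matrix multiplication is continuous, Slutsky's theorem shows that they do not affect the limiting law of $n^{1/2} X_n \adj{Y_n}$.

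So assume $X_n, Y_n \sim \Gleichv(S_\beta^{m, n})$ and independent. Since $\adj{X_n} \sim \Gleichv(\Orth_{n, m; \beta})$, Proposition~\ref{sa:simul_gleich_stiefel} lets me write $X_n \GlVert (G_n \adj{G_n})^{-1/2} G_n$ and, independently, $Y_n \GlVert (H_n \adj{H_n})^{-1/2} H_n$, where $G_n = (g_1, \dotsc, g_n)$ and $H_n = (h_1, \dotsc, h_n)$ are built from two independent i.i.d.\ sequences $(g_i)_i$, $(h_i)_i$ of $\Normal{\beta}^{\otimes m}$-vectors. Consequently
\begin{equation*}
n^{1/2} X_n \adj{Y_n} \GlVert \Bigl( \tfrac{1}{n} G_n \adj{G_n} \Bigr)^{-1/2} \cdot \frac{1}{n^{1/2}} \sum_{i = 1}^n g_i \adj{h_i} \cdot \Bigl( \tfrac{1}{n} H_n \adj{H_n} \Bigr)^{-1/2},
\end{equation*}
and by the strong law of large numbers, exactly as in~\eqref{eq:pmb_stggz}, the two outer factors converge a.s.\ to $I_m$. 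The middle factor is the normalized sum of the i.i.d., centred (by independence of $g_i$ and $h_i$), square-integrable matrices $g_i \adj{h_i}$, so by the classical central limit theorem it converges in distribution to a Gaussian $Z$ on $\KZ^{m \times m}$; I would then compute $\Erw[(g_1)_a \konj{(h_1)_b}\, \konj{(g_1)_c}\, (h_1)_d]$, and for $\beta = 2$ also $\Erw[(g_1)_a \konj{(h_1)_b}\, (g_1)_c\, (h_1)_d]$, using $\Erw[\lvert (g_1)_a \rvert^2] = 1$ and (in the complex case) $\Erw[(g_1)_a^2] = 0$, to check that the covariance structure of $Z$ is precisely that of $\Normal{\beta}^{\otimes(m \times m)}$ --- in particular, for $\beta = 2$ the pseudo-covariances vanish, so the complex Gaussian limit indeed has i.i.d.\ entries. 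Combining this with the a.s.\ convergence of the outer factors and the continuous mapping theorem (the map $(A, B, C) \mapsto A^{-1/2} C B^{-1/2}$ is continuous near $(I_m, I_m, \cdot)$) yields $n^{1/2} X_n \adj{Y_n} \KiVert{} I_m Z I_m = Z$, which is the first assertion. I expect this covariance identification (together with the check that the limit has independent entries for $\beta = 2$) to be the only genuinely delicate point.

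For the second assertion I would apply the continuous map $x \mapsto \Re\Spur(x)$, obtaining $n^{1/2} \langle X_n, Y_n \rangle = \Re\Spur(n^{1/2} X_n \adj{Y_n}) \KiVert{} \sum_{a = 1}^m \Re(Z_{a, a})$, where $Z_{1, 1}, \dotsc, Z_{m, m}$ are i.i.d.\ $\Normal{\beta}$. For $\beta = 1$ this limit is $\mathcal{N}(0, m)$, and for $\beta = 2$ each $\Re(Z_{a, a})$ is $\mathcal{N}(0, \tfrac{1}{2})$, so the limit is $\mathcal{N}(0, \tfrac{m}{2})$; in either case the variance equals $m/\beta$, whence rescaling by $(\beta m^{-1} n)^{1/2}$ produces a standard normal. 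Everything else is routine bookkeeping of correction factors via Slutsky's theorem, parallel to the proof of Theorem~\ref{sa:pmb}.
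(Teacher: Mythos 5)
Your proposal is correct and follows essentially the same route as the paper: reduce to the Stiefel case via the representations from Proposition~\ref{sa:gleichv_b_probdarst1} and Slutsky, write $X_n \GlVert (G_n \adj{G_n})^{-1/2} G_n$ and $Y_n \GlVert (G_n'\adj{(G_n')})^{-1/2} G_n'$, kill the outer factors by the strong law of large numbers as in \eqref{eq:pmb_stggz}, and apply the multivariate CLT to $n^{-1/2}\sum_{j} g_j \adj{(g_j')}$, with the second assertion following by applying $\Re\Spur$. The only cosmetic difference is that you propose checking the limiting covariance by direct moment computations, where the paper invokes the Kronecker-product identity $\Cov[\Vek(g_1 \adj{(g_1')})] = \konj{\Cov[g_1']} \otimes \Cov[g_1] = I_{m^2}$ and $\Rel[\Vek(g_1 \adj{(g_1')})] = 0$, which is a compact way of doing the same thing.
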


\begin{proof}
We use the same notation as in the proof of Theorem~\ref{sa:pmb}. Note that the second convergence is an almost immediate consequence of the first one: apply the real part of the trace to get
\begin{equation*}
\frac{(\beta n)^{1/2}}{m^{1/2}} \langle X_n, Y_n \rangle = \frac{\beta^{1/2}}{m^{1/2}} \Re \Spur(n^{1/2} X_n \adj{Y_n}) \KiVert{n \to \infty} \frac{\beta^{1/2}}{m^{1/2}}\Re \Spur(G_m) = \frac{\beta^{1/2}}{m^{1/2}} \sum_{i = 1}^m \Re(g_{i, i}) \sim \Normal{1},
\end{equation*}
since the \(\Re(g_{i, i})\) are i.i.d.\ Gaussian with zero mean and variance \(\frac{1}{\beta}\).

Concerning the first statement, it suffices to consider the case \(X_n, Y_n \sim \Gleichv(S_\beta^{m, n})\); then use the same arguments as in the proof of Theorem~\ref{sa:pmb} to get rid of any `radial' components of the random variables. Write
\begin{equation*}
X_n \GlVert (G_n \adj{G_n})^{-1/2} G_n \quad \text{and} \quad Y_n \GlVert (G_n' \adj{(G_n')})^{-1/2} G_n',
\end{equation*}
where \(G_n' = (g_1', \dotsc, g_n')\) is an independent copy of \(G_n\), to obtain
\begin{align*}
n^{1/2} X_n \adj{Y_n} &\GlVert n^{1/2} (G_n \adj{G_n})^{-1/2} G_n \adj{(G_n')} (G_n' \adj{(G_n')})^{-1/2}\\
&= \Bigl( \frac{1}{n} G_n \adj{G_n} \Bigr)^{-1/2} \Bigl( \frac{1}{\sqrt{n}} G_n \adj{(G_n')} \Bigr) \Bigl( \frac{1}{n} G_n' \adj{(G_n')} \Bigr)^{-1/2}.
\end{align*}
Because of Equation~\eqref{eq:pmb_stggz} the square root terms are accounted for and they do not contribute to the limit. The middle term can be written out,
\begin{equation*}
G_n \adj{(G_n')} = \sum_{j = 1}^n g_j \adj{(g_j')},
\end{equation*}
and this is a sum of i.i.d.\ random variables; the summands are centred because of
\begin{equation*}
\Erw[g_1 \adj{g_1'}] = \Erw[g_1] \adj{\Erw[g_j']} = o \trapo{o} = 0,
\end{equation*}
their covariance matrix equals
\begin{align*}
\Cov[\Vek(g_1 \adj{(g_1')})] = \Erw[\Vek(g_1 \adj{(g_1')}) \adj{\Vek(g_1 \adj{(g_1')})}] = \konj{\Cov[g_1']} \otimes \Cov[g_1] = I_m \otimes I_m = I_{m^2},
\end{align*}
and in the complex case their pseudo\-/covariance matrix (also called relation matrix) equals
\begin{equation*}
\Rel[\Vek(g_1 \adj{(g_1')})] = \Erw[\Vek(g_1 \adj{(g_1')}) \trapo{\Vek(g_1 \adj{(g_1')})}] = \konj{\Rel[g_1']} \otimes \Rel[g_1] = 0 \otimes 0 = 0.
\end{equation*}
Therefore by the multivariate central limit theorem
\begin{equation*}
\frac{1}{\sqrt{n}} G_n \adj{(G_n')} \KiVert{n \to \infty} G_m.
\end{equation*}
This finally yields
\begin{equation*}
n^{1/2} X_n \adj{Y_n} \KiVert{n \to \infty} G_m. \qedhere
\end{equation*}
\end{proof}

% % % % % % % % % % % % % % % % % % % % % % % % % % % % % % % % %
\section{Theory of \(K\)\-/symmetric measures}
\label{sec:symmetrischemasze}
% % % % % % % % % % % % % % % % % % % % % % % % % % % % % % % % %

Referring back to Remark~\ref{bem:prob_darst}, we see that if \(X \sim \Gleichv(\Kug{\Schatten{\infty}{}, \beta}{m \times n})\), then \(\adj{X} (X \adj{X})^{-1/2} \sim \Gleichv(\Orth_{n, m; \beta})\). But it is also well known that if \(X \sim \Normal{1}^{\otimes (m \times n)}\), then \(\trapo{X} (X \trapo{X})^{-1/2} \sim \Gleichv(\Orth_{n, m; 1})\) too (see~\cite[Theorem~2.2.\ and top of p.~270]{Chikuse1990}, e.g.). This can be generalized. But first we need to introduce a notion of symmetry of a measure.

\begin{Def}\label{def:k_symm_masz}
Let \(d \in \NZ\) and let \(K \subset \RZ^d\) be a star body. A Borel\-/measure \(\mu\) on \(\RZ ^d\) is called \emph{\(K\)\=/symmetric} iff there exists a Borel\-/measure \(\rho\) on \(\RZ_{\geq 0}\) such that for any measurable, nonnegative map \(f \colon \RZ^d \to \RZ\),
\begin{equation*}
\int_{\RZ^d} f(x) \, \diff\mu(x) = \int_{\RZ_{\geq 0}} \int_{\Rand K} f(r \theta) \, \diff\KegM{K}{}(\theta) \, \diff\rho(r),
\end{equation*}
where recall that \(\KegM{K}{}\) denotes the normalized cone measure on \(\Rand K\).
\end{Def}

\begin{Bem}\label{bem:k_symm}
\begin{asparaenum}
\item
By the polar integration formula~\eqref{eq:kegmasz_polarint} the uniform distribution on \(K\) is \(K\)\=/symmetric with \(\frac{\diff\rho}{\diff r}(r) = d r^{d - 1} \Ind_{[0, 1]}(r)\); and \(\KegM{K}{}\) itself is \(K\)\=/symmetric with \(\rho = \delta_1\).

\item
Given a \(K\)\=/symmetric measure \(\mu\), the `radial measure' \(\rho\) as introduced above is unique. (This can be proved by inserting \(f = \Ind_A\) with \(A = \{r \theta \Colon r \in A_1, \theta \in \Rand{K}\}\) for some Borel\-/set \(A_1 \subset \RZ_{\geq 0}\).)
\end{asparaenum}
\end{Bem}

That Definition~\ref{def:k_symm_masz} captures the symmetry of \(K\) in a natural way is argued by the following statement which tells us that the density of a \(K\)\=/symmetric measure is constant on the dilates of \(\Rand{K}\).

\begin{Sa}\label{sa:k_symm_absstet}
Let \(d\), \(K\) be as in Definition~\ref{def:k_symm_masz}, and let \(\mu\) be absolutely continuous w.r.t.\ \(\vol_d\). Then \(\mu\) is \(K\)\=/symmetric iff its \(\vol_d\)\=/density is of the form \(f \circ \lvert \cdot \rvert_K\) with some measurable, nonnegative \(f \colon \RZ_{\geq 0} \to \RZ\); and then \(\frac{\diff\rho(r)}{\diff r} = d \vol_d(K) f(r) r^{d - 1}\).
\end{Sa}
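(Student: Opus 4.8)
The plan is to derive both implications directly from the polar integration formula~\eqref{eq:kegmasz_polarint}, after recording the elementary fact that, since $K$ is a star body with $0$ in its interior and continuous Minkowski functional, the polar-coordinate map $x \mapsto (\lvert x \rvert_K,\, x/\lvert x \rvert_K)$ is a homeomorphism of $\RZ^d \setminus \{0\}$ onto $(0,\infty) \times \Rand K$; in particular $x/\lvert x \rvert_K \in \Rand K$ is well defined and Borel for $x \neq 0$, and $\KegM{K}{}(\Rand K) = 1$.

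\emph{Direction ``$\Leftarrow$''.} Suppose the $\vol_d$\-/density of $\mu$ equals $f \circ \lvert \cdot \rvert_K$. Given nonnegative measurable $h \colon \RZ^d \to \RZ$, apply~\eqref{eq:kegmasz_polarint} to the integrand $x \mapsto h(x) f(\lvert x \rvert_K)$; since $\lvert r\theta \rvert_K = r$ for $\theta \in \Rand K$, this gives
\[
\int_{\RZ^d} h \, \diff\mu = d\vol_d(K) \int_{\RZ_{\geq 0}} \int_{\Rand K} h(r\theta) \, \diff\KegM{K}{}(\theta) \, f(r) r^{d-1} \, \diff r,
\]
which is exactly the identity in Definition~\ref{def:k_symm_masz} with $\diff\rho(r) = d\vol_d(K) f(r) r^{d-1} \, \diff r$; by uniqueness of the radial measure (Remark~\ref{bem:k_symm}, part~2) this is the radial measure of $\mu$, proving the asserted formula for $\rho$.

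\emph{Direction ``$\Rightarrow$''.} Fix a Borel representative $g$ of the $\vol_d$\-/density of $\mu$ and let $\rho$ be its radial measure. Applying~\eqref{eq:kegmasz_polarint} to $h \cdot g$ and comparing with Definition~\ref{def:k_symm_masz} yields, for all nonnegative measurable $h$,
\[
d\vol_d(K) \int_{\RZ_{\geq 0}}\! \int_{\Rand K}\! h(r\theta) g(r\theta) r^{d-1} \, \diff\KegM{K}{}(\theta) \, \diff r = \int_{\RZ_{\geq 0}}\! \int_{\Rand K}\! h(r\theta) \, \diff\KegM{K}{}(\theta) \, \diff\rho(r).
\]
Specialising to products $h(x) = \psi(\lvert x \rvert_K)\,\chi(x/\lvert x \rvert_K)$ turns this into an identity of Borel measures in $r$: with $\chi \equiv 1$ it shows $\rho \ll \vol_1$ with $\tfrac{\diff\rho}{\diff r}(r) = d\vol_d(K)\, r^{d-1}\,\bar g(r)$, where $\bar g(r) := \int_{\Rand K} g(r\theta)\,\diff\KegM{K}{}(\theta)$ is measurable by Tonelli and finite for a.e.\ $r$ (as $\mu$ is locally finite off the origin); feeding this density back in and cancelling $d\vol_d(K)\, r^{d-1}$ gives $\int_{\Rand K} \chi(\theta)\bigl(g(r\theta) - \bar g(r)\bigr)\,\diff\KegM{K}{}(\theta) = 0$ for a.e.\ $r > 0$. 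Letting $\chi$ range over a countable family of Borel functions generating the Borel $\sigma$\-/algebra of the compact metric space $\Rand K$ (so the exceptional $r$\-/set is independent of $\chi$) yields $g(r\theta) = \bar g(r)$ for a.e.\ $r > 0$ and $\KegM{K}{}$\-/a.e.\ $\theta$. Putting $f := \bar g$ and invoking~\eqref{eq:kegmasz_polarint} once more shows $\{x : g(x) \neq f(\lvert x \rvert_K)\}$ is $\vol_d$\-/null, so $f \circ \lvert \cdot \rvert_K$ is a density of $\mu$, and the formula for $\rho$ matches the one found above.

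I expect the only real obstacle to be the measure\-/theoretic step in ``$\Rightarrow$'': passing from the family of integral identities to the pointwise statement $g(r\theta) = \bar g(r)$ requires avoiding an $r$\-/null exceptional set that depends on the angular test function $\chi$, which is precisely what the countable generating family on $\Rand K$ is for; the absolute continuity of $\rho$ should be extracted first (the $\chi \equiv 1$ case) so that one may legitimately cancel and compare Lebesgue densities in $r$.
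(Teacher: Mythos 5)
Your proof is correct, and the overall strategy matches the paper's: both implications are read off the polar integration formula, and in the $\Rightarrow$ direction one first extracts the radial density $\tfrac{\diff\rho}{\diff r}(r) = d\vol_d(K)\,r^{d-1}\bar g(r)$ by specialising to radial test functions. Where you part ways from the paper is in how you finish $\Rightarrow$. After feeding the radial density back in, you keep everything disintegrated over $(0,\infty)\times\Rand K$, derive the fibrewise identity $\int_{\Rand K}\chi\,(g(r\cdot)-\bar g(r))\,\diff\KegM{K}{}=0$ for a.e.\ $r$, and then climb back to a pointwise statement via a countable family of angular test functions. The paper instead applies~\eqref{eq:kegmasz_polarint} \emph{backwards} at that point: since $f(r)=f(\lvert r\theta\rvert_K)$ on $\Rand K$, the double integral $d\vol_d(K)\int\int h(r\theta)f(\lvert r\theta\rvert_K)\,\diff\KegM{K}{}\,r^{d-1}\,\diff r$ is recognised as $\int_{\RZ^d}h(x)f(\lvert x\rvert_K)\,\diff x$, so one obtains $\int hF = \int h\,(f\circ\lvert\cdot\rvert_K)$ for all nonnegative measurable $h$ and reads off $F=f\circ\lvert\cdot\rvert_K$ $\vol_d$\=/a.e.\ directly. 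This is shorter and sidesteps the measure\-/theoretic step you flag as the main obstacle: no disintegration, no uniformisation of exceptional $r$\-/null sets, no countable generating family on $\Rand K$. If you do keep your route, be a bit more careful with the phrase ``countable family of Borel functions generating the Borel $\sigma$\-/algebra'' --- generating the $\sigma$\-/algebra is not by itself enough to conclude that $\int\chi_i\phi\,\diff\KegM{K}{}=0$ for all $i$ forces $\phi=0$ a.e.; you want, say, a countable dense subset of $C(\Rand K)$, or indicators of a countable algebra generating $\Borel(\Rand K)$, and then a Riesz or $\pi$\--$\lambda$ argument.
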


\begin{proof}
\(\Rightarrow\): Call \(F := \frac{\diff\mu}{\diff\!\vol_d}\). Let \(h \colon \RZ^d \to \RZ\) be measurable and nonnegative. Then, with a suitable measure \(\rho\),
\begin{align}
\int_{\RZ_{\geq 0}} \int_{\Rand{K}} h(r \theta) \, \diff\KegM{K}{}(\theta) \, \diff\rho(r) &= \int_{\RZ^d} h(x) \, \diff\mu(x) = \int_{\RZ^d} h(x) F(x) \, \diff x \label{eq:k_symm_polar1}\\
&= d \vol_d(K) \int_{\RZ_{\geq 0}} \int_{\Rand{K}} h(r \theta) F(r \theta) \, \diff\KegM{K}{}(\theta) \, r^{d - 1} \, \diff r. \notag
\end{align}
Choose \(h = h_1 \circ \lvert \cdot \rvert_K\) with some measurable, nonnegative \(h_1 \colon \RZ_{\geq 0} \to \RZ\), then \(h(r \theta) = h_1(r)\) for any \(r > 0\) and \(\theta \in \Rand{K}\), and therewith
\begin{equation}\label{eq:k_symm_polar2}
\int_{\RZ_{\geq 0}} h_1(r) \, \diff\rho(r) = d \vol_d(K) \int_{\RZ_{\geq 0}} h_1(r) \int_{\Rand{K}} F(r \theta) \, \diff\KegM{K}{}(\theta) \, r^{d - 1} \, \diff r.
\end{equation}
Define \(f(r) := \int_{\Rand{K}} F(r \theta) \, \diff\KegM{K}{}(\theta)\) for \(r \geq 0\), then Equation~\ref{eq:k_symm_polar2} tells us \(\frac{\diff\rho}{\diff r}(r) = d \vol_d(K) f(r) r^{d - 1}\), and going back to Equation~\ref{eq:k_symm_polar1}, again with \(h\) arbitrary, we have
\begin{align*}
\int_{\RZ^d} h(x) F(x) \, \diff x &= \int_{\RZ_{\geq 0}} \int_{\Rand{K}} h(r \theta) \, \diff\KegM{K}{}(\theta) \, d \vol_d(K) f(r) r^{d - 1} \, \diff r\\
&= d \vol_d(K) \int_{\RZ_{\geq 0}} \int_{\Rand{K}} h(r \theta) f(\lvert r \theta \rvert_K) \, \diff\KegM{K}{}(\theta) \, r^{d - 1} \, \diff r\\
&= \int_{\RZ^d} h(x) f(\lvert x \rvert_K) \, \diff x,
\end{align*}
and from this we obtain \(F = f \circ \lvert \cdot \rvert_K\), as claimed.

\(\Leftarrow\): Let \(h \colon \RZ^d \to \RZ\) be measurable and nonnegative. Then,
\begin{align*}
\int_{\RZ^d} h(x) \, \diff\mu(x) &= \int_{\RZ^d} h(x) f(\lvert x \rvert_K) \, \diff x\\
&= d \vol_d(K) \int_{\RZ_{\geq 0}} \int_{\Rand{K}} h(r \theta) f(\lvert r \theta \rvert_K) r^{d - 1} \, \diff\KegM{K}{}(\theta) \, \diff r\\
&= \int_{\RZ_{\geq 0}} \int_{\Rand{K}} h(r \theta) \, \diff\KegM{K}{}(\theta) \, d \vol_d(K) f(r) r^{d - 1} \, \diff r,
\end{align*}
hence the claim follows with \(\frac{\diff \rho}{\diff r}(r) := d \vol_d(K) f(r) r^{d - 1}\).
\end{proof}

\begin{Sa}\label{sa:k_symm_masz_unabh}
Let \(d\), \(K\), \(\mu\), \(\rho\) be as in Definition~\ref{def:k_symm_masz}, with \(\mu\) a probabiliy measure, and let \(X\) be a \(\RZ^d\)\=/valued random variable. Then \(X \sim \mu\) iff there exist independent random variables \(R\) and \(\Theta\) such that \(R \sim \rho\), \(\Theta \sim \KegM{K}{}\), and \(X \GlVert R \Theta\). In that case \(\lvert X \rvert_K \GlVert R\).

If in addition \(\mu\{o\} = 0\), then \(\frac{X}{\lvert X \rvert_K} \GlVert \Theta\), and \(\lvert X \rvert_K\) and \(\frac{X}{\lvert X \rvert_K}\) are stochastically independent.
\end{Sa}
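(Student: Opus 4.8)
The plan is to reduce everything to the defining formula of $K$-symmetry in Definition~\ref{def:k_symm_masz} by a Fubini argument, and then to extract the radial and angular identities by applying the Minkowski functional and the polar map.

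First I would prove the equivalence $X\sim\mu \iff X\GlVert R\Theta$. Taking $f\equiv 1$ in Definition~\ref{def:k_symm_masz} and using that $\KegM{K}{}$ is normalized (its total mass is $\KegM{K}{}(\Rand K)=1$, since $\{tx\Colon t\in[0,1],x\in\Rand K\}=K$) shows that $\rho$ is itself a probability measure, so on the ambient rich space one may pick independent random variables $R\sim\rho$ and $\Theta\sim\KegM{K}{}$. For any measurable nonnegative $f\colon\RZ^d\to\RZ$, Fubini together with the defining formula gives
\[
\Erw[f(R\Theta)] \;=\; \int_{\RZ_{\geq 0}}\int_{\Rand K} f(r\theta)\,\diff\KegM{K}{}(\theta)\,\diff\rho(r) \;=\; \int_{\RZ^d} f\,\diff\mu,
\]
that is, $R\Theta\sim\mu$. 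This settles both implications at once: if $X\sim\mu$ take these $R,\Theta$; conversely, if $X\GlVert R\Theta$ for independent $R\sim\rho$ and $\Theta\sim\KegM{K}{}$, then $X\sim\mu$. For $\lvert X\rvert_K\GlVert R$ note that $R\ge 0$ a.s.\ and $\Theta\in\Rand K$ a.s., so $\lvert\Theta\rvert_K=1$ and, by positive homogeneity of the Minkowski functional, $\lvert R\Theta\rvert_K = R\lvert\Theta\rvert_K = R$; hence $\lvert X\rvert_K\GlVert\lvert R\Theta\rvert_K = R$.

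For the last part, assume $\mu\{o\}=0$. Inserting $f=\Ind_{\{o\}}$ into the defining formula and using that $r\theta=o$ with $\theta\in\Rand K$ forces $r=0$ (since $o$ lies in the interior of $K$, so $o\notin\Rand K$) yields $\mu\{o\}=\rho\{0\}$; hence $\rho\{0\}=0$, so $R>0$ a.s.\ and $X\ne o$ a.s., and the polar map $\Phi\colon x\mapsto(\lvert x\rvert_K,\,x/\lvert x\rvert_K)$ is continuous and a.s.\ defined along both $X$ and $R\Theta$. Applying $\Phi$ to the identity $X\GlVert R\Theta$ gives $(\lvert X\rvert_K,\,X/\lvert X\rvert_K)\GlVert\Phi(R\Theta)=(R,\Theta)$, and the law of $(R,\Theta)$ is the product $\rho\otimes\KegM{K}{}$ by independence of $R$ and $\Theta$. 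This yields at once $X/\lvert X\rvert_K\GlVert\Theta$ and the stochastic independence of $\lvert X\rvert_K$ and $X/\lvert X\rvert_K$.

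The argument is essentially bookkeeping, so I do not expect a real obstacle; the only two points that need care are the implication $\mu\{o\}=0\Rightarrow\rho\{0\}=0$, which is exactly what guarantees that the angular part $X/\lvert X\rvert_K$ is almost surely well defined, and the realization that the asserted independence is nothing but the statement that $(\lvert X\rvert_K,\,X/\lvert X\rvert_K)$ is the image of $R\Theta$ under $\Phi$ while $(R,\Theta)$ carries a product law.
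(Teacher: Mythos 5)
Your proposal is correct and follows essentially the same route as the paper: the equivalence $X\sim\mu \iff X\GlVert R\Theta$ is the defining formula plus Fubini, $\lvert X\rvert_K\GlVert R$ is by homogeneity of the Minkowski functional, and the final claim is the observation that the joint law of the radial and angular parts is the product $\rho\otimes\KegM{K}{}$. The only cosmetic difference is that the paper verifies the product structure by computing $\Erw\bigl[f(\lvert X\rvert_K)\,g(X/\lvert X\rvert_K)\bigr]$ and showing it factorizes, whereas you push $X\GlVert R\Theta$ forward under the polar map $\Phi$ and read off the product law of $(R,\Theta)$; both arguments say the same thing.
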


\begin{proof}
The representation \(X \GlVert R \Theta\) together with the properties of \(R\) and \(\Theta\) are a mere rewriting of Definition~\ref{def:k_symm_masz}. Because of \(\Theta \sim \KegM{K}{}\) we have \(\Theta \in \Rand{K}\) almost surely, hence \(\lvert \Theta \rvert_K = 1\) almost surely, also \(R \geq 0\) almost surely, and thus \(\lvert X \rvert_K \GlVert \lvert R \Theta \rvert_K = R \lvert \Theta \rvert_K = R\).

Note \(\Wsk[X = o] = \mu\{o\}\) and \(X = o \Longleftrightarrow \lvert X \rvert_K = 0\), hence if \(\mu\{o\} = 0\), then \(\frac{X}{\lvert X \rvert_K}\) is almost surely defined. In the latter case, let \(f \colon \RZ_{\geq 0} \to \RZ\) and \(g \colon \Rand{K} \to \RZ\) be measurable and nonnegative, then,
\begin{align*}
\Erw\Bigl[ f(\lvert X \rvert_K) g\Bigl( \frac{X}{\lvert X \rvert_K} \Bigr) \Bigr] &= \int_{\RZ_{\geq 0}} \int_{\Rand{K}} f(\lvert r \theta \rvert_K) g\Bigl( \frac{r \theta}{\lvert r \theta \rvert_K} \Bigr) \, \diff\KegM{K}{}(\theta) \, \diff\rho(r)\\
&= \int_{\RZ_{\geq 0}} \int_{\Rand{K}} f(r) g(\theta) \, \diff\KegM{K}{}(\theta) \, \diff\rho(r)\\
&= \int_{\RZ_{\geq 0}} f(r) \, \diff\rho(r) \int_{\Rand{K}} g(\theta) \, \diff\KegM{K}{}(\theta).
\end{align*}
From that follow the distribution of \(\frac{X}{\lvert X \rvert_K}\) and the independence of \(\lvert X \rvert_K\) and \(\frac{X}{\lvert X \rvert_K}\).
\end{proof}

Thus equipped we turn to probability distributions on \(\KZ^{m \times n}\).

\begin{Sa}\label{sa:simul_gleich_stiefel}
Let \(K \subset \KZ^{m \times n}\) be a star body and let \(X\) be an \(\KZ^{m \times n}\)\=/valued random variable whose distribution is \(K\)\=/symmetric with radial distribution \(\rho\).
\begin{compactenum}
\item \(X \adj{X}\) is almost surely invertible, iff \(X \neq 0\) almost surely, iff \(\rho\{0\} = 0\).
\item If \(\rho\{0\} = 0\) and additionally \(K\) is right\-/unitarily invariant (that is \(K u = K\) for all \(u \in \Orth_{n; \beta}\)), then \(X \adj{X}\) and \(\adj{X} (X \adj{X})^{-1/2}\) are stochastically independent, and \(\adj{X} (X \adj{X})^{-1/2} \sim \Gleichv(\Orth_{n, m; \beta})\).
\item If \(K\) is right\-/unitarily invariant and the distribution of \(X\) has \(\vol_{\beta m n}\)\=/density \(h \circ \lvert \cdot \rvert_K\), then the distribution of \(X \adj{X}\) has density proportional to \(r \mapsto h(\lvert r^{1/2} \rvert_{\lvert K \rvert}) \det(r)^{\beta (n - m + 1)/2 - 1}\), where \(\lvert K \rvert := \{(x \adj{x})^{1/2} \Colon x \in K\} \subset \Sym_{m, \beta}\).
\end{compactenum}
\end{Sa}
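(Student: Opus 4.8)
The plan is to treat the three parts in sequence, relying throughout on the decomposition $X\GlVert R\Theta$ with independent $R\sim\rho$ and $\Theta\sim\KegM{K}{}$ from Proposition~\ref{sa:k_symm_masz_unabh}, and on the polar\-/decomposition transformation formula of Proposition~\ref{sa:integraltrafo}, part~1. The structural fact to record first is that $\KegM{K}{}$ charges no mass to the set $N$ of rank\-/deficient matrices in $\KZ^{m\times n}$: by its definition $\KegM{K}{}(A)$ is proportional to the Lebesgue volume of the cone $\{tx:t\in[0,1],\,x\in A\}$, which is contained in $N$ whenever $A$ is, and $N$ is a Lebesgue\-/null algebraic set. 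Hence $\Theta$ has full row rank almost surely, and so does $R\Theta$ on the event $\{R>0\}$. Part~1 is then immediate: since $0$ lies in the interior of $K$ we have $0\notin\Rand{K}$, so $\Theta\neq0$ almost surely and $X=R\Theta$ vanishes exactly when $R=0$; thus $\Wsk[X=0]=\rho\{0\}$, and likewise $\Wsk[X\adj{X}\text{ not invertible}]=\Wsk[\operatorname{rank}(X)<m]=\rho\{0\}$ by the rank observation, which chains the three equivalences.

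For part~2, assume $\rho\{0\}=0$ and that $K$ is right\-/unitarily invariant. First, $\KegM{K}{}$ inherits this invariance, because right multiplication by $w\in\Orth_{n;\beta}$ is a Frobenius isometry of $\KZ^{m\times n}$ and hence preserves the cones in the definition of $\KegM{K}{}$; so $\Theta w\GlVert\Theta$ for every fixed $w$. Writing the almost surely well\-/defined polar decomposition $\Theta=P\adj{U}$ with $P:=(\Theta\adj{\Theta})^{1/2}\in\Pos_{m,\beta}$ and $U:=\adj{\Theta}(\Theta\adj{\Theta})^{-1/2}\in\Orth_{n,m;\beta}$, and applying the continuous map $Y\mapsto\bigl((Y\adj{Y})^{1/2},\,\adj{Y}(Y\adj{Y})^{-1/2}\bigr)$ to both sides of $\Theta w\GlVert\Theta$, I get $(P,\adj{w}U)\GlVert(P,U)$. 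For nonnegative measurable $f$ on $\Pos_{m,\beta}$ and $g$ on $\Orth_{n,m;\beta}$ this yields $\Erw[f(P)g(U)]=\Erw[f(P)g(\adj{w}U)]$; averaging over $w$ against Haar measure on $\Orth_{n;\beta}$ and using that the pushforward of Haar under $w\mapsto\adj{w}u$ is $\Gleichv(\Orth_{n,m;\beta})$ for every fixed $u\in\Orth_{n,m;\beta}$ (left\-/invariance and uniqueness of the invariant measure), the inner average collapses to the $P$\-/free constant $\bar{g}:=\int g\,\diff\Gleichv(\Orth_{n,m;\beta})$, so $\Erw[f(P)g(U)]=\Erw[f(P)]\,\bar{g}$. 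Taking $f\equiv1$ identifies $U\sim\Gleichv(\Orth_{n,m;\beta})$, whereupon the displayed factorization gives $P\perp U$. Since $R$ is a nonnegative scalar independent of $\Theta$, one has $X\adj{X}=R^2P^2$ and $\adj{X}(X\adj{X})^{-1/2}=U$; hence $R,P,U$ are mutually independent, so $X\adj{X}$, being $\sigma(R,P)$\-/measurable, is independent of $\adj{X}(X\adj{X})^{-1/2}=U\sim\Gleichv(\Orth_{n,m;\beta})$.

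For part~3, let $X$ have $\vol_{\beta m n}$\-/density $h\circ\lvert\cdot\rvert_K$ and compute, for nonnegative measurable $g$ on $\Pos_{m,\beta}$, the integral $\Erw[g(X\adj{X})]=\int_{\KZ^{m\times n}}g(x\adj{x})\,h(\lvert x\rvert_K)\,\diff x$ using Proposition~\ref{sa:integraltrafo}, part~1. Since $(r^{1/2}\adj{u})\adj{(r^{1/2}\adj{u})}=r^{1/2}\adj{u}u\,r^{1/2}=r$ for $u\in\Orth_{n,m;\beta}$, the first factor becomes $g(r)$; the crucial point is that $\lvert r^{1/2}\adj{u}\rvert_K$ does not depend on $u$ and equals $\lvert r^{1/2}\rvert_{\lvert K\rvert}$. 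This is because right\-/unitary invariance gives $x\in K\iff x\adj{x}\in Q$, where $Q:=\{x\adj{x}:x\in K\}=\{q^2:q\in\lvert K\rvert\}$ (two matrices share the same $x\adj{x}$ exactly when they differ by right multiplication by a unitary, via the singular value decomposition); hence $\lvert x\rvert_K=\inf\{t>0:t^{-2}x\adj{x}\in Q\}$ depends on $x$ only through $x\adj{x}$, and since $q\mapsto q^2$ is injective on positive\-/semidefinite matrices this infimum equals $\lvert(x\adj{x})^{1/2}\rvert_{\lvert K\rvert}$ (note $\lvert K\rvert$ is star\-/shaped about $0$, so its gauge is meaningful). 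The $\Orth_{n,m;\beta}$\-/integral then contributes only a constant, leaving $\Erw[g(X\adj{X})]$ proportional to $\int_{\Pos_{m,\beta}}g(r)\,h(\lvert r^{1/2}\rvert_{\lvert K\rvert})\,\det(r)^{\beta(n-m+1)/2-1}\,\diff r$, which is the claimed density.

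The only step demanding genuine care rather than bookkeeping is the averaging argument in part~2: one must pass correctly from the one\-/parameter family of distributional identities $(P,\adj{w}U)\GlVert(P,U)$, one per $w$, to the joint independence\-/and\-/uniformity conclusion, and the mechanism is exactly that integrating $g(\adj{w}U)$ over $w$ collapses it to the $P$\-/free constant $\int g\,\diff\Gleichv(\Orth_{n,m;\beta})$, i.e.\ transitivity of $\Orth_{n;\beta}$ on the Stiefel manifold together with uniqueness of its invariant measure. A small but essential supporting point throughout is that $P$ is genuinely positive definite (equivalently $\Theta$ has full rank), which is precisely the preliminary rank observation and is what legitimizes the polar decomposition of $\Theta$.
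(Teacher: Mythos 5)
Your proof is correct, and parts~1 and~3 run essentially parallel to the paper's (your observation in part~1 that the cone measure assigns no mass to the rank-deficient set, because the cone over that set sits inside the Lebesgue-null variety of degenerate matrices, is a cleaner route than the paper's detour through an auxiliary $\Gleichv([0,1])$-variable and $\Gleichv(K)$).

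Part~2 is where you genuinely diverge. The paper proves independence and uniformity by brute force: it expands $\Erw[f(\adj{X}(X\adj{X})^{-1/2})g(X\adj{X})]$ through the cone-measure representation, converts to a volume integral over $K$, and invokes the polar-decomposition transformation formula (Proposition~\ref{sa:integraltrafo}, part~1) to exhibit the product structure of the resulting integral over $\Pos_{m,\beta}\times\Orth_{n,m;\beta}$ directly. You instead exploit the right-unitary invariance of $\KegM{K}{}$ at the level of the random variable: from $\Theta w\GlVert\Theta$ you push forward through the polar map to obtain the one-parameter family $(P,\adj{w}U)\GlVert(P,U)$, and then average over $w\sim$ Haar on $\Orth_{n;\beta}$, relying on the fact that the image of Haar under $w\mapsto\adj{w}u$ is the invariant measure on the Stiefel manifold regardless of $u$. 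This collapses $g(\adj{w}U)$ to the $P$-free constant $\int g\,\diff\Gleichv(\Orth_{n,m;\beta})$ and delivers both the law of $U$ and the factorization in one stroke. Your route is arguably more conceptual and avoids the transformation formula entirely for this part; the paper's integral computation, on the other hand, is the same machine that also yields the explicit density in part~3, so the two parts come as a package there. Both are valid; yours buys a shorter, invariance-theoretic argument at the price of a small amount of care in the Fubini/averaging step, which you handle correctly. One point worth stating explicitly if you write this up: the map $Y\mapsto\bigl((Y\adj{Y})^{1/2},\adj{Y}(Y\adj{Y})^{-1/2}\bigr)$ is only defined (and continuous) on the open set of full-rank matrices, and the hypothesis $\rho\{0\}=0$ together with your part-1 observation about $\KegM{K}{}$ is exactly what places $\Theta$ and $X$ in that set almost surely.
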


\begin{proof}
\begin{asparaenum}
\item
Write \(X \GlVert R \Theta\) with \(R \sim \rho\) and \(\Theta \sim \KegM{K}{}\) independent, according to Proposition~\ref{sa:k_symm_masz_unabh}; then \(X = 0\) iff \(\lvert X \rvert_K \GlVert R = 0\), hence \(\Wsk[X = 0] = \Wsk[R = 0] = \rho\{0\}\).

{\Absatz}Next note that \(X \adj{X}\) is invertible iff \(X\) has full rank, iff \(s_m(X) > 0\); using the representation of \(X\) his yields
\begin{equation*}
\begin{split}
\Wsk[s_m(X) > 0] &= \Wsk[s_m(R \Theta) > 0] = \Wsk[R s_m(\Theta) > 0]\\
&= \Wsk[R > 0 \wedge s_m(\Theta) > 0] = \Wsk[R > 0] \Wsk[s_m(\Theta) > 0].
\end{split}
\end{equation*}
Trivially \(\Wsk[R > 0] = 1 - \rho\{0\}\); take \(U \sim \Gleichv([0, 1])\) independent of \(\Theta\), then \(U^{1/(\beta m n)} \Theta \sim \Gleichv(K)\) (this follows from Remark~\ref{bem:k_symm}, part~1) and \(\Wsk[U^{1/(\beta m n)} > 0] = 1\), and by doing the same calculation as before,
\begin{equation*}
1 = \frac{\vol_{\beta m n}(\{x \in K \Colon s_m(x) > 0\})}{v_{\beta m n}(K)} = \Wsk[s_m(U^{1/(\beta m n)} \Theta) > 0] = \Wsk[s_m(\Theta) > 0].
\end{equation*}
This establishes \(\Wsk[s_m(X) > 0] = 1 - \rho\{0\}\), thus \(X \adj{X}\) is almost surely invertible iff \(\rho\{0\} = 0\).

\item
Let \(f \colon \!\Orth_{n, m; \beta} \to \RZ\) and \(g \colon \!\Pos_{m, \beta} \to \RZ\) be measurable and nonnegative, then
\begin{align*}
\Erw[f(\adj{X} (X \adj{X})^{-1/2}) g(X \adj{X})] &= \int_{\RZ_{\geq 0}} \int_{\Rand{K}} f(r \adj{\theta} (r^2 \theta \adj{\theta})^{-1/2}) g(r^2 \theta \adj{\theta}) \, \diff\KegM{K}{}(\theta) \, \diff\rho(r)\\
&= \int_{\Rand{K}} f(\adj{\theta} (\theta \adj{\theta})^{-1/2}) \int_{\RZ_{\geq 0}} g(r^2 \theta \adj{\theta}) \, \diff\rho(r) \, \diff\KegM{K}{}(\theta)\\
&= \frac{1}{\vol_{\beta m n}(K)} \int_K f(\adj{x} (x \adj{x})^{-1/2}) \int_{\RZ_{\geq 0}} g\Bigl( \frac{r^2 x \adj{x}}{\lvert x \rvert_K^2} \Bigr) \diff\rho(r) \, \diff x\\
&= \frac{2^{-m}}{\vol_{\beta m n}(K)} \int_{\Pos_{m, \beta}} \int_{\Orth_{n, m; \beta}} \Ind_K(t^{1/2} \adj{u}) f(u t^{1/2} (t^{1/2} \adj{u} u t^{1/2})^{-1/2}) \\
&\quad \cdot \int_{\RZ_{\geq 0}} g\Bigl( \frac{r^2 t^{1/2} \adj{u} u t^{1/2}}{\lvert t^{1/2} \adj{u} \rvert_K^2} \Bigr) \diff\rho(r) \, \diff u \det(t)^{\beta (n - m + 1)/2 - 1} \, \diff t.
\end{align*}
We have \(t^{1/2} \adj{u} \in K\) iff \(t^{1/2} \in \lvert K \rvert\); indeed, concerning ``\(\Rightarrow\)'' let \(t^{1/2} \adj{u} \in K\), then \((t^{1/2} \adj{u} u t^{1/2})^{1/2} = t^{1/2} \in \lvert K \rvert\). Concerning ``\(\Leftarrow\)'' let \(t^{1/2} \in \lvert K \rvert\), then there is some \(x \in K\) such that \(t^{1/2} = (x \adj{x})^{1/2}\); let \(x = \lvert x \rvert \adj{v}\) be its polar decomposition, then \(t^{1/2} = \lvert x \rvert\); and as \(K\) is right\-/unitarily invariant we get \(x \hat{v} \adj{\hat{u}} \in K\), and \(x \hat{v} \adj{\hat{u}} = \lvert x \rvert \adj{v} \hat{v} \adj{\hat{u}} = \lvert x \rvert \adj{u} = t^{1/2} \adj{u}\). This also implies \(\Ind_K(t^{1/2} \adj{u}) = \Ind_{\lvert K \rvert}(t^{1/2})\). Hence we may continue,
\begin{align*}
\Erw[f(\adj{X} (X \adj{X})^{-1/2}) g(X \adj{X})] &= \frac{2^{-m}}{\vol_{\beta m n}(K)} \int_{\Pos_{m, \beta}} \int_{\Orth_{n, m; \beta}} \Ind_{\lvert K \rvert}(t^{1/2}) f(u)\\
&\quad \cdot \int_{\RZ_{\geq 0}} g\Bigl( \frac{r^2 t}{\lvert t^{1/2} \rvert_K^2} \Bigr) \diff\rho(r) \, \diff u \det(t)^{\beta (n - m + 1)/2 - 1} \, \diff t\\
&= \frac{2^{-m}}{\vol_{\beta m n}(K)} \int_{\Orth_{n, m; \beta}} f(u) \, \diff u\\
&\quad \cdot \int_{\Pos_{m, \beta}} \Ind_{\lvert K \rvert}(t^{1/2}) \int_{\RZ_{\geq 0}} g\Bigl( \frac{r^2 t}{\lvert t^{1/2} \rvert_K^2} \Bigr) \, \diff\rho(r) \det(t)^{\beta (n - m + 1)/2 - 1} \, \diff t.
\end{align*}
This reveals the claimed independence of \(\adj{X} (X \adj{X})^{-1/2}\) and \(X \adj{X}\), and the distribution of \(\adj{X} (X \adj{X})^{-1/2}\).

\item
Let the distribution of \(X\) have \(\vol_{\beta m n}\)\=/density \(h \circ \lvert \cdot \rvert_K\) according to Proposition~\ref{sa:k_symm_absstet} and let \(g\) be as before, then,
\begin{align*}
\Erw[g(X \adj{X})] &= \int_{\KZ^{m \times n}} g(x \adj{x}) h(\lvert x \rvert_K) \, \diff x\\
&= 2^{-m} \int_{\Pos_{m, \beta}} \int_{\Orth_{n, m; \beta}} g(r^{1/2} \adj{u} u r^{1/2}) h(\lvert r^{1/2} \adj{u} \rvert_K) \, \diff u \det(r)^{\beta (n - m + 1)/2 - 1} \, \diff r\\
&= 2^{-m} \, \frac{\OrthVol_{n, m; \beta}}{2^{\beta m (m - 1)/4}} \int_{\Pos_{m, \beta}} g(r) h(\lvert r^{1/2} \rvert_{\lvert K \rvert}) \det(r)^{\beta (n - m + 1)/2 - 1} \, \diff r.
\end{align*}
This last line yields the claimed density of the distribution of \(X \adj{X}\). \qedhere
\end{asparaenum}
\end{proof}

\begin{Bem}\label{bem:simul_gleich_stiefel}
\begin{asparaenum}
\item Because \(\Kug{\Schatten{p}{}, \beta}{m \times n}\) is both left- and right\-/unitarily invariant, all of Proposition~\ref{sa:simul_gleich_stiefel} is applicable to it.

\item The classical case \(X \sim \Normal{\beta}^{\otimes (m \times n)}\) is recovered by noticing its density \linebreak\((\frac{\beta}{2 \pi})^{\beta m n/2} \ez^{-\beta \Spur(x \adj{x})/2} = (\frac{\beta}{2 \pi})^{\beta m n/2} \ez^{-\beta \lVert x \rVert_{\Schatten{2}{}}^2/2}\), that is, \(\Normal{\beta}^{\otimes (m \times n)}\) is \(\Kug{\Schatten{2}{}, \beta}{m \times n}\)\=/symmetric. As an aside, \(X \adj{X}\) is Wishart\-/distributed, which fact can also be inferred from part~3 given above.

\item Part~3 of Proposition~\ref{sa:simul_gleich_stiefel} is always satisfied for \(X \sim \Gleichv(\Kug{\Schatten{p}{}, \beta}{m \times n})\) (density \(\inv{(\KugVol{\Schatten{p}{}, \beta}{m \times n})} \Ind_{[0, 1]} \circ \lVert \cdot \rVert_{\Schatten{p}{}}\)).
\end{asparaenum}
\end{Bem}

Proposition~\ref{sa:simul_gleich_stiefel} partly overlaps with~\cite[Theorem~8.2.5]{GN2000}, as follows from the next statement. But in Proposition~\ref{sa:simul_gleich_stiefel} \(X\) need not have a \(\vol_{\beta m n}\)\=/density in order to ensure almost sure invertibility of \(X \adj{X}\) and to achieve \(\adj{X} (X \adj{X})^{-1/2} \sim \Gleichv(\Orth_{n, m; \beta})\). (Of course, left or right unitary invariance is not sufficient for invertibility of \(X \adj{X}\): just take \(X := V \diag(1, \dotsc, 1, 0) \adj{U}\) with \(U \sim \Gleichv(\Orth_{n; \beta})\) and \(V \sim \Gleichv(\Orth_{m; \beta})\), for instance.)

In the sequel \(\Perm{m}\) denotes the symmetric group of degree \(m \in \NZ\).

\begin{Sa}\label{sa:unitinvar}
A \(\KZ^{m \times n}\)\=/valued random variable \(X\) has a two\-/sided unitarily invariant distribution, that is \(X \GlVert v X u\) for all \(u \in \Orth_{n; \beta}\) and \(v \in \Orth_{m; \beta}\), if and only if there exist independent random variables \(S\), \(U\), and \(V\) such that \(S\) takes values in \(\RZ^m\), \(U \sim \Gleichv(\Orth_{n; \beta})\), \(V \sim \Gleichv(\Orth_{m; \beta})\), and \(X \GlVert V \diag(S) \adj{U}\).

Moreover, if \(X\) has a two\-/sided unitarily invariant distribution, then \(S\) can be constructed such that \(\Wsk[S \in \RZ_{\geq 0}^m] = 1\), it has exchangeable coordinates and then \((s_{\pi(i)}(X))_{i \leq m} \GlVert S\) with \(\pi \sim \Gleichv(\Perm{m})\) independent of \(X\), or such that \(\Wsk[S \in W^m] = 1\) (recall \(W^m = \{(x_i)_{i \leq m} \in \RZ^m \Colon 0 \leq x_m \leq \dotsb \leq x_1\}\)) and then \(s(X) \GlVert S\).
\end{Sa}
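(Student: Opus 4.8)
The plan is to prove the two implications separately and then read off the two explicit shapes of $S$.

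For the direction ``$\Leftarrow$'' I would argue directly. Assume $X \GlVert V\diag(S)\adj U$ with $V\sim\Gleichv(\Orth_{m;\beta})$, $U\sim\Gleichv(\Orth_{n;\beta})$, $S$ an $\RZ^m$-valued random variable, and $V,U,S$ mutually independent. For fixed $v_0\in\Orth_{m;\beta}$ and $u_0\in\Orth_{n;\beta}$ I would write $v_0Xu_0\GlVert(v_0V)\diag(S)\adj{(\adj{u_0}U)}$, using $\adj Uu_0=\adj{(\adj{u_0}U)}$; since Haar measure on a unitary group is invariant under left translation, $v_0V$ and $\adj{u_0}U$ are again Haar, and $(v_0V,\adj{u_0}U,S)$ is again an independent triple of the same type, so the right-hand side has the same law as $V\diag(S)\adj U\GlVert X$. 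Hence $v_0Xu_0\GlVert X$.

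For ``$\Rightarrow$'' together with the clause on $S\in W^m$ I would use a \emph{randomized} singular value decomposition. Fix Borel maps $\mathcal V\colon\KZ^{m\times n}\to\Orth_{m;\beta}$ and $\mathcal U\colon\KZ^{m\times n}\to\Orth_{n;\beta}$ with $x=\mathcal V(x)\diag(s(x))\adj{\mathcal U(x)}$ for all $x$ (a measurable selector of the SVD, which exists by a standard measurable-selection theorem, the set of admissible pairs being always non-empty and compact). Pick $V_0\sim\Gleichv(\Orth_{m;\beta})$ and $U_0\sim\Gleichv(\Orth_{n;\beta})$, independent of each other and of $X$, and put
\[
V:=V_0\,\mathcal V(X),\qquad U:=\adj{U_0}\,\mathcal U(X),\qquad S:=s(X),
\]
so that $V\diag(S)\adj U=V_0XU_0$. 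Conditioning on $(V_0,U_0)$ and using $v_0Xu_0\GlVert X$ gives $V_0XU_0\GlVert X$, hence $X\GlVert V\diag(S)\adj U$. It then remains to show that $V,U$ are Haar and that $(V,U,S)$ is \emph{mutually} independent: conditionally on $X$ the matrices $\mathcal V(X),\mathcal U(X),s(X)$ are fixed while $V_0,U_0$ stay independent and Haar, so $V=V_0\mathcal V(X)$ and $U=\adj{U_0}\mathcal U(X)$ are conditionally Haar (right translation invariance of Haar, plus its invariance under adjunction) and conditionally independent; since these conditional laws do not depend on $X$, the pair $(V,U)$ is Haar$\otimes$Haar and independent of $X$, whence $(V,U,s(X))$ is mutually independent. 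As always $s(X)\in W^m$ and $S=s(X)$, this is exactly the asserted construction with $\Wsk[S\in W^m]=1$ and $s(X)\GlVert S$.

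For the exchangeable shape I would insert a uniform random permutation. Take $\pi\sim\Gleichv(\Perm m)$ independent of $(X,V_0,U_0)$, let $P_\pi\in\Orth_{m;\beta}$ be its permutation matrix and $\hat P_\pi:=\diag(P_\pi,I_{n-m})\in\Orth_{n;\beta}$. From the elementary identity $\diag(a)=P_\pi\diag((a_{\pi(i)})_{i\le m})\adj{\hat P_\pi}$ for $a\in\RZ^m$ (with $\diag(\cdot)$ read as the $(m\times n)$-matrix) I obtain $X\GlVert(VP_\pi)\diag((S_{\pi(i)})_i)\adj{(U\hat P_\pi)}$; setting $V':=VP_\pi$, $U':=U\hat P_\pi$ and $S':=(s_{\pi(i)}(X))_{i\le m}$ and running the same conditioning argument conditionally on $(X,\pi)$ yields $V'\sim\Gleichv(\Orth_{m;\beta})$, $U'\sim\Gleichv(\Orth_{n;\beta})$, and $(V',U',S')$ mutually independent. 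Finally $S'$ is $\RZ_{\ge0}^m$-valued and has exchangeable coordinates, because for $\rho\in\Perm m$ one has $(S'_{\rho(i)})_i=(s_{(\pi\rho)(i)}(X))_i$ with $\pi\rho\GlVert\pi$ still independent of $X$; the stated identity $(s_{\pi(i)}(X))_{i\le m}\GlVert S'$ holds by construction. The main obstacle is not conceptual but the independence bookkeeping in ``$\Rightarrow$'': one must check that randomizing by independent Haar factors on both sides genuinely decouples the SVD data, i.e.\ that $(V,U)$ is Haar$\otimes$Haar \emph{and} jointly (not merely pairwise) independent of $s(X)$, using carefully the translation and adjunction invariances of Haar measure and the existence of a measurable SVD selector. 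A structurally cleaner alternative avoiding that selector is to disintegrate the law of $X$ along the $(\Orth_{m;\beta}\times\Orth_{n;\beta})$-invariant map $x\mapsto s(x)$: by invariance of the law of $X$, each fibre $s^{-1}(\sigma)$—a single group orbit, hence a compact homogeneous space—carries its unique invariant probability measure, which is the law of $V\diag(\sigma)\adj U$, so the disintegration reads $X\GlVert V\diag(S)\adj U$ with $S$ distributed as $s(X)$.
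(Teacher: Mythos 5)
Your argument is correct, and for the direction ``$\Rightarrow$'' it takes a genuinely different route from the paper. The paper's proof works by integral averaging: starting from $\Erw[f(X)]=\Erw[f(vXu)]$ it integrates over $(v,u)$ against the Haar measures, then uses a pointwise SVD inside the integrand to reduce $\int\!\int f(vxu)\,\diff u\,\diff v$ to $\int\!\int f(v\diag(s(x))\adj{u})\,\diff u\,\diff v$, and reads off the product structure. You instead randomize: introduce independent Haar factors $V_0,U_0$, show $V_0XU_0\GlVert X$ by conditioning, and then exhibit $V,U,S$ as concrete functions of $(X,V_0,U_0)$ via a measurable SVD selector, verifying the independence structure by conditioning on $X$. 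The conditioning bookkeeping is sound: the conditional law of $(V,U)$ given $X$ is Haar$\,\otimes\,$Haar by right-translation invariance and unimodularity (for $\adj{U_0}$), does not depend on $X$, and the product structure survives to the joint law of $(V,U,S)$. What your construction buys is a coupling on a single probability space and a more probabilistic reading of the invariance; what it costs is the appeal to a measurable SVD selector, which your disintegration alternative cleanly avoids and which the paper's integral computation never needs. The ``$\Leftarrow$'' direction and the insertion of a uniform permutation to get the exchangeable form are essentially the same as in the paper.
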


\begin{proof}
\(\Leftarrow\): This is trivial because \(\Gleichv(\Orth_{d; \beta})\) is a Haar measure and thus \(w W \GlVert W\) for \(W \sim \Gleichv(\Orth_{d; \beta})\) and any \(w \in \Orth_{d; \beta}\).

\(\Rightarrow\): Call \(\mu := \Wsk \circ \inv{X}\); then we have, for any \(f \colon \KZ^{m \times n} \to \RZ\) measurable and nonneagtive and any \(u \in \Orth_{n; \beta}\) and \(v \in \Orth_{m; \beta}\),
\begin{equation*}
\Erw[f(X)] = \Erw[f(v X u)] = \int_{\KZ^{m \times n}} f(v x u) \, \diff\mu(x).
\end{equation*}
Integrate on both sides over the unitary groups to obtain
\begin{align*}
\frac{\OrthVol_{m; \beta} \OrthVol_{n; \beta}}{2^{\beta m (m - 1)/4 + \beta n (n - 1)/4}} \Erw[f(X)] &= \int_{\Orth_{m; \beta}} \int_{\Orth_{n; \beta}} \int_{\KZ^{m \times n}} f(v x u) \, \diff\mu(x) \diff u \, \diff v\\
&= \int_{\KZ^{m \times n}} \int_{\Orth_{m; \beta}} \int_{\Orth_{n; \beta}} f(v x u) \, \diff u \, \diff v \, \diff\mu(x).
\end{align*}
For any \(x \in \KZ^{m \times n}\) choose a singular value decomposition \(x = v_x \diag(s(x)) \adj{u_x}\) and get
\begin{align*}
\int_{\Orth_{m; \beta}} \int_{\Orth_{n; \beta}} f(v x u) \, \diff u \, \diff v &= \int_{\Orth_{m; \beta}} \int_{\Orth_{n; \beta}} f(v v_x \diag(s(x)) \adj{u_x} u) \, \diff u \, \diff v\\
&= \int_{\Orth_{m; \beta}} \int_{\Orth_{n; \beta}} f(v \diag(s(x)) \adj{u}) \, \diff u \, \diff v,
\end{align*}
where we have used unitary invariance of the respective Haar measures and also its invariance under adjoining. This leads to (abbreviating the normalization constant by \(c\))
\begin{align*}
\Erw[f(X)] &= c \int_{\KZ^{m \times n}} \int_{\Orth_{m; \beta}} \int_{\Orth_{n; \beta}} f(v \diag(s(x)) \adj{u}) \, \diff u \, \diff v \, \diff\mu(x)\\
&= c \int_{W^m} \int_{\Orth_{m; \beta}} \int_{\Orth_{n; \beta}} f(v \diag(\sigma) \adj{u}) \, \diff u \, \diff v \, \diff(\mu \circ \inv{s})(\sigma),
\end{align*}
and this yields \(X = V \diag(S) \adj{U}\) with independent \(S\), \(U\), and \(V\) such that \(U \sim \Gleichv(\Orth_{n; \beta})\), \(V \sim \Gleichv(\Orth_{m; \beta})\), and \(S \sim \mu \circ \inv{s} = \Wsk{} \circ \inv{s(X)}\), so \(s(X) \GlVert S\).

Now let \(P \sim \Gleichv(\Perm{m})\) be independent of all other random variables and set \(T := (S_{P(i)})_{i \leq m}\), then \(T \in \RZ_{\geq 0}^m\) almost surely, \(T\) has exchangeable coordinates, and \(T \GlVert (s_{P(i)}(X))_{i \leq m}\), and we also have
\begin{align*}
\Erw[f(V T \adj{U})] &= c \, \frac{1}{m!} \sum_{\pi \in \Perm{m}} \int_{W^m} \int_{\Orth_{m; \beta}} \int_{\Orth_{n; \beta}} f(v \diag((\sigma_{\pi(i)})_{i \leq m}) \adj{u}) \, \diff u \, \diff v \, \diff(\mu \circ \inv{s})(\sigma)\\
&= c \, \frac{1}{m!} \sum_{\pi \in \Perm{m}} \int_{W^m} \int_{\Orth_{m; \beta}} \int_{\Orth_{n; \beta}} f(v \diag(\sigma) \adj{u}) \, \diff u \, \diff v \, \diff(\mu \circ \inv{s})(\sigma)\\
&= c \int_{W^m} \int_{\Orth_{m; \beta}} \int_{\Orth_{n; \beta}} f(v \diag(\sigma) \adj{u}) \, \diff u \, \diff v \, \diff(\mu \circ \inv{s})(\sigma)\\
&= \Erw[f(X)],
\end{align*}
where from the first to second line we have used \(\diag((\sigma_{\pi(i)})_{i \leq m}) = \tau_\pi \diag(\sigma) \trapo{\diag(\tau_\pi, I_{n - m})}\) with \(\tau_\pi = (\delta_{\pi(i), j})_{(i, j) \in n \times n}\) and unitary invariance, as in Remark~\ref{bem:weylkammer}. This proves \(X \GlVert V T \adj{U}\).
\end{proof}

\begin{Bem}
Proposition~\ref{sa:unitinvar} tells us that if the distribution of \(X\) is two\-/sided unitarily invariant, then it is alredy completely determined by the distribution of the singular values of \(X\), in the sense that \(X \GlVert V \diag(s(X)) \adj{U}\) with independent \(U \sim \Gleichv(\Orth_{n; \beta})\) and \(V \sim \Gleichv(\Orth_{m; \beta})\).
\end{Bem}

\begin{Sa}\label{sa:bqsymm_orthinvar}
Any \(\Kug{\Schatten{p}{}, \beta}{m \times n}\)\=/symmetric measure on \(\KZ^{m \times n}\) is two\-/sided unitarily invariant.
\end{Sa}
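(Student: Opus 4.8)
The plan is to exploit that the Minkowski functional of \(K := \Kug{\Schatten{p}{}, \beta}{m \times n}\) is \(\lVert \cdot \rVert_{\Schatten{p}{}}\), which depends only on the singular values, together with the fact that the normalized cone measure \(\KegM{K}{}\) is built purely from \(\vol_{\beta m n}\) and the body \(K\); since two\-/sided unitary conjugation leaves both \(K\) and \(\vol_{\beta m n}\) invariant, the representation of a \(K\)-symmetric measure from Definition~\ref{def:k_symm_masz} will be unchanged under it.

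First I would fix \(u \in \Orth_{n; \beta}\) and \(v \in \Orth_{m; \beta}\) and consider the \(\RZ\)-linear bijection \(\phi \colon \KZ^{m \times n} \to \KZ^{m \times n}\), \(\phi(x) := v x u\). Using \(\adj{u} u = I_n\), \(\adj{v} v = I_m\) and cyclicity of the trace one checks
\begin{equation*}
\langle \phi(x), \phi(y) \rangle = \Re \Spur\bigl( v x u \adj{(v y u)} \bigr) = \Re \Spur\bigl( v x \adj{y} \adj{v} \bigr) = \Re \Spur(x \adj{y}) = \langle x, y \rangle,
\end{equation*}
so \(\phi\) is an orthogonal transformation of \((\KZ^{m \times n}, \langle \cdot, \cdot \rangle)\) and in particular preserves \(\beta m n\)-dimensional Lebesgue measure. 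Moreover \((v x u)\adj{(v x u)} = v (x \adj{x}) \adj{v}\) is unitarily similar to \(x \adj{x}\), hence \(s(v x u) = s(x)\) and \(\lVert v x u \rVert_{\Schatten{p}{}} = \lVert x \rVert_{\Schatten{p}{}}\); thus \(\phi(K) = K\) and \(\phi(\Rand K) = \Rand K\). The next step is to deduce that \(\phi\) preserves \(\KegM{K}{}\): for measurable \(A \subset \Rand K\), linearity of \(\phi\) gives \(\phi(\{t x \colon t \in [0, 1],\, x \in A\}) = \{t y \colon t \in [0, 1],\, y \in \phi(A)\}\), and since \(\phi\) preserves \(\vol_{\beta m n}\) and \(\phi(K) = K\), the definition of the cone measure gives \(\KegM{K}{}(\phi(A)) = \KegM{K}{}(A)\); equivalently \(\int_{\Rand K} g \circ \phi \, \diff\KegM{K}{} = \int_{\Rand K} g \, \diff\KegM{K}{}\) for every nonnegative measurable \(g\) (alternatively this follows from Lemma~\ref{lem:kegel_gg_hausdorff}, since \(\phi\) preserves both \(\Hausd^{\beta m n - 1}\) and the map \(x \mapsto \langle \nu(x), x \rangle\)).

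With this in hand the conclusion is short. Given a \(K\)-symmetric \(\mu\) with radial measure \(\rho\) and \(X \sim \mu\), for any nonnegative measurable \(f\) I would write \(f\bigl(v(r\theta)u\bigr) = f\bigl(r\,\phi(\theta)\bigr)\) and compute
\begin{equation*}
\Erw[f(v X u)] = \int_{\RZ_{\geq 0}} \int_{\Rand K} f\bigl(r\,\phi(\theta)\bigr)\, \diff\KegM{K}{}(\theta)\, \diff\rho(r) = \int_{\RZ_{\geq 0}} \int_{\Rand K} f(r \theta)\, \diff\KegM{K}{}(\theta)\, \diff\rho(r) = \Erw[f(X)],
\end{equation*}
the middle equality being the \(\phi\)-invariance of \(\KegM{K}{}\) applied for each fixed \(r\). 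Hence \(v X u \GlVert X\) for all \(u \in \Orth_{n; \beta}\), \(v \in \Orth_{m; \beta}\), which is the claim. I expect no genuine obstacle here; the only point requiring a little care is the invariance of the cone measure under \(\phi\), and it is settled directly from the definition as above.
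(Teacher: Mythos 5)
Your proposal is correct and is essentially the same argument as the paper's: both reduce to showing that the cone measure on the Schatten sphere is invariant under the transformation \(x \mapsto v x u\), then carry this through the radial decomposition from Definition~\ref{def:k_symm_masz}. You establish the cone-measure invariance directly from its definition (noting \(\phi\) is a volume-preserving linear bijection fixing \(K\)), whereas the paper obtains it by pulling the integral back to the ball via identity~\eqref{eq:kegmasz_int} and substituting; these are the same computation.
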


\begin{proof}
Let \(\mu\) be a \(\Kug{\Schatten{p}{}, \beta}{m \times n}\)\=/symmetric measure on \(\KZ^{m \times n}\) with radial measure \(\rho\), and let \(u \in \Orth_{n; \beta}\) and \(v \in \Orth_{m; \beta}\). Keeping in mind that singular values, and hence also \(\lVert \cdot \rVert_{\Schatten{p}{}}\) and \(\Kug{\Schatten{p}{}, \beta}{m \times n}\), are two\-/sided unitarily invariant, let \(f \colon \KZ^{m \times n} \to \RZ\) be measurable and nonnegative, then we have for any \(r \in \RZ_{\geq 0}\)
\begin{align*}
\int_{\Sph{\Schatten{p}{}, \beta}{m, n}} f(r v \theta u) \, \diff\KegM{\Schatten{p}{}, \beta}{m \times n}(\theta) &= \frac{1}{\KugVol{\Schatten{p}{}, \beta}{m \times n}} \int_{\KZ^{m \times n}} \Ind_{\Kug{\Schatten{p}{}, \beta}{m \times n}}(x) f\Bigl( \frac{r v x u}{\lVert x \rVert_{\Schatten{p}{}}} \Bigr) \, \diff x\\
&= \frac{1}{\KugVol{\Schatten{p}{}, \beta}{m \times n}} \int_{\KZ^{m \times n}} \Ind_{\Kug{\Schatten{p}{}, \beta}{m \times n}}(\adj{v} x \adj{u}) f\Bigl( \frac{r x}{\lVert \adj{v} x \adj{u} \rVert_{\Schatten{p}{}}} \Bigr) \, \diff x\\
&= \frac{1}{\KugVol{\Schatten{p}{}, \beta}{m \times n}} \int_{\KZ^{m \times n}} \Ind_{\Kug{\Schatten{p}{}, \beta}{m \times n}}(x) f\Bigl( \frac{r x}{\lVert x \rVert_{\Schatten{p}{}}} \Bigr) \, \diff x\\
&= \int_{\Sph{\Schatten{p}{}, \beta}{m, n}} f(r \theta) \, \diff\KegM{\Schatten{p}{}, \beta}{m \times n}(\theta).
\end{align*}
Now intgerate w.r.t.\ \(\rho\) in order to get
\begin{align*}
\int_{\KZ^{m \times n}} f(v x u) \, \diff\mu(x) &= \int_{\RZ_{\geq 0}} \int_{\Sph{\Schatten{p}{}, \beta}{m, n}} f(r v \theta u) \, \diff\KegM{\Schatten{p}{}, \beta}{m \times n}(\theta) \, \diff\rho(r)\\
&= \int_{\RZ_{\geq 0}} \int_{\Sph{\Schatten{p}{}, \beta}{m, n}} f(r \theta) \, \diff\KegM{\Schatten{p}{}, \beta}{m \times n}(\theta) \, \diff\rho(r) = \int_{\KZ^{m \times n}} f(x) \, \diff\mu(x),
\end{align*}
and the claim follows.
\end{proof}

The next theorem is a major generalization of the probabilistic representation of singular values of random matrices as used in \cite{KPTh2020_3} and \cite{KPTh2020_2}.

\begin{Thm}\label{sa:gleichvert_schattenq_kugel_sphaere}
Let \(X\) be a \(\KZ^{m \times n}\)\=/valued random variable.
\begin{compactenum}
\item
\(X\) has a \(\Kug{\Schatten{p}{}, \beta}{m \times n}\)\-/symmetric distribution with radial distribution \(\rho\) if and only if there exist independent random variables \(R\), \(U\), \(V\), and \(Y\) such that \(R \sim \rho\), \(U \sim \Gleichv(\Orth_{n; \beta})\), \(V \sim \Gleichv(\Orth_{m; \beta})\), and \(Y = (Y_i)_{i \leq m}\) is \(\RZ^m\)\-/valued with Lebesgue\-/density
\begin{equation}\label{eq:dichte_y}
g(y_1, \dotsc, y_m) := \frac{1_{\RZ_{> 0}^m}(y)}{Z_{m, n, p, \beta}} \ez^{-\beta n \lVert y \rVert_{p/2}^{p/2}} \prod_{i = 1}^m y_i^{\beta (n - m + 1)/2 - 1} \prod_{1 \leq i < j \leq m} \lvert y_i - y_j \rvert^\beta
\end{equation}
(here \(Z_{m, n, p, \beta} \in \RZ_{> 0}\) is the appropriate normalization constant, and we interpret \(\ez^{-\beta n \lVert y \rVert_{p/2}^{p/2}} := \Ind_{\Kug{\infty, 1}{m}}(y)\) for \(p = \infty\)), and
\begin{equation*}
X \GlVert R \, V \, \dfrac{\diag\bigl( (Y_i^{1/2})_{i \leq m} \bigr)}{\lVert Y \rVert_{p/2}^{1/2}} \, \adj{U}.
\end{equation*}

\item
Special cases of 1.:
\begin{compactenum}
\item \(X \sim \Gleichv(\Kug{\Schatten{p}{}, \beta}{m \times n})\) iff \(R \GlVert W^{1/(\beta m n)}\), where \(W \sim \Gleichv([0, 1])\) is independent of all other random variables.
\item \(X \sim \KegM{\Schatten{p}{}, \beta}{m \times n}\) iff \(R = 1\) almost surely.
\end{compactenum}

\item
For the particular value \(p = \infty\) we have the following additional representations:
\begin{compactenum}
\item \(X \sim \Gleichv(\Kug{\Schatten{\infty}{}, \beta}{m \times n})\) if and only if \(X \GlVert V \diag((Y_i^{1/2})_{i \leq m}) \adj{U}\), where \(U\), \(V\), and \(Y\) have the same meanings as in 1.
\item \(X \sim \KegM{\Schatten{\infty}{}, \beta}{m \times n}\) if and only if \(X \GlVert V \diag\bigl( (Y_i^{1/2})_{i \leq m} \bigr) \adj{U}\), where \(U\) und \(V\) have the same meanings as in~1., and \(Y = \trapo{(Y_1, \dotsc, Y_m)}\) is \(\RZ^m\)\-/valued with \(Y_1 = 1\) almost surely and \((Y_2, \dotsc, Y_m)\) has Lebesgue\-/density proportional to
\begin{equation*}
(y_2, \dotsc, y_m) \mapsto g(1, y_2, \dotsc, y_m).
\end{equation*}
\end{compactenum}
\end{compactenum}
\end{Thm}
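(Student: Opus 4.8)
The plan is to reduce the whole theorem to the single identification that the cone measure $\KegM{\Schatten{p}{}, \beta}{m \times n}$ is the law of
$\Theta := V\,\diag\bigl((Y_i^{1/2})_{i\leq m}\bigr)\,\lVert Y\rVert_{p/2}^{-1/2}\,\adj{U}$, with $U$, $V$, $Y$ independent as in the statement. Granting this, part~1 is immediate from the first assertion of Proposition~\ref{sa:k_symm_masz_unabh}: a random matrix $X$ with a $\Kug{\Schatten{p}{}, \beta}{m \times n}$\=/symmetric law and radial distribution $\rho$ is exactly one of the form $R\Theta$ with $R\sim\rho$, $\Theta\sim\KegM{\Schatten{p}{}, \beta}{m \times n}$, independent, and one may realize $R,U,V,Y$ jointly independent. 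Parts~2(a) and~2(b) then follow by reading off from Remark~\ref{bem:k_symm}, part~1, the radial distributions of $\Gleichv(\Kug{\Schatten{p}{}, \beta}{m \times n})$ and of $\KegM{\Schatten{p}{}, \beta}{m \times n}$, namely $\frac{\diff\rho}{\diff r}(r)=\beta m n\,r^{\beta m n-1}\Ind_{[0,1]}(r)$ --- which is the law of $W^{1/(\beta m n)}$, $W\sim\Gleichv([0,1])$ --- and $\rho=\delta_1$, i.e.\ $R\equiv1$.

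\textbf{The key computation.} To prove the identification I start from $X\sim\Gleichv(\Kug{\Schatten{p}{}, \beta}{m \times n})$ and apply Proposition~\ref{sa:integraltrafo}, part~2, with $f$ a test function times $\Ind_{\Kug{\Schatten{p}{}, \beta}{m \times n}}$; since $\Ind_{\Kug{\Schatten{p}{}, \beta}{m \times n}}(v\diag(s)\adj{u})=\Ind_{\{\lVert s\rVert_p\leq1\}}(s)$, this shows $X\GlVert V\diag(S)\adj{U}$ with $U\sim\Gleichv(\Orth_{n;\beta})$, $V\sim\Gleichv(\Orth_{m;\beta})$ and $S$ independent, where $S$ has density proportional to $\Ind_{\{\lVert s\rVert_p\leq1\}}(s)\Ind_{\RZ_{>0}^m}(s)\prod_i s_i^{\beta(n-m+1)-1}\prod_{i<j}\lvert s_i^2-s_j^2\rvert^\beta$. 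Putting $T_i:=S_i^2$ (and using $\lVert(t_i^{1/2})_{i\leq m}\rVert_p\leq1\iff\lVert t\rVert_{p/2}\leq1$) turns this density into one proportional to $\Ind_{\{\lVert t\rVert_{p/2}\leq1\}}(t)\,\psi(t)$, where $\psi(t):=\Ind_{\RZ_{>0}^m}(t)\prod_i t_i^{\beta(n-m+1)/2-1}\prod_{i<j}\lvert t_i-t_j\rvert^\beta$; so $X\GlVert V\diag\bigl((T_i^{1/2})_{i\leq m}\bigr)\adj{U}$. A short degree count shows the measure $\psi(t)\,\diff t$ is homogeneous of degree $\beta m n/2$, hence admits the polar decomposition
\[
\int_{\RZ_{>0}^m} h(t)\,\psi(t)\,\diff t=\int_0^\infty\int_{\Sigma}h(\rho w)\,\diff\sigma(w)\,\rho^{\beta m n/2-1}\,\diff\rho,
\]
with $\Sigma:=\{w\in\RZ_{>0}^m\Colon\lVert w\rVert_{p/2}=1\}$ and $\sigma$ a finite measure on $\Sigma$ (finite because $\Kug{\Schatten{p}{}, \beta}{m \times n}$ has finite volume); for $p=\infty$ one reads $\lVert\cdot\rVert_{p/2}=\lVert\cdot\rVert_\infty$ throughout. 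Consequently $\lVert T\rVert_{p/2}$ and $W:=T/\lVert T\rVert_{p/2}$ are independent with $W\propto\sigma$; since $\lVert V\diag((T_i^{1/2})_{i\leq m})\adj{U}\rVert_{\Schatten{p}{}}=\lVert(T_i^{1/2})_{i\leq m}\rVert_p=\lVert T\rVert_{p/2}^{1/2}$ and hence $X/\lVert X\rVert_{\Schatten{p}{}}\GlVert V\diag((W_i^{1/2})_{i\leq m})\adj{U}$, the second part of Proposition~\ref{sa:k_symm_masz_unabh} (applicable as $\Gleichv(\Kug{\Schatten{p}{}, \beta}{m \times n})$ has no atom at $0$) shows that $\KegM{\Schatten{p}{}, \beta}{m \times n}$ is the law of $V\diag((W_i^{1/2})_{i\leq m})\adj{U}$.

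\textbf{Closing the loop and part~3.} Since $g(y)=Z_{m,n,p,\beta}^{-1}\,\ez^{-\beta n\lVert y\rVert_{p/2}^{p/2}}\,\psi(y)$ (the exponential being $\Ind_{\{\lVert y\rVert_{p/2}\leq1\}}$ when $p=\infty$), the same polar decomposition applied to $g$ shows that for $Y\sim g$ the quotient $Y/\lVert Y\rVert_{p/2}$ is again $\propto\sigma$ (the radial part now being a generalized gamma density, resp.\ proportional to $\rho^{\beta mn/2-1}\Ind_{[0,1]}(\rho)$ for $p=\infty$). Hence $\diag((Y_i^{1/2})_{i\leq m})\lVert Y\rVert_{p/2}^{-1/2}=\diag\bigl(((Y_i/\lVert Y\rVert_{p/2})^{1/2})_{i\leq m}\bigr)\GlVert\diag((W_i^{1/2})_{i\leq m})$, giving $\Theta\GlVert V\diag((W_i^{1/2})_{i\leq m})\adj{U}\sim\KegM{\Schatten{p}{}, \beta}{m \times n}$ and finishing parts~1 and~2. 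For part~3(a) one notes that when $p=\infty$ the density of $T$ obtained above is literally $g$, so $T\GlVert Y$ and $X\GlVert V\diag((Y_i^{1/2})_{i\leq m})\adj{U}$ with no normalization; the converse is the same computation read backwards. For part~3(b), part~3(a) together with Proposition~\ref{sa:k_symm_masz_unabh} identifies $\KegM{\Schatten{\infty}{}, \beta}{m \times n}$ with the law of $V\diag((Y_i^{1/2})_{i\leq m})\adj{U}$ divided by its spectral norm $(\max_i Y_i)^{1/2}$, i.e.\ of $V\diag\bigl(((Y_i/\max_j Y_j)^{1/2})_{i\leq m}\bigr)\adj{U}$; by the permutation invariance of the Haar measures on $\Orth_{n;\beta}$ and $\Orth_{m;\beta}$ (as used in Remark~\ref{bem:weylkammer}) one may move the a.s.\ unique maximal coordinate to position~$1$ and randomize the order of the remaining ones, and the change of variables $y\mapsto(\max_i y_i,\,(y_j/\max_i y_i)_j)$ together with the exchangeability of $g$ shows that the resulting vector $\tilde Y$ satisfies $\tilde Y_1=1$ a.s.\ and $(\tilde Y_2,\dots,\tilde Y_m)$ has density proportional to $g(1,\cdot)$.

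\textbf{Main obstacle.} The computations are elementary; the points needing care are the polar decomposition of the homogeneous measure $\psi(t)\,\diff t$ and the consistent bookkeeping of the Jacobians through $s\mapsto s^2$ and $t\mapsto(\lVert t\rVert_{p/2},\,t/\lVert t\rVert_{p/2})$, which is precisely what forces the three angular marginals to coincide, together with the change of variables underlying part~3(b). No idea beyond Propositions~\ref{sa:integraltrafo} and~\ref{sa:k_symm_masz_unabh} is required.
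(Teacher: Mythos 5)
Your proof is correct, and it rests on the same two workhorses as the paper's (Propositions~\ref{sa:integraltrafo} and~\ref{sa:k_symm_masz_unabh}), but the organization is genuinely different. You anchor the whole argument to the uniform law on the ball: apply Proposition~\ref{sa:integraltrafo}, part~2, directly, square to $T=S^2$, and compare the angular laws of $T/\lVert T\rVert_{p/2}$ and $Y/\lVert Y\rVert_{p/2}$ through one polar decomposition of the homogeneous measure $\psi(t)\,\diff t$ with respect to $\lVert\cdot\rVert_{p/2}$; Proposition~\ref{sa:k_symm_masz_unabh} then promotes the resulting cone\-/measure identification to the general $\Kug{\Schatten{p}{},\beta}{m\times n}$\-/symmetric case. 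The paper instead first factors the cone measure itself as $\Theta\GlVert V\diag(S)\adj{U}$ via the unitary\-/invariance machinery (Propositions~\ref{sa:unitinvar} and~\ref{sa:bqsymm_orthinvar}) and then shows that $S$ and $\lVert Y\rVert_{p/2}^{-1/2}(Y_i^{1/2})_{i\leq m}$ share the same $\KegM{p,1}{m}$\-/density, proportional to $h$. Your route dispenses with those two propositions entirely; the paper's route buys a structural decoupling for every two\-/sided unitarily invariant law, which it reuses elsewhere (e.g.\ in Section~\ref{sec:sanovldp}). Your angular measure $\sigma$ on $\{\lVert\cdot\rVert_{p/2}=1\}$ is simply the pushforward of the paper's $h\,\diff\KegM{p,1}{m}$ under the coordinatewise map $s\mapsto s^2$, so the bookkeeping is isomorphic. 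One spot deserves a line of caution: in part~3(b) you assert, without writing the Jacobian, that the change of variables $y\mapsto(\max_i y_i,\,(y_j/\max_i y_i)_{j})$ together with exchangeability of $g$ yields the density proportional to $g(1,\cdot)$ for $(\tilde Y_2,\dots,\tilde Y_m)$; this is exactly the paper's splitting of $\Sph{\infty,1}{m}\cap\RZ_{>0}^m$ into its $m$ faces $\{1\}\times[0,1]^{m-1}$ up to permutation, and the Jacobian $r^{m-1}$ combined with the homogeneity $g(r\theta)=r^{\beta m n/2-m}g(\theta)$ makes the radial factor separate as $r^{\beta m n/2-1}$, so the step does close --- but it should be spelled out.
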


\begin{proof}
For ease of notation we abbreviate normalization constants (whose value also may change from one occurrence to the other) and set \(h(x) := \Ind_{\RZ_{> 0}^m}(x) \cdot \prod_{i = 1}^m x_i^{\beta (n - m + 1) - 1} \cdot \prod_{1 \leq i < j \leq m} \lvert x_i^2 - x_j^2 \rvert^\beta\).

\begin{asparaenum}
\item
\(\Rightarrow\): From Proposition~\ref{sa:k_symm_masz_unabh} we know \(X \GlVert R \Theta\) where \(R\) and \(\Theta\) are independent, \(R \sim \rho\) and \(\Theta \sim \KegM{\Schatten{p}{}, \beta}{m \times n}\). Since \(\KegM{\Schatten{p}{}, \beta}{m \times n}\) is \(\Kug{\Schatten{p}{}, \beta}{m \times n}\)\=/symmetric (see Remark~\ref{bem:k_symm}), from Propositions~\ref{sa:unitinvar} and~\ref{sa:bqsymm_orthinvar} we know \(\Theta \GlVert V \diag(S) \adj{U}\) with independent variables \(S\), \(U\), and \(V\) such that \(S \GlVert (s_{P(i)}(\Theta))_{i \leq m}\), \(U \sim \Gleichv(\Orth_{n; \beta})\), and \(V \sim \Gleichv(\Orth_{m; \beta})\), where \(P \sim \Gleichv(\Perm{m})\) is independent of all others. Thence it suffices to establish \(S \GlVert \lVert Y \rVert_{p/2}^{-1/2} (Y_i^{1/2})_{i \leq m}\) where \(Y\) has the claimed distribution. We further define \(x_\pi := (x_{\pi(i)})_{i \leq m}\) for \(x \in \KZ^m\) and \(\pi \in \Perm{m}\). So let \(f \colon \RZ^m \to \RZ\) be measurable and nonnegative, then
\begin{equation*}
\Erw[f(S)] = \Erw[f(s(\Theta)_P)] = \frac{1}{m!} \sum_{\pi \in \Perm{m}} \int_{\Sph{\Schatten{p}{}, \beta}{m, n}} f(s(\theta)_\pi) \, \diff\KegM{\Schatten{p}{}, \beta}{m \times n}(\theta);
\end{equation*}
express the integral over \(\Sph{\Schatten{p}{}, \beta}{m, n}\) as one over \(\Kug{\Schatten{p}{}, \beta}{m \times n}\) and use the positive homogeneity of singular values to get
\begin{equation*}
\Erw[f(S)] = \frac{C}{m!} \sum_{\pi \in \Perm{m}} \int_{\Kug{\Schatten{p}{}, \beta}{m \times n}} f\Bigl( \frac{s(x)_\pi}{\lVert x \rVert_{\Schatten{p}{}}} \Bigr) \, \diff x;
\end{equation*}
now use Proposition~\ref{sa:integraltrafo}, part~2, noting that the integrand already depends only on the singular values of \(x\),
\begin{equation*}
\Erw[f(S)] = \frac{C_1}{m!} \sum_{\pi \in \Perm{m}} \int_{\Kug{p, 1}{m}} f\Bigl( \frac{s(\diag(\sigma))_\pi}{\lVert \sigma \rVert_p} \Bigr) \, h(\sigma) \, \diff\sigma;
\end{equation*}
as in Remark~\ref{bem:weylkammer} we split \(\RZ_{> 0}^m = \bigcup_{\pi' \in \Perm{m}} \tau_{\pi'} W^m\) (up to null sets), transform \(\tau_{\pi'} W^m\) to \(W^m\) while bearing the symmetry of \(\lVert \cdot \rVert_p\) and \(h\) in mind, and use \(s(\diag(\sigma_{\pi'})) = \sigma\) for \(\sigma \in W^m\), so
\begin{align*}
\int_{\Kug{p, 1}{m}} f\Bigl( \frac{s(\diag(\sigma))_\pi}{\lVert \sigma \rVert_p} \Bigr) \, h(\sigma) \, \diff\sigma &= \sum_{\pi' \in \Perm{m}} \int_{\tau_{\pi'} W^m} \Ind_{\Kug{p, 1}{m}}(\sigma) f\Bigl( \frac{s(\diag(\sigma))_\pi}{\lVert \sigma \rVert_p} \Bigr) \, h(\sigma) \, \diff\sigma\\
&= \sum_{\pi' \in \Perm{m}} \int_{W^m} \Ind_{\Kug{p, 1}{m}}(\sigma) f\Bigl( \frac{s(\diag(\sigma_{\pi'}))_\pi}{\lVert \sigma \rVert_p} \Bigr) \, h(\sigma) \, \diff\sigma\\
&= \sum_{\pi' \in \Perm{m}} \int_{W^m} \Ind_{\Kug{p, 1}{m}}(\sigma) f\Bigl( \frac{\sigma_\pi}{\lVert \sigma \rVert_p} \Bigr) \, h(\sigma) \, \diff\sigma\\
&= m! \int_{\tau_{\inv{\pi}} W^m} \Ind_{\Kug{p, 1}{m}}(\sigma) f\Bigl( \frac{\sigma}{\lVert \sigma \rVert_p} \Bigr) \, h(\sigma) \, \diff\sigma;
\end{align*}
therefore
\begin{align*}
\Erw[f(S)] &= C_1 \sum_{\pi \in \Perm{m}} \int_{\tau_{\inv{\pi}} W^m} \Ind_{\Kug{p, 1}{m}}(\sigma) f\Bigl( \frac{\sigma}{\lVert \sigma \rVert_p} \Bigr) \, h(\sigma) \, \diff\sigma\\
&= C_1 \int_{\RZ^m} \Ind_{\Kug{p, 1}{m}}(\sigma) f\Bigl( \frac{\sigma}{\lVert \sigma \rVert_p} \Bigr) \, h(\sigma) \, \diff\sigma.
\end{align*}
Introduce polar coordinates in \(\RZ^m\) and use the homogeneity of \(h\) to arrive at
\begin{equation*}
\Erw[f(S)] = C_2 \int_{\Sph{p, 1}{m}} f(\theta) h(\theta) \, \diff\KegM{p, 1}{m}(\theta),
\end{equation*}
which shows that the distribution of \(S\) has \(\KegM{p, 1}{m}\)\=/density proportional to \(h\). It remains to prove that \(\lVert Y \rVert_{p/2}^{-1/2} (Y_i^{1/2})_{i \leq m}\) also has \(\KegM{p, 1}{m}\)\=/density proportional to \(h\); so let \(f\) be as before, then via substitution \(x := (y_i^{1/2})_{i \leq m}\) and polar integration, and noting that \(g\) and \(h\) contain the indicator function \(\Ind_{\RZ_{> 0}^m}\),
\begin{align*}
\Erw\biggl[ f\biggl( \frac{(Y_i^{1/2})_{i \leq m}}{\lVert Y \rVert_{p/2}^{1/2}} \biggr) \biggr] &= \int_{\RZ^m} f\biggl( \frac{(y_i^{1/2})_{i \leq m}}{\lVert y \rVert_{p/2}^{1/2}} \biggr) g(y) \, \diff y\\
&= 2^m \int_{\RZ^m} f\Bigl( \frac{x}{\lVert x \rVert_p} \Bigr) g((x_i^2)_{i \leq m}) \prod_{i = 1}^m x_i \, \diff x\\
&= C \int_{\RZ^m} f\Bigl( \frac{x}{\lVert x \rVert_p} \Bigr) \ez^{-\beta n \lVert x \rVert_p^p} h(x) \, \diff x\\
&= C_1 \int_{\RZ_{\geq 0}} \int_{\Sph{p, 1}{m}} f\Bigl( \frac{r \theta}{\lVert r \theta \rVert_p} \Bigr) \ez^{-\beta n \lVert r \theta \rVert_p^p} h(r \theta) r^{m - 1} \, \diff\KegM{p, 1}{m}(\theta) \, \diff r\\
&= C_1 \int_{\RZ_{\geq 0}} r^{\beta m n - 1} \ez^{-\beta n r^p} \, \diff r \int_{\Sph{p, 1}{m}} f(\theta) h(\theta) \, \diff\KegM{p, 1}{m}(\theta),
\end{align*}
where for the last step we have invested the homogeneity of \(h\); and we have finished.

{\Absatz}\(\Leftarrow\): Because of Proposition~\ref{sa:k_symm_masz_unabh} it suffices to prove \(\Theta := \lVert Y \rVert_{p/2}^{-1/2} \, V \diag((Y_i^{1/2})_{i \leq m}) \adj{U} \sim \KegM{\Schatten{p}{}, \beta}{m \times n}\), which makes sense since \(\lVert \Theta \rVert_{\Schatten{p}{}} = \lVert Y \rVert_{p/2}^{-1/2} \lVert \diag((Y_i^{1/2})_{i \leq m}) \rVert_{\Schatten{p}{}} = 1\). From the proof of \(\Rightarrow\) we already know that \(\lVert Y \rVert_{p/2}^{-1/2} (Y_i^{1/2})_{i \leq m}\) has \(\KegM{p, 1}{m}\)\=/density proportional to \(h\). So let \(f \colon \Sph{\Schatten{p}{}, \beta}{m, n} \to \RZ\) be measurable and nonnegative, then, by immediately converting back to \(\RZ^m\) we get
\begin{align*}
\Erw[f(\Theta)] &= C \int_{\Orth_{m; \beta}} \int_{\Orth_{n; \beta}} \int_{\Sph{p, 1}{m}} f(v \diag(\theta) \adj{u}) h(\theta) \, \diff\KegM{p, 1}{m}(\theta) \, \diff u \, \diff v\\
&= C_1 \int_{\Orth_{m; \beta}} \int_{\Orth_{n; \beta}} \int_{\Kug{p, 1}{m}} f\Bigl( \frac{v \diag(x) \adj{u}}{\lVert x \rVert_p} \Bigr) \frac{h(x)}{\lVert x \rVert_p^{\beta m n - m}} \, \diff x \, \diff u \, \diff v.
\end{align*}
Use \(\lVert x \rVert_p = \lVert v \diag(x) \adj{u} \rVert_{\Schatten{p}{}}\) and Proposition~\ref{sa:integraltrafo}, part~2, in order to revert to \(\KZ^{m \times n}\), so
\begin{equation*}
\Erw[f(\Theta)] = C_2 \int_{\Kug{\Schatten{p}{}, \beta}{m \times n}} f\Bigl( \frac{x}{\lVert x \rVert_{\Schatten{p}{}}} \Bigr) \lVert x \rVert_{\Schatten{p}{}}^{m - \beta m n} \, \diff x;
\end{equation*}
finally introduce polar coordinates on \(\KZ^{m \times n}\) to achieve
\begin{align*}
\Erw[f(\Theta)] &= C_3 \int_{\RZ_{> 0}} \int_{\Sph{\Schatten{p}{}, \beta}{m, n}} \Ind_{\Kug{\Schatten{p}{}, \beta}{m \times n}}(r \theta) f\Bigl( \frac{r \theta}{\lVert r \theta \rVert_{\Schatten{p}{}}} \Bigr) \lVert r \theta \rVert_{\Schatten{p}{}}^{m - \beta m n} \, r^{\beta m n - 1} \, \diff\KegM{\Schatten{p}{}, \beta}{m \times n}(\theta) \, \diff r\\
&= \frac{C_3}{m} \int_{\Sph{\Schatten{p}{}, \beta}{m, n}} f(\theta) \, \diff\KegM{\Schatten{p}{}, \beta}{m \times n}(\theta),
\end{align*}
and this proves the claim.

\item
We refer to Remark~\ref{bem:k_symm}: a \(K\)\=/symmetric measure equals \(\Gleichv(K)\) iff \(\frac{\diff\rho(r)}{\diff r} = d r^{d - 1} \Ind_{[0, 1]}(r)\), and it equals \(\KegM{K}{}\) iff \(\rho = \delta_1\). Since we know \(R \sim \rho\), the claim is immediate for \(\KegM{\Schatten{p}{}, \beta}{m \times n}\), and in the case of \(\Gleichv(\Kug{\Schatten{p}{}, \beta}{m \times n})\) observe \(R^{\beta m n} \sim \Gleichv([0, 1])\).

\item
\begin{asparaenum}
\item
We start anew from Proposition~\ref{sa:integraltrafo}, part~2; let \(f \colon \KZ^{m \times n} \to \RZ\) be measurable and nonnegative, then, via the usual substitution \(s = (y_i^{1/2})_{i \leq m}\),
\begin{align*}
\Erw[f(X)] &= \frac{1}{\KugVol{\Schatten{\infty}{}, \beta}{m \times n}} \int_{\KZ^{m \times n}} \Ind_{\Kug{\Schatten{\infty}{}, \beta}{m \times n}}(x) f(x) \, \diff x\\
&= C \int_{\RZ_{> 0}^m} \int_{\Orth_{m; \beta}} \int_{\Orth_{n; \beta}} \Ind_{\Kug{\Schatten{\infty}{}, \beta}{m \times n}}(v \diag(s) \adj{u}) f(v \diag(s) \adj{u}) h(s) \, \diff u \, \diff v \, \diff s\\
&= C 2^{-m} \int_{\RZ_{> 0}^m} \int_{\Orth_{m; \beta}} \int_{\Orth_{n; \beta}} \Ind_{\Kug{\infty, 1}{m}}(y) f(v \diag((y_i^{1/2})_{i \leq m}) \adj{u})\\
&\qquad \cdot h((y_i^{1/2})_{i \leq m}) \prod_{i = 1}^m y_i^{-1/2} \, \diff u \, \diff v \, \diff y\\
&= C_1 \int_{\RZ_{> 0}^m} \int_{\Orth_{m; \beta}} \int_{\Orth_{n; \beta}} f(v \diag((y_i^{1/2})_{i \leq m}) \adj{u}) g(y) \, \diff u \, \diff v \, \diff y,
\end{align*}
where recall our convention \(\ez^{-\beta n \lVert y \rVert_\infty^{\infty}} = \Ind_{\Kug{\infty, 1}{m}}(y)\). The last line implies the claim.

\item
We start from~2.; let \(f \colon \Sph{\Schatten{\infty}{}, \beta}{m, n} \to \RZ\) be measurable and nonnegative, then,
\begin{align*}
\Erw[f(X)] &= C \int_{\RZ_{> 0}^m} \int_{\Orth_{m; \beta}} \int_{\Orth_{n; \beta}} f\biggl( \frac{v \diag((y_i^{1/2})_{i \leq m}) \adj{u}}{\lVert y \rVert_\infty^{1/2}} \biggr) g(y) \, \diff u \, \diff v \, \diff y\\
&= C_1 \int_{\RZ_{> 0}} \int_{\Sph{\infty, 1}{m}} \int_{\Orth_{m; \beta}} \int_{\Orth_{n; \beta}} f\biggl( \frac{v \diag(((r \theta_i)^{1/2})_{i \leq m}) \adj{u}}{\lVert r \theta \rVert_\infty^{1/2}} \biggr)\\
&\qquad \cdot g(r \theta) r^{m - 1} \, \diff u \, \diff v \, \diff\KegM{\infty, 1}{m}(\theta) \, \diff r\\
&= C_1 \int_{(0, 1]} r^{\beta m n/2 - 1} \, \diff r \int_{\Sph{\infty, 1}{m}} \int_{\Orth_{m; \beta}} \int_{\Orth_{n; \beta}} f\bigl( v \diag((\theta_i^{1/2})_{i \leq m}) \adj{u} \bigr)\\
&\qquad \cdot g(\theta) \, \diff u \, \diff v \, \diff\KegM{\infty, 1}{m}(\theta),
\end{align*}
where we have introduced polar coordinates and used the homogeneity of \(g\), to wit \(g(r \theta) = \Ind_{[0, 1]}(r) r^{\beta m n/2 - m} \, g(\theta)\). Now essentially \(\Sph{\infty, 1}{m} \cap \RZ_{> 0}^m\) consists of those faces of \([0, 1]^m\) where at least one coordinate equals \(1\), and they all are permutations of \(\{1\} \times [0, 1]^{m - 1}\); the matrix \(\diag(\theta)\) is permuted accordingly, but as we are integrating w.r.t.\ the Haar\-/measures on \(\Orth_{m; \beta}\) and \(\Orth_{n; \beta}\), the value of the integral over each of the faces is the same; cf.\ Remark~\ref{bem:weylkammer} for more details. Thus we get
\begin{align*}
\Erw[f(X)] &= C_2 \int_{[0, 1]^{m - 1}} \int_{\Orth_{m; \beta}} \int_{\Orth_{n; \beta}} f(v \diag(1, \theta_2^{1/2}, \dotsc, \theta_m^{1/2}) \adj{u})\\
&\qquad \cdot g(1, \theta_2, \dotsc, \theta_m) \, \diff u \, \diff v \, \diff\theta
\end{align*}
and the result follows. \qedhere
\end{asparaenum}
\end{asparaenum}
\end{proof}

% % % % % % % % % % % % % % % % % % % % % % % % % % % % % % % % %
\section{Sanov\-/type LDPs for singular values}
\label{sec:sanovldp}
% % % % % % % % % % % % % % % % % % % % % % % % % % % % % % % % %

\subsection{Prerequisites}
\label{sec:ldp_voraussetzungen}

At first glance it may seem an odd choice that Theorem~\ref{sa:gleichvert_schattenq_kugel_sphaere} contains a representation for the squares of the singular values instead of the singular values themselves. The main reason is the application of results which lead to large deviations principles.

Having a broad readership in mind, we shall recall the basic definitions from the theory of large deviations principles; more information may be found in textbooks like \cite{DZ2010}, \cite{denHol2000}, \cite{RAS2015}, or the survey article \cite{jp2024}, which also provides the reader with the current state of the art concerning large deviations theory in geometric functional analysis. In what follows let \(E\) be a Polish space (a complete separable metrizable topological space) and by \(\Borel_E\) denote its Borel \(\sigma\)\=/algebra.

\begin{Def}[rate function]\label{def:ratenfunktion}
A \emph{rate function} is a lower semicontinuous map \(\mathcal{I} \colon E \to [0, \infty]\) which is not identical \(\infty\). A rate function \(\mathcal{I}\) is called a \emph{good rate function} iff for any \(y \in \RZ_{\geq 0}\) the sublevel set \(\inv{\mathcal{I}}([0, y])\) is compact.
\end{Def}

\begin{Def}[large deviations principle]\label{def:ldp}
A sequence \((\xi_n)_{n \in \NZ}\) of \(E\)\-/valued random variables is said to satisfy a \emph{large deviations principle} (LDP) with speed \((s_n)_{n \in \NZ}\) and (good) rate function \(\mathcal{I} \colon E \to [0, \infty]\), iff \((s_n)_{n \in \NZ}\) is a sequence in \(\RZ_{> 0}\) with \(\lim_{n \to \infty} s_n = \infty\), \(\mathcal{I}\) is a (good) rate function in the sense of Definition~\ref{def:ratenfunktion}, and for any \(A \in \Borel_E\) the following chain of inequalities holds true,
\begin{equation*}
-\inf\bigl( \mathcal{I}(\offen{A}) \bigr) \leq \liminf_{n \to \infty} \frac{1}{s_n} \log \Wsk[\xi_n \in A] \leq \limsup_{n \to \infty} \frac{1}{s_n} \log \Wsk[\xi_n \in A] \leq -\inf\bigl( \mathcal{I}(\schluss{A}) \bigr),
\end{equation*}
where \(\offen{A}\) and \(\schluss{A}\) denote the interior and closure of \(A\) resp.
\end{Def}

\begin{Def}[exponentially tight]\label{def:exp_straff}
A sequence \((\xi_n)_{n \in \NZ}\) of \(E\)\=/valued random variables is \emph{exponentially tight} with speed \((s_n)_{n \in \NZ}\) iff \((s_n)_{n \in \NZ}\) is as in Definition~\ref{def:ldp} and there holds true,
\begin{equation*}
\inf_{C} \limsup_{n \to \infty} \frac{1}{s_n} \log \Wsk[\xi_n \notin C] = -\infty,
\end{equation*}
where \(C\) ranges over all compact subsets of \(E\).
\end{Def}

There are several tools for proving the existence of an LDP; one which starts form first principles is the following.

\begin{Sa}
Let \(\mathcal{T}\) be a basis of the topology on \(E\). Let \((\xi_n)_{n \in \NZ}\) be a sequence of \(E\)\=/valued random variables and let \((s_n)_{n \in \NZ}\) be a sequence in \(\RZ_{> 0}\) with \(\lim_{n \to \infty} s_n = \infty\). If there exists a function \(\mathcal{I} \colon E \to [0, \infty]\) such that, for all \(a \in E\),
\begin{equation*}
\inf_{\substack{U \in \mathcal{T} \\ a \in U}} \liminf_{n \to \infty} \frac{1}{s_n} \log \Wsk[\xi_n \in U] = -\mathcal{I}(a) = \inf_{\substack{U \in \mathcal{T} \\ a \in U}} \limsup_{n \to \infty} \frac{1}{s_n} \log \Wsk[\xi_n \in U],
\end{equation*}
and if in addition \((\xi_n)_{n \in \NZ}\) is exponentially tight with speed \((s_n)_{n \in \NZ}\), then \((\xi_n)_{n \in \NZ}\) satisfies a large deviations principle with speed \((s_n)_{n \in \NZ}\) and good rate function \(\mathcal{I}\).
\end{Sa}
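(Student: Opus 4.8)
\medskip

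The plan is the classical one for establishing an LDP from a base of the topology: verify that \(\mathcal{I}\) is a good rate function, prove the upper bound for compact sets directly from the \(\limsup\)\nobreakdash-half of the hypothesis, promote it to all closed sets using exponential tightness, and read off the lower bound for open sets from the \(\liminf\)\nobreakdash-half together with the fact that \(\mathcal{T}\) is a base. The only step needing genuine care is the bookkeeping in the compact upper bound.

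First I would record that \(\mathcal{I} \geq 0\), since \(\Wsk[\xi_n \in U] \leq 1\) makes each quotient \(\frac{1}{s_n}\log\Wsk[\xi_n \in U]\) nonpositive, hence each \(\liminf\) occurring in the hypothesis is \(\leq 0\). Lower semicontinuity is formal: writing \(\mathcal{I}(a) = \sup_{U \in \mathcal{T},\, a \in U}\bigl(-\liminf_n \tfrac{1}{s_n}\log\Wsk[\xi_n \in U]\bigr)\), if \(\mathcal{I}(a) > \alpha\) then some admissible \(U\) already satisfies \(-\liminf_n \tfrac{1}{s_n}\log\Wsk[\xi_n \in U] > \alpha\), and since this same \(U\) is admissible in the defining supremum for every \(c \in U\), we get \(\mathcal{I} > \alpha\) on the neighbourhood \(U\) of \(a\); thus \(\{\mathcal{I} > \alpha\}\) is open. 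That \(\mathcal{I} \not\equiv \infty\) and that \(\mathcal{I}\) is good will be obtained at the very end.

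For a compact \(K \subseteq E\) and \(\varepsilon > 0\), for each \(a \in K\) I would use the \(\limsup\)\nobreakdash-equality to pick \(U_a \in \mathcal{T}\) with \(a \in U_a\) and \(\limsup_n \tfrac{1}{s_n}\log\Wsk[\xi_n \in U_a] \leq -\min\{\mathcal{I}(a),\tfrac{1}{\varepsilon}\} + \varepsilon\) (the truncation at \(1/\varepsilon\) accommodates points with \(\mathcal{I}(a) = \infty\)), extract a finite subcover \(K \subseteq U_{a_1} \cup \dots \cup U_{a_N}\), and combine \(\Wsk[\xi_n \in K] \leq \sum_{i=1}^N \Wsk[\xi_n \in U_{a_i}]\) with the elementary identity \(\limsup_n \tfrac{1}{s_n}\log\bigl(p_n^{(1)} + \dots + p_n^{(N)}\bigr) = \max_i \limsup_n \tfrac{1}{s_n}\log p_n^{(i)}\) (which holds because \(s_n \to \infty\)) to conclude \(\limsup_n \tfrac{1}{s_n}\log\Wsk[\xi_n \in K] \leq -\min\{\inf_K \mathcal{I},\tfrac{1}{\varepsilon}\} + \varepsilon\); letting \(\varepsilon \to 0\) yields the compact upper bound. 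For an arbitrary closed \(F\) I would invoke exponential tightness: given \(M\), a compact \(C\) with \(\limsup_n \tfrac{1}{s_n}\log\Wsk[\xi_n \notin C] \leq -M\) gives, via \(\Wsk[\xi_n \in F] \leq \Wsk[\xi_n \in F \cap C] + \Wsk[\xi_n \notin C]\) and the compact bound applied to \(F \cap C\), that \(\limsup_n \tfrac{1}{s_n}\log\Wsk[\xi_n \in F] \leq \max\{-\inf_F\mathcal{I},\,-M\}\); now let \(M \to \infty\).

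The lower bound for open \(G\) is where the base property enters: for \(a \in G\) there is \(U \in \mathcal{T}\) with \(a \in U \subseteq G\), so \(\liminf_n \tfrac{1}{s_n}\log\Wsk[\xi_n \in G] \geq \liminf_n \tfrac{1}{s_n}\log\Wsk[\xi_n \in U] \geq -\mathcal{I}(a)\) by the \(\liminf\)\nobreakdash-equality, and taking the supremum over \(a \in G\) gives \(\geq -\inf_G\mathcal{I}\). Applying this to \(G = E\) shows \(\inf_E\mathcal{I} \leq 0\), hence \(\mathcal{I} \not\equiv \infty\). Finally, for goodness, given \(y \geq 0\) I would choose (exponential tightness again) a compact \(C\) with \(\limsup_n \tfrac{1}{s_n}\log\Wsk[\xi_n \notin C] < -y\); the open\nobreakdash-set lower bound applied to the open set \(\kompl{C}\) then forces \(\inf_{\kompl{C}}\mathcal{I} > y\), so that \(\inv{\mathcal{I}}([0,y]) \subseteq C\) is a closed subset of a compact set, hence compact. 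I expect the only real obstacle to be the bookkeeping in the compact upper bound — the truncation trick for points with \(\mathcal{I} = \infty\) and the finite\nobreakdash-subcover estimate — everything else being soft topology.
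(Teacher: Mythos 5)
The paper does not prove this proposition---it is stated in the Prerequisites subsection as a known result cited from the large-deviations textbook literature (Dembo--Zeitouni and the other references listed there). Your proof is correct and is the standard argument: nonnegativity is immediate, lower semicontinuity follows from the supremum representation of $\mathcal{I}$ over open $U \ni a$, the compact upper bound is obtained via a finite subcover together with the $1/\varepsilon$-truncation for points where $\mathcal{I}$ is infinite and the finite-sum principle for exponential rates, the bound is promoted to closed sets by exponential tightness, the open-set lower bound follows from the base property of $\mathcal{T}$, and goodness is obtained by trapping each sublevel set inside the compacta furnished by exponential tightness. Every step checks out.
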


A very useful result to transform an already existing LDP on \(E\) to one on another Polish space \(F\) is the following.

\begin{Sa}[contraction principle]\label{sa:kontraktionsprinzip}
Let \((\xi_n)_{n \in \NZ}\) be a sequence of \(E\)\=/valued random variables that satisfies a large deviations principle with speed \((s_n)_{n \in \NZ}\) and good rate function \(\mathcal{I} \colon E \to [0, \infty]\), and let \(f \colon E \to F\) be a continuous map. Then the sequence \((f(\xi_n))_{n \in \NZ}\) satisfies a large deviations principle in \(F\) with the same speed \((s_n)_{n \in \NZ}\) and good rate function \(\mathcal{J} \colon F \to [0, \infty]\) given by
\begin{equation*}
\mathcal{J}(y) := \inf_{x \in \inv{f}\{y\}} \mathcal{I}(x),
\end{equation*}
where \(\inf(\emptyset) := \infty\).
\end{Sa}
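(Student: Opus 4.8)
The plan is to first establish that $\mathcal{J}$ is a good rate function, and then to deduce the two LDP estimates for $(f(\xi_n))_{n \in \NZ}$ by pulling Borel sets back through $f$. Since $f$ is continuous, $\inv{f}(G)$ is open whenever $G \subseteq F$ is open and $\inv{f}(C)$ is closed whenever $C \subseteq F$ is closed; thus the LDP bounds for $(\xi_n)_{n \in \NZ}$, applied to $\inv{f}(G)$ and $\inv{f}(C)$, translate directly into the corresponding bounds for $(f(\xi_n))_{n \in \NZ}$, once one records the elementary identity $\inf_{x \in \inv{f}(A)} \mathcal{I}(x) = \inf_{y \in A} \mathcal{J}(y)$ valid for every $A \subseteq F$, which is immediate from $\inv{f}(A) = \bigcup_{y \in A} \inv{f}\{y\}$ and the definition of $\mathcal{J}$.

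The crucial preliminary step is the goodness of $\mathcal{J}$. First, $\mathcal{J} \not\equiv \infty$: choosing $x_0 \in E$ with $\mathcal{I}(x_0) < \infty$ (such $x_0$ exists because $\mathcal{I}$ is a rate function) gives $\mathcal{J}(f(x_0)) \leq \mathcal{I}(x_0) < \infty$. Next I would show that for every $y \in F$ with $\mathcal{J}(y) < \infty$ the infimum in the definition of $\mathcal{J}(y)$ is \emph{attained}: the set $\inv{f}\{y\}$ is nonempty and closed, and any minimizing sequence $(x_k)_k$ in it eventually lies in the sublevel set $\inv{\mathcal{I}}([0, \mathcal{J}(y) + 1])$, which is compact by assumption; a subsequential limit $x^* \in \inv{f}\{y\}$ then satisfies $\mathcal{I}(x^*) \leq \mathcal{J}(y)$ by lower semicontinuity of $\mathcal{I}$, hence $\mathcal{I}(x^*) = \mathcal{J}(y)$. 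This attainment yields, for every $y \in \RZ_{\geq 0}$, the set identity $\inv{\mathcal{J}}([0, y]) = f\bigl( \inv{\mathcal{I}}([0, y]) \bigr)$: the inclusion ``$\subseteq$'' uses the minimizer $x^*$ for a given $y'$ with $\mathcal{I}(x^*) = \mathcal{J}(y') \leq y$ and $f(x^*) = y'$, while ``$\supseteq$'' is trivial from $\mathcal{J}(f(x)) \leq \mathcal{I}(x)$. Therefore $\inv{\mathcal{J}}([0, y])$ is the continuous image of a compact set, hence compact (and in particular closed, as $F$ is Polish, hence Hausdorff); compactness of all sublevel sets of $\mathcal{J}$ gives both its lower semicontinuity and its goodness.

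It then remains to record the two bounds. For closed $C \subseteq F$,
\begin{equation*}
\limsup_{n \to \infty} \frac{1}{s_n} \log \Wsk[f(\xi_n) \in C] = \limsup_{n \to \infty} \frac{1}{s_n} \log \Wsk[\xi_n \in \inv{f}(C)] \leq -\inf\bigl( \mathcal{I}(\inv{f}(C)) \bigr) = -\inf\bigl( \mathcal{J}(C) \bigr),
\end{equation*}
using that $\inv{f}(C)$ is closed and the identity above; for open $G \subseteq F$, the analogous computation with $\liminf$, the open set $\inv{f}(G)$, and the reversed inequality gives $\liminf_{n \to \infty} \frac{1}{s_n} \log \Wsk[f(\xi_n) \in G] \geq -\inf\bigl( \mathcal{J}(G) \bigr)$. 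Applying these to $\offen{A}$ and $\schluss{A}$ for an arbitrary $A \in \Borel_F$ produces exactly the chain of inequalities in Definition~\ref{def:ldp}. The only genuinely non-formal ingredient here is the attainment argument in the second paragraph — the interplay of compactness of the sublevel sets of $\mathcal{I}$ with lower semicontinuity — and once that is in place everything else is bookkeeping.
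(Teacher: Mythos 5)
The paper states this proposition without proof, citing it as a standard tool from large deviations theory (cf.\ Dembo--Zeitouni, \cite{DZ2010}, Theorem~4.2.1). Your proof is correct and is the canonical one: the key steps --- attainment of the infimum defining $\mathcal{J}(y)$ via compactness of sublevel sets and lower semicontinuity, the identity $\inv{\mathcal{J}}([0,y]) = f(\inv{\mathcal{I}}([0,y]))$ giving goodness, the identity $\inf_{\inv{f}(A)} \mathcal{I} = \inf_A \mathcal{J}$, and the pullback of open/closed sets through the continuous $f$ --- are exactly the standard argument, correctly executed.
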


Given a Polish space \(E\), define \(\WMasz(E)\) to be the set of all probability measures on \(E\). The standard topology on \(\WMasz(E)\) is the \emph{weak topology,} that is, the coarsest topology such that the functional
\begin{equation*}
\WMasz(E) \ni \mu \mapsto \int_E f \, \diff\mu
\end{equation*}
is continuous for any given \(f \in \CeBe(E)\), the set of all bounded continuous real\-/valued functions on \(E\). It follows that for \(\mu \in \WMasz(E)\) a basis of open neighborhoods consists of sets \(\mathcal{O}_{f_1, \dotsc, f_d; \epsilon}\), where
\begin{equation*}
\mathcal{O}_{f_1, \dotsc, f_d; \epsilon} := \biggl\{ \nu \in \WMasz(E) \Colon \forall i \in \{1, \dotsc, d\} \colon \biggl\lvert \int_E f_i \, \diff\nu - \int_E f_i \, \diff\mu \biggr\rvert < \epsilon \biggr\},
\end{equation*}
with \(d \in \NZ\), \(\epsilon \in \RZ_{> 0}\), and \(f_1, \dotsc, f_d \in \CeBe(E)\).

The weak topology on \(\WMasz(E)\) may be metrized, for instance with the Lévy\--Prokhorov metric or with the bounded Lipschitz metric \cite[Chapter~11.3]{Dudley2002}, and endowed with such a metric \(\WMasz(E)\) becomes a Polish space itself. For this reason it makes sense to consider LDPs for sequences of random probability measures, and in the sequel any set of probability measures is to be understood as endowed with the weak topology.

\subsection{Results}
\label{sec:ldp_ergebnisse}

For any array \(((X_{n, i})_{i \leq m(n)})_{n \in \NZ}\) of (say) real\-/valued random variables the empirical measures \(\mu_n := \frac{1}{m(n)} \sum_{i = 1}^{m(n)} \delta_{X_{n, i}}\) (where \(\delta_a\) denotes the Dirac measure supported on \(a\)) form a sequence of random probability measures, and the following Theorems~\ref{sa:ldp_singulaerwerte} and~\ref{sa:ldp_singulaerwerte_unendl} treat the situation when the input array consists of the (scaled) singular values of a suitably distributed random matrix.

\begin{Thm}[\(p < \infty\)]\label{sa:ldp_singulaerwerte}
Let \(p \in (0, \infty)\), let \(m \in \NZ\) depend on \(n \in \NZ\) such that \(m \leq n\) and \(c := \lim_{n \to \infty} \frac{m}{n} \in (0, 1]\) exists, and let \((X^{(n)})_{n \in \NZ}\) be a sequence of random variables such that for each \(n \in \NZ\) either \(X^{(n)} \sim \KegM{\Schatten{p}{}, \beta}{m \times n}\) or \(X^{(n)} \sim \Gleichv(\Kug{\Schatten{p}{}, \beta}{m \times n})\). Then the sequence \((\mu_n)_{n \in \NZ}\) of empirical measures \(\mu_n := \frac{1}{m} \sum_{i = 1}^m \delta_{m^{1/p} s_i(X^{(n)})}\) satisfies a large deviations principle on \(\WMasz(\RZ_{\geq 0})\) with speed \(\beta m n\) and good rate function \(\mathcal{I}_{c, p} \colon \WMasz(\RZ_{\geq 0}) \to [0, \infty]\) given by
\begin{equation*}
\mathcal{I}_{c, p}(\mu) := \begin{cases} -\frac{c}{2} \int_{\RZ_{\geq 0}^2} \log\lvert x^2 - y^2 \rvert \, \diff\mu^{\otimes 2}(x, y) - (1 - c) \int_{\RZ_{\geq 0}} \log(x) \, \diff\mu(x) + \frac{\log(\ez p)}{p} + B_{c, p}, & \text{if } m_p(\mu) \leq 1, \\ \infty & \text{else,} \end{cases}
\end{equation*}
which possesses a unique global mimimizer \(\mu_{c, p} \in \WMasz(\RZ_{\geq 0})\) with compact support and some Lebesgue density. Here
\begin{equation*}
B_{c, p} := \lim_{n \to \infty} \frac{1}{\beta m n} \log(Z_{m, n, p, \beta}) \in \RZ,
\end{equation*}
with \(Z_{m, n, p, \beta}\) as in Theorem~\ref{sa:gleichvert_schattenq_kugel_sphaere}, and \(m_p(\mu) := \int_{\RZ_{\geq 0}} x^p \, \diff\mu(x)\).
\end{Thm}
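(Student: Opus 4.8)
The plan is to reduce, via Theorem~\ref{sa:gleichvert_schattenq_kugel_sphaere}, to a mean\-/field logarithmic gas, to prove a Sanov\-/type LDP for that ensemble, and to transfer it back by the contraction principle; an explicit one\-/parameter minimisation then produces the stated rate function, and a potential\-/theoretic argument describes its minimiser. By Theorem~\ref{sa:gleichvert_schattenq_kugel_sphaere}, if \(X\sim\KegM{\Schatten{p}{}, \beta}{m \times n}\) the squared singular values of \(X\) are the sorted coordinates of \(Y/\lVert Y\rVert_{p/2}\), where \(Y=(Y_1,\dots,Y_m)\) has the density \(g\) of~\eqref{eq:dichte_y}; writing \(\tilde\mu_n:=\frac1m\sum_{i=1}^m\delta_{Y_i^{1/2}}\) and letting \(D_t\) denote the push\-/forward under \(x\mapsto tx\), this reads \(\mu_n=D_{m_p(\tilde\mu_n)^{-1/p}}\#\tilde\mu_n\). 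If \(X\sim\Gleichv(\Kug{\Schatten{p}{}, \beta}{m \times n})\) the same holds with an extra \emph{independent} scalar factor \(R\) with \(R^{\beta mn}\sim\Gleichv([0,1])\) (Theorem~\ref{sa:gleichvert_schattenq_kugel_sphaere}, part~2, and Proposition~\ref{sa:k_symm_masz_unabh}); equivalently, for the ball Proposition~\ref{sa:integraltrafo}, part~2, shows directly that \((m^{1/p}s_i(X))_{i\le m}\) has Lebesgue density proportional to \(\Ind_{\{m_p(\cdot)\le1\}}\prod_i t_i^{\beta(n-m+1)-1}\prod_{i<j}\lvert t_i^2-t_j^2\rvert^\beta\). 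Since \(x\mapsto x^{1/2}\), hence \(\mu\mapsto(\sqrt{\,\cdot\,})\#\mu\), is continuous, it suffices to prove the LDP for \(\tilde\nu_n:=\frac1m\sum\delta_{Y_i}\) and then apply Proposition~\ref{sa:kontraktionsprinzip}.

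\emph{LDP for the base ensemble.} Write \(g\) in Gibbs form \(g(y)\propto\prod_{i<j}\lvert y_i-y_j\rvert^\beta\prod_{i=1}^m\ez^{-\beta n V_n(y_i)}\) with \(V_n(y)=y^{p/2}-\frac1{\beta n}\bigl(\frac{\beta(n-m+1)}2-1\bigr)\log y\to V(y):=y^{p/2}-\frac{1-c}2\log y\), and note that the ratio of the Vandermonde ``inverse temperature'' to the external\-/field strength tends to \(c\). I would establish for \((\tilde\nu_n)\) an LDP on \(\WMasz(\RZ_{\geq 0})\) at speed \(\beta mn\) with good rate function \(\nu\mapsto-\frac c2\iint\log\lvert x-y\rvert\,\diff\nu^{\otimes2}+\int V\,\diff\nu+B_{c,p}\) by the classical scheme for mean\-/field Coulomb gases (Ben~Arous\--Guionnet~\cite{BenAG1997}, Hiai\--Petz~\cite{HiaiPetz2000}): exponential tightness from the confining potential, and matching upper and lower local estimates over a basis of weak neighbourhoods via the first\-/principles criterion recalled above. \textbf{This is the main obstacle.} The feature absent in the square case is the dimension\-/dependent factor \(\prod_i y_i^{\beta(n-m+1)/2-1}\): its exponent grows linearly in \(n\), producing the term \(-\frac{1-c}2\log y\) in the limiting potential and, for \(c<1\), a diverging logarithmic wall at the origin which must be controlled jointly with the logarithmic repulsion, both in the tightness bound and in the lower\-/bound localisation near \(0\); after the substitution \(y=u^2\) the kernel conveniently splits as \(\log\lvert u^2-v^2\rvert=\log\lvert u-v\rvert+\log(u+v)\), the second summand being harmless. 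Pushing forward under \(y\mapsto\sqrt y\) yields for \((\tilde\mu_n)\) an LDP at speed \(\beta mn\) with good rate function
\begin{equation*}
\tilde{\mathcal I}(\mu)=m_p(\mu)-\frac c2\iint\log\lvert x^2-y^2\rvert\,\diff\mu^{\otimes2}(x,y)-(1-c)\int\log x\,\diff\mu(x)+B_{c,p}
\end{equation*}
on the set where these integrals are finite and \(m_p(\mu)\in(0,\infty)\), and \(+\infty\) else; here \(B_{c,p}=\lim_n\frac1{\beta mn}\log Z_{m,n,p,\beta}\in\RZ\), the limit existing by the accompanying variational formula for the partition function (or by Lemma~\ref{lem:produkt_gamma}\--type asymptotics for the normalising constant of the ball density).

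\emph{Transfer and rate function.} It remains to pass from \(\tilde\mu_n\) to \(\mu_n=D_{m_p(\tilde\mu_n)^{-1/p}}\#\tilde\mu_n\) (resp.\ to the map with the extra factor \(R\)). Since \(\mu\mapsto m_p(\mu)\) is only lower semicontinuous this map is not continuous, so Proposition~\ref{sa:kontraktionsprinzip} does not apply verbatim; the only pathology, however, is escape of mass to infinity, which perturbs the logarithmic\-/energy and \(\log\)\--terms negligibly while costing exactly the corresponding increment of \(m_p\). Accounting for this carefully (as in the analogous ball\--sphere passage for the square case, cf.~\cite{KPTh2020_2}) shows that \((\mu_n)\) satisfies an LDP at speed \(\beta mn\) with good rate function
\begin{equation*}
\mathcal I_{c,p}(\mu)=\inf_{t>0}\Bigl\{-\log t+t^p-\frac c2\iint\log\lvert x^2-y^2\rvert\,\diff\mu^{\otimes2}-(1-c)\int\log x\,\diff\mu+B_{c,p}\Bigr\}
\end{equation*}
for \(m_p(\mu)\le1\), and \(+\infty\) otherwise --- the latter being forced, since \(m_p(\mu_n)\) is a.s.\ \(1\) in the cone case and \(\le1\) in the ball case while \(\{m_p>1\}\) is open. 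The infimum is attained at \(t=p^{-1/p}\) with value \(\frac1p\log(\ez p)\), which gives precisely the asserted formula, and the \emph{same} rate function in both cases.

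\emph{The minimiser.} Finally, \(\mu_{c,p}\) is the unique minimiser of \(\mu\mapsto-\frac c2\iint\log\lvert x^2-y^2\rvert\,\diff\mu^{\otimes2}-(1-c)\int\log x\,\diff\mu\) over the convex set \(\{\mu:m_p(\mu)\le1\}\): existence from lower semicontinuity and compactness of sublevel sets (goodness of the rate function), and uniqueness from strict convexity of the logarithmic energy \(\mu\mapsto-\iint\log\lvert x-y\rvert\,\diff\mu^{\otimes2}\) on finite\-/energy measures together with convexity of \(\mu\mapsto-\iint\log(x+y)\,\diff\mu^{\otimes2}\) and of the constraint. As the constraint \(m_p(\mu)=1\) is active at the optimum, the Euler\--Lagrange (Frostman) conditions identify \(\mu_{c,p}\) as the weighted equilibrium measure for the logarithmic kernel in the external field \(x\mapsto\lambda x^p-(1-c)\log x\) for a suitable Lagrange multiplier \(\lambda>0\) (equivalently, the push\-/forward under \(y\mapsto\sqrt y\) of the Laguerre\-/type equilibrium measure in the field \(\lambda y^{p/2}-\frac{1-c}2\log y\)); by the classical theory of equilibrium measures with external fields this measure is compactly supported and absolutely continuous, completing the proof.
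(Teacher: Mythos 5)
Your overall strategy coincides with the paper's: reduce via Theorem~\ref{sa:gleichvert_schattenq_kugel_sphaere} to the log-gas density~\eqref{eq:dichte_y}, prove an LDP for the unnormalised empirical measure \`a la Ben Arous--Guionnet/Hiai--Petz, transfer through the (discontinuous) normalisation by $m_{p/2}$, absorb the resulting one-parameter infimum to produce the term $\frac{\log(\ez p)}{p}$, push forward through $y\mapsto\sqrt y$, and describe the minimiser via weighted equilibrium theory; your final formula for the rate function and the value $\frac{\log(\ez p)}{p}$ of $\inf_M(-\tfrac1p\log M+M)$ also match the paper's computation in the proof of Proposition~\ref{sa:ldp_quadsingulaerwerte_kegm}.

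The genuine gap is precisely the step you flag as ``the main obstacle'' and then dispatch in one sentence by reference to the square case: passing the LDP through the normalisation when $m_{p/2}$ is not weakly continuous. In the paper this is Step~2 (Proposition~\ref{sa:ldp_empir_paar}) and is by far the longest part of the argument. It requires a direct local estimate for the pair $(\nu_n, m_{p/2}(\nu_n))$ over a neighbourhood basis; the new factor $\prod_{i}y_i^{\beta(n-m+1)/2-1}$ is exactly what breaks the existing estimates from~\cite{KPTh2020_2}. Concretely one needs: (i) a lower-bound lemma (Lemma~\ref{lem:ldp_untere_abschaetzung}) in which the extra products $\prod_i(\eta_i^{(m-1)})^{\beta(n-m+1)/2-1}$ and $\eta_m^{\beta(n-m+1)/2-1}$ must be introduced case-by-case according to the sign of $\frac{\beta(n-m+1)}{2}-1$ and shown to contribute the $\frac{1-c}{2}\int\log x\,\diff\mu$ term in the limit; (ii) a construction of approximating measures (Lemma~\ref{lem:approx_maszfolge}) satisfying condition~(iii) for the new kernel $F$ that contains both $\log|x-y|$ and $\log(xy)$, which you cannot borrow from~\cite{HiaiPetz2000} because the $\log$-term is not free entropy and because the measures now live on $\RZ_{\geq 0}$, not $\RZ$; and (iii) an upper-bound lemma (Lemma~\ref{lem:ldp_obere_abschaetzung}) where, for $c=1$, the coefficient of $\sum_i\log y_i$ can be negative so naively maximising over the shell is not finite, forcing the integral estimate through Lemma~\ref{lem:hom_integral} and Stirling. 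None of these are consequences of the square case; the paper even notes it had to repair gaps in~\cite{KPTh2020_2} that only became visible in the non-square setting. Your sketch identifies the right structure, but without these estimates the proof of the central technical proposition is missing.
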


\begin{Bem}\label{bem:ldp_singulaerwerte}
Unfortunately neither the minimizer \(\mu_{c, p}\) nor the constant \(B_{c, p}\) are explicitly known for most values of \(c\) and \(p\). The exceptions are \(c = 1\) for arbitrary \(p \in (0, \infty)\) since this corresponds to the quadratic case already solved in \cite{KPTh2020_2}, and \(p = 2\) for arbitrary \(c \in (0, 1]\) since here the theory for Wishart matrices is applicable; further information for the latter case is given in Remark~\ref{bem:minimierer_p2} below; and for selected values of \(c\) the density of \(\mu_{c, 2}\) is plotted in Figure~\ref{fig:dichtenZwei}.
\end{Bem}

\begin{figure}
\centering\includegraphics[scale=0.5]{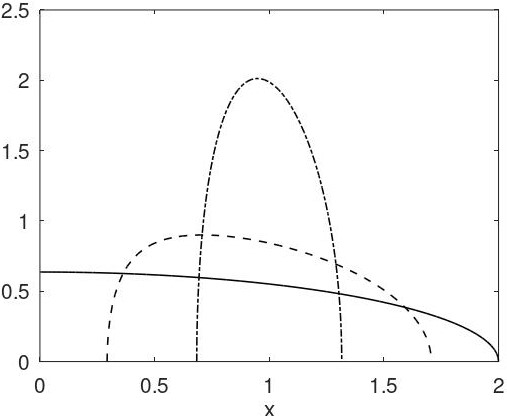}
\caption{Plot of the density of \(\mu_{c, 2}\) for \(c = 0.1\) (dash\-/dotted), \(c = 0.5\) (dashed), and \(c = 1\) (solid).}
\label{fig:dichtenZwei}
\end{figure}

\begin{Bem}\label{bem:ldp_kaufmannthaele}
Recently Kaufmann and Thäle \cite{KaufThaele2022} have proved large deviations principles for the empirical spectral measures of random square matrices whose distribution is a weighted mixture of \(\KegM{\Schatten{p}{}, \beta}{n \times n}\) and \(\Gleichv(\Kug{\Schatten{p}{}, \beta}{n \times n})\) in the spirit of Barthe, Guédon, Mendelson, and Naor~\cite{BGMN2005}. Since those measures are \(\Kug{\Schatten{p}{}, \beta}{n \times n}\)\-/symmetric they allow a representation according to Theorem~\ref{sa:gleichvert_schattenq_kugel_sphaere} with an appropriate radial component \(R_n\). According to the outline given in Section~\ref{sec:ldp_beweis_endl} below, \(R_n\) only enters in the last step, and as is seen in the proof of Proposition~\ref{sa:ldp_endl_vollkugel} an LDP for \((R_n)_{n \in \NZ}\) needs to be established, which is done in \cite[Lemma~6.6]{KaufThaele2022}. We do not go any further into this here.
\end{Bem}

\begin{Thm}[\(p = \infty\)]\label{sa:ldp_singulaerwerte_unendl}
Let \(m \in \NZ\) depend on \(n \in \NZ\) such that \(m \leq n\) and \(c := \lim_{n \to \infty} \frac{m}{n} \in (0, 1]\) exists, and let \((X^{(n)})_{n \in \NZ}\) be a sequence of random variables such that for each \(n \in \NZ\), \(X^{(n)} \sim \Gleichv(\Kug{\Schatten{\infty}{}, \beta}{m \times n})\). Then the sequence \((\mu_n)_{n \in \NZ}\) of empirical measures \(\mu_n := \frac{1}{m} \sum_{i = 1}^m \delta_{s_i(X^{(n)})}\) satisfies a large deviations principle on \(\WMasz(\RZ_{\geq 0})\) with speed \(\beta m n\) and good rate function \(\mathcal{I}_{c, \infty} \colon \WMasz(\RZ_{\geq 0}) \to [0, \infty]\) given by
\begin{equation*}
\mathcal{I}_{c, \infty}(\mu) := \begin{cases} -\frac{c}{2} \int_{[0, 1]^2} \log\lvert x^2 - y^2 \rvert \, \diff\mu^{\otimes 2}(x, y) - (1 - c) \int_{[0, 1]} \log(x) \, \diff\mu(x) + B_{c, \infty}, & \text{if } \supp(\mu) \subset [0, 1], \\ \infty & \text{else,} \end{cases}
\end{equation*}
which possesses a unique global mimimizer \(\mu_{c, \infty} \in \WMasz(\RZ_{\geq 0})\) given by its Lebesgue density,
\begin{equation*}
\frac{\diff \mu_{c, \infty}(x)}{\diff x} = \frac{1 + c}{c \pi x} \, \frac{\bigl( x^2 - (\frac{1 - c}{1 + c})^2 \bigr)^{1/2}}{(1 - x^2)^{1/2}} \Ind_{\bigl[ \frac{1 - c}{1 + c}, 1 \bigr]}(x).
\end{equation*}
Here
\begin{equation*}
B_{c, \infty} := \lim_{n \to \infty} \frac{1}{\beta m n} \log(Z_{m, n, \infty, \beta}) = \frac{c}{2} \log(c) - \frac{(1 - c)^2}{4 c} \log(1 - c) - \frac{(1 + c)^2}{4 c} \log(1 + c)
\end{equation*}
with \(Z_{m, n, \infty, \beta}\) as in Theorem~\ref{sa:gleichvert_schattenq_kugel_sphaere}.
\end{Thm}

\begin{Bem}
In the extreme case \(c = 1\), \(\mu_{1, \infty}\) has the density
\begin{equation*}
x \mapsto \frac{2}{\pi} \, (1 - x^2)^{-1/2} \Ind_{[0, 1]}(x)
\end{equation*}
which corresponds to the absolute arcsine distribution, and this is consistent with~\cite{KPTh2020_2}. For selected values of \(c\) the density of \(\mu_{c, \infty}\) is plotted in Figure~\ref{fig:dichtenUnend}.
\end{Bem}

\begin{figure}
\centering
\includegraphics[scale=0.5]{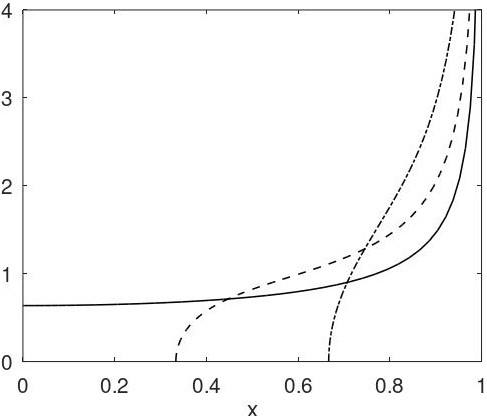}
\caption{Plot of the density of \(\mu_{c, \infty}\) for \(c = 0.2\) (dash\-/dotted), \(c = 0.5\) (dashed), and \(c = 1\) (solid).}
\label{fig:dichtenUnend}
\end{figure}

The following statement is a strong law of large numbers for the empirical measures of singular values and a corollary of the large deviations principles of Theorems~\ref{sa:ldp_singulaerwerte} and~\ref{sa:ldp_singulaerwerte_unendl} and parallels the results \cite[Corollaries~1.4, 1.6]{KPTh2020_2}.

\begin{Sa}\label{sa:ggz_singulaerwerte}
Under the premises of Theorems~\ref{sa:ldp_singulaerwerte} or~\ref{sa:ldp_singulaerwerte_unendl} we have, w.r.t.\ the weak topology,
\begin{equation*}
(\mu_n)_{n \in \NZ} \Kfs{} \mu_{c, p}.
\end{equation*}
\end{Sa}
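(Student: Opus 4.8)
The plan is to obtain the almost sure convergence as the standard concentration consequence of the large deviations principles of Theorems~\ref{sa:ldp_singulaerwerte} and~\ref{sa:ldp_singulaerwerte_unendl}, combining their \emph{upper} bounds with the Borel--Cantelli lemma. Write $\mathcal{I}$ for $\mathcal{I}_{c, p}$ (to be read as $\mathcal{I}_{c, \infty}$ when $p = \infty$) and recall that $\WMasz(\RZ_{\geq 0})$, endowed with the weak topology, is Polish, hence metrizable; I would fix a compatible metric $d$ at the outset. The first observation to record is that $\inf_{\WMasz(\RZ_{\geq 0})} \mathcal{I} = 0$ and that this value is attained exactly at $\mu_{c, p}$: applying the LDP to the clopen set $A = \WMasz(\RZ_{\geq 0})$, on which $\Wsk[\mu_n \in A] = 1$ for all $n$, forces $\inf \mathcal{I} = 0$, while attainment at a unique minimizer is part of the conclusion of those theorems.

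Next I would prove the key quantitative claim that for each $\epsilon > 0$ the closed set $F_\epsilon := \{\nu \in \WMasz(\RZ_{\geq 0}) \Colon d(\nu, \mu_{c, p}) \geq \epsilon\}$ satisfies $\delta_\epsilon := \inf_{\nu \in F_\epsilon} \mathcal{I}(\nu) > 0$. This is the one place where goodness of the rate function enters: were $\delta_\epsilon$ equal to $0$, a minimizing sequence in $F_\epsilon$ would eventually lie in the compact sublevel set $\inv{\mathcal{I}}([0, 1])$, hence have a subsequential limit $\nu_\infty \in F_\epsilon$ (by closedness of $F_\epsilon$), and lower semicontinuity of $\mathcal{I}$ would give $\mathcal{I}(\nu_\infty) = 0$, so $\nu_\infty = \mu_{c, p}$ by uniqueness of the minimizer, contradicting $\mu_{c, p} \notin F_\epsilon$.

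Having $\delta_\epsilon > 0$, I would invoke the large deviations upper bound to get $\limsup_{n \to \infty} \frac{1}{\beta m n} \log \Wsk[d(\mu_n, \mu_{c, p}) \geq \epsilon] \leq -\delta_\epsilon$, so that $\Wsk[d(\mu_n, \mu_{c, p}) \geq \epsilon] \leq \ez^{-\beta m n \delta_\epsilon / 2}$ for all large $n$. Since $c = \lim_{n \to \infty} \frac{m}{n} \in (0, 1]$, the speed $\beta m n$ tends to infinity, and $\sum_n \ez^{-\beta m n \delta_\epsilon / 2} < \infty$; by the Borel--Cantelli lemma, $\Wsk[d(\mu_n, \mu_{c, p}) \geq \epsilon \text{ infinitely often}] = 0$. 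Finally I would let $\epsilon$ range over $\{1/k \Colon k \in \NZ\}$ and discard the countable union of the corresponding null sets, which yields $d(\mu_n, \mu_{c, p}) \to 0$ almost surely, i.e.\ $(\mu_n)_{n \in \NZ} \Kfs{} \mu_{c, p}$. I do not expect a serious obstacle here: no independence of the $X^{(n)}$ is needed (only the first Borel--Cantelli lemma is used), and the sole genuine inputs beyond the LDPs are the goodness of the rate function and the uniqueness of its minimizer, both already available; the mildest care is required only in the second step, namely in checking that $F_\epsilon$ is bounded away from the zero level of $\mathcal{I}$.
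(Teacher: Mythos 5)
Your proposal is correct and follows essentially the same route as the paper's proof: fix a compatible metric, show the infimum of the rate function over the closed complement of each $\epsilon$\-/ball around $\mu_{c,p}$ is strictly positive (using goodness of the rate function and uniqueness of the minimizer), then apply the LDP upper bound and Borel--Cantelli. The only cosmetic difference is in establishing positivity of that infimum: the paper intersects with a sublevel set to get a nonempty compact set on which $\mathcal{I}_{c,p}$ attains its minimum, while you argue by contradiction via a minimizing sequence, a subsequential limit inside the compact sublevel set, and lower semicontinuity; both arguments rest on exactly the same inputs and are interchangeable.
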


\begin{proof}
Let \(\dw\) be some metric that induces the weak topology on \(\WMasz(\RZ_{\geq 0})\). Via the Borel\--Cantelli lemma it suffices to show, for any \(\epsilon \in \RZ_{> 0}\),
\begin{equation*}
\sum_{n = 1}^\infty \Wsk[\dw(\mu_n, \mu_{c, p}) \geq \epsilon] < \infty.
\end{equation*}
Let \(\epsilon \in \RZ_{> 0}\), then Theorem~\ref{sa:ldp_singulaerwerte} or~\ref{sa:ldp_singulaerwerte_unendl}, resp., yields
\begin{equation*}
\limsup_{n \to \infty} \frac{1}{\beta m n} \log \Wsk[\dw(\mu_n, \mu_{c, p}) \geq \epsilon] \leq -\inf\{\mathcal{I}_{c, p}(\nu) \Colon \dw(\nu, \mu_{c, p}) \geq \epsilon\} =: -M.
\end{equation*}
We will argue \(M > 0\) below; once we have established that, there follows
\begin{equation*}
\Wsk[\dw(\mu_n, \mu_{c, p}) \geq \epsilon] \leq \ez^{-M \beta m n/2}
\end{equation*}
for all but finitely many \(n\), and clearly the majorant is summable.

In order to establish \(M > 0\) we may assume \(M < \infty\), else there is nothing to prove. By definition of the infimum there exists some \(\nu \in \WMasz(\RZ_{\geq 0})\) with \(\dw(\nu, \mu_{c, p}) \geq \epsilon\) such that \(\mathcal{I}_{c, p}(\nu) \leq M + 1\). Because \(\mathcal{I}_{c, p}\) is a good rate function the sublevel set \(\inv{\mathcal{I}_{c, p}}\bigl( [0, M + 1] \bigr)\) is compact and therefore \(C := \{\nu \in \WMasz(\RZ_{\geq 0}) \Colon \dw(\nu, \mu_{c, p}) \geq \epsilon \wedge \mathcal{I}_{c, p}(\nu) \leq M + 1\}\) is nonempty and compact, thus \(\mathcal{I}_{c, p}\) attains its miminmum on \(C\) in some \(\nu_\epsilon \in C\), and by construction we get \(\mathcal{I}_{c, p}(\nu_\epsilon) = M\). But we also know \(\dw(\nu_\epsilon, \mu_{c, p}) \geq \epsilon\), in particular \(\nu_\epsilon \neq \mu_{c, p}\), and as \(\mu_{c, p}\) is the unique point where \(\mathcal{I}_{c, p}\) attains its global minimum \(0\), this implies \(M = \mathcal{I}_{c, p}(\nu_\epsilon) > 0\).
\end{proof}

Although we are not able to give a closed expression for \(\KugVol{\Schatten{p}{}, \beta}{m \times n}\) for \(p \notin \{2, \infty\}\), we are still able to obtain some asymptotics of the volume radius, as stated below.

\begin{Sa}\label{sa:volumen_bsp}
Let \(p \in (0, \infty)\), consider \(m\) as depending on \(n\) such that \(c := \lim_{n \to \infty} \frac{m}{n} \in (0, 1]\) exists, and let \(B_{c, p}\) be as in Theorem~\ref{sa:ldp_singulaerwerte}. Then we have
\begin{equation*}
\lim_{n \to \infty} (\beta n)^{1/2 + 1/p} (\KugVol{\Schatten{p}{}, \beta}{m \times n})^{1/(\beta m n)} = (2 \pi \ez^{3/2})^{1/2} \Bigl( \frac{\ez p}{c} \Bigr)^{1/p} \ez^{B_{c, p}} \, (1 - c)^{(1 - c)^2/(4 c)} \, c^{-c/4},
\end{equation*}
where in the case \(c = 1\) we interpret \((1 - c)^{(1 - c)^2} := 1\).
\end{Sa}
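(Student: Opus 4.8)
The plan is to extract the volume asymptotics from the large deviations principle in Theorem~\ref{sa:ldp_singulaerwerte}, using the normalization constant $Z_{m,n,p,\beta}$ as the bridge. The starting point is the explicit density~\eqref{eq:dichte_y} of the random vector $Y$ appearing in Theorem~\ref{sa:gleichvert_schattenq_kugel_sphaere}: we have
\begin{equation*}
Z_{m,n,p,\beta} = \int_{\RZ_{>0}^m} \ez^{-\beta n \lVert y \rVert_{p/2}^{p/2}} \prod_{i=1}^m y_i^{\beta(n-m+1)/2 - 1} \prod_{1 \leq i < j \leq m} \lvert y_i - y_j \rvert^\beta \, \diff y.
\end{equation*}
First I would relate this integral, via the substitution $y_i = s_i^2$ (turning $\lvert y_i - y_j\rvert$ into $\lvert s_i^2 - s_j^2\rvert$ and adjusting the power of $s_i$), to the Selberg-type integral that expresses $\KugVol{\Schatten{p}{}, \beta}{m \times n}$ through Proposition~\ref{sa:integraltrafo}, part~2. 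Concretely, applying part~2 of that proposition to $f = \Ind_{\Kug{\Schatten{p}{}, \beta}{m \times n}} = \Ind_{\Kug{p,1}{m}} \circ s$ gives
\begin{equation*}
\KugVol{\Schatten{p}{}, \beta}{m \times n} = \frac{2^{\beta m/2} \, \OrthVol_{m;\beta} \OrthVol_{n;\beta}}{2^{\beta m n/2} \, m! \, \OrthVol_{1;\beta}^m \OrthVol_{n-m;\beta}} \int_{[0,1]^m} \prod_{i=1}^m s_i^{\beta(n-m+1)-1} \prod_{1 \leq i<j\leq m} \lvert s_i^2 - s_j^2 \rvert^\beta \, \diff s,
\end{equation*}
so the task reduces to comparing the Selberg integral over $[0,1]^m$ with the (Gaussian-weighted, unbounded) integral defining $Z_{m,n,p,\beta}$, together with the prefactor built from Stiefel-manifold volumes.

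Next I would handle each piece asymptotically. The prefactor $\frac{2^{\beta m/2} \OrthVol_{m;\beta} \OrthVol_{n;\beta}}{2^{\beta m n/2} m! \OrthVol_{1;\beta}^m \OrthVol_{n-m;\beta}}$ is controlled, after taking $\frac{1}{\beta m n}\log$, by~\eqref{eq:vol_stiefel} and Lemma~\ref{lem:produkt_gamma}; this is precisely the same computation already carried out in the proof of Theorem~\ref{sa:volumen_b} for $p=\infty$, so I would reuse it and only track the $p$-independent part. The genuinely new content is the ratio between the bounded Selberg integral and $Z_{m,n,p,\beta}$: after the substitution $y_i=s_i^2$,
\begin{equation*}
Z_{m,n,p,\beta} = 2^m \int_{\RZ_{>0}^m} \ez^{-\beta n \lVert s \rVert_p^p} \prod_{i=1}^m s_i^{\beta(n-m+1)-1} \prod_{1\leq i<j\leq m} \lvert s_i^2 - s_j^2\rvert^\beta \, \diff s,
\end{equation*}
and the scaling $s_i \mapsto (\beta n)^{-1/p} s_i$ turns $\ez^{-\beta n \lVert s\rVert_p^p}$ into $\ez^{-\lVert s\rVert_p^p}$ and pulls out an explicit power of $\beta n$. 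Thus $\frac{1}{\beta m n}\log Z_{m,n,p,\beta}$ splits into an explicit term of order $\frac{1}{p}\log(\beta n)$ plus a term converging to $B_{c,p}$ by definition, plus (from the rescaled integral) exactly the LDP rate evaluated at the minimizer, i.e.\ $-\mathcal{I}_{c,p}(\mu_{c,p}) = 0$ relative to the constant $\frac{\log(\ez p)}{p}+B_{c,p}$ that is already absorbed into the rate function. In parallel, $\frac{1}{\beta m n}\log$ of the bounded Selberg integral over $[0,1]^m$ is obtained from its exact product-of-gammas value (as written in the proof of Theorem~\ref{sa:volumen_b} with $n$ replaced appropriately) via Lemma~\ref{lem:produkt_gamma}; alternatively one notes that in the LDP the constraint $m_p(\mu)\leq 1$ is inactive at the minimizer, so the bounded and unbounded integrals have the same exponential rate and differ only by lower-order factors. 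Collecting all contributions and exponentiating $\beta m n$ times $\frac{1}{\beta m n}\log$ yields the stated limit; the explicit factors $(1-c)^{(1-c)^2/(4c)}$ and $c^{-c/4}$ come from the same $\Gamma$-function bookkeeping as in Theorem~\ref{sa:volumen_b} (with the roles of $m$, $n$, $m+n$ now played by $m$, $n-m$ or similar, dictated by the exponent $\beta(n-m+1)-1$ rather than $\beta(n-m+1)/2-1$ — I would check this index matching carefully).

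The main obstacle I expect is the interchange of limits: the LDP of Theorem~\ref{sa:ldp_singulaerwerte} gives the exponential order of $Z_{m,n,p,\beta}$ only up to $\smallo(\beta m n)$, which is exactly the precision needed for $(\KugVol{\Schatten{p}{}, \beta}{m \times n})^{1/(\beta m n)}$, so no finer control is required — but one must be careful that $B_{c,p}$ is \emph{defined} as $\lim_{n\to\infty}\frac{1}{\beta m n}\log Z_{m,n,p,\beta}$, so the argument is partly a matter of correctly decomposing $Z_{m,n,p,\beta}$ into the explicit $(\beta n)^{1/p}$-type factor times a piece whose normalized log converges, without circularity. The secondary nuisance is keeping track of the $\beta$-dependent and $p$-independent constants: the factor $(2\pi\ez^{3/2})^{1/2}$ and the $(\beta n)^{1/2}$ scaling must reproduce exactly what Theorem~\ref{sa:volumen_b} gives at $p=\infty$ in the limit $p\to\infty$ (where $(\ez p)^{1/p}\to 1$ and $B_{c,\infty}$ has its explicit value), providing a useful consistency check. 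Once the decomposition is set up cleanly, the remainder is routine manipulation of $\Gamma$-asymptotics via Lemma~\ref{lem:produkt_gamma} and elementary algebra with the powers of $c$ and $1-c$.
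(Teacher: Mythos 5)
Your overall plan — express $\KugVol{\Schatten{p}{}, \beta}{m \times n}$ through Proposition~\ref{sa:integraltrafo}, part~2, connect it to the normalization constant $Z_{m,n,p,\beta}$, and then use Stirling's formula, Lemma~\ref{lem:produkt_gamma} for the Stiefel-volume prefactor, and $\lim_n Z_{m,n,p,\beta}^{1/(\beta m n)} = \ez^{B_{c,p}}$ from Proposition~\ref{sa:ldp_y} — is exactly the paper's approach. However, there are two concrete errors and one missing step.

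First, the displayed formula you write for $\KugVol{\Schatten{p}{}, \beta}{m \times n}$ has the integral taken over $[0,1]^m$; that domain is correct only for $p=\infty$ (it is $\Kug{\infty,1}{m} \cap \RZ_{\geq 0}^m$). For finite $p$ the correct domain is $\Kug{p,1}{m} \cap \RZ_{\geq 0}^m$, which is not a product, so there is no Selberg integral available; this is precisely why the route for $p<\infty$ must differ from that of Theorem~\ref{sa:volumen_b}.

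Second, the ``alternative'' observation that ``the constraint $m_p(\mu)\leq 1$ is inactive at the minimizer, so the bounded and unbounded integrals have the same exponential rate'' is false: from Propositions~\ref{sa:ldp_quadsingulaerwerte_kegm} and~\ref{sa:ldp_singulaerwerte_kegm} one checks $m_{p/2}(\mu_{c,p}^2) = 1$ and hence $m_p(\mu_{c,p}) = 1$, so the constraint is saturated, not slack.

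Third, the essential bridge that you gesture at but do not nail down is an \emph{exact identity}, not an asymptotic one, obtained from positive homogeneity and the polar integration formula~\eqref{eq:kegmasz_polarint}. After the substitution $y_i = x_i^2$, the remaining integrand on $\Kug{p/2,1}{m}\cap\RZ_{>0}^m$ is positively homogeneous of degree $\frac{\beta m n}{2} - m$; polar integration then gives, with no error term,
\begin{equation*}
\int_{\Kug{p/2, 1}{m} \cap \RZ_{> 0}^m} \tilde{h}(x) \, \diff x = \frac{(\beta n)^{\beta m n/p}}{\Gamma\bigl( \frac{\beta m n}{p} + 1 \bigr)} \int_{\RZ_{> 0}^m} \ez^{-\beta n \lVert x \rVert_{p/2}^{p/2}} \tilde{h}(x) \, \diff x = \frac{(\beta n)^{\beta m n/p}}{\Gamma\bigl( \frac{\beta m n}{p} + 1 \bigr)} \, Z_{m,n,p,\beta}.
\end{equation*}
The factor $(\beta n)^{\beta m n/p}$ thus sits as a \emph{prefactor} to $Z_{m,n,p,\beta}$ in the volume formula, so there is no need to ``decompose $Z_{m,n,p,\beta}$'' and no danger of circularity: $B_{c,p}$ is applied directly as the limit of $\frac{1}{\beta m n}\log Z_{m,n,p,\beta}$, and the scaling $s_i \mapsto (\beta n)^{-1/p} s_i$ you mention plays no role. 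Finally, the factors $(1-c)^{(1-c)^2/(4c)} c^{-c/4}$ do not come from the Selberg bookkeeping of Theorem~\ref{sa:volumen_b}; they come from the asymptotics of the Stiefel-volume prefactor $c_n$, which is handled via~\eqref{eq:vol_stiefel} and Lemma~\ref{lem:produkt_gamma}.

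Your $p \to \infty$ consistency check is a good idea, and it does indeed work out against Theorem~\ref{sa:volumen_b} once $B_{c,\infty}$ from Theorem~\ref{sa:ldp_singulaerwerte_unendl} is substituted.
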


\begin{proof}
Refer to Proposition~\ref{sa:integraltrafo}, part~2, and denote \(c_n := \frac{2^{\beta m/2} \, \OrthVol_{m; \beta} \OrthVol_{n; \beta}}{2^{\beta m n/2} \, m! \OrthVol_{1; \beta}^m \OrthVol_{n - m; \beta}}\). Then because of \(\Ind_{\Kug{\Schatten{p}{}, \beta}{m \times n}} = \Ind_{\Kug{p, 1}{m}}{} \circ s\) we have
\begin{align*}
\KugVol{\Schatten{p}{}, \beta}{m \times n} &= c_n \int_{\Kug{p, 1}{m} \cap \RZ_{> 0}^m} \prod_{i = 1}^m x_i^{\beta (n - m + 1) - 1} \prod_{1 \leq i < j \leq m} \lvert x_i^2 - x_j^2 \rvert^\beta \, \diff x\\
&= 2^{-m} \, c_n \int_{\Kug{p/2, \beta}{m} \cap \RZ_{> 0}^m} \prod_{i = 1}^m x_i^{\beta (n - m + 1)/2 - 1} \prod_{1 \leq i < j \leq m} \lvert x_i - x_j \rvert^\beta \, \diff x,
\end{align*}
where we have transformed \(y_i = x_i^2\) and immediately renamed \(y_i\) as \(x_i\). As the integrand is positive\-/homogeneous of degree \(\frac{\beta m n}{2} - m\), via polar integration the last expression equals
\begin{align*}
\KugVol{\Schatten{p}{}, \beta}{m \times n} &= 2^{-m} \, c_n \, \frac{n^{\beta m n/p}}{\Gamma\bigl( \frac{\beta m n}{p} + 1 \bigr)} \int_{\RZ_{> 0}^m} \ez^{-\beta n \lVert x \rVert_{p/2}^{p/2}} \prod_{i = 1}^m x_i^{\beta (n - m + 1)/2 - 1} \prod_{1 \leq i < j \leq m} \lvert x_i - x_j \rvert^\beta \, \diff x\\
&= 2^{-m} \, c_n \, \frac{n^{\beta m n/p}}{\Gamma\bigl( \frac{\beta m n}{p} + 1 \bigr)} \, Z_{m, n, p, \beta}.
\end{align*}
Stirling's formula implies \(\Gamma(\frac{\beta m n}{p} + 1)^{1/(\beta m n)} = (\frac{\beta m n}{\ez p})^{1/p} (1 + \smallO(1))\), and also \((2^{-m})^{1/(\beta m n)} = 2^{-1/(\beta n)} = 1 + \smallO(1)\). Furthermore Proposition~\ref{sa:ldp_y} tells us
\begin{equation*}
\lim_{n \to \infty} Z_{m, n, p, \beta}^{1/(\beta m n)} = \lim_{n \to \infty} \exp\Bigl( \frac{1}{\beta m n} \log Z_{m, n, p, \beta} \Bigr) = \ez^{B_{c, p}}.
\end{equation*}
It remains to determine the asymptotics of \(c_n^{1/(\beta m n)}\). For that substitute the explicit expression for \(\OrthVol_{n; \beta}\) given in~\eqref{eq:vol_stiefel} and use the asymptotics stated in Lemma~\ref{lem:produkt_gamma} to arrive at (skipping the details here)
\begin{equation*}
c_n^{1/(\beta m n)} = (2 \pi \ez^{3/2})^{1/2} \, (1 - c)^{(1 - c)^2/(4 c)} \, c^{-c/4} \, (\beta n)^{-1/2} \, (1 + \smallO(1)).
\end{equation*}
Putting things together yields the result.
\end{proof}

\begin{Bem}
In the new preprint~\cite{Naor2024} Naor shows the asymptotics
\begin{equation*}
(\KugVol{\Schatten{p}{}, 1}{m \times n})^{1/(m n)} \asymp \min\{m, n\}^{-1/p} \max\{m, n\}^{-1/2},
\end{equation*}
where \(\asymp\) denotes asymptotic equivalence, i.e., boundedness from above and from below up to dimension\-/independent constant factors. If those bounds were known, they could be used together with Proposition~\ref{sa:volumen_bsp} to get bounds for \(B_{c, p}\), and vice versa.
\end{Bem}

\subsection{Proof of Theorem~\ref{sa:ldp_singulaerwerte}}
\label{sec:ldp_beweis_endl}

The proof of Theorem~\ref{sa:ldp_singulaerwerte} parallels that of \cite[Theorem~1.1]{KPTh2020_2} and accordingly proceeds in several steps. As already mentioned in the overview (Section~\ref{sec:ueberblick} \textit{ad fin.}), the adaption amounts to more than merely substituting \(m\) for \(n\) in the right places, because the now dimension\-/dependent term \(\prod_{i = 1}^m y_i^{\beta (n - m + 1)/2 - 1}\) in the density~\eqref{eq:dichte_y} prompts special treatment involving delicate estimations; all these turn up in Step~2 below. Apart from that, while going through the original paper~\cite{KPTh2020_2} carefully, the authors discovered a few technical gaps and inaccuracies there; we will refer to them in the appropriate places in our proof and provide amendments (which of course have to be readapted in the Hermitian case).

For the convenience of the reader we repeat the outline of the proof in \cite{KPTh2020_2} here, together with an addition for the present Theorem~\ref{sa:ldp_singulaerwerte} (Step~4):
\begin{compactenum}
\item Establish an LDP for \((\nu_n)_{n \geq 1}\), where \(\nu_n := \frac{1}{m} \sum_{i = 1}^m \delta_{Y_i^{(n)}}\) and \(Y^{(n)} = (Y_i^{(n)})_{i \leq m}\) has density~\eqref{eq:dichte_y}.
\item Establish an LDP for \(\bigl( (\nu_n, m_{p/2}(\nu_n)) \bigr)_{n \in \NZ}\), where \(\nu_n\) is as above.
\item Use the contraction principle to prove an LDP for \((\mu_n^2)_{n \in \NZ}\) with \(\mu_n^2 = \frac{1}{m} \sum_{i = 1}^m \delta_{m^{2/p} Y_i^{(n)} / \lVert Y^{(n)} \rVert_{p/2}} \GlVert \frac{1}{m} \sum_{i = 1}^m \delta_{m^{2/p} s_i(X^{(n)})^2}\) for the case of \(X^{(n)} \sim \KegM{\Schatten{p}{}, \beta}{m \times n}\).
\item Contract again to obtain an LDP for \((\mu_n^\kappa)_{n \in \NZ}\) with \(\mu_n^\kappa := \frac{1}{m} \sum_{i = 1}^m \delta_{m^{1/p} s_i(X^{(n)})}\), still for \(X^{(n)} \sim \KegM{\Schatten{p}{}, \beta}{m \times n}\).
\item Contract one last time to get an LDP for \((\mu_n)_{n \geq 1}\) with \(\mu_n := \frac{1}{m} \sum_{i = 1}^m \delta_{m^{1/p} s_i(X^{(n)})}\) where \(X^{(n)} \sim \Gleichv(\Kug{\Schatten{p}{}}{m \times n})\).
\end{compactenum}
The main justification for this procedure are Proposition~\ref{sa:unitinvar} and Theorem~\ref{sa:gleichvert_schattenq_kugel_sphaere} which together yield
\begin{equation*}
s(X^{(n)})_\pi \GlVert R \, \frac{((Y_i^{(n)})^{1/2})}{\lVert Y^{(n)} \rVert_{p/2}^{1/2}},
\end{equation*}
where \(\pi \sim \Gleichv(\Perm{m})\) is independent of \(X^{(n)}\), and therewith
\begin{equation*}
\frac{1}{m} \sum_{i = 1}^m \delta_{m^{1/p} s_i(X^{(n)})} \GlVert \frac{1}{m} \sum_{i = 1}^m \delta_{m^{1/p} R ((Y_i^{(n)})^{1/2})_{i \leq m} / \lVert Y^{(n)} \rVert_{p/2}^{1/2}};
\end{equation*}
also note
\begin{equation*}
m_{p/2}(\nu_n) = \int_{\RZ_{\geq 0}} y^{p/2} \, \diff\nu_n(y) = \frac{1}{m} \sum_{i = 1}^m (Y_i^{(n)})^{p/2} = \frac{\lVert Y^{(n)} \rVert_{p/2}^{p/2}}{m}.
\end{equation*}
That \(\pi\) indeed is immaterial is argued by the result below.

\begin{Lem}\label{lem:permutation}
Let \(E\) be a topological space and \(d \in \NZ\), let \(S = (S_i)_{i \leq d}\) be an \(E^d\)\-/valued random variable and let \(\pi \sim \Gleichv(\Perm{d})\) be independent of \(S\). Then
\begin{equation*}
\frac{1}{d} \sum_{i = 1}^d \delta_{S_i} \GlVert \frac{1}{d} \sum_{i = 1}^d \delta_{S_{\pi(i)}}.
\end{equation*}
\end{Lem}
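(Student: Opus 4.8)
The plan is to exploit the fact that the empirical-measure functional does not see the order of its arguments. Concretely, I would first record the elementary observation that for every $d$-tuple $(x_1,\dots,x_d)\in E^d$ and every $\tau\in\Perm{d}$ one has the identity
\[
\frac1d\sum_{i=1}^d\delta_{x_{\tau(i)}}=\frac1d\sum_{j=1}^d\delta_{x_j}
\]
as elements of $\WMasz(E)$, since $i\mapsto\tau(i)$ is a bijection of $\{1,\dots,d\}$ and relabelling the summation index changes nothing. In other words, the map $\Phi\colon E^d\to\WMasz(E)$, $(x_1,\dots,x_d)\mapsto\frac1d\sum_{i=1}^d\delta_{x_i}$, is symmetric.

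From here there are two essentially equivalent ways to conclude. The shortest applies the displayed identity pointwise: for every $\omega\in\Omega$ we have $\pi(\omega)\in\Perm{d}$, hence $\frac1d\sum_i\delta_{S_{\pi(i)}(\omega)}=\frac1d\sum_i\delta_{S_i(\omega)}$, so the two random measures $\Phi(S_{\pi(1)},\dots,S_{\pi(d)})$ and $\Phi(S_1,\dots,S_d)$ agree as maps on $\Omega$ and a fortiori have the same law; note that this route does not even use the independence hypothesis. If instead one wants an argument that genuinely uses the independence (and matches the style of the surrounding text), I would test against an arbitrary $f\in\CeBe(\WMasz(E))$, condition on $S$, and average over the uniformly chosen permutation:
\[
\Erw\bigl[f(\Phi(S_{\pi(1)},\dots,S_{\pi(d)}))\bigr]=\Erw\Bigl[\tfrac1{d!}\sum_{\sigma\in\Perm{d}}f\bigl(\Phi(S_{\sigma(1)},\dots,S_{\sigma(d)})\bigr)\Bigr]=\Erw\bigl[f(\Phi(S_1,\dots,S_d))\bigr],
\]
the first equality by independence of $\pi$ and $S$, the second by the symmetry of $\Phi$; this identifies the two laws.

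There is essentially no serious obstacle. The only point deserving a word of care is that the statement "$\GlVert$" presupposes measurability of $\omega\mapsto\Phi(S_{\pi(1)}(\omega),\dots,S_{\pi(d)}(\omega))$ into $\WMasz(E)$ with its (fixed) Borel structure; this is automatic, since by the pointwise identity this map coincides with $\omega\mapsto\Phi(S_1(\omega),\dots,S_d(\omega))$, which is measurable as the composition of the measurable $S$ with the measurable symmetric map $\Phi$. Since $E$ is only assumed to be a topological space, one simply keeps the weak topology on $\WMasz(E)$ (and the associated $\sigma$-algebra) fixed throughout; the argument is insensitive to this choice.
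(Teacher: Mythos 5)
Your proposal is correct, and you have actually given two valid arguments. Your second route (condition on $S$, average over permutations, then invoke the symmetry of the empirical-measure map) is precisely the paper's proof. Your first route is more elementary and sharper: since $\sum_i\delta_{x_{\tau(i)}}=\sum_i\delta_{x_i}$ for every $\tau\in\Perm{d}$, the two random measures agree pointwise on $\Omega$, so they are not merely equal in law but literally the same random variable; as you note, this makes the independence hypothesis superfluous (it is carried along only because it is available in the intended application). The paper's phrasing in terms of expectations is chosen to match the surrounding material, but your pointwise observation shows the lemma is essentially content-free, which is worth being aware of. Your measurability remark is also sound.
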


\begin{proof}
Let \(f \colon \WMasz(E) \to \RZ\) be measurable and nonnegative, then
\begin{align*}
\Erw\biggl[ f\biggl( \frac{1}{d} \sum_{i = 1}^d \delta_{S_{\pi(i)}} \biggr) \biggr] &= \int_{E^d} \frac{1}{d!} \sum_{\tau \in \Perm{d}} f\biggl( \frac{1}{d} \sum_{i = 1}^d \delta_{s_{\tau(i)}} \biggr) \, \diff(\Wsk \circ \inv{S})(s)\\
&= \int_{E^d} \frac{1}{d!} \sum_{\tau \in \Perm{d}} f\biggl( \frac{1}{d} \sum_{i = 1}^d \delta_{s_i} \biggr) \, \diff(\Wsk \circ \inv{S})(s)\\
&= \int_{E^d} f\biggl( \frac{1}{d} \sum_{i = 1}^d \delta_{s_i} \biggr) \, \diff(\Wsk \circ \inv{S})(s) = \Erw\biggl[ f\biggl( \frac{1}{d} \sum_{i = 1}^d \delta_{S_i} \biggr) \biggr],
\end{align*}
where we have invested the elementary fact that addition is commutative and thus \(\sum_{i = 1}^d \delta_{s_{\tau(i)}} = \sum_{i = 1}^d \delta_{s_i}\) for any \(s \in E^d\) and \(\tau \in \Perm{d}\).
\end{proof}

In order to keep the wording simple, in the sequel we are going to work under the premises of Theorem~\ref{sa:ldp_singulaerwerte}, that is, \(p \in (0, \infty)\), \(m\) varies with \(n\) such that \(m \leq n\) and \(c := \lim_{n \to \infty} \frac{m}{n} \in (0, 1]\) exists, for each \(n \in \NZ\) \(Y^{(n)} = (Y_i^{(n)})_{i \leq m}\) is a random vector with density~\eqref{eq:dichte_y}, and \(\nu_n := \frac{1}{m} \sum_{i = 1}^m \delta_{Y_i^{(n)}}\).

\paragraph{Step~1}
This is dealt with readily using a result from \cite{HiaiPetz2000}.

\begin{Sa}\label{sa:ldp_y}
The limit \(B_{c, p} = \lim_{n \to \infty} \frac{1}{\beta m n} \log(Z_{m, n, p, \beta})\) exists and is finite and does not depend on \(\beta\), and the sequence \((\nu_n)_{n \in \NZ}\) of empirical measures satisfies a large deviations principle on \(\WMasz(\RZ_{\geq 0})\) at speed \(\beta m n\) with good rate function \(\mathcal{J}_{c, p} \colon \WMasz(\RZ_{\geq 0}) \to [0, \infty]\) given by
\begin{equation*}
\mathcal{J}_{c, p}(\mu) := -\frac{c}{2} \int_{\RZ_{\geq 0}^2} \log\lvert x - y \rvert \, \diff\mu^{\otimes 2}(x, y) + \int_{\RZ_{\geq 0}} \Bigl( x^{p/2} - \frac{1 - c}{2} \log(x) \Bigr) \, \diff\mu(x) + B_{c, p},
\end{equation*}
which has a unique global minimizer \(\nu_{c, p}\) with compact support and a Lebesgue density.
\end{Sa}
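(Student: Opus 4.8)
The plan is to recognise the law of $Y^{(n)}=(Y_i^{(n)})_{i\leq m}$ as a mean-field (Coulomb-gas) ensemble on $[0,\infty)$ and to invoke the corresponding large deviations theorem from \cite{HiaiPetz2000} (in the spirit of \cite{BenAG1997}, and as carried out for $c=1$ in \cite{KPTh2020_2}). Setting $c_n:=\frac{m}{n}$ and $b_n:=\frac{\beta(n-m+1)/2-1}{\beta n}$, so that $c_n\to c$ and $b_n\to\frac{1-c}{2}$, and using $\frac1m\sum_{i=1}^m (Y_i^{(n)})^{p/2}=\int x^{p/2}\,\diff\nu_n(x)$, $\frac1m\sum_{i=1}^m\log Y_i^{(n)}=\int\log x\,\diff\nu_n(x)$ and $\frac1{mn}\sum_{1\leq i<j\leq m}\log|Y_i^{(n)}-Y_j^{(n)}|=\frac{c_n}{2}\iint_{x\neq y}\log|x-y|\,\diff\nu_n^{\otimes 2}(x,y)$, one checks that density~\eqref{eq:dichte_y} is proportional to
\begin{equation*}
\exp\!\left(-\beta m n\left[\int_{\RZ_{\geq 0}}x^{p/2}\,\diff\nu_n(x)-b_n\int_{\RZ_{\geq 0}}\log x\,\diff\nu_n(x)-\frac{c_n}{2}\iint_{x\neq y}\log|x-y|\,\diff\nu_n^{\otimes 2}(x,y)\right]\right).
\end{equation*}
Thus $\nu_n$ is the empirical measure of $m$ particles at speed $\beta m n$, with two-body logarithmic interaction of strength $c_n$ and external field $x\mapsto x^{p/2}-b_n\log x$, which converge as $n\to\infty$ (the field locally uniformly on $(0,\infty)$, together with a uniform confinement bound) to $c$ and to $x\mapsto x^{p/2}-\frac{1-c}{2}\log x$ respectively. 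Note that all powers of $\beta$ have cancelled, which is the source of the $\beta$-independence of the limit.

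The external field $Q(x):=x^{p/2}-\frac{1-c}{2}\log x$ (and, for large $n$, its $n$-dependent analogue) is lower semicontinuous on $\RZ_{\geq 0}$, continuous and finite on $(0,\infty)$, grows faster than any logarithm at $+\infty$ (which yields exponential tightness of $(\nu_n)_{n\in\NZ}$ in $\WMasz(\RZ_{\geq 0})$ and compactness of the sublevel sets of the rate function) and, when $c<1$, tends to $+\infty$ at the endpoint $0$; in the boundary case $c=1$ the logarithmic term drops out and $Q(x)=x^{p/2}$, which is precisely the situation of \cite{KPTh2020_2}. Hence the large deviations theorem of \cite{HiaiPetz2000} applies and delivers an LDP for $(\nu_n)_{n\in\NZ}$ at speed $\beta m n$ with good rate function $\mathcal{J}_{c,p}=\mathcal{J}^0_{c,p}+B_{c,p}$, where $\mathcal{J}^0_{c,p}(\mu):=-\frac{c}{2}\iint_{\RZ_{\geq 0}^2}\log|x-y|\,\diff\mu^{\otimes 2}(x,y)+\int_{\RZ_{\geq 0}}\bigl(x^{p/2}-\frac{1-c}{2}\log x\bigr)\diff\mu(x)$ and $B_{c,p}=-\inf\mathcal{J}^0_{c,p}$; the same theorem identifies the normalising constant as $B_{c,p}=\lim_{n\to\infty}\frac1{\beta m n}\log Z_{m,n,p,\beta}$ and shows it is finite (the minimiser has finite logarithmic energy) and, as noted, free of $\beta$.

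I expect the main obstacle to be the mild discrepancy between the usual statement of the cited result and the present setup: the number $m$ of particles and the scaling $n$ of the confinement differ (although $m/n\to c\in(0,1]$, so $m\to\infty$), and the effective field carries the dimension-dependent corrections $b_n$ and $c_n$. I would handle this either by appealing to a version of the Hiai--Petz result allowing a sequence of fields converging locally uniformly on $(0,\infty)$ subject to a uniform confinement estimate — which $b_n,c_n$ satisfy, being bounded with $x^{p/2}$ fixed — or, alternatively, by an exponential-equivalence argument absorbing the $o(1)$ perturbations $(b_n-\frac{1-c}{2})\int\log x\,\diff\nu_n$ and $(c_n-c)\iint\log|x-y|\,\diff\nu_n^{\otimes 2}$ on the exponentially negligible complements of suitable compact subsets of $\WMasz(\RZ_{\geq 0})$. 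The remaining assertions are routine potential theory: strict convexity of the logarithmic-energy functional on probability measures of finite energy gives uniqueness of the minimiser $\nu_{c,p}$; the super-logarithmic growth of $x^{p/2}$ forces $\supp(\nu_{c,p})$ to be compact; and the regularity of $Q$ on $(0,\infty)$ together with the Euler--Lagrange characterisation of weighted equilibrium measures (cf.\ \cite{KPTh2020_2} for $c=1$) yields the existence of a Lebesgue density for $\nu_{c,p}$.
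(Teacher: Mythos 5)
Your proposal is correct and follows essentially the same route as the paper: both invoke the large deviations theorem of Hiai and Petz (their Theorem~5.5.1), rewrite the joint density of \(Y^{(n)}\) in Coulomb-gas form, and then obtain uniqueness, compact support and regularity of the equilibrium measure from standard weighted potential theory. The "mild discrepancy" you flag is in fact not an issue, since the cited theorem is already formulated for a number of particles \(m\) growing proportionally to \(n\) with the Laguerre-type factor \(\prod_i y_i^{\beta(n-m+1)/2-1}\) built in, so no extra exponential-equivalence or field-perturbation argument is needed; the paper simply applies it, rescales from speed \(n^2\) to \(\beta m n\), and reads off the \(\beta\)-independence of \(B_{c,p}\) from the explicit form of the rate function, while the auxiliary facts (compact support via \cite[Theorem~5.3.3]{HiaiPetz2000}, Lebesgue density via \cite[Theorem~IV.2.5]{SaffTotik1997} applied to the weight \(w(x)=x^{(1-c)/(2c)}\ez^{-x^{p/2}/c}\)) are the same conclusions you obtain by slightly different references.
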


\begin{proof}
This follows almost immediately from \cite[Theorem~5.5.1]{HiaiPetz2000}. Said Theorem implies the existence of \(B_\beta := \lim_{n \to \infty} \frac{1}{n^2} \log(Z_{m, n, p, \beta})\), which might depend on \(\beta\) somehow, and \((\nu_n)_{n \in \NZ}\) satisfies an LDP at speed \(n^2\) with good rate function \(J_{c, p, \beta} = \beta c J_{c, p} + B_\beta\) where
\begin{equation*}
J_{c, p}(\mu) = -\frac{c}{2} \int_{\RZ_{> 0}^2} \log\lvert x - y \rvert \, \diff\mu^{\otimes 2}(x, y) + \int_{\RZ_{> 0}} \Bigl( x^{p/2} - \frac{1 - c}{2} \log(x) \Bigr) \, \diff\mu(x),
\end{equation*}
which no longer depends on \(\beta\); hence a measure \(\mu\) minimizes \(J_{c, p, \beta}\) iff it minimizes \(J_{c, p}\), and therefore the minimizer \(\nu_{c, p}\) does not depend on \(\beta\). Being the rate function of an LDP, \(J_{c, p, \beta}\) satisfies \(J_{c, p, \beta}(\nu_{c, p}) = 0\), hence \(B_{\beta} = -\beta c J_{c, p}(\nu_{c, p})\), and \(B_{c, p} := -J_{c, p}(\nu_{c, p})\) does not depend on \(\beta\) and it satisfies
\begin{equation*}
B_{c, p} = \frac{1}{\beta c} \, B_\beta = \lim_{n \to \infty} \frac{n}{m} \cdot \lim_{n \to \infty} \frac{1}{\beta n^2} \log(Z_{m, n, p, \beta}) = \lim_{n \to \infty} \frac{1}{\beta m n} \log(Z_{m, n, p, \beta}).
\end{equation*}
Writing out the defining inequalities of the LDP, we have, for any measurable set \(A\),
\begin{equation*}
-\inf(J_{c, p, \beta}(\offen{A})) \leq \liminf_{n \to \infty} \frac{1}{n^2} \log \Wsk[\nu_n \in A] \leq \limsup_{n \to \infty} \frac{1}{n^2} \log \Wsk[\nu_n \in A] \leq -\inf(J_{c, p, \beta}(\schluss{A}));
\end{equation*}
and since \(J_{c, p, \beta} = \beta c J_{c, p} + \beta c B_{c, p} = \beta c \mathcal{J}_{c, p}\) and \(\frac{1}{\beta c} = \lim_{n \to \infty} \frac{n}{\beta m}\), we can divide the LDP by \(\beta c\) to get
\begin{equation*}
-\inf(\mathcal{J}_{c, p}(\offen{A})) \leq \liminf_{n \to \infty} \frac{1}{\beta m n} \log \Wsk[\nu_n \in A] \leq \limsup_{n \to \infty} \frac{1}{\beta m n} \log \Wsk[\nu_n \in A] \leq -\inf(\mathcal{J}_{c, p}(\schluss{A})),
\end{equation*}
so \((\nu_n)_{n \in \NZ}\) satisfies the claimed LDP. The compactness of the support of the minimizer \(\nu_{c, p}\) is argued by \cite[Theorem~5.3.3]{HiaiPetz2000}, and the existence of a Lebesgue density by \cite[Theorem~IV.2.5]{SaffTotik1997} as the following argument makes clear: up to a constant factor the rate function \(\mathcal{J}_{c, p}\) can be considered an energy functional with external potential \(Q(x) := \frac{1}{c} x^{p/2} - \frac{1 - c}{2 c} \log(x)\) on \(\RZ_{\geq 0}\) (\(\infty\) else), or weight function \(w(x) = \ez^{-Q(x)} = x^{(1 - c)/(2 c)} \, \ez^{-x^{p/2}/c}\) (\(0\) on \(\RZ_{\leq 0}\)); then \(w\) is positive on the interval \(I = \RZ_{> 0}\) and infinitely often differentiable there, and so all premises of \cite[Theorem~IV.2.5]{SaffTotik1997} are met.
\end{proof}

\begin{Bem}\label{bem:ldp_y}
There seem to be no special results available for weight functions of the form \(w(x) = x^\theta \ez^{-\lambda x^{p/2}} \Ind_{\RZ_{\geq 0}}(x)\) unless \(p = 2\) (for which see Remark~\ref{bem:minimierer_p2} below). The general theory set out in \cite{SaffTotik1997} tells us that \(\supp(\nu_{c, p}) = [a, b]\) with \(a, b \in \RZ_{\geq 0}\), that \(a = 0\) only for \(c = 1\), and else \(a\) and \(b\) satisfy the simultaneous equations
\begin{align*}
\int_a^b \Bigl( \frac{p x^{p/2 - 1}}{2 c} - \frac{1 - c}{2 c x} \Bigr) (x - a)^{-1/2} (b - x)^{1/2} \, \diff x &= -\pi,\\
\int_a^b \Bigl( \frac{p x^{p/2 - 1}}{2 c} - \frac{1 - c}{2 c x} \Bigr) (x - a)^{1/2} (b - x)^{-1/2} \, \diff x &= \pi.
\end{align*}
The integrals can be expressed in terms of the Gaussian hypergeometric functions, but that does not help any further. We also cannot determine the density of \(\nu_{c, p}\) as the only pertinent result \cite[Theorem~IV.3.2]{SaffTotik1997} cannot be applied: upon transferring \([a, b]\) to \([0, 1]\) via an affine map the conditions on the external field are violated.
\end{Bem}

\paragraph{Step~2}
We repeat the observation made in \cite{KPTh2020_2} that the moment map \(m_{p/2}\) is not continuous w.r.t.\ the weak topology, thus contraction is not applicable, and therefore an LDP for \(\bigl( (\nu_n, m_{p/2}(\nu_n)) \bigr)_{n \in \NZ}\) must be proved directly.

\begin{Sa}\label{sa:ldp_empir_paar}
The sequence of pairs \(\bigl( (\nu_n, m_{p/2}(\nu_n)) \bigr)_{n \in \NZ}\) satisfies an LDP on \(\WMasz(\RZ_{\geq 0}) \times \RZ_{\geq 0}\) with speed \(\beta m n\) and good rate function \(\mathcal{J}_{c, p}^2 \colon \WMasz(\RZ_{\geq 0}) \times \RZ_{\geq 0} \to [0, \infty]\) given by
\begin{equation*}
\mathcal{J}_{c, p}^2(\mu, M) = \begin{cases} -\frac{c}{2} \int_{\RZ_{\geq 0}^2} \log\lvert x - y \rvert \, \diff\mu^{\otimes 2}(x, y) - \frac{1 - c}{2} \int_{\RZ_{\geq 0}} \log(x) \, \diff\mu(x) + M + B_{c, p} & \text{if } m_{p/2}(\mu) \leq M, \\ \infty & \text{if } m_{p/2}(\mu) > M, \end{cases}
\end{equation*}
which attains its global minimum precisely at \((\nu_{c, p}, m_{p/2}(\nu_{c, p}))\), where \(\nu_{c, p}\) is the same as in Proposition~\ref{sa:ldp_y}.
\end{Sa}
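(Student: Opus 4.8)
The strategy is to deduce the joint LDP from the LDP for $(\nu_n)$ alone (Proposition~\ref{sa:ldp_y}) by exploiting the deterministic identity $\beta n\,\lVert Y^{(n)}\rVert_{p/2}^{p/2} = \beta m n\, m_{p/2}(\nu_n)$, which says that the exponential weight in the density~\eqref{eq:dichte_y} depends on $Y^{(n)}$ only through the second coordinate. Writing $\gamma_n := \beta(n-m+1)/2 - 1$, $k_n(y) := \prod_{i=1}^m y_i^{\gamma_n}\prod_{1\le i<j\le m}\lvert y_i - y_j\rvert^\beta$ for the potential-free part of the density kernel, and $\nu[y] := \frac1m\sum_{i\le m}\delta_{y_i}$, one has for any Borel $A$
\begin{equation*}
\Wsk\bigl[(\nu_n, m_{p/2}(\nu_n)) \in A\bigr] = \frac{1}{Z_{m,n,p,\beta}}\int_{\{y \in \RZ_{> 0}^m\,:\, (\nu[y], m_{p/2}(\nu[y])) \in A\}} \ez^{-\beta m n\, m_{p/2}(\nu[y])}\, k_n(y)\, \diff y.
\end{equation*}
I would then verify the hypotheses of the local-LDP criterion stated in Section~\ref{sec:ldp_voraussetzungen}: a local characterization of the rate function on a basis of the topology, together with exponential tightness (Definition~\ref{def:exp_straff}).

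\emph{Exponential tightness.} The $\WMasz(\RZ_{\geq 0})$-coordinate is exponentially tight because $\mathcal J_{c,p}$ is a good rate function (Proposition~\ref{sa:ldp_y}); for the $\RZ_{\geq 0}$-coordinate it suffices to show $\limsup_n \frac{1}{\beta m n}\log\Wsk[m_{p/2}(\nu_n) \ge L] \to -\infty$ as $L \to \infty$. On $\{m_{p/2}(\nu_n) \ge L\}$ write $\ez^{-\beta m n\, m_{p/2}(\nu_n)} \le \ez^{-\beta m n L/2}\,\ez^{-\frac12\beta n\sum y_i^{p/2}}$, so the probability is at most $\ez^{-\beta m n L/2}$ times the ratio of the $\tfrac12$-tilted normalization to $Z_{m,n,p,\beta}$, which stays bounded at exponential scale $\beta m n$ (both being of Hiai--Petz type and converging after taking logarithms and dividing by $\beta m n$, cf.\ the proof of Proposition~\ref{sa:ldp_y}). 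The delicate point is controlling $k_n$: the exponent $\gamma_n$ of $\prod_i y_i^{\gamma_n}$ is of order $n$, so this factor cannot be bounded crudely; instead one absorbs it against a fraction of $\ez^{-\beta n\sum y_i^{p/2}}$, using that $\sup_{y>0} y^{an}\ez^{-bny^{p/2}} = \ez^{O(n)} = \ez^{\smallO(\beta m n)}$ since $m\to\infty$ (as $c>0$), and bounds the Vandermonde factor by $\prod_i(1\vee y_i)^{\beta(m-1)}$ in the same way.

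\emph{Local characterization.} If $m_{p/2}(\mu) > M$, lower semicontinuity of $m_{p/2}$ makes a small basic neighbourhood $B_\delta(\mu)\times(M-\epsilon,M+\epsilon)$ miss the graph of $m_{p/2}$ entirely, so the probability is $0$ and $\mathcal J^2_{c,p}(\mu,M)=\infty$ is reproduced. If $m_{p/2}(\mu) \le M$, for the upper bound I would interpolate the potential: for $\lambda\in(0,1)$, on $\{m_{p/2}(\nu_n)>M-\epsilon\}$ bound $\ez^{-\beta n\sum y_i^{p/2}} \le \ez^{-(1-\lambda)\beta m n(M-\epsilon)}\ez^{-\lambda\beta n\sum y_i^{p/2}}$, which estimates $\Wsk[(\nu_n,m_{p/2}(\nu_n))\in B_\delta(\mu)\times(M-\epsilon,M+\epsilon)]$ by $\ez^{-(1-\lambda)\beta m n(M-\epsilon)}$ times $(Z^{(\lambda)}_{m,n,p,\beta}/Z_{m,n,p,\beta})\,\Wsk^{(\lambda)}[\nu_n\in\overline{B}_\delta(\mu)]$ for the $\lambda$-tilted ensemble; applying the Hiai--Petz LDP for that ensemble (rate function $\mathcal J^{(\lambda)}_{c,p}$ with potential $\lambda x^{p/2}$ and its own finite constant, obtained exactly as in Proposition~\ref{sa:ldp_y}) and then sending $\delta\to0$, $\lambda\to0$, $\epsilon\to0$ collapses the bound to $-\mathcal J^2_{c,p}(\mu,M)$, the tilting constants cancelling against the potential-free part. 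For the matching lower bound I would insert one far-away point: approximate $\mu$ weakly by $\mu' := (1-\tfrac1m)\mu_* + \tfrac1m\delta_{t_n}$ with $\mu_*\approx\mu$ and $t_n^{p/2} := m\bigl(M-m_{p/2}(\mu_*)\bigr)$, so $m_{p/2}(\mu')=M$ exactly; restricting the integral to configurations with one coordinate near $t_n$ and the other $m-1$ empirically close to $\mu_*$, the weight contributes $\ez^{-\beta n t_n^{p/2}} = \ez^{-\beta m n(M-m_{p/2}(\mu_*))}$, the $k_n$-factors at a coordinate of size $t_n\asymp m^{2/p}$ contribute only $\ez^{O(n\log m)} = \ez^{\smallO(\beta m n)}$, and the remaining $(m-1,n)$-integral is controlled by the Step-1 lower bound, giving $\liminf \ge -\bigl(\mathcal J_{c,p}(\mu_*)+M-m_{p/2}(\mu_*)\bigr) = -\mathcal J^2_{c,p}(\mu_*,M)\to-\mathcal J^2_{c,p}(\mu,M)$ along a standard mollifying sequence $\mu_*\to\mu$. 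The criterion of Section~\ref{sec:ldp_voraussetzungen} then yields the LDP at speed $\beta m n$ with good rate function $\mathcal J^2_{c,p}$.

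Identifying the minimizer is then immediate: on the finiteness domain $M\ge m_{p/2}(\mu)$ one has $\mathcal J^2_{c,p}(\mu,M) = \mathcal J_{c,p}(\mu) + (M-m_{p/2}(\mu)) \ge \mathcal J_{c,p}(\mu) \ge 0$, with equality throughout iff $M = m_{p/2}(\mu)$ and $\mu=\nu_{c,p}$ (the unique minimizer of $\mathcal J_{c,p}$ from Proposition~\ref{sa:ldp_y}, which has compact support so $m_{p/2}(\nu_{c,p})<\infty$). The main obstacle, exactly as flagged in the introduction, is the pervasive bookkeeping forced by $\prod_i y_i^{\gamma_n}$ with $\gamma_n$ of order $n$—absent in the square case $c=1$ of \cite{KPTh2020_2}—which must be carried through the exponential-tightness estimate, the far-point lower bound, and the tilted normalizations, and shown each time to be exponentially negligible at speed $\beta m n$; a subtler secondary point is pinning down the ``$\le$ versus $=$'' relaxation of the moment constraint, i.e.\ that the interpolation ($\lambda\to0$) upper bound and the far-point ($t_n\asymp m^{2/p}$) lower bound genuinely produce the same $M-m_{p/2}(\mu)$ term.
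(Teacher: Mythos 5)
Your overall architecture matches the paper's: both reduce to the identity $\beta n\lVert Y^{(n)}\rVert_{p/2}^{p/2}=\beta m n\,m_{p/2}(\nu_n)$, prove the upper bound by siphoning off a $\gamma$- (your $\lambda$-) fraction of the potential, prove the lower bound by planting one coordinate near $\bigl(m(M-m_{p/2}(\mu))\bigr)^{2/p}$ and invoking the Step-1 estimate on the remaining $(m-1)$-point configuration, and close with exponential tightness. Your upper bound is implemented somewhat differently: rather than cutting off the kernel $F_\alpha(\cdot,\cdot;\gamma)$ and estimating $\int F_\alpha\,\diff\nu_n^{\otimes2}$ pointwise as the paper does (Lemma~\ref{lem:ldp_obere_abschaetzung}, which also needs Lemma~\ref{lem:hom_integral} to tame $\prod_i y_i^{\alpha_n}$), you invoke a full LDP for the $\lambda$-tilted ensemble and let the two occurrences of $B^{(\lambda)}_{c,p}$ cancel. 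This is a clean packaging and does work, since the tilted ensemble is again Hiai--Petz with potential $\lambda x^{p/2}-\frac{1-c}{2c}\log x$, though it merely delegates the cutoff machinery to the quoted theorem rather than avoiding it.

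The genuine gap is the phrase ``along a standard mollifying sequence $\mu_*\to\mu$'' in your lower bound. For the argument to close one needs not just weak convergence $\mu_*\to\mu$ and $m_{p/2}(\mu_*)\to m_{p/2}(\mu)$, but the crucial one-sided energy bound
\begin{equation*}
\limsup_{*}\biggl(-\frac{c}{2}\int_{\RZ_{\geq 0}^2}\log\lvert x-y\rvert\,\diff\mu_*^{\otimes 2}-\frac{1-c}{2}\int_{\RZ_{\geq 0}}\log x\,\diff\mu_*\biggr)\leq -\frac{c}{2}\int_{\RZ_{\geq 0}^2}\log\lvert x-y\rvert\,\diff\mu^{\otimes 2}-\frac{1-c}{2}\int_{\RZ_{\geq 0}}\log x\,\diff\mu,
\end{equation*}
and simultaneously that each $\mu_*$ has compact support in $(0,\infty)$ with a density that is continuous and bounded away from zero on its support. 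Neither log-energy term is weakly continuous, so no off-the-shelf mollification gives this. The energy inequality hinges on concavity of the free entropy under convolution (Lemma~\ref{lem:freieentropie_faltung} in the paper), and even then the additional $\int\log x\,\diff\mu$ term and the constraint $\supp(\mu_*)\subset(0,\infty)$ require a three-stage construction (truncate to $[\frac1k,k]$, convolve with a compactly supported kernel shrinking faster than the distance to $0$, then mix with the uniform law on the support) worked out in Lemma~\ref{lem:approx_maszfolge}. The paper explicitly notes that this very point was left as a gap in the references being generalized; presenting it as ``standard'' would reproduce that gap. Everything else in your proposal is sound in outline, but this step needs to be carried out in full before the lower bound is complete.
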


Seeing that the proof of the corresponding \cite[Theorem~4.1]{KPTh2020_2} is rather lengthy, here we do not spell out all details anew but restrict ourselves mostly to the adaptions necessitated by the new term \(\prod_{i = 1}^m y_i^{\beta (n - m + 1)/2 - 1}\) in density~\eqref{eq:dichte_y} for \(m \neq n\).

\cite[Lemma~4.2]{KPTh2020_2} is carried over with the obvious modifications (i.e., replace \(\RZ\) by \(\RZ_{\geq 0}\) and \(p\) by \(\frac{p}{2}\)) since it does not use the density of \(Y^{(n)}\).

\cite[Lemma~4.4]{KPTh2020_2} too can be adopted with the obvious modifications for the same reason as before; in particular the definition of \(D_n\) now reads
\begin{equation*}
D_n := \bigl( m^{2/p} (M - m_{p/2}(\mu))^{2/p} - m^{2/p - 2}, m^{2/p} (M - m_{p/2}(\mu))^{2/p} \bigr).
\end{equation*}
But \cite[Lemma~4.5]{KPTh2020_2} needs to be modified to the form given below.

\begin{Lem}\label{lem:ldp_untere_abschaetzung}
Let \(\mu \in \WMasz(\RZ_{\geq 0})\) be supported on an interval \([a, b]\) with \(0 \leq a < b < \infty\) and let \(M \in (m_{p/2}(\mu), \infty)\); assume that \(\mu\) has Lebesgue density \(h\) which is continuous on \([a, b]\) and satisfies \(\inf_{x \in [a, b]} h(x) > 0\). Let \(d \in \NZ\), \(f_1, \dotsc, f_d \in \CeBe(\RZ_{\geq 0})\) and \(\delta, \epsilon \in \RZ_{> 0}\). Then
\begin{multline*}
\liminf_{n \to \infty} \frac{1}{\beta m n} \log \Wsk\bigl[ \nu_n' \in \mathcal{O}_{\epsilon, d}(\mu) \wedge \bigl\lvert m_{p/2}(\nu_n') - m_{p/2}(\mu) \bigr\rvert < \delta \wedge Y_m^{(n)} \in D_n \bigr]\\
\geq \frac{c}{2} \int_{\RZ_{\geq 0}^2} \log\lvert x - y \rvert \, \diff\mu^{\otimes 2}(x, y) + \frac{1 - c}{2} \int_{\RZ_{\geq 0}} \log(x) \, \diff\mu(x) - M - B_{c, p},
\end{multline*}
where \(\nu_n' := \frac{1}{m - 1} \sum_{i = 1}^{m - 1} \delta_{Y_i^{(n)}}\).
\end{Lem}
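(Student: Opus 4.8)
The plan is to bound the probability from below by $\Wsk[(Y_1^{(n)},\dots,Y_m^{(n)}) \in G_n]$ for a product set $G_n = I_1' \times \dots \times I_{m-1}' \times D_n \subset \RZ_{>0}^m$ and to estimate this quantity directly from the density~\eqref{eq:dichte_y}. Set $\eta := \min_{[a,b]} h > 0$ and $H := \max_{[a,b]} h < \infty$. Since $\mu$ has a density it has no atoms, so I can partition $[a,b]$ into consecutive intervals $I_1 < \dots < I_{m-1}$ with $\mu(I_k) = \frac{1}{m-1}$, whence $\frac{1}{(m-1)H} \le |I_k| \le \frac{1}{(m-1)\eta}$; let $I_k'$ be the left half of $I_k$. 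For $(y_1,\dots,y_m) \in G_n$ the first $m-1$ coordinates are automatically increasing and lie in $[a,b]$, while $y_m \in D_n$ tends to $+\infty$; the empirical measure $\nu_n' = \frac{1}{m-1}\sum_{i<m}\delta_{y_i}$ is a Riemann approximation of $\mu$, so that (each $f_j$ and $x\mapsto x^{p/2}$ being uniformly continuous on $[a,b]$) for $n$ large one has $\nu_n' \in \mathcal{O}_{\epsilon,d}(\mu)$ and $\lvert m_{p/2}(\nu_n')-m_{p/2}(\mu)\rvert < \delta$. Hence $G_n$ is contained in the event, and it suffices to bound $\Wsk[Y^{(n)}\in G_n] = Z_{m,n,p,\beta}^{-1}\int_{G_n}(\cdots)\,dy$ from below.

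I then estimate the three factors of the integrand separately. For the exponential: on $G_n$ one has $\sum_{i=1}^m y_i^{p/2} = (m-1)\,m_{p/2}(\nu_n') + y_m^{p/2} \le m\,m_{p/2}(\mu) + m(M-m_{p/2}(\mu)) + o(m) = mM + o(m)$ by the very definition of $D_n$, so this factor contributes $-M$ to $\liminf \frac{1}{\beta mn}\log(\cdots)$. For the power factor I integrate coordinatewise rather than take an infimum (this also dispenses with the case $a = 0$): since $\tfrac{\beta(n-m+1)}{2} - 1 > -1$ always, $\int_{I_k'} y^{\beta(n-m+1)/2-1}\,dy = \xi_k^{\beta(n-m+1)/2-1}\,\lvert I_k\rvert/2$ for some $\xi_k \in I_k'$ by the mean value theorem, and similarly for the $D_n$-coordinate; then $\frac{1}{\beta mn}\log\prod_{k<m}\xi_k^{\beta(n-m+1)/2-1} \to \frac{1-c}{2}\int_{\RZ_{\geq0}}\log x\,d\mu$ via the Riemann-sum convergence $\frac{1}{m-1}\sum_{k<m}\log\xi_k \to \int\log x\,d\mu$ (finite since $h$ is bounded), while the $D_n$-coordinate, the lengths $\log\lvert I_k\rvert$, and the lone point near $0$ when $a=0$ contribute only $O(m\log m)$ (resp.\ $O(n\log m)$) and vanish after division by $\beta mn$. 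Finally $-\frac{1}{\beta mn}\log Z_{m,n,p,\beta} \to -B_{c,p}$ by Proposition~\ref{sa:ldp_y}.

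The delicate step — and the main obstacle — is the lower bound on the Vandermonde factor $\beta \sum_{1\le i<j\le m}\log\lvert y_i-y_j\rvert$ over $G_n$. The terms with $j = m$ are each $\ge \log(\inf D_n - b) = O(\log m)$, hence negligible; the terms with $j = i+1$ satisfy $\lvert y_i-y_{i+1}\rvert \ge \lvert I_i\rvert/2 \ge \frac{1}{2(m-1)H}$ thanks to the left-half restriction, hence are $\ge -O(\log m)$ each and again negligible; and for $j \ge i+2$ one uses $y_i \le \sup I_i$, $y_j \ge \inf I_j$ to get $\log\lvert y_i-y_j\rvert \ge \log(\inf I_j - \sup I_i)$, which for $j \ge i+3$ is in turn $\ge (m-1)^2 \iint_{I_{i+1}\times I_{j-1}}\log(y-x)\,d\mu^{\otimes 2}$ because $y - x \le \inf I_j - \sup I_i$ on that cell. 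Summing over these disjoint cells and bounding the omitted part of $\{x<y\}\cap[a,b]^2$ (the diagonal, near-diagonal and boundary cells, of total $\mu^{\otimes 2}$-mass $O(1/m)$) by $o(1)$ — using that $\log^+$ is bounded and that $\log^-(y-x)\in L^1(\mu^{\otimes 2})$, so its integral over a set of mass $O(1/m)$ is $o(1)$ by absolute continuity — yields $\sum_{1\le i<j\le m}\log\lvert y_i-y_j\rvert \ge (m-1)^2\bigl(\tfrac12\iint\log\lvert x-y\rvert\,d\mu^{\otimes 2} - o(1)\bigr) - O(m\log m)$, which after multiplying by $\beta$ and dividing by $\beta mn$ tends to $\frac{c}{2}\iint\log\lvert x-y\rvert\,d\mu^{\otimes 2}$. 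Adding the four limiting contributions $-M$, $\frac{1-c}{2}\int\log x\,d\mu$, $\frac{c}{2}\iint\log\lvert x-y\rvert\,d\mu^{\otimes 2}$ and $-B_{c,p}$ gives exactly the asserted bound. The new feature compared with the square case of~\cite{KPTh2020_2} is precisely the factor $\prod_i y_i^{\beta(n-m+1)/2-1}$, which forces the extraction of the $\frac{1-c}{2}\int\log x\,d\mu$ term and introduces the various $O(\log m)$-sized corrections near $x=0$ and at the far-out coordinate $y_m$ that must all be shown to be asymptotically negligible.
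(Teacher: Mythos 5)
Your proof is correct and follows essentially the same route as the paper: you construct the same product set (your $I_1'\times\dots\times I_{m-1}'\times D_n$ is exactly the set $\Delta_{m-1}\times D_n$ inherited from \cite[Lemma~4.4]{KPTh2020_2}), bound the exponential and Vandermonde factors by their infimum over the box, and identify the same four limiting contributions $-M$, $\frac{1-c}{2}\int\log x\,\diff\mu$, $\frac{c}{2}\iint\log\lvert x-y\rvert\,\diff\mu^{\otimes 2}$, and $-B_{c,p}$. The only genuine deviation is your treatment of the new factor $\prod_i y_i^{\beta(n-m+1)/2-1}$ via the integral mean value theorem rather than by picking the endpoint $\eta_i^{(m-1)}$ (chosen as $a_i^{(m-1)}$ or $b_i^{(m-1)}$ according to the sign of the exponent, as the paper does); this is a small refinement that also covers $a=0$, a case the paper sidesteps because in the application (Lemma~\ref{lem:ldp_untere_abschaetzung2}) the approximating measures are always supported away from~$0$.
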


\begin{proof}
Differently from \cite{KPTh2020_2}, the objects \(a_i^{(m - 1)}\), \(b_i^{(m - 1)}\) and \(\Delta_{m - 1}\) now depend rather on \(m\) than on \(n\). Since \([a, b] \subset \RZ_{\geq 0}\), we additionally have \(\xi_i^{(m - 1)} = b_i^{(m - 1)}\). This yields
\begin{align*}
\Wsk\bigl[ &\nu_n' \in \mathcal{O}_{\epsilon, d}(\mu) \wedge \bigl\lvert m_{p/2}(\nu_n') - m_{p/2}(\mu) \bigr\rvert < \delta \wedge Y_m^{(n)} \in D_n \bigr]\\
&\geq \frac{1}{Z_{m, n, p, \beta}} \int_{\Delta_{m - 1} \times D_n} \ez^{-\beta n \lVert y \rVert_{p/2}^{p/2}} \prod_{i = 1}^m y_i^{\beta (n - m + 1)/2 - 1} \prod_{1 \leq i < j \leq m} \lvert y_j - y_i \rvert^\beta \, \diff y\\
&= \frac{1}{Z_{m, n, p, \beta}} \int_{\Delta_{m - 1} \times D_n} \ez^{-\beta n \sum_{i = 1}^{m - 1} y_i^{p/2}} \prod_{i = 1}^{m - 1} y_i^{\beta (n - m + 1)/2 - 1} \prod_{1 \leq i < j \leq m - 1} \lvert y_j - y_i \rvert^\beta\\
&\mspace{100mu} \cdot \ez^{-\beta n y_m^{p/2}} y_m^{\beta (n - m + 1)/2 - 1} \prod_{i = 1}^{m - 1} \lvert y_m - y_i \rvert^\beta \, \diff y\\
&\geq \frac{1}{Z_{m, n, p, \beta}} \int_{\Delta_{m - 1} \times D_n} \ez^{-\beta n \sum_{i = 1}^{m - 1} (b_i^{(m - 1)})^{p/2}} \prod_{i = 1}^{m - 1} (\eta_i^{(m - 1)})^{\beta (n - m + 1)/2 - 1}\\
&\mspace{100mu} \cdot \prod_{1 \leq i < j \leq m - 1} \lvert a_j^{(m - 1)} - b_i^{(m - 1)} \rvert^\beta \, \ez^{-\beta n y_m^{p/2}} y_m^{\beta (n - m + 1)/2 - 1} \prod_{i = 1}^{m - 1} \lvert y_m - y_i \rvert^\beta \, \diff y\\
&\geq \frac{1}{Z_{m, n, p, \beta}} \, \ez^{-\beta n \sum_{i = 1}^{m - 1} (b_i^{(m - 1)})^{p/2}} \prod_{i = 1}^{m - 1} (\eta_i^{(m - 1)})^{\beta (n - m + 1)/2 - 1} \prod_{1 \leq i < j \leq m - 1} \lvert a_j^{(m - 1)} - b_i^{(m - 1)} \rvert^\beta\\
&\mspace{100mu} \cdot \ez^{-\beta n m (M - m_{p/2}(\mu))} \eta_m^{\beta (n - m + 1)/2 - 1} \vol_m(\Delta_{m - 1} \times D_n),
\end{align*}
where
\begin{equation*}
\eta_i^{(m - 1)} := \begin{cases} a_i^{(m - 1)} & \text{if } \frac{\beta (n - m + 1)}{2} \geq 1, \\ b_i^{(m - 1)} & \text{if } \frac{\beta (n - m + 1)}{2} < 1 \end{cases}
\end{equation*}
and
\begin{equation*}
\eta_m := \begin{cases} \inf(D_n) = m^{2/p} \bigl( M - m_{p/2}(\mu) \bigr)^{2/p} - m^{2/p - 2} & \text{if } \frac{\beta (n - m + 1)}{2} \geq 1, \\ \sup(D_n) = m^{2/p} \bigl( M - m_{p/2}(\mu) \bigr)^{2/p} & \text{if } \frac{\beta (n - m + 1)}{2} < 1. \end{cases}
\end{equation*}
(In the case \(c < 1\) only the first case occurs for all \(n\) large enough.) Now we use \(\lim_{n \to \infty} \frac{1}{\beta m n} \log(Z_{m, n, p, \beta}) = B_{c, p}\) by Proposition~\ref{sa:ldp_y}, also
\begin{equation*}
\lim_{n \to \infty} \frac{1}{m} \sum_{i = 1}^{m - 1} (b_i^{(m - 1)})^{p/2} = m_{p/2}(\mu),
\end{equation*}
and
\begin{equation*}
\lim_{n \to \infty} \frac{\beta (n - m + 1)/2 - 1}{\beta m n} \sum_{i = 1}^{m - 1} \log(\eta_i^{(m - 1)}) = \frac{1 - c}{2} \int_{\RZ_{\geq 0}} \log(x) \, \diff\mu(x),
\end{equation*}
as well as
\begin{equation*}
\liminf_{n \to \infty} \frac{1}{m n} \sum_{1 \leq i < j \leq m - 1} \log\lvert a_j^{(m - 1)} - b_i^{(m - 1)} \rvert \geq \frac{c}{2} \int_{\RZ_{\geq 0}^2} \log\lvert x - y \rvert \, \diff\mu^{\otimes 2}(x, y);
\end{equation*}
there is also
\begin{align*}
\lim_{n \to \infty} \frac{\beta (n - m + 1)/2 - 1}{\beta m n} \log(\eta_m) &= \frac{1 - c}{2} \lim_{n \to \infty} \frac{1}{m} \log\bigl( m^{2/p} \bigl( M - m_{p/2}(\mu) \bigr)^{2/p} - r(n) m^{2/p - 2} \bigr)\\
&= \frac{1 - c}{2} \lim_{n \to \infty} \Bigl( \frac{2 \log(m)}{p m} + \frac{1}{m} \log\Bigl( \bigl( M - m_{p/2}(\mu) \bigr)^{2/p} - \frac{r(n)}{m^2} \Bigr) \Bigr)\\
&= 0,
\end{align*}
where \(r(n) \in \{0, 1\}\) according to the definition of \(\eta_m\). Finally we know \(\liminf_{n \to \infty} \frac{1}{\beta m n} \log(\vol_m(\Delta_m \times D_n)) \geq 0\) and hence obtain
\begin{align*}
\liminf_{n \to \infty} \frac{1}{\beta m n} \log \Wsk\bigl[ &\nu_n' \in \mathcal{O}_{\epsilon, d}(\mu) \wedge \bigl\lvert m_{p/2}(\nu_n') - m_{p/2}(\mu) \bigr\rvert < \delta \wedge Y_m^{(n)} \in D_n \bigr]\\
&\geq -B_{c, p} - m_{p/2}(\mu) + \frac{1 - c}{2} \int_{\RZ_{\geq 0}} \log(x) \, \diff\mu(x)\\
&\quad + \frac{c}{2} \int_{\RZ_{\geq 0}^2} \log\lvert x - y \rvert \, \diff\mu^{\otimes 2}(x, y) - (M - m_{p/2}(\mu)),
\end{align*}
which equals the claimed expression.
\end{proof}

The proof of \cite[Lemma~4.6]{KPTh2020_2} could in principle be adopted without much comment, but for the present article the authors have decided to give a detailed proof, for two reasons: firstly, it is not at all clear that an approximating sequence of measures \((\mu_k)_{k \in \NZ}\) as postulated exists indeed, because in contrast to \cite{KPTh2020_2} and to \cite{HiaiPetz2000} as referenced therein, our candidate for the rate function, \(\mathcal{J}_{c, p}^2\), contains the additional term \(\int_{\RZ_{\geq 0}} \log(x) \, \diff\mu(x)\); moreover we work with probability measures on \(\RZ_{\geq 0}\), not \(\RZ\), so it is not immediately clear how the approximating measures are to be constructed, and again neither \cite{KPTh2020_2} nor \cite{HiaiPetz2000} provide any details. Secondly, there is an error of sign in \cite{KPTh2020_2}: the display before the line `This standard fact can be verified (\dots)' must read `\(\limsup_{k \to \infty} \bigl( -\int_\RZ \int_\RZ \ldots \bigr) \leq -\int_\RZ \int_\RZ \ldots\),' and the reference to \cite[p.\ 214]{HiaiPetz2000} (read: `p.\ 216') does not seem directly applicable.

In the following few lemmas we attempt a careful proof of all partial results omitted or only hinted at, and mentioned above. The first one deals with the concavity of the free entropy under convolution, mentioned in \cite[p.\ 216]{HiaiPetz2000}.

\begin{Lem}\label{lem:freieentropie_faltung}
Define the functional \(\Sigma \colon \WMasz(\CZ) \to \schluss{\RZ}\) by \(\Sigma(\mu) := \int_{\CZ^2} \log\lvert x - y \rvert \, \diff\mu^{\otimes 2}(x, y)\), wherever the integral is defined. Let \(\mu, \nu \in \WMasz(\CZ)\) be compactly supported, then,
\begin{equation*}
\Sigma(\nu * \mu) \geq \Sigma(\mu).
\end{equation*}
Via trivial extension, the statement remains true for \(\mu, \nu \in \WMasz(\RZ)\), or for \(\mu \in \WMasz(\RZ_{\geq 0})\) and \(\nu \in \WMasz(\RZ)\) such that \(\nu * \mu \in \WMasz(\RZ_{\geq 0})\).
\end{Lem}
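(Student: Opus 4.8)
The plan is to route everything through Frullani's integral $\log r = \tfrac12\int_0^\infty t^{-1}(\ez^{-t}-\ez^{-tr^2})\,\diff t$ (valid for $r>0$) together with the Fourier transform of the Gaussian; these two facts reduce the assertion to the trivial bound $\lvert\widehat\nu\rvert\leq 1$ for the characteristic function of a probability measure.

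Since $\nu*\mu$ is again compactly supported, $\log\lvert x-y\rvert$ is bounded above on the support of $(\nu*\mu)^{\otimes 2}$, so $\Sigma(\nu*\mu)$ is a well-defined element of $[-\infty,\infty)$; if $\Sigma(\mu)=-\infty$ there is nothing to prove, so I may assume $\Sigma(\mu)\in\RZ$. Set $\Phi_\mu(t):=\int_{\CZ^2}\ez^{-t\lvert x-y\rvert^2}\,\diff\mu^{\otimes 2}(x,y)\in(0,1]$. Plugging $r=\lvert x-y\rvert$ into Frullani's identity, splitting $\log=\log^+-\log^-$ (the first bounded on a compact set, the second non-negative) and applying Tonelli to each part, one obtains
\begin{equation*}
\Sigma(\mu)=\frac12\int_0^\infty\frac{\ez^{-t}-\Phi_\mu(t)}{t}\,\diff t,
\end{equation*}
all occurring integrals being convergent because $\Sigma(\mu)\in\RZ$; the integrand stays bounded as $t\to0^+$ (its limit is $\int\lvert x-y\rvert^2\,\diff\mu^{\otimes2}-1$, using compact support of $\mu$).

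The key step is to rewrite $\Phi_\mu$ spectrally. Gaussian Fourier inversion on $\CZ\cong\RZ^2$ gives $\ez^{-t\lvert z\rvert^2}=\frac{1}{4\pi t}\int_{\RZ^2}\ez^{-\lvert\xi\rvert^2/(4t)}\,\ez^{\ie\langle z,\xi\rangle}\,\diff\xi$, and interchanging integrals yields
\begin{equation*}
\Phi_\mu(t)=\frac{1}{4\pi t}\int_{\RZ^2}\ez^{-\lvert\xi\rvert^2/(4t)}\,\lvert\widehat\mu(\xi)\rvert^2\,\diff\xi\geq0,\qquad \widehat\mu(\xi):=\int_{\CZ}\ez^{\ie\langle x,\xi\rangle}\,\diff\mu(x).
\end{equation*}
Since $\widehat{\nu*\mu}=\widehat\nu\cdot\widehat\mu$ and $\lvert\widehat\nu(\xi)\rvert\leq1$ for every $\xi$, this gives $\Phi_{\nu*\mu}(t)\leq\Phi_\mu(t)$ for all $t>0$; hence the representation above, which holds verbatim for $\nu*\mu$, forces $\Sigma(\nu*\mu)\geq\Sigma(\mu)$ (both being finite), and in fact
\begin{equation*}
\Sigma(\nu*\mu)-\Sigma(\mu)=\frac12\int_0^\infty\frac{\Phi_\mu(t)-\Phi_{\nu*\mu}(t)}{t}\,\diff t\geq0.
\end{equation*}
The remaining assertions are immediate: $\RZ\subset\CZ$ isometrically and compatibly with convolution, so the statements for $\mu,\nu\in\WMasz(\RZ)$ and for $\mu\in\WMasz(\RZ_{\geq 0})$, $\nu\in\WMasz(\RZ)$ with $\nu*\mu\in\WMasz(\RZ_{\geq 0})$ follow by restriction.

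The real work is not the Fourier computation (routine) but the integrability bookkeeping: the interchanges of integrals above, and the convergence of the final $t$-integral near $0$ and near $\infty$, are delicate precisely when $\mu$ carries no Lebesgue density, since then $\lvert\widehat\mu\rvert^2$ need not be integrable and only its Gaussian-damped version is. A clean way to organise this is to first prove the inequality for $\mu,\nu$ with smooth compactly supported densities, where all integrals converge absolutely and every step is transparent, and then to pass to general compactly supported $\mu,\nu$ by mollification, using that $\nu_k*\mu_k\to\nu*\mu$ weakly, that $-\Sigma$ is lower semicontinuous on probability measures supported in a fixed compact set, and that $\Sigma$ is bounded above there.
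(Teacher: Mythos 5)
Your proof is correct, and it is genuinely different from the paper's. The paper first treats the case where \(\nu\) is a finite convex combination of Dirac masses, reducing the inequality to the concavity and translation invariance of \(\Sigma\) quoted from \cite[Proposition~5.3.2]{HiaiPetz2000}; it then approximates a general \(\nu\) weakly by discrete measures and invokes the upper semicontinuity of \(\Sigma\) (same reference) to pass to the limit. Your route is self-contained: the Frullani/Gaussian representation
\begin{equation*}
\Sigma(\rho) = \frac{1}{2}\int_0^\infty \frac{\ez^{-t} - \Phi_\rho(t)}{t}\,\diff t, \qquad \Phi_\rho(t) = \frac{1}{4\pi t}\int_{\RZ^2}\ez^{-\lvert\xi\rvert^2/(4t)}\lvert\widehat\rho(\xi)\rvert^2\,\diff\xi,
\end{equation*}
reduces the claim to the trivial bound \(\lvert\widehat\nu\rvert\leq 1\), and moreover yields an explicit nonnegative integral formula for the gap \(\Sigma(\nu*\mu)-\Sigma(\mu)\), which the paper's argument does not. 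The paper's approach buys the avoidance of all integrability bookkeeping by leaning on two black-box facts about \(\Sigma\); yours buys transparency about the mechanism and a quantitative form.

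One small loose end is worth tightening: you derive the Frullani representation for \(\Sigma(\mu)\) under the standing assumption \(\Sigma(\mu)\in\RZ\), and then write that it ``holds verbatim for \(\nu*\mu\)'' and conclude ``both being finite''---but finiteness of \(\Sigma(\nu*\mu)\) is part of what must be established. The fix stays inside your framework: the representation holds in \([-\infty,\infty)\) for every compactly supported \(\rho\), since splitting the \(t\)-integrand according to \(\{\lvert x-y\rvert\geq 1\}\) (nonnegative; finite \(t\)-integral by compact support) and \(\{\lvert x-y\rvert<1\}\) (nonpositive, possibly \(-\infty\)) lets one apply Tonelli to each half and add the two iterated integrals in the extended sense. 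With that convention, the pointwise inequality \(\Phi_{\nu*\mu}\leq\Phi_\mu\) together with \(\Sigma(\mu)\in\RZ\) directly gives \(\Sigma(\nu*\mu)\geq\Sigma(\mu)>-\infty\), closing the gap.
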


\begin{proof}
We proceed in two steps: first assume \(\nu\) to be discrete with finite support, then approximate arbitrary \(\nu\) by discrete measures.

\textit{Step~1, \(\nu\) discrete.} Assume we can write
\begin{equation*}
\nu = \sum_{i = 1}^n a_i \delta_{x_i}
\end{equation*}
with some \(n \in \NZ\), \(x_1, \dotsc, x_n \in \CZ\), and \(a_1, \dotsc, a_n \in \RZ_{\geq 0}\) with \(\sum_{i = 1}^n a_i = 1\). By the definition of convolution we have, for any \(A \in \Borel_\CZ\),
\begin{align*}
(\nu * \mu)(A) &= \int_\CZ \int_\CZ 1_A(x + y) \, \diff\mu(y) \, \diff\nu(x)\\
&= \int_\CZ \mu(A - x) \, \diff\nu(x) = \sum_{i = 1}^n a_i \mu(A - x_i),
\end{align*}
that means,
\begin{equation*}
\nu * \mu = \sum_{i = 1}^n a_i \mu_{x_i},
\end{equation*}
where we have defined ad hoc \(\mu_{x_i} \colon A \mapsto \mu(A - x_i)\). Since \(\Sigma\) is concave by \cite[Proposition~5.3.2]{HiaiPetz2000}, this readily yields
\begin{equation*}
\Sigma(\nu * \mu) \geq \sum_{i = 1}^n a_i \Sigma(\mu_{x_i}) = \sum_{i = 1}^n a_i \Sigma(\mu) = \Sigma(\mu),
\end{equation*}
where we additionally have used that \(\Sigma\) is invariant under translation, which follows from the fact that its definition depends only on the distance \(\lvert x - y\rvert\).

\textit{Step~2, \(\nu\) arbitrary.} Since \(\CZ\) is Polish, so is \(\WMasz(\CZ)\); specifically, a dense subset is given by convex combinations of Dirac measures, that is, discrete measures with finite support. So choose an approximating sequence \((\nu_n)_{n \in \NZ}\) of such discrete measures converging to \(\nu\). First we prove \((\nu_n * \mu)_{n \in \NZ} \to \nu * \mu\) which we do with Lévy's continuity theorem; so let \(\phi_\rho\) denote the characteristic function of a measure \(\rho\),\footnote{For measures \(\rho \in \WMasz(\CZ)\) this is defined via identification \(\CZ \cong \RZ^2\), i.e., \(\phi_\rho(t) := \int_\CZ \ez^{\ie \Re(\konj{t} z)} \, \diff\mu(z)\).} then we know \((\phi_{\nu_n}(t))_{n \in \NZ} \to \phi_\nu(t)\) for every \(t \in \CZ\), therefore also
\begin{equation*}
\phi_{\nu_n * \mu}(t) = \phi_{\nu_n}(t) \phi_\mu(t) \xrightarrow[n \to \infty]{} \phi_\nu(t) \phi_\mu(t) = \phi_{\nu * \mu}(t)
\end{equation*}
for every \(t \in \CZ\). Again by \cite[Proposition~5.3.2]{HiaiPetz2000}, \(\Sigma\) is upper semicontinuous, so together with Step~1 we get
\begin{equation*}
\Sigma(\nu * \mu) \geq \limsup_{n \to \infty} \Sigma(\nu_n * \mu) \geq \limsup_{n \to \infty} \Sigma(\mu) = \Sigma(\mu). \qedhere
\end{equation*}
\end{proof}

The next lemma establishes the existence of a ``good approximating sequence'' for \(\mu\) such that Lemma~\ref{lem:ldp_untere_abschaetzung} can be applied. In order to abbreviate notation somewhat we introduce the kernel function
\begin{equation*}
F \colon \RZ_{\geq 0}^2 \to (-\infty, \infty], \quad (x, y) \mapsto -c \log\lvert x - y \rvert - \frac{1 - c}{2} \log(x y);
\end{equation*}
then we can rewrite
\begin{equation*}
-c \int_{\RZ_{\geq 0}^2} \log\lvert x - y \rvert \, \diff\mu^{\otimes 2}(x, y) - (1 - c) \int_{\RZ_{\geq 0}} \log(x) \, \diff\mu(x) = \int_{\RZ_{\geq 0}^2} F(x, y) \, \diff\mu^{\otimes 2}(x, y),
\end{equation*}
whenever the integrals are defined.

\begin{Lem}\label{lem:approx_maszfolge}
Let \(\mu \in \WMasz(\RZ_{\geq 0})\) with \(\mu(\{0\}) = 0\), \(m_{p/2}(\mu) < \infty\), and \(\int_{\RZ_{\geq 0}^2} F \, \diff\mu^{\otimes 2} < \infty\). Then there exists a sequence \((\mu_k)_{k \in \NZ}\) in \(\WMasz(\RZ_{\geq 0})\) such that,
\begin{compactenum}[(i)]
\item \((\mu_k)_{k \in \NZ} \to \mu\) (weakly),
\item \((m_{p/2}(\mu_k))_{k \in \NZ} \to m_{p/2}(\mu)\),
\item \(\limsup_{k \to \infty} \int_{\RZ_{\geq 0}^2} F \, \diff\mu_k^{\otimes 2} \leq \int_{\RZ_{\geq 0}^2} F \, \diff\mu^{\otimes 2}\), and
\item for any \(k \in \NZ\), \(\mu_k\) is supported on a compact interval \([a_k, b_k]\) with \(0 < a_k < b_k < \infty\) and has a Lebesgue density \(h_k\) which is continuous on \([a_k, b_k]\) and satisfies \(\min h_k([a_k, b_k]) > 0\).
\end{compactenum}
\end{Lem}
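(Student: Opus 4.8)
\textbf{Proof plan for Lemma~\ref{lem:approx_maszfolge}.} The plan is first to reduce to the case in which $\mu$ is compactly supported inside $(0,\infty)$, and then to regularize $\mu$ by convolving with a mollifier while adding a vanishing uniform component that forces a strictly positive density. I begin by recording that the hypotheses already give $\int_{\RZ_{\geq0}^2}\lvert F\rvert\,\diff\mu^{\otimes2}<\infty$: using $\log^+t\le C_{p/2}\,t^{p/2}$ for $t\ge1$ one gets $F^-\le C(1+x^{p/2}+y^{p/2})$, which is $\mu^{\otimes2}$\=/integrable since $m_{p/2}(\mu)<\infty$, and then $\int F\,\diff\mu^{\otimes2}<\infty$ forces $\int F^+\,\diff\mu^{\otimes2}<\infty$ as well. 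For $\epsilon>0$ put $\mu^{(\epsilon)}:=\mu(\,\cdot\cap[\epsilon,1/\epsilon])/\mu([\epsilon,1/\epsilon])$; since $\mu(\{0\})=0$ we have $\mu([\epsilon,1/\epsilon])\to1$, and dominated convergence (with dominating functions $x^{p/2}$ resp.\ $\lvert F\rvert$) gives $\mu^{(\epsilon)}\to\mu$ weakly, $m_{p/2}(\mu^{(\epsilon)})\to m_{p/2}(\mu)$ and $\int F\,\diff(\mu^{(\epsilon)})^{\otimes2}\to\int F\,\diff\mu^{\otimes2}$. A diagonal argument then reduces the lemma to the case $\supp\mu\subset[a,b]\subset(0,\infty)$ with $\int\lvert F\rvert\,\diff\mu^{\otimes2}<\infty$; in that case $\Sigma(\mu):=\int\log\lvert x-y\rvert\,\diff\mu^{\otimes2}$ is finite (and $\mu$ is non\=/atomic).

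Assuming $\supp\mu\subset[a,b]\subset(0,\infty)$, I would fix $[a',b']:=[a/2,2b]$ and a mollifier $\eta\in C_c^\infty(\RZ)$ with $\eta\ge0$, $\int\eta=1$, $\supp\eta\subset[-1,1]$, $\eta>0$ on $(-1,1)$, and set $\eta_\delta:=\delta^{-1}\eta(\cdot/\delta)$. Choosing $\delta_k\downarrow0$ with $\delta_k<a/2$, define
\begin{equation*}
\rho_k:=\bigl(1-\tfrac1k\bigr)\mu+\tfrac1k\,\Gleichv([a',b']),\qquad\tilde\mu_k:=\rho_k*\eta_{\delta_k},\qquad\mu_k:=\frac{\tilde\mu_k(\,\cdot\cap[a',b'])}{\tilde\mu_k([a',b'])}.
\end{equation*}
Then $\tilde\mu_k$ has a $C^\infty$ density, supported on $[a'-\delta_k,b'+\delta_k]\subset(0,\infty)$ and strictly positive on the interior of that interval (the summand $\tfrac1k\Gleichv([a',b'])$ keeps it away from $0$, and $\eta_{\delta_k}>0$ on $(-\delta_k,\delta_k)$); hence on the fixed compact interval $[a',b']$ its density is continuous and bounded below by a positive constant. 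Taking $a_k:=a'$, $b_k:=b'$, the measure $\mu_k$ thus satisfies (iv). An elementary estimate gives $\tilde\mu_k(\RZ\setminus[a',b'])\le\tfrac1k\cdot\tfrac{2\delta_k}{b'-a'}$, so $\tilde\mu_k([a',b'])\to1$; combined with $\tilde\mu_k=\rho_k*\eta_{\delta_k}\to\mu$ weakly (because $\rho_k\to\mu$ and $\eta_{\delta_k}\to\delta_0$) this yields $\mu_k\to\mu$ weakly, i.e.\ (i), and since all $\mu_k$ live on the fixed compact set $[a',b']\subset(0,\infty)$, applying weak convergence to the bounded continuous function $x\mapsto x^{p/2}$ gives (ii).

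For (iii) I would write $\int F\,\diff\nu^{\otimes2}=-c\,\Sigma(\nu)-(1-c)\int\log x\,\diff\nu(x)$. Boundedness and continuity of $\log$ on $[a',b']$ together with (i) give $\int\log x\,\diff\mu_k\to\int\log x\,\diff\mu$, so it suffices to show $\liminf_k\Sigma(\mu_k)\ge\Sigma(\mu)$. Since $\log\lvert x-y\rvert$ is bounded above by some $C_0$ on the uniformly bounded supports,
\begin{equation*}
\Sigma(\mu_k)=\frac{1}{\tilde\mu_k([a',b'])^2}\int_{[a',b']^2}\log\lvert x-y\rvert\,\diff\tilde\mu_k^{\otimes2}\ge\frac{\Sigma(\tilde\mu_k)-C_0\bigl(1-\tilde\mu_k([a',b'])^2\bigr)}{\tilde\mu_k([a',b'])^2},
\end{equation*}
so with $\tilde\mu_k([a',b'])\to1$ it remains to bound $\liminf_k\Sigma(\tilde\mu_k)$ from below. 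Here Lemma~\ref{lem:freieentropie_faltung} applied to $\tilde\mu_k=\eta_{\delta_k}*\rho_k$ gives $\Sigma(\tilde\mu_k)\ge\Sigma(\rho_k)$, and concavity of $\Sigma$ (\cite[Proposition~5.3.2]{HiaiPetz2000}) gives $\Sigma(\rho_k)\ge(1-\tfrac1k)\Sigma(\mu)+\tfrac1k\Sigma(\Gleichv([a',b']))$; as $\Sigma(\Gleichv([a',b']))$ is a finite constant, $\liminf_k\Sigma(\rho_k)\ge\Sigma(\mu)$, and chaining these inequalities yields $\liminf_k\Sigma(\mu_k)\ge\Sigma(\mu)$. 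Consequently $\limsup_k\int F\,\diff\mu_k^{\otimes2}\le-c\,\Sigma(\mu)-(1-c)\int\log x\,\diff\mu=\int F\,\diff\mu^{\otimes2}$, which is (iii); together with the reduction step this proves the lemma.

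The main obstacle is reconciling (iv) with (iii): a convolved measure $\mu*\eta_{\delta_k}$ has a density vanishing at the endpoints of its support, so one cannot take $[a_k,b_k]$ to be the full support. Enlarging the support slightly via the $\Gleichv([a',b'])$ summand allows one to restrict back to the fixed closed interval $[a',b']$, on which the density stays bounded below — but then one must still control the free\=/entropy term under this restriction, which is exactly where $\tilde\mu_k([a',b'])\to1$ together with the monotonicity of $\Sigma$ under convolution (Lemma~\ref{lem:freieentropie_faltung}) and its concavity are needed. Indeed, without Lemma~\ref{lem:freieentropie_faltung} smoothing could a priori decrease $\Sigma$ and break (iii); this is precisely why that lemma was isolated, and the resulting $\mu_k$ are exactly of the form required so that Lemma~\ref{lem:ldp_untere_abschaetzung} becomes applicable.
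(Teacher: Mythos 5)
Your proof is correct and uses the same key ingredients as the paper (truncation, convolution monotonicity of $\Sigma$ via Lemma~\ref{lem:freieentropie_faltung}, concavity of $\Sigma$), but it organizes the regularization step differently. The paper splits the compactly supported case into two further sub\-/steps, chained by triangle inequality: first convolve with the mollifier alone (giving a continuous density), and only then mix the result with $\Gleichv$ on the support interval (giving the bounded\-/below property on that interval directly, since the uniform summand is not smoothed). You instead mix $\mu$ with $\Gleichv([a',b'])$ on an \emph{enlarged} interval, mollify the mixture, and then restrict and renormalize to $[a',b']$ to recover a density bounded below on a compact interval. The latter route works, but the restriction/renormalization costs you an extra estimate: you have to control how much the operation $\tilde\mu_k\mapsto\mu_k$ can decrease $\Sigma$, which is the inequality $\Sigma(\mu_k)\geq\bigl(\Sigma(\tilde\mu_k)-C_0(1-\tilde\mu_k([a',b'])^2)\bigr)/\tilde\mu_k([a',b'])^2$. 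That estimate is fine, but to pass from it to $\liminf_k\Sigma(\mu_k)\geq\liminf_k\Sigma(\tilde\mu_k)$ you implicitly use that $\Sigma(\tilde\mu_k)$ stays bounded; this does hold (above by $\log$ of the common diameter, below via the chain $\Sigma(\tilde\mu_k)\geq\Sigma(\rho_k)\geq(1-1/k)\Sigma(\mu)+(1/k)\Sigma(\Gleichv([a',b']))$ and finiteness of $\Sigma(\mu)$), but it deserves a sentence. The paper's order mollify\-/then\-/mix avoids the renormalization entirely, which is why it splits the step in two. A second minor difference: in your reduction step you invoke dominated convergence (dominating by $|F|$, available from the integrability hypothesis) where the paper uses monotone convergence plus Fatou; your variant actually gives full convergence of $\int F$, not merely $\limsup\leq$, which is a small gain, though the diagonal pasting you sketch at the end still needs the quantitative ``$\leq+1/j$'' selections that the paper writes out explicitly.
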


\begin{proof}
The construction of \((\mu_k)_{k \in \NZ}\) proceeds in three steps, where in each step \(\mu\) is assumed to behave closer to the desired properties; the steps are going to be pasted together via triangle inequality. We will follow the course sketched in \cite[p.\ 216]{HiaiPetz2000}.

\textit{Step~1.} Make no further assumptions on \(\mu\), and construct \(\mu_k\) to have compact support within \(\RZ_{> 0}\). To that end define, for any \(k \in \NZ\), \(k \geq k_0\),
\begin{equation*}
\mu_k \colon \Borel(\RZ_{\geq 0}) \to \RZ, \quad A \mapsto \frac{\mu(A \cap [\frac{1}{k}, k])}{\mu([\frac{1}{k}, k])},
\end{equation*}
where \(k_0 \in \NZ\) is chosen such that \(\mu([\frac{1}{k}, k]) > 0\) for all \(k \geq k_0\); obviously this is possible since \(\mu([\frac{1}{k}, k]) \to \mu((0, \infty)) = 1\) as \(k \to \infty\) where we have needed \(\mu(\{0\}) = 0\). Now for any \(k \geq k_0\), \(\mu_k\) has support contained in \([\frac{1}{k}, k]\), and it remains to check conditions (i)--(iii).

\textit{Ad (i).} We already have pointed out \(\mu([\frac{1}{k}, k]) \to 1\); similarly, for any \(A \in \Borel(\RZ_{\geq 0})\), since \((A \cap [\frac{1}{k}, k])_{k \geq k_0}\) is increasing we get \(\mu(A \cap [\frac{1}{k}, k]) \to \mu(A \cap (0, \infty)) = \mu(A)\) and therefore,
\begin{equation*}
(\mu_k(A))_{k \geq k_0} \to \mu(A),
\end{equation*}
so we even have strong convergence of \((\mu_k)_{k \geq k_0}\) to \(\mu\).

\textit{Ad (ii).} For any \(x \in \RZ_{\geq 0}\), \(x^{p/2} \Ind_{[1/k, k]}(x)\) is nonnegative and monotonically increasing with limit \(x^{p/2}\), hence monotone convergence yields,
\begin{equation*}
m_{p/2}(\mu_k) = \frac{1}{\mu([\frac{1}{k}, k])} \int_{\RZ_{\geq 0}} x^{p/2} \Ind_{[1/k, k]}(x) \, \diff\mu(x) \xrightarrow[k \to \infty]{} \frac{1}{1} \int_{\RZ_{\geq 0}} x^{p/2} \, \diff\mu(x) = m_{p/2}(\mu).
\end{equation*}

\textit{Ad (iii).} Similarly to (ii) we have \(F \Ind_{[1/k, k]^2} \to F\) \(\mu^{\otimes 2}\)\=/almost everywhere; also, the negative part of \(F\) can be bounded from below by \(-C (x^{p/2} + y^{p/2} + x^{p/2} \, y^{p/2})\) with some \(C > 0\), which is \(\mu^{\otimes 2}\)\=/integrable since we assume \(m_{p/2}(\mu) < \infty\); and because of \(\int_{\RZ_{\geq 0}^2} F \, \diff\mu^{\otimes 2} < \infty\) also \(F^+\), the positive part of \(F\), is \(\mu^{\otimes 2}\)\=/integrable and we have \(F \Ind_{[1/k, k]^2} \leq F^+\) \(\mu^{\otimes 2}\)\=/almost everywhere. The conclusion now follows via Fatou's lemma, that is,
\begin{align*}
\limsup_{k \to \infty} \int_{\RZ_{\geq 0}^2} F \, \diff\mu_k^{\otimes 2} &= \limsup_{k \to \infty} \frac{1}{\mu([\frac{1}{k}, k])^2} \int_{\RZ_{\geq 0}^2} F \Ind_{[1/k, k]^2} \, \diff\mu^{\otimes 2}\\
&\leq \frac{1}{1} \int_{\RZ_{\geq 0}^2} F \, \diff\mu^{\otimes 2}.
\end{align*}

\textit{Step~2.} Next we assume that \(\mu\) has support contained in an interval \([a, b]\) with \(0 < a < b < \infty\), and we want \(\mu_k\) to be compactly supported within \((0, \infty)\) and to have a continuous Lebesgue density.

Fix a nonnegative function \(f \in \CeCe(\RZ)\) supported in \([-1, 1]\) and such that \(\int_\RZ f(x) \, \diff x = 1\) (any smoothing kernel will do), and for \(k \geq k_0\) define \(f_k \colon x \mapsto k f(k x)\) and \(\mu_k := f_k * \mu\),\footnote{This means \(\nu_k * \mu\) where \(\diff \nu_k(x) := f_k(x) \, \diff x\).} where this time \(k_0 \in \NZ\) is chosen such that \(\frac{1}{k_0} \leq \frac{a}{2}\). By the basic properties of convolution it follows that \(\mu_k\) is supported in \([a - \frac{1}{k}, b + \frac{1}{k}] \subset [\frac{a}{2}, b + \frac{a}{2}]\) and has the continuous Lebesgue density
\begin{equation*}
h_k \colon \RZ_{\geq 0} \to \RZ, \quad x \mapsto \int_{\RZ_{\geq 0}} f_k(x - y) \, \diff\mu(y).
\end{equation*}
Again we have to assert (i)--(iii).

\textit{Ad (i).} Like in the proof of Lemma~\ref{lem:freieentropie_faltung} we will invest Lévy's continuity theorem. Note\footnote{Like before this means \(\phi_{\nu_k}\) where \(\diff\nu_k(x) := f_k(x) \, \diff x\).}
\begin{equation*}
\phi_{f_k}(t) = \int_\RZ f_k(x) \ez^{\ie t x} \, \diff x = \int_\RZ f(x) \ez^{\ie t x/k} \, \diff x \xrightarrow[k \to \infty]{} \int_\RZ f(x) \, \diff x = 1
\end{equation*}
for all \(t \in \RZ\) by dominated convergence; this implies
\begin{equation*}
\phi_{\mu_k}(t) = \phi_{f_k * \mu}(t) = \phi_{f_k}(t) \phi_\mu(t) \xrightarrow[k \to \infty]{} \phi_\mu(t)
\end{equation*}
for all \(t \in \RZ\), thus \((\mu_k)_{k \geq k_0} \to \mu\).

\textit{Ad (ii).} This is a direct consequence of (i) since all measures involved are supported in the common compact set \([\frac{a}{2}, b + \frac{a}{2}]\), and restricted to that set the function \(x \mapsto x^{p/2}\) is bounded and continuous.

\textit{Ad (iii).} With the notation from Lemma~\ref{lem:freieentropie_faltung} and immediately using it we have
\begin{equation*}
\Sigma(\mu_k) = \Sigma(f_k * \mu) \geq \Sigma(\mu),
\end{equation*}
and therewith,
\begin{align*}
\int_{\RZ_{\geq 0}^2} F \, \diff\mu_k^{\otimes 2} &= -c \Sigma(\mu_k) - (1 - c) \int_{\RZ_{\geq 0}} \log(x) \, \diff\mu_k(x)\\
&\leq -c \Sigma(\mu) - (1 - c) \int_{\RZ_{\geq 0}} \log(x) \, \diff\mu_k(x).
\end{align*}
So it suffices to show
\begin{equation*}
\limsup_{k \to \infty} \biggl( -\int_{\RZ_{\geq 0}} \log(x) \, \diff\mu_k(x) \biggr) \leq -\int_{\RZ_{\geq 0}} \log(x) \, \diff\mu(x).
\end{equation*}
But like in (ii) this readily follows from (i) as also the logarithm restricted to \([\frac{a}{2}, b + \frac{a}{2}]\) is continuous and bounded, and therefore we even have existence of the limit.

\textit{Step~3.} Lastly assume that \(\mu\) is compactly supported in \([a, b] \subset (0, \infty)\) and has a continuous Lebesgue density \(h\). Call \(\upsilon := \Gleichv([a, b])\), and for any \(k \in \NZ\) define \(\mu_k := (1 - \frac{1}{k}) \mu + \frac{1}{k} \, \upsilon\). Clearly each \(\mu_k\) is supported on \([a, b]\) and has Lebesgue density \(h_k := (1 - \frac{1}{k}) h + \frac{1}{k (b - a)} \Ind_{[a, b]}\) which is continuous on \([a, b]\) and bounded from below by \(\frac{1}{k (b - a)}\) on \([a, b]\). Now we verify (i)--(iii).

\textit{Ad (i).} We see \((h_k)_{k \in \NZ} \to h\) pointwise everywhere on \(\RZ_{\geq 0}\), and convergence of densities implies weak convergence of measures.

\textit{Ad (ii).} Since \(m_{p/2}(\upsilon) < \infty\), we get immediately
\begin{equation*}
m_{p/2}(\mu_k) = \Bigl( 1 - \frac{1}{k} \Bigr) m_{p/2}(\mu) + \frac{1}{k} \, m_{p/2}(\upsilon) \xrightarrow[k \to \infty]{} m_{p/2}(\mu).
\end{equation*}

\textit{Ad (iii).} It is easy to see that, because \(\mu\) has a compactly supported Lebesgue density which is bounded from above, the integral \(\int_{\RZ_{\geq 0}^2} F \, \diff\mu^{\otimes 2}\) exists and is finite; the same holds true for \(\upsilon\) and each \(\mu_k\). Furthermore we have
\begin{equation*}
\int_{\RZ_{\geq 0}^2} F \, \diff\mu_k^{\otimes 2} = \Bigl( 1 - \frac{1}{k} \Bigr)^2 \int_{\RZ_{\geq 0}^2} F \, \diff\mu^{\otimes 2} + \frac{2}{k} \Bigl( 1 - \frac{1}{k} \Bigr) \int_{\RZ_{\geq 0}^2} F \, \diff(\mu \otimes \upsilon) + \frac{1}{k^2} \int_{\RZ_{\geq 0}^2} F \, \diff\upsilon^{\otimes 2},
\end{equation*}
and since all integrals on the right\-/hand side are finite it converges to \(\int_{\RZ_{\geq 0}^2} F \, \diff\mu^{\otimes 2}\) as \(k \to \infty\), as desired.

Now to put all the steps together, let \(\mu\) be arbitrary again as the premises of the lemma require. Fix a metric \(d_\text{\upshape w}\) inducing the weak topology on \(\WMasz(\RZ_{\geq 0})\). Let \(k \in \NZ\), then by Step~1 there exists \(\mu_k' \in \WMasz(\RZ_{\geq 0})\) supported on \([a_k, b_k] \subset (0, \infty)\) such that \(d_\text{\upshape w}(\mu, \mu_k') \leq \frac{1}{3 k}\) and \(\lvert m_{p/2}(\mu_k') - m_{p/2}(\mu) \rvert \leq \frac{1}{3 k}\) and \(\int_{\RZ_{\geq 0}^2} F \, \diff(\mu_k')^{\otimes 2} \leq \int_{\RZ_{\geq 0}^2} F \, \diff\mu^{\otimes 2} + \frac{1}{3 k}\).

By Step~2 we can choose \(\mu_k'' \in \WMasz(\RZ_{\geq 0})\) supported in \([\frac{a_k}{2}, b_k + \frac{a_k}{2}]\) with continuous Lebesgue density such that \(d_\text{\upshape w}(\mu_k', \mu_k'') \leq \frac{1}{3 k}\) and \(\lvert m_{p/2}(\mu_k'') - m_{p/2}(\mu_k') \rvert \leq \frac{1}{3 k}\) and \(\int_{\RZ_{\geq 0}^2} F \, \diff(\mu_k'')^{\otimes 2} \leq \int_{\RZ_{\geq 0}^2} F \, \diff(\mu_k')^{\otimes 2} + \frac{1}{3 k}\).

Lastly by Step~3 we can find \(\mu_k \in \WMasz(\RZ_{\geq 0})\)  supported on \([\frac{a_k}{2}, b_k + \frac{a_k}{2}]\) with Lebesgue density \(h_k\) which is continuous on its support and satisfies \(\min h_k([\frac{a_k}{2}, b_k + \frac{a_k}{2}]) > 0\), such that \(d_\text{\upshape w}(\mu_k'', \mu_k) \leq \frac{1}{3 k}\) and \(\lvert m_{p/2}(\mu_k) - m_{p/2}(\mu_k'') \rvert \leq \frac{1}{3 k}\) and \(\int_{\RZ_{\geq 0}^2} F \, \diff\mu_k^{\otimes 2} \leq \int_{\RZ_{\geq 0}^2} F \, \diff(\mu_k'')^{\otimes 2} + \frac{1}{3 k}\).

Finally the triangle inequality yields \(d_\text{\upshape w}(\mu, \mu_k) \leq \frac{1}{k}\) and \(\lvert m_{p/2}(\mu_k) - m_{p/2}(\mu) \rvert \leq \frac{1}{k}\) and \(\int_{\RZ_{\geq 0}^2} F \, \diff\mu_k^{\otimes 2} \leq \int_{\RZ_{\geq 0}^2} F \, \diff\mu^{\otimes 2} + \frac{1}{k}\), and thus \((\mu_k)_{k \in \NZ}\) fulfills all of (i)--(iv).
\end{proof}

Now we have gathered all preparatory results to prove the following statement.

\begin{Lem}\label{lem:ldp_untere_abschaetzung2}
Define the functional \(\mathcal{I} \colon \WMasz(\RZ_{\geq 0}) \times \RZ_{\geq 0} \to \schluss{\RZ}\) by
\begin{equation*}
\mathcal{I}(\mu, M) := \inf_G \liminf_{n \to \infty} \frac{1}{\beta m n} \log \Wsk\bigl[ (\nu_n, m_{p/2}(\nu_n)) \in G \bigr],
\end{equation*}
where \(G\) ranges over basic open subsets of \(\WMasz(\RZ_{\geq 0}) \times \RZ_{\geq 0}\) such that \((\mu, M) \in G\). Then we have
\begin{equation*}
\mathcal{I}(\mu, M) \geq -\mathcal{J}_{c, p}^2(\mu, M)
\end{equation*}
for any \(\mu \in \WMasz(\RZ_{\geq 0})\) and \(M \in \RZ_{\geq 0}\), with \(\mathcal{J}_{c, p}^2\) defined in Proposition~\ref{sa:ldp_empir_paar}.
\end{Lem}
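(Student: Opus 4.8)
The plan is to establish the local lower bound $\liminf_{n\to\infty}\frac{1}{\beta m n}\log\Wsk[(\nu_n, m_{p/2}(\nu_n))\in G]\geq -\mathcal{J}_{c,p}^2(\mu,M)$ for every basic open $G\ni(\mu,M)$; taking the infimum over such $G$ then gives $\mathcal{I}(\mu,M)\geq -\mathcal{J}_{c,p}^2(\mu,M)$. First I would dispose of the trivial case $\mathcal{J}_{c,p}^2(\mu,M)=\infty$, and then note that, writing $\mathcal{J}_{c,p}^2(\mu,M)=\frac12\int_{\RZ_{\geq0}^2}F\,\diff\mu^{\otimes2}+M+B_{c,p}$ (legitimate since the negative part of $F$ is $\mu^{\otimes2}$\=/integrable once $m_{p/2}(\mu)<\infty$), finiteness of the rate function --- together with $M<\infty$ and $B_{c,p}$ finite by Proposition~\ref{sa:ldp_y} --- forces $m_{p/2}(\mu)\leq M<\infty$, $\int_{\RZ_{\geq0}^2}F\,\diff\mu^{\otimes2}<\infty$ and $\mu(\{0\})=0$ (the last from the $-\frac{1-c}{2}\log(xy)$ part of $F$ if $c<1$, and from $-c\log\lvert x-y\rvert$ near $(0,0)$ if $c=1$). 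Moreover one may reduce to $m_{p/2}(\mu)<M$: in the degenerate case $m_{p/2}(\mu)=M$ one replaces $M$ by $M+\epsilon$, proves the bound for $(\mu,M+\epsilon)$, and lets $\epsilon\downarrow0$, using $\mathcal{J}_{c,p}^2(\mu,M+\epsilon)=\mathcal{J}_{c,p}^2(\mu,M)+\epsilon$ and that $G$, being open, contains $(\mu,M+\epsilon)$ for all small $\epsilon$.

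The core follows the Hiai--Petz scheme as adapted in \cite{KPTh2020_2}: approximate $\mu$ by regular measures and invoke Lemma~\ref{lem:ldp_untere_abschaetzung}. Concretely, given a basic open $G\ni(\mu,M)$, shrink it to $\mathcal{O}_{f_1,\dotsc,f_d;\epsilon_0}(\mu)\times(M-\epsilon_0,M+\epsilon_0)$; apply Lemma~\ref{lem:approx_maszfolge} to obtain $(\mu_k)_{k\in\NZ}$ with $\mu_k\to\mu$ weakly, $m_{p/2}(\mu_k)\to m_{p/2}(\mu)$, $\limsup_k\int_{\RZ_{\geq0}^2}F\,\diff\mu_k^{\otimes2}\leq\int_{\RZ_{\geq0}^2}F\,\diff\mu^{\otimes2}$, each $\mu_k$ supported on a compact $[a_k,b_k]\subset(0,\infty)$ with continuous density bounded below on its support; and, given $\gamma>0$, fix $k$ so large that $m_{p/2}(\mu_k)<M$, that $\mu_k$ is $\epsilon_0/3$\=/close to $\mu$ tested against $f_1,\dotsc,f_d$ and in $m_{p/2}$, and that $\int_{\RZ_{\geq0}^2}F\,\diff\mu_k^{\otimes2}<\int_{\RZ_{\geq0}^2}F\,\diff\mu^{\otimes2}+\gamma$. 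For this $\mu_k$ all hypotheses of Lemma~\ref{lem:ldp_untere_abschaetzung} hold (with $\epsilon=\delta=\epsilon_0/3$), so, writing $\nu_n'=\frac1{m-1}\sum_{i\leq m-1}\delta_{Y_i^{(n)}}$ and $D_n$ for the interval attached to $\mu_k$ there,
\begin{equation*}
\liminf_{n\to\infty}\frac{1}{\beta m n}\log\Wsk\bigl[\nu_n'\in\mathcal{O}_{\epsilon,d}(\mu_k)\wedge\lvert m_{p/2}(\nu_n')-m_{p/2}(\mu_k)\rvert<\delta\wedge Y_m^{(n)}\in D_n\bigr]\geq -\mathcal{J}_{c,p}^2(\mu_k,M).
\end{equation*}

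I would then glue: on the event in brackets, $\nu_n=\frac{m-1}{m}\nu_n'+\frac1m\delta_{Y_m^{(n)}}$ satisfies $\lvert\int f_i\,\diff\nu_n-\int f_i\,\diff\nu_n'\rvert\leq 2\lVert f_i\rVert_\infty/m$, while $Y_m^{(n)}\in D_n$ pins the extracted atom at the scale $m^{2/p}(M-m_{p/2}(\mu_k))^{2/p}$, so that $\frac1m(Y_m^{(n)})^{p/2}\to M-m_{p/2}(\mu_k)$ and hence $m_{p/2}(\nu_n)=\frac{m-1}{m}m_{p/2}(\nu_n')+\frac1m(Y_m^{(n)})^{p/2}\to M$ up to $\epsilon_0$\=/errors; therefore for $n$ large the event lies inside $\{(\nu_n,m_{p/2}(\nu_n))\in G\}$. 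This yields, for $n$ large,
\begin{equation*}
\liminf_{n\to\infty}\frac{1}{\beta m n}\log\Wsk[(\nu_n,m_{p/2}(\nu_n))\in G]\geq -\mathcal{J}_{c,p}^2(\mu_k,M)\geq -\mathcal{J}_{c,p}^2(\mu,M)-\tfrac{\gamma}{2},
\end{equation*}
using $\int_{\RZ_{\geq0}^2}F\,\diff\mu_k^{\otimes2}<\int_{\RZ_{\geq0}^2}F\,\diff\mu^{\otimes2}+\gamma$; letting $\gamma\downarrow0$ and taking the infimum over all basic open $G\ni(\mu,M)$ finishes the proof.

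The main obstacle is precisely this glueing step: one must check carefully that the event written in terms of $\nu_n'$ and the atom $Y_m^{(n)}$ is eventually contained in $\{(\nu_n,m_{p/2}(\nu_n))\in G\}$, which requires coordinating the interacting parameters $k$, $\gamma$, $\epsilon_0$, $\delta$, $n$ and the precise placement of $Y_m^{(n)}$ at its $m$\=/dependent scale; there is also the minor nuisance of the degenerate case $m_{p/2}(\mu)=M$. Everything else is routine bookkeeping once Lemmas~\ref{lem:approx_maszfolge} and~\ref{lem:ldp_untere_abschaetzung} are in hand.
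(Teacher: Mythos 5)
Your proposal is correct and reaches the same conclusion, but by a noticeably different route from the paper's.

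The paper first establishes the estimate $\mathcal{I}(\mu,M)\geq-\mathcal{J}_{c,p}^2(\mu,M)$ as a clean intermediate result for \emph{nice} $\mu$ (compactly supported in $(0,\infty)$, continuous density bounded below, $m_{p/2}(\mu)<M$), using the gluing lemma of \cite{KPTh2020_2} together with Lemma~\ref{lem:ldp_untere_abschaetzung} and taking the infimum over basic neighborhoods of $(\mu,M)$. It then passes to arbitrary $(\mu,M)$ via two structural facts recorded at the start of the proof: the upper semicontinuity of $\mathcal{I}$ as a functional, and the flexibility of choosing neighborhood bases; the approximating sequence $(\mu_k)_k$ from Lemma~\ref{lem:approx_maszfolge} is paired with the auxiliary targets $M_k:=m_{p/2}(\mu_k)+M-m_{p/2}(\mu)+\tfrac1k$, which automatically ensure $M_k>m_{p/2}(\mu_k)$ and $M_k\to M$ and thereby also absorb the degenerate case $m_{p/2}(\mu)=M$ without any special treatment. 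You instead work directly with a fixed basic $G\ni(\mu,M)$, shrink it, fix $k$ large once and for all, apply Lemma~\ref{lem:ldp_untere_abschaetzung} to $\mu_k$ with target $M$, and then glue by hand --- the triangle inequality on the test functionals $f_1,\dotsc,f_d$ plus the observation that the single extracted atom sits at scale $m^{2/p}(M-m_{p/2}(\mu_k))^{2/p}$ and hence contributes $M-m_{p/2}(\mu_k)+o(1)$ to $m_{p/2}(\nu_n)$. Your gluing is correct (and is essentially the content of the \cite[Lemma~4.4]{KPTh2020_2} analogue the paper invokes but does not re-derive), and the separate $M\mapsto M+\epsilon$ reduction for the degenerate case goes through since $\mathcal{J}_{c,p}^2(\mu,\cdot)$ is affine in its second argument and $G$ is open. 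In summary: the paper's route keeps the bookkeeping out of sight behind the upper semicontinuity of $\mathcal{I}$ and the $M_k$ device, at the price of introducing these slightly indirect mechanisms; your route is more elementary and self-contained (it never invokes USC of $\mathcal{I}$), at the price of coordinating $(k,\gamma,\epsilon_0,n)$ by hand inside the proof and of a separate handling of $m_{p/2}(\mu)=M$. Both are legitimate.
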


\begin{proof}
First we note a few important properties of \(\mathcal{I}\). \(\mathcal{I}\) is of the form \(\mathcal{I}(\mu, M) = \inf_G I(G)\) with an isotone map \(I\) (i.e., \(U \subset V\) implies \(I(U) \leq I(V)\)). The system of basic open sets \(G \in (\mu, M)\) is a neighborhood basis of \((\mu, M)\), and then the form of \(\mathcal{I}\) yields that we may choose any specific neighborhood basis for any point \((\mu, M)\) without changing the value \(\mathcal{I}(\mu, M)\). Also, as rightly observed in \cite[p.\ 216]{HiaiPetz2000}, \(\mathcal{I}\) is upper semicontinuous.

Now we are going to prove the statement of this lemma. If \(\mu \in \WMasz(\RZ_{\geq 0})\) and \(M \in \RZ_{\geq 0}\) are such that \(\mathcal{J}_{c, p}^2(\mu, M) = \infty\), then trivially there is nothing left to prove. By the definition of \(\mathcal{J}_{c, p}^2\) this definitely is the case if \(m_{p/2}(\mu) > M\), or if \(m_{p/2}(\mu) \leq M\) and \(\mu(\{0\}) > 0\).

So consider such \(\mu \in \WMasz(\RZ_{\geq 0})\) and \(M \in \RZ_{\geq 0}\) that \(\mathcal{J}_{c, p}^2(\mu, M) < \infty\), then \(m_{p/2}(\mu) \leq M\) and \(\mu(\{0\}) = 0\), which also implies \(m_{p/2}(\mu) > 0\) and therefore \(M > 0\). First assume that \(\mu\) is supported on a compact interval \([a, b] \subset (0, \infty)\) with Lebesgue density \(h\) which is continuous on \([a, b]\) and satisfies \(\min h([a, b]) > 0\), and that \(m_{p/2}(\mu) < M\). In this situation we can apply the analogue of \cite[Lemma~4.5]{KPTh2020_2} and Lemma~\ref{lem:ldp_untere_abschaetzung} to get
\begin{align*}
\liminf_{n \to \infty} \frac{1}{\beta m n} &\log \Wsk\bigl[ \nu_n \in \mathcal{O}_{\epsilon, d}(\mu) \wedge \lvert m_{p/2}(\nu_n) - M \rvert < \delta \bigr]\\
&\geq \liminf_{n \to \infty} \frac{1}{\beta m n} \log \Wsk\Bigl[ \nu_n' \in \mathcal{O}_{\epsilon/3, d}(\mu) \wedge \lvert m_{p/2}(\nu_n') - m_{p/2}(\mu) \rvert < \frac{\delta}{3} \wedge Y_m^{(n)} \in D_n \Bigr]\\
&\geq -\mathcal{J}_{c, p}^2(\mu, M)
\end{align*}
for any \(\delta, \epsilon \in \RZ_{> 0}\), \(d \in \NZ\), and \(f_1, \dotsc, f_d \in \CeBe(\RZ_{\geq 0})\). Since
\begin{equation*}
\bigl\{ \mathcal{O}_{\epsilon, d}(\mu) \times (M - \delta, M + \delta) \Colon \delta, \epsilon \in \RZ_{> 0}, d \in \NZ, f_1, \dotsc, f_d \in \CeBe(\RZ_{\geq 0}) \bigr\}
\end{equation*}
is a neighborhood basis at \((\mu, M)\), taking the infimum over neighborhoods yields
\begin{equation*}
\mathcal{I}(\mu, M) \geq -\mathcal{J}_{c, p}^2(\mu, M),
\end{equation*}
as claimed.

Now let \(\mu \in \WMasz(\RZ_{\geq 0})\) and \(M \in \RZ_{\geq 0}\) be general, but still such that \(\mathcal{J}_{c, p}^2(\mu, M) < \infty\). Recalling the definition of the kernel \(F\) introduced before Lemma~\ref{lem:approx_maszfolge}, we can rewrite
\begin{equation*}
\mathcal{J}_{c, p}^2(\mu, M) = \frac{1}{2} \int_{\RZ_{\geq 0}^2} F \, \diff\mu^{\otimes 2} + M + B_{c, p},
\end{equation*}
and therefore we know \(\int_{\RZ_{\geq 0}^2} F \, \diff\mu^{\otimes 2} < \infty\). Let \((\mu_k)_{k \in \NZ}\) be a sequence as asserted by Lemma~\ref{lem:approx_maszfolge}, and for each \(k \in \NZ\) define
\begin{equation*}
M_k := m_{p/2}(\mu_k) + M - m_{p/2}(\mu) + \frac{1}{k},
\end{equation*}
then \(M_k > m_{p/2}(\mu_k)\) and \((M_k)_{k \in \NZ} \to M\). This means that for each \(k \in \NZ\) we are in the special situation treated above, hence
\begin{equation*}
\mathcal{I}(\mu_k, M_k) \geq -\mathcal{J}_{c, p}^2(\mu_k, M_k);
\end{equation*}
then letting \(k \to \infty\) and using the upper semicontinuity of \(\mathcal{I}\),
\begin{equation*}
\mathcal{I}(\mu, M) \geq \limsup_{k \to \infty} \mathcal{I}(\mu_k, M_k) \geq \limsup_{k \to \infty} \bigl( -\mathcal{J}_{c, p}^2(\mu_k, M_k) \bigr) = -\liminf_{k \to \infty} \mathcal{J}_{c, p}^2(\mu_k, M_k).
\end{equation*}
Here now the crucial property (iii) of Lemma~\ref{lem:approx_maszfolge} enters the picture, since this leads to
\begin{align*}
\liminf_{k \to \infty} \mathcal{J}_{c, p}^2(\mu_k, M_k) &= \liminf_{k \to \infty} \biggl( \frac{1}{2} \int_{\RZ_{\geq 0}^2} F \, \diff\mu_k^{\otimes 2} + M_k + B_{c, p} \biggr)\\
&= \frac{1}{2} \liminf_{k \to \infty} \int_{\RZ_{\geq 0}^2} F \, \diff\mu_k^{\otimes 2} + M + B_{c, p}\\
&\leq \frac{1}{2} \limsup_{k \to \infty} \int_{\RZ_{\geq 0}^2} F \, \diff\mu_k^{\otimes 2} + M + B_{c, p}\\
&\leq \frac{1}{2} \int_{\RZ_{\geq 0}^2} F \, \diff\mu^{\otimes 2} + M + B_{c, p} = \mathcal{J}_{c, p}^2(\mu, M),
\end{align*}
and together with the display before this concludes the proof.
\end{proof}

\cite[Lemma~4.7]{KPTh2020_2} actually need not be true as stated; the flaw happens on p.~945, at `Now, if we take \(\epsilon \mathbin{\downarrow} 0\) as well as \(\delta \mathbin{\downarrow} 0\), then': even though the functional \(\nu \mapsto \int_{\RZ^2} F_\alpha(x, y; \gamma) \, \diff\nu^{\otimes 2}(x, y)\) be weakly continuous, this is \emph{not} sufficient for
\begin{equation*}
\lim_{\epsilon \to 0+} \inf_{\nu \in \mathcal{O}_{\epsilon, d}(\mu)} \int_{\RZ^2} F_\alpha(x, y; \gamma) \, \diff\nu^{\otimes 2}(x, y) = \int_{\RZ^2} F_\alpha(x, y; \gamma) \, \diff\mu^{\otimes 2}(x, y),
\end{equation*}
if the neighbourhoods \(\mathcal{O}_{\epsilon, d}(\mu)\) depend on fixed functions \(f_1, \dotsc, f_d \in \CeBe(\RZ)\). But in truth this setup is not needed since we actually wish to establish (reverting to our notation)
\begin{equation*}
\inf_{G} \limsup_{n \to \infty} \frac{1}{\beta m n} \log \Wsk\bigl[ (\nu_n, m_{p/2}(\nu_n)) \in G \bigr] \leq -\mathcal{J}_{c, p}^2(\mu, M),
\end{equation*}
where \(G\) runs through a neighborhood basis of \((\mu, M)\); thus it suffices to consider a specific subset of basic neighborhoods over which to minimize, as then the global infimum can only be smaller. The precise statement is given next.

\begin{Lem}\label{lem:ldp_obere_abschaetzung}
Let \(\mu \in \WMasz(\RZ_{\geq 0})\) and \(M \in \RZ_{> 0}\) with \(m_{p/2}(\mu) \leq M\). Then there exists a family \((\mathcal{O}_\epsilon)_{\epsilon \in \RZ_{> 0}}\) of basic neighborhoods of \(\mu\) such that
\begin{multline*}
\lim_{\delta, \epsilon \to 0+} \limsup_{n \to \infty} \frac{1}{\beta m n} \log \Wsk\bigl[ \nu_n \in \mathcal{O}_\epsilon \wedge m_{p/2}(\nu_n) \in (M - \delta, M + \delta) \bigr]\\
\leq \frac{c}{2} \int_{\RZ_{\geq 0}^2} \log\lvert x - y \rvert \, \diff\mu^{\otimes 2}(x, y) + \frac{1 - c}{2} \int_{\RZ_{\geq 0}} \log(x) \, \diff\mu(x) - M - B_{c, p}.
\end{multline*}
\end{Lem}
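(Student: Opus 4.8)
The plan is to adapt the upper-bound argument of \cite[Lemma~4.7]{KPTh2020_2} and \cite[Chapter~5]{HiaiPetz2000}, but with the correction flagged above built in: the logarithmic truncation level is tied to the neighbourhood $\mathcal{O}_\epsilon$ rather than fixed in advance. Write $\theta_n := \beta(n-m+1)/2 - 1$, so that density~\eqref{eq:dichte_y} has the shape $\ez^{-\beta n\lVert y\rVert_{p/2}^{p/2}}\prod_i y_i^{\theta_n}\prod_{i<j}\lvert y_i - y_j\rvert^\beta$ up to the factor $Z_{m,n,p,\beta}^{-1}$, and recall $\lVert Y^{(n)}\rVert_{p/2}^{p/2} = m\,m_{p/2}(\nu_n)$. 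On the event $\{m_{p/2}(\nu_n) \in (M-\delta,M+\delta)\}$ the exponential factor is therefore at most $\ez^{-\beta mn(M-\delta)}$, which will contribute the term $-M$ once $\delta\to 0{+}$. The real work is to estimate the polynomial factor from above on the event $\{\nu_n\in\mathcal{O}_\epsilon\}$.

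First I would reduce to configurations essentially confined to a fixed compact interval $[0,T]$: the $p/2$-moment bound confines every coordinate to $[0,(m(M+\delta))^{2/p}]$, and the few coordinates that exceed a large fixed $T$ are shown to be super-exponentially costly at speed $\beta mn$ by weighing the factors $\ez^{-\beta n y_i^{p/2}}$ against the polynomial growth $y_i^{\theta_n}\prod_j(y_i+y_j)^\beta$ — this is the analogue, with the extra factor $\prod_i y_i^{\theta_n}$ present, of the exponential-tightness estimate \cite[Lemma~4.2]{KPTh2020_2} that otherwise carries over unchanged. For a truncation level $L>0$ set $\psi_L(x) := \max\{\log x, -L\}$ and $\Phi_L(x,y) := \max\{\log\lvert x-y\rvert, -L\}$; both are continuous — also at $x=0$, resp.\ on the diagonal, where they equal $-L$ — bounded on $[0,T]$, resp.\ $[0,T]^2$, and dominate $x\mapsto\log x$ resp.\ $(x,y)\mapsto\log\lvert x-y\rvert$ pointwise. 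Using $\log y_i \leq \psi_L(y_i)$ (legitimate since $\theta_n\geq0$, except in the real square case $m=n$, $\beta=1$, where $\theta_n=-\tfrac12$ and the integrable factor $\prod_i y_i^{-1/2}$ is negligible as in \cite{KPTh2020_2}) together with $\sum_{i<j}\log\lvert y_i-y_j\rvert = \tfrac12\sum_{i\neq j}\log\lvert y_i-y_j\rvert \leq \tfrac{m^2}{2}\iint\Phi_L\,\diff\nu_n^{\otimes 2} + \tfrac{mL}{2}$, one gets
\begin{equation*}
\frac{1}{\beta mn}\log\Bigl(\prod_i y_i^{\theta_n}\prod_{i<j}\lvert y_i-y_j\rvert^\beta\Bigr) \leq \frac{\theta_n}{\beta n}\int\psi_L\,\diff\nu_n + \frac{m}{2n}\iint\Phi_L\,\diff\nu_n^{\otimes2} + \BigO\Bigl(\frac{L}{n}\Bigr).
\end{equation*}

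I would then \emph{define} $\mathcal{O}_\epsilon$ to be the weakly open neighbourhood of $\mu$ carved out by $\bigl\lvert\int\tilde\psi_{L(\epsilon)}\,\diff\nu - \int\tilde\psi_{L(\epsilon)}\,\diff\mu\bigr\rvert < \epsilon$ and $\bigl\lvert\iint\tilde\Phi_{L(\epsilon)}\,\diff\nu^{\otimes2} - \iint\tilde\Phi_{L(\epsilon)}\,\diff\mu^{\otimes2}\bigr\rvert < \epsilon$, with $L(\epsilon) := \log(1/\epsilon)$ and $\tilde\psi_L, \tilde\Phi_L$ bounded continuous cutoff extensions of $\psi_L, \Phi_L$ off the window (so that the two functionals, in particular the quadratic one, are genuinely weakly continuous on $\WMasz(\RZ_{\geq0})$ while agreeing with $\psi_L, \Phi_L$ on measures supported in $[0,T]$). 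Bounding $\Wsk[\,\cdot\,]$ by $Z_{m,n,p,\beta}^{-1}\,\ez^{-\beta mn(M-\delta)}\,\sup(\text{polynomial factor})\,\vol_m([0,T]^m)$, taking $\tfrac{1}{\beta mn}\log$, and letting $n\to\infty$ — using $\tfrac{1}{\beta mn}\log Z_{m,n,p,\beta}\to B_{c,p}$ from Proposition~\ref{sa:ldp_y}, $\tfrac{\theta_n}{\beta n}\to\tfrac{1-c}{2}$, $\tfrac{m}{2n}\to\tfrac c2$, and $\tfrac{1}{\beta n}\log\vol_m([0,T]^m)\to0$ — leaves
\begin{equation*}
\limsup_{n\to\infty}\frac{1}{\beta mn}\log \Wsk\bigl[\nu_n\in\mathcal{O}_\epsilon,\, m_{p/2}(\nu_n)\in(M-\delta,M+\delta)\bigr] \leq -B_{c,p} + \frac{1-c}{2}\int\psi_{L(\epsilon)}\,\diff\mu + \frac{c}{2}\iint\Phi_{L(\epsilon)}\,\diff\mu^{\otimes2} - (M-\delta) + \BigO(\epsilon) + \rho(T),
\end{equation*}
with $\rho(T)\to0$. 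Finally, letting $\delta,\epsilon\to0{+}$ (hence $L(\epsilon)\to\infty$), invoking monotone convergence $\psi_L(x)\searrow\log x$ and $\Phi_L(x,y)\searrow\log\lvert x-y\rvert$ on the window, and then $T\to\infty$, yields the asserted bound; when the right-hand side equals $-\infty$ (i.e.\ $\mu$ charges $0$ or has divergent logarithmic energy) the conclusion holds a fortiori, since then $\int\psi_L\,\diff\mu\to-\infty$ resp.\ $\iint\Phi_L\,\diff\mu^{\otimes2}\to-\infty$.

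The step I expect to be the main obstacle is the compactification in the second paragraph: the moment constraint alone confines the coordinates only to a window growing with $m$, so a handful of coordinates escaping to that scale could a priori spoil the pair-interaction estimate, and ruling this out requires the delicate quantitative argument weighing $\ez^{-\beta n y_i^{p/2}}$ against the now dimension-dependent polynomial factor $\prod_i y_i^{\theta_n}$ (this is also what makes the adaptation of \cite[Lemma~4.2]{KPTh2020_2} nontrivial). The secondary subtlety — and the genuine point of divergence from \cite{KPTh2020_2} — is that the neighbourhoods must carry the truncation level $L(\epsilon)$ themselves, so that $\nu\mapsto\iint\Phi_{L(\epsilon)}\,\diff\nu^{\otimes2}$ is continuous on the relevant neighbourhood and the infimum over $\epsilon$ actually recovers $\iint\log\lvert x-y\rvert\,\diff\mu^{\otimes2}$ instead of a strictly larger value.
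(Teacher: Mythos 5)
Your proposal follows the same broad skeleton as the paper (expose the log-potential and Coulomb terms, truncate to obtain a weakly continuous functional, invoke continuity on a shrinking neighborhood basis, remove the truncation by monotone convergence), and you correctly diagnose the flaw in \cite[Lemma~4.7]{KPTh2020_2} and propose a viable fix for it. But your route through the details is genuinely different from the paper's, and it carries a load-bearing gap that the paper's route is specifically designed to avoid.

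The paper does not compactify. Instead it defines the kernel with a $\gamma$-regularization borrowed from the exponential moment factor,
\[
F(x,y;\gamma) = -\tfrac{c}{2}\log\lvert x-y\rvert - (1-\gamma)\tfrac{1-c}{4}\log(xy) + \tfrac{\gamma}{2}(x^{p/2}+y^{p/2}),
\]
and truncates from \emph{above} by $\alpha$. For $\gamma>0$ the term $\tfrac{\gamma}{2}(x^{p/2}+y^{p/2})$ dominates both logarithms at infinity, so $F(\cdot,\cdot;\gamma)$ is bounded below on all of $\RZ_{\geq 0}^2$, and $F_\alpha(\cdot,\cdot;\gamma)$ is globally bounded and continuous. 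Hence $\nu\mapsto\iint F_\alpha(\cdot,\cdot;\gamma)\,\diff\nu^{\otimes2}$ is weakly continuous on all of $\WMasz(\RZ_{\geq 0})$ — no $T$-window is needed, and therefore no claim that coordinates escaping to the moment-scale window are super-exponentially negligible has to be proved. The remaining dimension-dependent factor $\prod_i y_i^{\alpha_n}$ is then integrated \emph{exactly} over the moment shell via polar integration and the Selberg-type identity of Lemma~\ref{lem:hom_integral}, and its contribution is shown to vanish after $\gamma\to 0{+}$. Finally, the neighborhoods $\mathcal{O}_\epsilon$ are just basic neighborhoods sandwiched inside metric balls $B_\epsilon(\mu)$, and the identity $\lim_{\epsilon\to0+}\inf\mathcal{F}(\mathcal{O}_\epsilon)=\mathcal{F}(\mu)$ for any fixed weakly continuous $\mathcal{F}$ fixes the flaw in the reference without having to tie the truncation level to $\epsilon$; the limits $\gamma\to0{+}$, $\alpha\to\infty$ are taken afterward, with no interaction.

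The gap in your version is precisely the compactification: the truncated functionals $\nu\mapsto\iint\tilde\Phi_{L}\,\diff\nu^{\otimes2}$ are weakly continuous only because $\tilde\Phi_L$ is a bounded continuous extension, and they agree with $\Phi_L$ only on measures supported in $[0,T]$. But $\nu_n$ lives a priori in a window of radius $(m(M+\delta))^{2/p}$, which grows with $n$, so the estimate $\iint\Phi_L\,\diff\nu_n^{\otimes2}\leq\iint\tilde\Phi_L\,\diff\mu^{\otimes2}+\epsilon$ is not available until you have actually shown that configurations with coordinates outside $[0,T]$ carry negligible weight. You flag this yourself as ``the main obstacle,'' but it is exactly the kind of delicate quantitative argument that a complete proof must supply, and it is what the paper's $\gamma$-regularization makes unnecessary. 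A secondary soft spot is the case $c=1$, $\beta=1$, $m=n$, where $\theta_n=-\tfrac12$: you defer to ``negligible as in \cite{KPTh2020_2},'' but the whole point of Lemma~\ref{lem:hom_integral} in the paper is to give a uniform treatment of the polynomial factor for all $\alpha_n>-1$ without case splits, and the paper explicitly remarks that this case is left silent in the reference. So your scheme is plausible, but as written it is not a proof: it replaces the paper's two precise mechanisms ($\gamma$-regularization, Lemma~\ref{lem:hom_integral}) with one unproven reduction.
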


Before proving Lemma~\ref{lem:ldp_obere_abschaetzung} we give a purely auxiliary result which by the way nicely illustrates how one can evaluate integrals with respect to cone measures.

\begin{Lem}\label{lem:hom_integral}
Let \(q \in \RZ_{> 0}\) and \(\alpha \in (-1, \infty)\), then
\begin{equation*}
\int_{\Sph{q, 1}{m}} \Ind_{\RZ_{> 0}^m}(\theta) \prod_{i = 1}^m \theta_i^\alpha \, \diff\KegM{q, 1}{m}(\theta) = \frac{\Gamma\bigl( \frac{\alpha + 1}{q} \bigr)^m}{m \KugVol{q, 1}{m} \, q^{m - 1} \, \Gamma\bigl( \frac{m (\alpha + 1)}{q} \bigr)}.
\end{equation*}
\end{Lem}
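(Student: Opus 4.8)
The plan is to reduce everything to one-dimensional Gamma integrals by computing a single integral over the positive orthant in two different ways. Set $K := \Kug{q, 1}{m}$; this is a star body for every $q > 0$ (its Minkowski functional is $\lvert \cdot \rvert_K = \lVert \cdot \rVert_q$, which is continuous even when $q < 1$), so $\Rand K = \Sph{q, 1}{m}$ and $\lVert r\theta \rVert_q = r$ for all $r \geq 0$ and $\theta \in \Sph{q, 1}{m}$. Consider the nonnegative measurable function
\begin{equation*}
f(x) := \Ind_{\RZ_{> 0}^m}(x) \prod_{i = 1}^m x_i^\alpha \, \ez^{-\lVert x \rVert_q^q}.
\end{equation*}

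On the one hand, since $\lVert x \rVert_q^q = \sum_{i = 1}^m \lvert x_i \rvert^q$, the integral of $f$ factorizes, and the substitution $u = x^q$ turns each factor into a Gamma integral:
\begin{equation*}
\int_{\RZ^m} f(x) \, \diff x = \prod_{i = 1}^m \int_0^\infty x^\alpha \, \ez^{-x^q} \, \diff x = \biggl( \frac{1}{q} \, \Gamma\Bigl( \frac{\alpha + 1}{q} \Bigr) \biggr)^m,
\end{equation*}
which is finite precisely because $\alpha > -1$ (convergence at $0$) and $q > 0$ (convergence at $\infty$). On the other hand, applying the polar integration formula~\eqref{eq:kegmasz_polarint} with $d = m$ to $f$, and using the homogeneities $\prod_i (r\theta_i)^\alpha = r^{m\alpha} \prod_i \theta_i^\alpha$, $\Ind_{\RZ_{> 0}^m}(r\theta) = \Ind_{\RZ_{> 0}^m}(\theta)$, and $\lVert r\theta \rVert_q = r$ for $\theta \in \Sph{q, 1}{m}$, we get
\begin{equation*}
\int_{\RZ^m} f(x) \, \diff x = m \, \KugVol{q, 1}{m} \biggl( \int_0^\infty r^{m\alpha + m - 1} \, \ez^{-r^q} \, \diff r \biggr) \int_{\Sph{q, 1}{m}} \Ind_{\RZ_{> 0}^m}(\theta) \prod_{i = 1}^m \theta_i^\alpha \, \diff\KegM{q, 1}{m}(\theta),
\end{equation*}
where the interchange of integrations implicit in~\eqref{eq:kegmasz_polarint} is legitimate by Tonelli's theorem since $f \geq 0$, and the radial integral equals $\frac{1}{q} \Gamma\bigl( \frac{m(\alpha + 1)}{q} \bigr)$ by the same substitution $u = r^q$. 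Equating the two evaluations and solving for the cone-measure integral gives
\begin{equation*}
\int_{\Sph{q, 1}{m}} \Ind_{\RZ_{> 0}^m}(\theta) \prod_{i = 1}^m \theta_i^\alpha \, \diff\KegM{q, 1}{m}(\theta) = \frac{q^{-m} \, \Gamma\bigl( \frac{\alpha + 1}{q} \bigr)^m}{m \, \KugVol{q, 1}{m} \, q^{-1} \, \Gamma\bigl( \frac{m(\alpha + 1)}{q} \bigr)} = \frac{\Gamma\bigl( \frac{\alpha + 1}{q} \bigr)^m}{m \, \KugVol{q, 1}{m} \, q^{m - 1} \, \Gamma\bigl( \frac{m(\alpha + 1)}{q} \bigr)},
\end{equation*}
which is the claim.

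There is essentially no obstacle here: the argument is a short double counting. The only points that merit a line of justification are that $\Kug{q, 1}{m}$ is a star body so that~\eqref{eq:kegmasz_polarint} applies (valid for all $q > 0$), and the integrability conditions $\alpha > -1$, $q > 0$ which guarantee that both one-dimensional Gamma integrals — and hence the total integral of $f$ — are finite, so that the two evaluations may be equated. One could equally run the computation via identity~\eqref{eq:kegmasz_int} applied to $x \mapsto f(x/\lvert x \rvert_K)$, but routing through the Gaussian-type weight $\ez^{-\lVert x \rVert_q^q}$ keeps the bookkeeping minimal.
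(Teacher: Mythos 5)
Your proof is correct and is essentially identical to the paper's: both compute the auxiliary integral \(\int_{\RZ_{>0}^m} \ez^{-\lVert x \rVert_q^q} \prod_i x_i^\alpha \, \diff x\) twice, once by factorization into one-dimensional Gamma integrals and once by polar integration with respect to the cone measure, then equate. The only difference is the order in which the two evaluations are presented.
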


\begin{proof}
The basic property of the integrand which we are going to exploit is its positive homogeneity. By a standard method the integral is extended to all of \(\RZ_{> 0}^m\) via introducing an exponential term. To be specific, we consider
\begin{equation}\label{eq:hom_integral}
\int_{\RZ_{> 0}^m} \ez^{-\lVert x \rVert_q^q} \prod_{i = 1}^m x_i^\alpha \, \diff x.
\end{equation}
On the one hand use polar integration on \eqref{eq:hom_integral} to obtain
\begin{align*}
\int_{\RZ_{> 0}^m} \ez^{-\lVert x \rVert_q^q} \prod_{i = 1}^m x_i^\alpha \, \diff x &= m \KugVol{q, 1}{m} \int_{\RZ_{> 0}} \int_{\Sph{q, 1}{m}} \Ind_{\RZ_{> 0}^m}(r \theta) \ez^{-\lVert r \theta \rVert_q^q} \prod_{i = 1}^m (r \theta_i)^\alpha \, \diff\KegM{q, 1}{m}(\theta) \, r^{m - 1} \, \diff r\\
&= m \KugVol{q, 1}{m} \int_{\RZ_{> 0}} r^{m \alpha + m - 1} \, \ez^{-r^q} \, \diff r \int_{\Sph{q, 1}{m}} \Ind_{\RZ_{> 0}^m}(\theta) \prod_{i = 1}^m \theta_i^\alpha \, \diff\KegM{q, 1}{m}(\theta)\\
&= \frac{m \KugVol{q, 1}{m} \, \Gamma\bigl( \frac{m (\alpha + 1)}{q} \bigr)}{q} \int_{\Sph{q, 1}{m}} \Ind_{\RZ_{> 0}^m}(\theta) \prod_{i = 1}^m \theta_i^\alpha \, \diff\KegM{q, 1}{m}(\theta).
\end{align*}
On the other hand the integrand of \eqref{eq:hom_integral} has tensor product structure, therefore
\begin{align*}
\int_{\RZ_{> 0}^m} \ez^{-\lVert x \rVert_q^q} \prod_{i = 1}^m x_i^\alpha \, \diff x &= \biggl( \int_{\RZ_{> 0}} x^\alpha \, \ez^{-x^q} \, \diff x \biggr)^m\\
&= \biggl( \frac{\Gamma\bigl( \frac{\alpha + 1}{q} \bigr)}{q} \biggr)^m.
\end{align*}
Equating the two partial results and rearranging lead to the claim.
\end{proof}

\begin{proof}[Proof of Lemma~\ref{lem:ldp_obere_abschaetzung}]
The strategy is the same as in~\cite{KPTh2020_2}. For \(\alpha \in \RZ_{\geq 0}\) and \(\gamma \in [0, 1)\) define the kernels \(F(\cdot, \cdot; \gamma), F_\alpha(\cdot, \cdot; \gamma) \colon \RZ_{\geq 0}^2 \to (-\infty, \infty]\) by
\begin{equation*}
F(x, y; \gamma) := \begin{cases} -\frac{c}{2} \log\lvert x - y \rvert - (1 - \gamma) \frac{1 - c}{4} \log(x y) + \frac{\gamma}{2} (x^{p/2} + y^{p/2}) & \text{if } x > 0 \wedge y > 0 \wedge x \neq y,\\
\infty & \text{else}, \end{cases}
\end{equation*}
and
\begin{equation*}
F_\alpha(x, y; \gamma) = \min\{F(x, y; \gamma), \alpha\}.
\end{equation*}
We note some properties of \(F\) and \(F_\alpha\): for \(\gamma > 0\), \(F(\cdot, \cdot; \gamma)\) and \(F_\alpha(\cdot, \cdot; \gamma)\) are bounded from below, hence their (coinciding) negative parts are \(\mu^{\otimes 2}\)\-/integrable; and for \(\gamma = 0\) they can be bounded from below by \(-C (x^{p/2} + y^{p/2} + (x y)^{p/2})\) with some \(C > 0\), and since we have been supposing \(m_{p/2}(\mu) \leq M < \infty\), also in this case the negative parts are \(\mu^{\otimes 2}\)\-/integrable. From this follows that for any \(\alpha \geq 0\) and \(\gamma \in [0, 1)\), \(F_\alpha(\cdot, \cdot; \gamma)\) is \(\mu^{\otimes 2}\)\-/integrable. Also we can rewrite
\begin{equation*}
F(x, y; \gamma) = F(x, y; 0) + \frac{\gamma}{2} \Bigl( x^{p/2} + y^{p/2} - \frac{1 - c}{2} \log(x y) \Bigr),
\end{equation*}
wherever the value is finite, which clearly implies \(\lim_{\gamma \to 0+} F(x, y; \gamma) = F(x, y; 0)\) pointwise, and the map \(\gamma \to F(x, y; \gamma)\) is increasing on
\begin{equation*}
P := \Bigl\{ (x, y) \in \RZ_{\geq 0}^2 \Colon x^{p/2} + y^{p/2} - \frac{1 - c}{2} \log(x y) \geq 0 \Bigr\}
\end{equation*}
and decreasing on its complement, \(N := \RZ_{\geq 0}^2 \setminus P\); this holds true for \(F_\alpha\) too. Notice that \(P\) and \(N\) do not depend on \(\gamma\). Lastly, for any \(\gamma \in [0, 1)\) and \((x, y) \in \RZ_{\geq 0}^2\) the map \(\alpha \mapsto F_\alpha(x, y; \gamma)\) is increasing and \(\lim_{\alpha \to \infty} F_\alpha(x, y; \gamma) = F(x, y; \gamma)\).

Fix a metric \(\dw\) on \(\WMasz(\RZ_{\geq 0})\) which metrizes the weak topology, then for any \(\epsilon \in \RZ_{> 0}\) choose a basic weak neighborhood \(\mathcal{O}_\epsilon\) of \(\mu\) with \(\mathcal{O}_\epsilon \subset B_\epsilon(\mu)\) (the open ball w.r.t.\ \(\dw\) around \(\mu\) with radius \(\epsilon\)).

Let \(\mathcal{F} \colon \WMasz(\RZ_{\geq 0}) \to (-\infty, \infty]\) be some functional, then the map \(\epsilon \mapsto \inf\bigl( \mathcal{F}(B_\epsilon(\mu)) \bigr)\) is decreasing, hence the improper limit \(\lim_{\epsilon \to 0+} \inf\bigl( \mathcal{F}(B_\epsilon(\mu)) \bigr)\) exists; next, for any \(\epsilon > 0\) we may choose an \(\epsilon'(\epsilon) \in (0, \epsilon]\) such that \(B_{\epsilon'(\epsilon)}(\mu) \subset \mathcal{O}_\epsilon \subset B_\epsilon(\mu)\), then as \(\epsilon \to 0+\), so also \(\epsilon'(\epsilon) \to 0+\), and hence
\begin{equation*}
\lim_{\epsilon \to 0+} \inf(\mathcal{F}(\mathcal{O}_\epsilon)) = \lim_{\epsilon \to 0+} \inf\bigl( \mathcal{F}(B_\epsilon(\mu)) \bigr).
\end{equation*}
If in addition \(\mathcal{F}\) is continuous at \(\mu\), then there follows
\begin{equation}\label{eq:stetigkeit_infimum}
\lim_{\epsilon \to 0+} \inf(\mathcal{F}(\mathcal{O}_\epsilon)) = \mathcal{F}(\mu).
\end{equation}
Indeed, the inequality `\(\leq\)' readily follows from \(\inf(\mathcal{F}(\mathcal{O}_\epsilon)) \leq \mathcal{F}(\mu)\) for any \(\epsilon > 0\) (since \(\mu \in \mathcal{O}_\epsilon\)); and concerning `\(\geq\)' it suffices to prove \(\lim_{\epsilon \to 0+} \inf(\mathcal{F}(\mathcal{O}_\epsilon)) \geq \mathcal{F}(\mu) - \zeta\) for any \(\zeta \in \RZ_{> 0}\). But continuity implies that for any \(\zeta > 0\) there exists \(\epsilon' > 0\) such that \(\mathcal{F}(\nu) \geq \mathcal{F}(\mu) - \zeta\) for any \(\nu \in B_{\epsilon'}(\mu)\); in particular this remains valid for all \(\nu \in  \mathcal{O}_{\epsilon'}\), and by passing to the infimum and then investing \(\lim_{\epsilon \to 0+} \inf(\mathcal{F}(\mathcal{O}_\epsilon)) \geq \inf(\mathcal{F}(\mathcal{O}_{\epsilon'}))\) we arrive at~\eqref{eq:stetigkeit_infimum}.

As observed in \cite{KPTh2020_2}, for any \(\alpha > 0\) and \(\gamma \in (0, 1)\) the map
\begin{equation*}
\nu \mapsto \int_{\RZ_{\geq 0}^2} F_\alpha(x, y; \gamma) \, \diff\nu^{\otimes 2}(x, y)
\end{equation*}
is weakly continuous, and therefore identity~\eqref{eq:stetigkeit_infimum} can be applied.

Now that we have finished the preparations, let \(\delta, \epsilon \in \RZ_{> 0}\), w.l.o.g.\ \(\delta < M\), and define
\begin{equation*}
H := \biggl\{ y \in \RZ_{\geq 0}^m \Colon \frac{1}{m} \sum_{i = 1}^m \delta_{y_i} \in \mathcal{O}_\epsilon \wedge \frac{1}{m} \sum_{i = 1}^m y_i^{p/2} \in (M - \delta, M + \delta) \biggr\};
\end{equation*}
then
\begin{align*}
\Wsk\bigl[ &\nu_n \in \mathcal{O}_\epsilon \wedge m_{p/2}(\nu_n) \in (M - \delta, M + \delta) \bigr]\\
&= \frac{1}{Z_{m, n, p, \beta}} \int_H \ez^{-\beta n \lVert y \rVert_{p/2}^{p/2}} \prod_{i = 1}^m y_i^{\beta (n - m + 1)/2 - 1} \prod_{1 \leq i < j \leq m} \lvert y_j - y_i \rvert^\beta \, \diff y\\
&= \frac{1}{Z_{m, n, p, \beta}} \int_H \exp\biggl( -\beta n \Bigl( 1 - \frac{\gamma (m - 1)}{c n} \Bigr) \lVert y \rVert_{p/2}^{p/2} \biggr) \exp\biggl( -\frac{2 \beta}{c} \sum_{1 \leq i < j \leq m} F(y_i, y_j;  \gamma) \biggr)\\
&\mspace{100mu} \cdot \exp\biggl( \Bigl( \frac{\beta (n - m + 1)}{2} - 1 - \frac{\beta (1 - \gamma) (1 - c) (m - 1)}{2 c} \Bigr) \sum_{i = 1}^m \log(y_i) \biggr) \, \diff y,
\end{align*}
where we have used the definition of \(F\) to rewrite \(\prod_{i < j} \lvert y_j - y_i \rvert^\beta\); we want to estimate the integral from above. First note \(\lim_{n \to \infty} (1 - \frac{\gamma (m - 1)}{c n}) = 1 - \gamma > 0\), hence the first exponential term may be estimated on \(H\) from above by \(\exp\bigl( \beta n (1 - \frac{\gamma (m - 1)}{c n}) m (M - \delta) \bigr)\) for all sufficiently large \(n\) (so also in the sequel). Next there is
\begin{align*}
\sum_{1 \leq i < j \leq m} F(y_i, y_j; \gamma) &\geq \sum_{1 \leq i < j \leq m} F_\alpha(y_i, y_j; \gamma)\\
&= \frac{m^2}{2} \int_{\RZ_{\geq 0}^2} F_\alpha(x_1, x_2; \gamma) \, \diff\mu_y^{\otimes 2}(x_1, x_2) - \frac{m \alpha}{2}\\
&\geq \frac{m^2}{2} \inf_{\nu \in \mathcal{O}_\epsilon} \int_{\RZ_{\geq 0}^2} F_\alpha(x_1, x_2; \gamma) \, \diff\nu^{\otimes 2}(x_1, x_2) - \frac{m \alpha}{2},
\end{align*}
where of course \(\mu_y := \frac{1}{m} \sum_{i = 1}^m \delta_{y_i}\), and this treats the second exponential term. The third exponential term needs special care; even though we have
\begin{equation*}
\lim_{n \to \infty} \frac{1}{\beta n} \Bigl( \frac{\beta (n - m + 1)}{2} - 1 - \frac{\beta (1 - \gamma) (1 - c) (m - 1)}{2 c} \Bigr) = \frac{\gamma (1 - c)}{2} \geq 0,
\end{equation*}
in the case \(c = 1\) the coefficient of \(\sum_{i = 1}^m \log(y_i)\) can be negative and hence naively maximizing the exponent over \(\{y \in \RZ_{\geq 0}^m \Colon m (M - \delta) < \lVert y \rVert_{p/2}^{p/2} < (M + \delta)\}\) would yield the trivial bound \(\infty\). (As in other places the authors of \cite{KPTh2020_2} remain silent on this point although they would have needed it in the proof of their Theorem~1.5.) Instead we have to find a finer estimate for the following integral,
\begin{equation*}
\int_H \prod_{i = 1}^m y_i^{\beta(n - m + 1)/2 - 1 - \beta (1 - \gamma) (1 - c) (m - 1)/(2 c)} \, \diff y.
\end{equation*}
Call \(\alpha_n := \frac{\beta (n - m + 1)}{2} - 1 - \frac{\beta (1 - \gamma) (1 - c) (m - 1)}{2 c}\), then we already have observed above that \(\lim_{n \to \infty} \frac{\alpha_n}{\beta n} = \frac{\gamma (1 - c)}{2}\), hence in the case \(c < 1\) we know \(\alpha_n \geq 0 > -1\) for all \(n\) large enough, and in the case \(c = 1\) we get \(\alpha_n = \frac{\beta (n - m + 1)}{2} - 1 \geq \frac{\beta}{2} - 1 > -1\) for all \(n \in \NZ\), since \(m \leq n\). Therefore we have \(\alpha_n > -1\) for eventually all \(n\) in any case, and we aim for an application of Lemma~\ref{lem:hom_integral}. Using \(H \subset \bigl\{ y \in \RZ_{\geq 0}^m \Colon m (M - \delta) < \lVert y \rVert_{p/2}^{p/2} < m (M + \delta) \bigr\}\), polar integration leads to
\begin{align*}
\int_H \prod_{i = 1}^m y_i^{\alpha_n} \, \diff y &\leq m \KugVol{p/2, 1}{m} \int_{(m (M - \delta))^{2/p}}^{(m (M + \delta))^{2/p}} \int_{\Sph{p/2, 1}{m}} \Ind_{\RZ_{\geq 0}^m}(r \theta) \prod_{i = 1}^m (r \theta_i)^{\alpha_n} \, \diff\KegM{p/2, 1}{m}(\theta) \, r^{m - 1} \, \diff r\\
&= m \KugVol{p/2, 1}{m} \int_{(m (M - \delta))^{2/p}}^{(m (M + \delta))^{2/p}} r^{m (\alpha_n + 1) - 1} \, \diff r \int_{\Sph{p/2, 1}{m}} \Ind_{\RZ_{\geq 0}^m}(\theta) \prod_{i = 1}^m \theta_i^{\alpha_n} \, \diff\KegM{p/2, 1}{m}(\theta)\\
&\leq \frac{(m (M + \delta))^{2 m (\alpha_n + 1)/p}}{m (\alpha_n + 1)} \cdot \frac{\Gamma\bigl( \frac{2 (\alpha_n + 1)}{p} \bigr)^m}{\bigl( \frac{p}{2} \bigr)^{m - 1} \, \Gamma\bigl( \frac{2 m (\alpha_n + 1)}{p} \bigr)},
\end{align*}
where for the last line we have invested Lemma~\ref{lem:hom_integral} (obviously it is immaterial whether we integrate over \(\RZ_{\geq 0}^m\) or \(\RZ_{> 0}^m\)). Summarizing all this we see
\begin{align*}
\Wsk\bigl[ \nu_n \in \mathcal{O}_\epsilon \wedge m_{p/2}(\nu_n) &\in (M - \delta, M + \delta) \bigr]\\
&\leq \frac{1}{Z_{m, n, p, \beta}} \exp\biggl( -\beta m n (M - \delta) \Bigl( 1 - \frac{\gamma (m - 1)}{c n} \Bigr) \biggr)\\
&\quad \cdot \exp\biggl( -\frac{\beta m^2}{c} \inf_{\nu \in \mathcal{O}_\epsilon} \int_{\RZ_{\geq 0}^2} F_\alpha(x, y; \gamma) \, \diff\nu^{\otimes 2}(x, y) + \frac{\alpha \beta m}{c} \biggr)\\
&\quad \cdot \frac{(m (M + \delta))^{2 m (\alpha_n + 1)/p}}{m (\alpha_n + 1)} \cdot \frac{\Gamma\bigl( \frac{2 (\alpha_n + 1)}{p} \bigr)^m}{\bigl( \frac{p}{2} \bigr)^{m - 1} \, \Gamma\bigl( \frac{2 m (\alpha_n + 1)}{p} \bigr)}.
\end{align*}
Proposition~\ref{sa:ldp_y} tells us \(\lim_{n \to \infty} \frac{1}{\beta m n} \log(Z_{m, n, p, \beta}) = B_{c, p}\). The first exponential term needs no comment and for the second we get
\begin{align*}
\lim_{n \to \infty} &\frac{1}{\beta m n} \log\Bigl( \ez^{-\frac{\beta m^2}{c} \inf_{\nu \in \mathcal{O}_\epsilon} \int_{\RZ_{\geq 0}^2} F_\alpha(x, y; \gamma) \, \diff\nu^{\otimes 2}(x, y) + \frac{\alpha \beta m}{c}} \Bigr)\\
&= -\inf_{\nu \in \mathcal{O}_\epsilon} \int_{\RZ_{\geq 0}^2} F_\alpha(x, y; \gamma) \, \diff\nu^{\otimes 2}(x, y).
\end{align*}
Stirling's formula \(\Gamma(z) = \sqrt{2 \pi} \, z^{z - 1/2} \, \ez^{-z + R(z)}\), where \(R(z) = \frac{1}{12 z} \bigl( 1 + \BigO(z^{-2}) \bigr)\), implies
\begin{equation*}
\frac{m^{2 m (\alpha_n + 1)/p} \, \Gamma\bigl( \frac{2 (\alpha_n + 1)}{p} \bigr)^m}{\Gamma\bigl( \frac{2 m (\alpha_n + 1)}{p} \bigr)} = m^{1/2} \Bigl( \frac{\pi p}{\alpha_n + 1} \Bigr)^{(m - 1)/2} \, \ez^{m R(2 (\alpha_n + 1)/p) - R(2 m (\alpha_n + 1)/p)},
\end{equation*}
hence
\begin{align*}
\frac{1}{\beta m n} \log\frac{m^{2 m (\alpha_n + 1)/p} \, \Gamma\bigl( \frac{2 (\alpha_n + 1)}{p} \bigr)^m}{\Gamma\bigl( \frac{2 m (\alpha_n + 1)}{p} \bigr)} &= \frac{\log(m)}{2 \beta m n} + \frac{(m - 1) \log(\pi p)}{2 \beta m n} - \frac{(m + 1) \log(\alpha_n + 1)}{2 \beta m n}\\
&\quad + \frac{R\bigl( \frac{2 (\alpha_n + 1)}{p} \bigr)}{\beta n} - \frac{R\bigl( \frac{2 m (\alpha_n + 1)}{p} \bigr)}{\beta m n}\\
&\xrightarrow[n \to \infty]{} 0,
\end{align*}
where we also have used \(\alpha_n = \BigO(\beta n)\). Lastly there remains
\begin{align*}
\frac{1}{\beta m n} \log\frac{(M + \delta)^{2 m (\alpha_n + 1)/p}}{m (\alpha_n + 1) \bigl( \frac{p}{2} \bigr)^{m - 1}} &= \frac{2 (\alpha_n + 1) \log(M + \delta)}{p \beta n} - \frac{\log(m (\alpha_n + 1))}{\beta m n} - \frac{(m - 1) \log\bigl( \frac{p}{2} \bigr)}{\beta m n}\\
&\xrightarrow[n \to \infty]{} \frac{\gamma (1 - c) \log(M + \delta)}{p}.
\end{align*}
Therewith we get
\begin{multline*}
\limsup_{n \to \infty} \frac{1}{\beta m n} \log\Wsk\bigl[ \nu_n \in \mathcal{O}_\epsilon \wedge m_{p/2}(\nu_n) \in (M - \delta, M + \delta) \bigr]\\
\leq -B_{c, p} - (M - \delta) (1 - \gamma) - \inf_{\nu \in \mathcal{O}_\epsilon} \int_{\RZ_{\geq 0}^2} F_\alpha(x, y; \gamma) \, \diff\nu^{\otimes 2}(x, y) + \frac{\gamma (1 - c)}{p} \log(M + \delta).
\end{multline*}
Now we send \(\delta, \epsilon \to 0+\), so by our discussion at the preparations for the proof concerning the continuity of infima of continuous functionals we have
\begin{multline*}
\lim_{\delta, \epsilon \to 0+} \limsup_{n \to \infty} \frac{1}{\beta m n} \log\Wsk\bigl[ \nu_n \in \mathcal{O}_\epsilon \wedge m_{p/2}(\nu_n) \in (M - \delta, M + \delta) \bigr]\\
\leq -B_{c, p} - M (1 - \gamma) - \int_{\RZ_{\geq 0}^2} F_\alpha(x, y; \gamma) \, \diff\mu^{\otimes 2}(x, y) + \frac{\gamma (1 - c)}{p} \log(M).
\end{multline*}
Next we take \(\gamma \to 0+\), then clearly \(F_\alpha(x, y; \gamma) \to F_\alpha(x, y; 0)\). Recall the sets \(P\) and \(N\) from the beginning of this proof; on \(P\), \(F_\alpha(\cdot, \cdot; \gamma)\) is bounded from below by \(F_\alpha(\cdot, \cdot; 0)\), and on \(N\), it is bounded from above by \(\alpha\); either bound is \(\mu^{\otimes 2}\)\-/quasiintegrable, hence monotone convergence can be applied on \(P\) and on \(N\) separately, and because the negative part of the limiting function \(F_\alpha(\cdot, \cdot; 0)\) is \(\mu^{\otimes 2}\)\-/integrable over \(\RZ_{\geq 0}^2\) we also have convergence of the integral on all of \(\RZ_{\geq 0}^2\), that is,
\begin{equation*}
\lim_{\gamma \to 0+} \int_{\RZ_{\geq 0}^2} F_\alpha(x, y; \gamma) \, \diff\mu^{\otimes 2}(x, y) = \int_{\RZ_{\geq 0}^2} F_\alpha(x, y; 0) \, \diff\mu^{\otimes 2}(x, y).
\end{equation*}
Lastly we let \(\alpha \to \infty\), then \(F_\alpha(\cdot, \cdot; 0)\) is dominated from below by \(F(\cdot, \cdot; 0)\) whose negative part is integrable as we have discussed above. Thus monotone convergence can be used once more, so
\begin{equation*}
\lim_{\alpha \to \infty} \int_{\RZ_{\geq 0}^2} F_\alpha(x, y; 0) \, \diff\mu^{\otimes 2}(x, y) = \int_{\RZ_{\geq 0}^2} F(x, y; 0) \, \diff\mu^{\otimes 2}(x, y).
\end{equation*}
Plugging in the definition of \(F\) we finally have
\begin{multline*}
\lim_{\delta, \epsilon \to 0+} \limsup_{n \to \infty} \frac{1}{\beta m n} \log \Wsk\bigl[ \nu_n \in \mathcal{O}_\epsilon \wedge m_{p/2}(\nu_n) \in (M - \delta, M + \delta) \bigr]\\
\leq -B_{c, p} - M + \int_{\RZ_{\geq 0}^2} \Bigl( \frac{c}{2} \log\lvert x - y \rvert + \frac{1 - c}{4} \log(x y) \Bigr) \diff\mu^{\otimes 2}(x, y),
\end{multline*}
and slight simplification leads to the claimed result.
\end{proof}

\cite[Lemma~4.8]{KPTh2020_2}, which concerns the exponential tightness, carries over analogously, where the reference point is  \cite[Theorem~5.5.1, p.\ 232]{HiaiPetz2000} now.

We also adapt \cite[Remark~4.9]{KPTh2020_2}, that is, the LDP on \(\WMasz(\RZ_{\geq 0}) \times \RZ_{> 0}\); the proof of
\begin{equation*}
\lim_{A \to \infty} \limsup_{n \to \infty} \frac{1}{\beta m n} \log \Wsk\Bigl[ m_{p/2}(\nu_n) < \frac{1}{A} \Bigr] = -\infty
\end{equation*}
needs to be modified as follows: We have, for any \(A \in \RZ_{> 0}\),
\begin{align*}
\Wsk\Bigl[ &m_{p/2}(\nu_n) < \frac{1}{A} \Bigr] = \Wsk\Bigl[ \lVert Y^{(n)} \rVert_{p/2} < \Bigl( \frac{m}{A} \Bigr)^{2/p} \Bigr]\\
&= \frac{1}{Z_{m, n, p, \beta}} \int_{\RZ_{\geq 0}^m} \Ind_{(\frac{m}{A})^{2/p} \Kug{p/2, 1}{m}}(y) \, \ez^{-\beta n \lVert y \rVert_{p/2}^{p/2}} \prod_{i = 1}^m y_i^{\beta (n - m + 1)/2 - 1} \prod_{1 \leq i < j \leq m} \lvert y_j - y_i \rvert^\beta \, \diff y\\
&\leq \frac{1}{Z_{m, n, p, \beta}} \int_{\RZ_{\geq 0}^m} \Ind_{(\frac{m}{A})^{2/p} \Kug{p/2, 1}{m}}(y) \prod_{i = 1}^m y_i^{\beta (n - m + 1)/2 - 1} \prod_{1 \leq i < j \leq m} \lvert y_j - y_i \rvert^\beta \, \diff y\\
&= \frac{1}{Z_{m, n, p, \beta}} \Bigl( \frac{m}{A} \Bigr)^{\beta m n/p} \int_{\RZ_{\geq 0}^m} \Ind_{\Kug{p/2, 1}{m}}(z) \prod_{i = 1}^m z_i^{\beta (n - m + 1)/2 - 1} \prod_{1 \leq i < j \leq m} \lvert z_j - z_i \rvert^\beta \, \diff z,
\end{align*}
where in the last step we have substituted \(z := \bigl( \frac{A}{m} \bigr)^{2/p} y\). The last integral then is transformed with the same method as in the proof of Theorem~\ref{sa:gleichvert_schattenq_kugel_sphaere}, part~1, that is,
\begin{align*}
\int_{\RZ_{\geq 0}^m} \Ind_{\Kug{p/2, 1}{m}}(z) &\prod_{i = 1}^m z_i^{\beta (n - m + 1)/2 - 1} \prod_{1 \leq i < j \leq m} \lvert z_j - z_i \rvert^\beta \, \diff z\\
&= \frac{(\beta n)^{\beta m n/p}}{\Gamma(\frac{\beta m n}{p} + 1)} \int_{\RZ_{\geq 0}^m} \ez^{-\beta n \lVert y \rVert_{p/2}^{p/2}} \prod_{i = 1}^m y_i^{\beta (n - m + 1)/2 - 1} \prod_{1 \leq i < j \leq m} \lvert y_j - y_i \rvert^\beta \, \diff y\\
&= \frac{(\beta n)^{\beta m n/p} \, Z_{m, n, p, \beta}}{\Gamma(\frac{\beta m n}{p} + 1)};
\end{align*}
this leads to
\begin{equation*}
\Wsk\Bigl[ m_{p/2}(\nu_n) < \frac{1}{A} \Bigr] \leq \frac{(\beta m n)^{\beta m n/p}}{A^{\beta m n/p} \, \Gamma(\frac{\beta m n}{p} + 1)},
\end{equation*}
and thence
\begin{equation*}
\frac{1}{\beta m n} \log \Wsk\Bigl[ m_{p/2}(\nu_n) < \frac{1}{A} \Bigr] \leq \frac{\log(\beta m n) - \log(A)}{p} - \frac{1}{\beta m n} \log \Gamma\Bigl( \frac{\beta m n}{p} + 1 \Bigr).
\end{equation*}
Stirling's formula yields \(\lim_{n \to \infty} \bigl( \frac{1}{\beta m n} \log \Gamma(\frac{\beta m n}{p} + 1) - \frac{1}{p} \log(\frac{\beta m n}{p}) \bigr) = -\frac{1}{p}\), hence
\begin{equation*}
\limsup_{n \to \infty} \frac{1}{\beta m n} \log\Wsk\Bigl[ m_{p/2}(\nu_n) < \frac{1}{A} \Bigr] \leq \frac{1 - \log(A)}{p},
\end{equation*}
so the claim follows as \(A \to \infty\).

\paragraph{Step~3}
The result of this step is the LDP for the empirical measure of the squares of the singular values, reached via the contraction principle from the LDP for the empirical pair in Step~2.

\begin{Sa}\label{sa:ldp_quadsingulaerwerte_kegm}
Let \((X^{(n)})_{n \in \NZ}\) be a sequence of random variables where \(X^{(n)} \sim \KegM{\Schatten{p}{}, \beta}{m \times n}\) for each \(n \in \NZ\), and define \(\mu_n^2 := \frac{1}{m} \sum_{i = 1}^m \delta_{m^{2/p} s_i(X^{(n)})^2}\). Then \((\mu_n^2)_{n \in \NZ}\) satisfies a large deviations principle with speed \(\beta m n\) and good rate function \(\mathcal{I}_{c, p}^2 \colon \WMasz(\RZ_{\geq 0}) \to [0, \infty]\) given by
\begin{equation*}
\mathcal{I}_{c, p}^2(\mu) := \begin{cases} -\frac{c}{2} \int_{\RZ_{\geq 0}^2} \log\lvert x - y \rvert \, \diff\mu^{\otimes 2}(x, y) - \frac{1 - c}{2} \int_{\RZ_{\geq 0}} \log(x) \, \diff\mu(x) + \frac{\log(\ez p)}{p} + B_{c, p} & \text{if } m_{p/2}(\mu) \leq 1, \\ \infty & \text{else,} \end{cases}
\end{equation*}
which has a unique global minimizer \(\mu_{c, p}^2\) defined by \(\mu_{c, p}^2(A) = \nu_{c, p}\bigl( m_{p/2}(\nu_{c, p})^{2/p} A \bigr)\), where \(\nu_{c, p}\) is the same as in Proposition~\ref{sa:ldp_y}; in particular \(\mu_{c, p}^2\) has compact support and a Lebesgue density.
\end{Sa}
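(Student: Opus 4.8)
The plan is to obtain $(\mu_n^2)_{n \in \NZ}$ as the image, under a continuous map, of the sequence of pairs $\bigl( (\nu_n, m_{p/2}(\nu_n)) \bigr)_{n \in \NZ}$ treated in Step~2, and then to apply the contraction principle (Proposition~\ref{sa:kontraktionsprinzip}). First I would use Proposition~\ref{sa:unitinvar} together with Theorem~\ref{sa:gleichvert_schattenq_kugel_sphaere}, part~1 with $R = 1$ almost surely (which by part~2 characterises $\KegM{\Schatten{p}{}, \beta}{m \times n}$), to write $s(X^{(n)})_\pi \GlVert \lVert Y^{(n)} \rVert_{p/2}^{-1/2} \, \bigl( (Y_i^{(n)})^{1/2} \bigr)_{i \leq m}$ with $\pi \sim \Gleichv(\Perm{m})$ independent of $X^{(n)}$. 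Squaring, invoking Lemma~\ref{lem:permutation} so that the permutation is immaterial for the empirical measure, and using $\lVert Y^{(n)} \rVert_{p/2}^{p/2} = m\, m_{p/2}(\nu_n)$, this yields
\[
\mu_n^2 \GlVert \frac{1}{m} \sum_{i = 1}^m \delta_{Y_i^{(n)}/m_{p/2}(\nu_n)^{2/p}} = T\bigl( \nu_n, m_{p/2}(\nu_n) \bigr),
\]
where $T \colon \WMasz(\RZ_{\geq 0}) \times \RZ_{> 0} \to \WMasz(\RZ_{\geq 0})$ sends $(\rho, M)$ to the image of $\rho$ under the dilation $x \mapsto M^{-2/p} x$, equivalently $T(\rho, M)(A) = \rho(M^{2/p} A)$.

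Next I would check that $T$ is continuous on $\WMasz(\RZ_{\geq 0}) \times \RZ_{> 0}$: if $(\rho_k, M_k) \to (\rho, M)$ with $M > 0$, a Skorokhod coupling gives $Z_k \to Z$ almost surely for $Z_k \sim \rho_k$, $Z \sim \rho$, hence $M_k^{-2/p} Z_k \to M^{-2/p} Z$ almost surely and $T(\rho_k, M_k) \to T(\rho, M)$ weakly. By Proposition~\ref{sa:ldp_empir_paar} together with the exponential negligibility of $\{m_{p/2}(\nu_n) < 1/A\}$ as $A \to \infty$ established at the end of Step~2, the sequence $\bigl( (\nu_n, m_{p/2}(\nu_n)) \bigr)_{n \in \NZ}$ satisfies a large deviations principle with speed $\beta m n$ and good rate function $\mathcal{J}_{c, p}^2$ on the Polish space $\WMasz(\RZ_{\geq 0}) \times \RZ_{> 0}$. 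The contraction principle then yields an LDP for $(\mu_n^2)_{n \in \NZ}$ with speed $\beta m n$ and good rate function $\mathcal{I}_{c, p}^2(\mu) = \inf_{M > 0} \mathcal{J}_{c, p}^2\bigl( (M^{2/p})_* \mu, M \bigr)$, where $(M^{2/p})_* \mu$ denotes the law of $M^{2/p} X$ for $X \sim \mu$ (so that $T\bigl( (M^{2/p})_* \mu, M \bigr) = \mu$).

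Evaluating this infimum is a one\-/variable optimisation. A change of variables gives $m_{p/2}\bigl( (M^{2/p})_* \mu \bigr) = M\, m_{p/2}(\mu)$, so the finiteness constraint $m_{p/2}(\rho) \leq M$ present in $\mathcal{J}_{c, p}^2$ becomes exactly $m_{p/2}(\mu) \leq 1$, independently of $M$; and the two logarithmic integrals together contribute a term $-\tfrac{1}{p} \log M$. Hence, for $m_{p/2}(\mu) \leq 1$,
\[
\mathcal{J}_{c, p}^2\bigl( (M^{2/p})_* \mu, M \bigr) = -\frac{c}{2} \int_{\RZ_{\geq 0}^2} \log\lvert x - y \rvert \, \diff\mu^{\otimes 2}(x, y) - \frac{1 - c}{2} \int_{\RZ_{\geq 0}} \log(x) \, \diff\mu(x) + M - \frac{1}{p} \log M + B_{c, p},
\]
and $\infty$ otherwise. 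Minimising $M \mapsto M - \tfrac{1}{p} \log M$ over $\RZ_{> 0}$ gives $M = \tfrac{1}{p}$ and the value $\tfrac{1 + \log p}{p} = \tfrac{\log(\ez p)}{p}$, which is precisely the claimed expression for $\mathcal{I}_{c, p}^2$.

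For the minimiser and its regularity: since $(\nu_{c, p}, m_{p/2}(\nu_{c, p}))$ is the unique minimiser of $\mathcal{J}_{c, p}^2$ (Proposition~\ref{sa:ldp_empir_paar}), the contraction principle forces $\mathcal{I}_{c, p}^2$ to have the unique global minimiser $\mu_{c, p}^2 := T\bigl( \nu_{c, p}, m_{p/2}(\nu_{c, p}) \bigr)$, that is, $\mu_{c, p}^2(A) = \nu_{c, p}\bigl( m_{p/2}(\nu_{c, p})^{2/p} A \bigr)$; compactness of its support and the existence of a Lebesgue density are inherited from $\nu_{c, p}$ (Proposition~\ref{sa:ldp_y}) under this dilation. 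I expect the only genuinely delicate point to be the passage to the restricted state space $\WMasz(\RZ_{\geq 0}) \times \RZ_{> 0}$ needed to make $T$ continuous (using the exponential control on small $m_{p/2}(\nu_n)$); the rest is the contraction principle and the elementary optimisation above.
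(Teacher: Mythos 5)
Your proposal takes essentially the same route as the paper: both pass through the continuous scaling map (the paper's $F_{p/2}$, your $T$) on $\WMasz(\RZ_{\geq 0}) \times \RZ_{> 0}$, apply the contraction principle to Proposition~\ref{sa:ldp_empir_paar}, and evaluate the resulting infimum by the elementary optimisation of $M \mapsto M - \tfrac{1}{p}\log M$, also noting explicitly—as the paper flags as a gap in the reference it follows—that the fibre over $\mu$ is nonempty for every $M > 0$. The one point you compress is uniqueness of the minimiser: ``the contraction principle forces'' it only after one observes that the infimum over the closed fibre $T^{-1}\{\mu\}$ is attained because $\mathcal{J}_{c, p}^2$ is a good rate function, which the paper spells out and you leave implicit.
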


\begin{proof}
The role model here is \cite[Proposition~5.1]{KPTh2020_2} which we adopt with the obvious adaptions; in particular \(p\) has to be replaced by \(\frac{p}{2}\). The contraction principle yields the LPD with speed \(\beta m n\) and good rate function defined by
\begin{equation*}
\mathcal{I}(\mu) := \inf_{(\nu, M) \in \inv{F_{p/2}}\{\mu\}} \mathcal{J}_{c, p}^2(\nu, M).
\end{equation*}
The value \(\mathcal{I}(\mu) = \infty\) for \(m_{p/2}(\mu) > 1\) is argued as in \cite{KPTh2020_2}. In the case \(m_{p/2}(\mu) \leq 1\), note that actually for any \(M \in \RZ_{> 0}\) there exists a \(\nu \in \WMasz(\RZ_{\geq 0})\) such that \(F_{p/2}(\nu, M) = \mu\); simply take \(\nu := F_{p/2}(\mu, \inv{M})\) (this\---important\---point is not addressed in \cite{KPTh2020_2}). Therefore
\begin{align*}
\mathcal{I}(\mu) &= \inf_{(\nu, M) \in \inv{F_{p/2}}\{\mu\}} \biggl( -\frac{c}{2} \int_{\RZ_{\geq 0}^2} \log\lvert x - y \rvert \, \diff\nu^{\otimes 2}(x, y) - \frac{1 - c}{2} \int_{\RZ_{\geq 0}} \log(x) \, \diff\nu(x) + M + B_{c, p} \biggr)\\
&= \inf_{(\nu, M) \in \inv{F_{p/2}}\{\mu\}} \biggl( -\frac{c}{2} \int_{\RZ_{\geq 0}^2} \log\bigl( M^{2/p} \lvert x - y \rvert \bigr) \, \diff\mu^{\otimes 2}(x, y)\\
&\mspace{120mu} - \frac{1 - c}{2} \int_{\RZ_{\geq 0}} \log(M^{2/p} x) \, \diff\mu(x) + M + B_{c, p} \biggr)\\
&= \inf_{(\nu, M) \in \inv{F_{p/2}}\{\mu\}} \biggl( -\frac{c}{2} \int_{\RZ_{\geq 0}^2} \log\lvert x - y \rvert \, \diff\mu^{\otimes 2}(x, y) - \frac{1 - c}{2} \int_{\RZ_{\geq 0}} \log(x) \, \diff\mu(x)\\
&\mspace{120mu} - \frac{c}{p} \log(M) - \frac{1 - c}{p} \log(M) + M + B_{c, p} \biggr)\\
&= -\frac{c}{2} \int_{\RZ_{\geq 0}^2} \log\lvert x - y \rvert \, \diff\mu^{\otimes 2}(x, y) - \frac{1 - c}{2} \int_{\RZ_{\geq 0}} \log(x) \, \diff\mu(x) + \frac{\log(\ez p)}{p} + B_{c, p},
\end{align*}
because \(\frac{\log(\ez p)}{p}\) is the global minimum of \(-\frac{1}{p} \log(x) + x\) on \(\RZ_{> 0}\). But this concludes \(\mathcal{I} = \mathcal{I}_{c, p}^2\).

Concerning the minimizer of \(\mathcal{I}_{c, p}^2\), from Proposition~\ref{sa:ldp_empir_paar} we know that \(\mathcal{J}_{c, p}^2\) has the global minimizer \((\nu_{c, p}, m_{p/2}(\nu_{c, p}))\) and hence \(\mu_{c, p}^2 = F_{p/2}(\nu_{c, p}, m_{p/2}(\nu_{c, p}))\) is a global minimizer of \(\mathcal{I}_{c, p}^2\). It remains to show unicity. Let \(\mu \in \WMasz(\RZ_{\geq 0})\) with \(\mu \neq \mu_{c, p}^2\), w.l.o.g.\ \(\mathcal{I}_{c, p}^2(\mu) < \infty\); then \(\{\mu\}\) is weakly closed and thus also the fibre \(\inv{F_{p/2}}\{\mu\}\) is closed in \(\WMasz(\RZ_{\geq 0}) \times \RZ_{> 0}\). Because of \(\mathcal{I}_{c, p}^2(\mu) < \infty\) and because \(\mathcal{J}_{c, p}^2\) is a good rate function there exists \((\nu, M) \in \inv{F_{p/2}}\{\mu\}\) such that \(\mathcal{I}_{c, p}^2(\mu) = \mathcal{J}_{c, p}^2(\nu, M)\). But \(\mu \neq \mu_{c, p}^2\) forces \((\nu, M) \neq (\nu_{c, p}, m_{p/2}(\nu_{c, p}))\) and therefore \(\mathcal{J}_{c, p}^2(\nu, M) > 0\).
\end{proof}

\paragraph{Step~4}
The key observation here is stated in the lemma below.

\begin{Lem}\label{lem:ldp_bildmasz}
Let \((\mu_n)_{n \geq 1}\) be a sequence of \(\WMasz(\RZ_{\geq 0})\)\-/valued random variables which satisfies an LDP at speed \((s_n)_{n \geq 1}\) with GRF \(I\), and let \(q \colon \RZ_{\geq 0} \to \RZ_{\geq 0}\) be a measurable bijection with continuous inverse. Then \((\mu_n \circ q)_{n \geq 1}\) satisfies an LDP at speed \((s_n)_{n \geq 1}\) with GRF \(I^q \colon \mu \mapsto I(\mu \circ \inv{q})\). In particular \(\mu \circ q\) is a minimizer of \(I^q\) iff \(\mu\) is a mimizer of \(I\).
\end{Lem}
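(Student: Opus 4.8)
The plan is to recognise the map $\mu \mapsto \mu \circ q$ as a weakly continuous \emph{bijection} of $\WMasz(\RZ_{\geq 0})$ and then to invoke the contraction principle, Proposition~\ref{sa:kontraktionsprinzip}.

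First I would fix the convention that for a Borel measure $\mu$ on $\RZ_{\geq 0}$ the measure $\mu \circ q$ is given by $(\mu \circ q)(A) := \mu(q(A))$ for $A \in \Borel_{\RZ_{\geq 0}}$. This is meaningful because $q$ is a bijection, so $q(A) = \inv{(\inv q)}(A)$ is the preimage of $A$ under the continuous (hence Borel) map $\inv q$; equivalently, $\mu \circ q$ is simply the image measure of $\mu$ under $\inv q$, and thus again a probability measure. Set $\Phi \colon \WMasz(\RZ_{\geq 0}) \to \WMasz(\RZ_{\geq 0})$, $\Phi(\mu) := \mu \circ q$. Two facts are then to be checked: (a) $\Phi$ is weakly continuous, since for $f \in \CeBe(\RZ_{\geq 0})$ one has $\int_{\RZ_{\geq 0}} f \, \diff\Phi(\mu) = \int_{\RZ_{\geq 0}} (f \circ \inv q) \, \diff\mu$ with $f \circ \inv q \in \CeBe(\RZ_{\geq 0})$ because $\inv q$ is continuous, so each evaluation functional $\mu \mapsto \int f \, \diff\Phi(\mu)$ is continuous; (b) $\Phi$ is a bijection with inverse $\tilde\mu \mapsto \tilde\mu \circ \inv q$, which follows from $((\tilde\mu \circ \inv q) \circ q)(A) = \tilde\mu(\inv q(q(A))) = \tilde\mu(A)$ and the symmetric computation, using that $q$ is bijective. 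In particular $(\mu_n \circ q)_{n \geq 1} = (\Phi(\mu_n))_{n \geq 1}$ is again a sequence of random probability measures, since $\Phi$ is Borel.

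Next I would apply Proposition~\ref{sa:kontraktionsprinzip} to the continuous map $\Phi$: the sequence $(\mu_n \circ q)_{n \geq 1}$ satisfies an LDP at speed $(s_n)_{n \geq 1}$ with good rate function $\tilde\mu \mapsto \inf\{I(\mu) \Colon \Phi(\mu) = \tilde\mu\}$. Because $\Phi$ is injective, the fibre $\inv\Phi\{\tilde\mu\}$ is the singleton $\{\tilde\mu \circ \inv q\}$, so the infimum collapses and the rate function is exactly $I^q(\tilde\mu) = I(\tilde\mu \circ \inv q)$; goodness is already guaranteed by the contraction principle (alternatively, $\inv{(I^q)}([0, y]) = \Phi(\inv I([0, y]))$ is compact as a continuous image of a compact set).

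Finally, for the statement about minimizers I would note that $I^q \circ \Phi = I$ on $\WMasz(\RZ_{\geq 0})$, because $(\mu \circ q) \circ \inv q = \mu$; as $\Phi$ is a bijection, this yields $\inf I^q = \inf I$ and shows that $\Phi$ restricts to a bijection between the set of minimizers of $I$ and that of $I^q$, whence $\mu \circ q = \Phi(\mu)$ minimizes $I^q$ if and only if $\mu$ minimizes $I$. I expect no genuine obstacle in this proof; the only points requiring some care are the notational convention for $\mu \circ q$ together with the Borel-measurability of $q(A)$ — both handled by continuity of $\inv q$, which is also all that is needed, as $q$ itself need not be continuous — and the observation that the \emph{bijectivity} of $\Phi$ is precisely what turns the contraction-principle infimum into a single value.
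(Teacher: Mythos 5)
Your proposal is correct and takes essentially the same route as the paper: recognise $\mu \mapsto \mu \circ q$ as a weakly continuous bijection of $\WMasz(\RZ_{\geq 0})$, invoke the contraction principle, and use injectivity to collapse the infimum in the resulting rate function to the single value $I(\mu \circ \inv{q})$. The paper verifies continuity by chasing basic neighborhoods (exhibiting $U$ with $F(U) \subset V$), whereas you phrase the same computation as continuity of each evaluation functional $\mu \mapsto \int (f \circ \inv{q}) \, \diff\mu$; these are interchangeable, and your remark that $\Phi$ carries the sublevel sets of $I$ homeomorphically onto those of $I^q$ cleanly handles both goodness and the minimizer correspondence, which the paper leaves implicit.
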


\begin{proof}
It suffices to prove that the map
\begin{equation*}
F \colon \left\{ \begin{aligned} \WMasz(\RZ_{\geq 0}) &\to \WMasz(\RZ_{\geq 0}) \\ \mu &\mapsto \mu \circ q \end{aligned} \right.
\end{equation*}
is weakly continuous, then apply the contraction principle to \((F(\mu_n))_{n \geq 1}\) which will lead to the desired results. Note that \(F\) is well\-/defined since \(q\) is a measurable isomorphism, and that for the same reason \(F\) is bijective.

Let \(\mu\) in \(\WMasz(\RZ_{\geq 0})\) and let \(V \subset \WMasz(\RZ_{\geq 0})\) be a basic weak neighborhood of \(F(\mu) = \mu \circ q\), that is,
\begin{equation*}
V = \bigcap_{i = 1}^k \biggl\{ \nu \in \WMasz(\RZ_{\geq 0}) \Colon \biggl\lvert \int_{\RZ_{\geq 0}} f_i \, \diff\nu - \int_{\RZ_{\geq 0}} f_i \, \diff(\mu \circ q) \biggr\rvert < \epsilon \biggr\}
\end{equation*}
with some \(k \in \NZ\), \(\epsilon \in \RZ_{> 0}\) and \(f_1, \dotsc, f_k \in \CeBe(\RZ_{\geq 0})\). Now \(f_i \circ \inv{q} \in \CeBe(\RZ_{\geq 0})\) for any \(i \in [1, k]\), therefore
\begin{equation*}
U := \bigcap_{i = 1}^k \biggl\{ \nu \in \WMasz(\RZ_{\geq 0}) \Colon \biggl\lvert \int_{\RZ_{\geq 0}} (f_i \circ \inv{q}) \, \diff\nu - \int_{\RZ_{\geq 0}} (f_i \circ \inv{q}) \, \diff\mu \biggr\rvert < \epsilon \biggr\}
\end{equation*}
is a weak neighborhood of \(\mu\), and for any \(\nu \in U\) we get, for any \(i \in [1, k]\),
\begin{equation*}
\biggl\lvert \int_{\RZ_{\geq 0}} f_i \, \diff(\nu \circ q) - \int_{\RZ_{\geq 0}} f_i \, \diff(\mu \circ q) \biggr\rvert = \biggl\lvert \int_{\RZ_{\geq 0}} (f_i \circ \inv{q}) \, \diff\nu - \int_{\RZ_{\geq 0}} (f_i \circ \inv{q}) \, \diff\mu \biggr\rvert < \epsilon,
\end{equation*}
and thus (recall \(F(\nu) = \nu \circ q\)) \(F(\nu) \in V\).
\end{proof}

The most important consequence of Lemma~\ref{lem:ldp_bildmasz} for us is that not only the empirical measures of the squares of singular values satisfy an LDP, but also of the singular values themselves. This concludes the proof of Theorem~\ref{sa:ldp_singulaerwerte} in the case \(X^{(n)} \sim \KegM{\Schatten{p}{}, \beta}{m \times n}\).

\begin{Sa}\label{sa:ldp_singulaerwerte_kegm}
Let \((X^{(n)})_{n \geq m}\) be a sequence of random variables such that \(X^{(n)} \sim \KegM{\Schatten{p}{}, \beta}{m \times n}\) for each \(n \in \NZ\), and define \(\mu_n := \frac{1}{m} \sum_{i = 1}^m \delta_{m^{1/p} s_i(X^{(n)})}\), then the sequence \((\mu_n)_{n \in \NZ}\) satsifies a large deviations principle with speed \(\beta m n\) and good rate function
\begin{equation*}
\mathcal{I}_{c, p}(\mu) := \begin{cases} -\frac{c}{2} \int_{\RZ_{\geq 0}^2} \log\lvert x^2 - y^2 \rvert \, \diff\mu^{\otimes 2}(x, y) - (1 - c) \int_{\RZ_{\geq 0}} \log(x) \, \diff\mu(x) + \frac{\log(\ez p)}{p} + B_{c, p} & \text{if } m_p(\mu) \leq 1, \\ \infty & \text{else,} \end{cases}
\end{equation*}
which possesses a unique minimizer \(\mu_{c, p}\), given by its Lebesgue density
\begin{equation*}
\frac{\diff\mu_{c, p}(x)}{\diff x} = 2 x \, \frac{\diff\mu_{c, p}^2(x^2)}{\diff x},
\end{equation*}
where \(\mu_{c, p}^2\) is the minimizer from Proposition~\ref{sa:ldp_quadsingulaerwerte_kegm}.
\end{Sa}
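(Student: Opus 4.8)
The statement is Step~4 of the proof outline given at the start of this subsection, and the plan is to derive it by one more application of the contraction-type Lemma~\ref{lem:ldp_bildmasz} to the LDP for the squared singular values from Proposition~\ref{sa:ldp_quadsingulaerwerte_kegm}. Write $\mu_n^2 := \frac{1}{m} \sum_{i = 1}^m \delta_{m^{2/p} s_i(X^{(n)})^2}$ as in that proposition and note $m^{2/p} s_i(X^{(n)})^2 = \bigl( m^{1/p} s_i(X^{(n)}) \bigr)^2$, so the atoms of $\mu_n$ are exactly the nonnegative square roots of the atoms of $\mu_n^2$. Consequently, with $q \colon \RZ_{\geq 0} \to \RZ_{\geq 0}$, $q(x) := x^2$ — a measurable bijection with continuous inverse $x \mapsto x^{1/2}$ — we have $\mu_n = \mu_n^2 \circ q$ in the notation of Lemma~\ref{lem:ldp_bildmasz} (recall $\nu \circ q = (\inv{q})_*\nu$). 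Since Proposition~\ref{sa:ldp_quadsingulaerwerte_kegm} gives that $(\mu_n^2)_n$ satisfies an LDP at speed $\beta m n$ with good rate function $\mathcal{I}_{c,p}^2$ and unique minimizer $\mu_{c,p}^2$, Lemma~\ref{lem:ldp_bildmasz} yields at once that $(\mu_n)_n$ satisfies an LDP at the same speed with good rate function $\mu \mapsto \mathcal{I}_{c,p}^2(\mu \circ \inv{q})$ and with unique minimizer $\mu_{c,p} := \mu_{c,p}^2 \circ q$, uniqueness being preserved since $\mu \mapsto \mu \circ q$ is a bijection of $\WMasz(\RZ_{\geq 0})$.

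It then remains to make the rate function and the minimizer explicit. For $\mu \in \WMasz(\RZ_{\geq 0})$ the measure $\mu \circ \inv{q}$ is the image of $\mu$ under the squaring map, hence $(\mu \circ \inv{q})^{\otimes 2}$ is the image of $\mu^{\otimes 2}$ under $(x, y) \mapsto (x^2, y^2)$, and the change of variables gives $\int_{\RZ_{\geq 0}^2} \log\lvert x - y \rvert \, \diff(\mu \circ \inv{q})^{\otimes 2} = \int_{\RZ_{\geq 0}^2} \log\lvert x^2 - y^2 \rvert \, \diff\mu^{\otimes 2}$, $\int_{\RZ_{\geq 0}} \log(x) \, \diff(\mu \circ \inv{q}) = 2 \int_{\RZ_{\geq 0}} \log(x) \, \diff\mu$, and $m_{p/2}(\mu \circ \inv{q}) = m_p(\mu)$. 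These identities hold in $[-\infty, \infty]$, and on $\{ m_p(\mu) \leq 1 \}$ the relevant positive parts are integrable (the $p$-th moment bound controls the tails), so the three terms may be combined without ambiguity, while for $m_p(\mu) > 1$ both rate functions take the value $\infty$; substituting into the defining formula of $\mathcal{I}_{c,p}^2$ reproduces exactly the claimed $\mathcal{I}_{c,p}$.

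For the minimizer, $\mu_{c,p} = \mu_{c,p}^2 \circ q$ is the image of $\mu_{c,p}^2$ under $x \mapsto x^{1/2}$; writing $g \geq 0$ for a Borel test function and substituting $x = u^2$ in $\int g(x^{1/2}) \, \diff\mu_{c,p}^2(x)$ shows that $\mu_{c,p}$ has the Lebesgue density $u \mapsto 2 u \, \frac{\diff\mu_{c,p}^2(u^2)}{\diff u}$, as claimed, and it inherits the compact support and absolute continuity of $\mu_{c,p}^2$ established in Proposition~\ref{sa:ldp_quadsingulaerwerte_kegm}. There is no genuine obstacle here: everything follows mechanically from the two cited results, and the only care needed is in bookkeeping the direction of $q$ versus $\inv{q}$ in Lemma~\ref{lem:ldp_bildmasz} and in checking that the change of variables in the rate function is legitimate term by term.
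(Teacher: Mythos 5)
Your proposal is correct and takes essentially the same route as the paper: apply Lemma~\ref{lem:ldp_bildmasz} with $q(x) = x^2$ to the LDP of Proposition~\ref{sa:ldp_quadsingulaerwerte_kegm}, observe $\mu_n = \mu_n^2 \circ q$ via $\delta_a \circ q = \delta_{a^{1/2}}$, and compute the transformed rate function and density by the change of variables. The paper's proof is a terse version of exactly what you wrote, noting in particular $m_{p/2}(\mu) = m_p(\mu \circ q)$ and referring to "density transformation" for the minimizer.
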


\begin{proof}
Apply Lemma~\ref{lem:ldp_bildmasz} to the LDP for \(\bigl( \frac{1}{m} \sum_{i = 1}^m \delta_{m^{2/p} s_i(X^{(n)})^2} \bigr)_{n \in \NZ}\) of Proposition~\ref{sa:ldp_quadsingulaerwerte_kegm} and the map \(q(x) := x^2\), defined on \(\RZ_{\geq 0}\); use that generally \(\delta_x \circ q = \delta_{\inv{q}(x)} = \delta_{x^{1/2}}\) and therefore especially \(\bigl( \frac{1}{m} \sum_{i = 1}^m \delta_{m^{2/p} s_i(X^{(n)})^2} \bigr) \circ q = \frac{1}{m} \sum_{i = 1}^m \delta_{m^{1/p} s_i(X^{(n)})}\); also mind \(m_{p/2}(\mu) = m_p(\mu \circ q)\). The density of \(\mu_{c, p}\) is a simple instance of density transformation.
\end{proof}

\begin{Bem}\label{bem:minimierer_p2}
As already indicated in Remarks~\ref{bem:ldp_singulaerwerte} and~\ref{bem:ldp_y} the minimizer \(\mu_{c, 2}\) is explicitly known. This is because then the rate function \(\mathcal{J}_{c, 2}\) from Proposition~\ref{sa:ldp_y} can be written as
\begin{equation*}
\mathcal{J}_{c, 2}(\mu) = \frac{c}{2} \int_{\CZ} \log\Bigl( \frac{1}{\lvert x - y \rvert w(x) w(y)} \Bigr) \, \diff\mu^{\otimes 2}(x, y) + B_{c, 2},
\end{equation*}
with the weight function
\begin{equation*}
w \colon \left\{ \begin{aligned} \CZ &\to \CZ \\ x &\mapsto x^{(1 - c)/(2 c)} \, \ez^{-x/c} \Ind_{\RZ_{\geq 0}}(x) \end{aligned} \right.
\end{equation*}
This is a \emph{Laguerre weight}, and by \cite[Example~IV.5.4]{SaffTotik1997} the minimizer \(\nu_{c, 2}\) of \(\mathcal{J}_{c, 2}\) has Lebesgue density
\begin{equation*}
\frac{\diff \nu_{c, 2}(x)}{\diff x} = \frac{1}{c \pi x} \Bigl( c - \Bigl( x - \frac{1 + c}{2} \Bigr)^2 \Bigr)^{1/2} \Ind_{\bigl[ \frac{(1 - \sqrt{c})^2}{2}, \frac{(1 + \sqrt{c})^2}{2} \bigr]}(x).
\end{equation*}
We can easily compute \(m_1(\nu_{c, 2}) = \frac{1}{2}\), and therewith Proposition~\ref{sa:ldp_quadsingulaerwerte_kegm} yields the minimizer \(\mu_{c, 2}^2\) with density
\begin{equation*}
\frac{\diff\mu_{c, 2}^2(x)}{\diff x} = \frac{1}{2} \, \frac{\diff\nu_{c, 2}}{\diff x}\Bigl( \frac{x}{2} \Bigr) = \frac{1}{2 \pi c x} \bigl( 4 c - (x - 1 - c)^2 \bigr)^{1/2} \Ind_{[(1 - \sqrt{c})^2, (1 + \sqrt{c})^2]}(x);
\end{equation*}
from this we can identify \(\mu_{c, 2}^2\) as a \emph{Marchenko\--Pastur distribution.} And finally from Proposition~\ref{sa:ldp_singulaerwerte_kegm} we obtain the minimizer \(\mu_{c, 2}\) with density
\begin{equation*}
\frac{\diff\mu_{c, 2}(x)}{\diff x} = \frac{1}{\pi c x} \bigl( 4 c - (x^2 - 1 - c)^2 \bigr)^{1/2} \Ind_{[1 - \sqrt{c}, 1 + \sqrt{c}]}(x);
\end{equation*}
in particular we have for \(c = 1\)
\begin{equation*}
\frac{\diff\mu_{1, 2}(x)}{\diff x} = \frac{1}{\pi} \, (4 - x^2)^{1/2} \Ind_{[0, 2]}(x),
\end{equation*}
which corresponds to a quarter\-/circle distribution, as expected.

Also the constant \(B_{c, 2}\) can be computed explicitly because of
\begin{align*}
Z_{m, n, 2, \beta} &= \int_{\RZ_{\geq 0}^m} \ez^{-\beta n \lVert y \rVert_1} \prod_{i = 1}^m y_i^{\beta (n - m + 1)/2 - 1} \prod_{1 \leq i < j \leq m} \lvert y_j - y_i \rvert^\beta \, \diff y\\
&= (\beta n)^{-\beta m n/2} \int_{\RZ_{\geq 0}^m} \ez^{-\lVert y \rVert_1} \prod_{i = 1}^m y_i^{\beta (n - m + 1)/2 - 1} \prod_{1 \leq i < j \leq m} \lvert y_j - y_i \rvert^\beta \, \diff y\\
&= (\beta n)^{-\beta m n/2} \prod_{i = 1}^m \frac{\Gamma(\frac{\beta (n - m + 1)}{2} + \frac{\beta (i - 1)}{2}) \Gamma(1 + \frac{\beta i}{2})}{\Gamma(1 + \frac{\beta}{2})},
\end{align*}
where we have used the Selberg integral of Laguerre type (see \cite[p.\ 354]{Mehta1991}). Using Lemma~\ref{lem:produkt_gamma} to deal with the products of gamma functions we arrive at
\begin{equation*}
B_{c, 2} = -\frac{3}{4} - \frac{\log(2)}{2} + \frac{c}{4} \log(c) - \frac{(1 - c)^2}{4 c} \log(1 - c).
\end{equation*}
For \(c = 1\) this is consistent with \cite[Theorem~1.5]{KPTh2020_2}.
\end{Bem}

\paragraph{Step~5}
The last step is the establishment of the LDP w.r.t.\ the uniform distribution on the ball, i.e.\ \(X^{(n)} \sim \Gleichv(\Kug{\Schatten{p}{}, \beta}{m \times n})\). Recall \(s(X^{(n)}) \GlVert U^{1/(\beta m n)} s(\Theta^{(n)})\), where \(\Theta^{(n)} \sim \KegM{\Schatten{p}{}, \beta}{m \times n}\) and hence its LDP is provided by Proposition~\ref{sa:ldp_singulaerwerte_kegm}, and \(U \sim \Gleichv([0, 1])\) is independent from \(\Theta^{(n)}\) and satisfies the following well\-/known LDP (see~\cite{AGPTh2018_2}, e.g., for a quite general version).

\begin{Lem}
The sequence \((U^{1/(\beta m n)})_{n \in \NZ}\) satisfies a large deviations principle with speed \(\beta m n\) and good rate function \(\mathcal{I}_U \colon \RZ \to [0, \infty]\) given by
\begin{equation*}
\mathcal{I}_U(x) := \begin{cases} -\log(x) & \text{if } x \in (0, 1], \\ \infty & \text{else.} \end{cases}
\end{equation*}
\end{Lem}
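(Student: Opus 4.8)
The plan is to give a direct elementary proof exploiting the explicit law of $V_n := U^{1/(\beta m n)}$; alternatively the statement is a special instance of the general principle in \cite{AGPTh2018_2}. First I would record that, since $U \sim \Gleichv([0, 1])$, for every $t \in [0, 1]$ one has $\Wsk[V_n \leq t] = \Wsk[U \leq t^{\beta m n}] = t^{\beta m n}$, so $V_n$ takes values in $(0, 1]$ almost surely and has Lebesgue density $r \mapsto \beta m n\, r^{\beta m n - 1} \Ind_{[0, 1]}(r)$. The intended speed is $s_n := \beta m n$, which tends to $\infty$ because $\beta, m \geq 1$ and $n \to \infty$. That $\mathcal{I}_U$ is a good rate function is immediate: it is lower semicontinuous (continuous on $(0, 1]$, with $\lim_{x \to 0+} \mathcal{I}_U(x) = \infty = \mathcal{I}_U(0)$), it is finite at $x = 1$, and its sublevel set $\inv{\mathcal{I}_U}([0, y]) = [\ez^{-y}, 1]$ is compact for every $y \geq 0$.

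Next I would establish the large deviations upper bound. Given $A \in \Borel_\RZ$, set $b := \sup(\schluss{A} \cap [0, 1])$, with the convention $b := 0$ when $\schluss A \cap [0, 1] = \emptyset$ (in which case $\Wsk[V_n \in A] = 0$ for all $n$, since $V_n \in (0, 1]$ a.s.). By monotonicity, $\Wsk[V_n \in A] \leq \Wsk[V_n \leq b] = b^{s_n}$, hence $\limsup_{n \to \infty} \frac{1}{s_n} \log \Wsk[V_n \in A] \leq \log b$. As $\mathcal{I}_U$ is nonincreasing on $(0, 1]$ and identically $\infty$ outside $[0, 1]$, one checks that $\inf \mathcal{I}_U(\schluss A) = -\log b$ (with $-\log 0 := \infty$), so $\limsup_{n \to \infty} \frac{1}{s_n} \log \Wsk[V_n \in A] \leq -\inf \mathcal{I}_U(\schluss A)$, as required.

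For the lower bound, let $G \subset \RZ$ be open. If $G \cap (0, 1] = \emptyset$ then $\inf \mathcal{I}_U(G) = \infty$ and there is nothing to prove, so fix $x_0 \in G \cap (0, 1]$. Choose reals $0 < a < b \leq 1$ such that $x_0 \in (a, b] \subset G$ (taking $b := 1$ and $(a, 1] \subset G$ if $x_0 = 1$). Then $\Wsk[V_n \in G] \geq \Wsk[V_n \in (a, b]] = b^{s_n} - a^{s_n} = b^{s_n}\bigl( 1 - (a/b)^{s_n} \bigr)$, and since $(a/b)^{s_n} \to 0$ this gives $\liminf_{n \to \infty} \frac{1}{s_n} \log \Wsk[V_n \in G] \geq \log b \geq \log x_0 = -\mathcal{I}_U(x_0)$. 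Passing to the supremum over $x_0 \in G$ yields $\liminf_{n \to \infty} \frac{1}{s_n} \log \Wsk[V_n \in G] \geq -\inf \mathcal{I}_U(G)$. No step here is a genuine obstacle; the only care needed is in handling the boundary points $0$ and $1$, which is why one uses $\Wsk[V_n = 0] = 0$ and right-half-open intervals abutting $1$.
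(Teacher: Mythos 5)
Your proof is correct and fully self-contained. The paper itself supplies no argument for this lemma: it simply states the result, calls it ``well-known,'' and cites a reference (\cite{AGPTh2018_2}) where a more general version is proved. You instead give a direct elementary derivation from the explicit law of $V_n := U^{1/(\beta m n)}$, which is a perfectly valid (and arguably preferable, since it keeps the section self-contained) alternative to a citation. Your verification that $\mathcal{I}_U$ is a good rate function (lower semicontinuity at $0$ and $1$, compact sublevel sets $[\ez^{-y},1]$), your upper bound via $b := \sup(\schluss{A}\cap[0,1])$ and the monotone estimate $\Wsk[V_n\in A]\leq b^{s_n}$, and your lower bound via intervals $(a,b]\subset G$ with $b^{s_n}(1-(a/b)^{s_n})$ are all sound; the boundary cases ($\schluss A\cap[0,1]=\emptyset$, $b=0$, $x_0=1$) are handled correctly, and the dependence of $m$ on $n$ causes no trouble since $s_n=\beta m n\geq n\to\infty$. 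The only cosmetic remark is that the final passage ``taking the supremum over $x_0\in G$'' implicitly uses $\inf\mathcal{I}_U(G)=\inf\mathcal{I}_U(G\cap(0,1])$, which holds because $\mathcal{I}_U\equiv\infty$ off $(0,1]$ --- worth a half-sentence if writing this out. What the citation-based approach buys is brevity and generality (the cited result covers a broader class of radial laws); what your direct proof buys is transparency and independence from an external source.
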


\begin{Sa}\label{sa:ldp_endl_vollkugel}
Let \((X^{(n)})_{n \in \NZ}\) be a sequence of random variables such that \(X^{(n)} \sim \Gleichv(\Kug{\Schatten{p}{}, \beta}{m \times n})\) for each \(n \in \NZ\). Then \((\mu_n)_{n \in \NZ}\) with \(\mu_n := \frac{1}{m} \sum_{i = 1}^m \delta_{m^{1/p} s_i(X^{(n)})}\) satisfies a large deviations principle with speed \(\beta m n\) and the same good rate function \(\mathcal{I}_{c, p}\) as in Proposition~\ref{sa:ldp_singulaerwerte_kegm}.
\end{Sa}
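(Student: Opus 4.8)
The plan is to derive this from Proposition~\ref{sa:ldp_singulaerwerte_kegm} together with the large deviations principle for $(U^{1/(\beta m n)})_{n \in \NZ}$ stated just above (with good rate function $\mathcal{I}_U$), by means of the contraction principle (Proposition~\ref{sa:kontraktionsprinzip}). Write $\kappa_n := \frac{1}{m} \sum_{i = 1}^m \delta_{m^{1/p} s_i(\Theta^{(n)})}$, where $\Theta^{(n)} \sim \KegM{\Schatten{p}{}, \beta}{m \times n}$, and introduce the map $T \colon \WMasz(\RZ_{\geq 0}) \times \RZ_{\geq 0} \to \WMasz(\RZ_{\geq 0})$ sending $(\nu, r)$ to the push\-/forward of $\nu$ under the dilation $x \mapsto r x$ (so that $T(\nu, 0) = \delta_0$). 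By the representation $s(X^{(n)}) \GlVert U^{1/(\beta m n)} s(\Theta^{(n)})$ recalled at the beginning of Step~5, with $U \sim \Gleichv([0, 1])$ independent of $\Theta^{(n)}$, one has $\mu_n \GlVert T\bigl( \kappa_n, U^{1/(\beta m n)} \bigr)$, and $\kappa_n$ and $U^{1/(\beta m n)}$ are stochastically independent.

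The first step is to combine the two LDPs into one for the pair $\bigl( (\kappa_n, U^{1/(\beta m n)}) \bigr)_{n}$ on $\WMasz(\RZ_{\geq 0}) \times \RZ_{\geq 0}$ at speed $\beta m n$ with rate function $(\nu, r) \mapsto \mathcal{I}_{c, p}(\nu) + \mathcal{I}_U(r)$. This is the standard fact that two independent sequences each satisfying an LDP with a good rate function satisfy the product LDP (see e.g.\ \cite[Exercise~4.2.7]{DZ2010}); the sum is itself a good rate function, since each of its sublevel sets is closed and contained in the product of the (compact) sublevel sets of the two summands.

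The second step is to verify that $T$ is weakly continuous on all of $\WMasz(\RZ_{\geq 0}) \times \RZ_{\geq 0}$: for $r > 0$ the dilation is a homeomorphism of $\RZ_{\geq 0}$, so $T(\cdot, r)$ is continuous, and joint continuity — including at the points $(\nu, 0)$ — follows from the tightness of weakly convergent sequences in $\WMasz(\RZ_{\geq 0})$ together with the uniform continuity of a bounded continuous test function on compacta. Granted this, the contraction principle applied to $T$ and the pair LDP yields that $(\mu_n)_{n} \GlVert \bigl( T(\kappa_n, U^{1/(\beta m n)}) \bigr)_{n}$ satisfies an LDP at speed $\beta m n$ with good rate function
\begin{equation*}
\mathcal{J}(\mu) := \inf\bigl\{ \mathcal{I}_{c, p}(\nu) + \mathcal{I}_U(r) \Colon r \in (0, 1],\ T(\nu, r) = \mu \bigr\}.
\end{equation*}

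It remains to identify $\mathcal{J}$ with $\mathcal{I}_{c, p}$. For each fixed $r \in (0, 1]$ the constraint $T(\nu, r) = \mu$ determines $\nu$ uniquely as the push\-/forward of $\mu$ under $x \mapsto x/r$; substituting and using that $\nu$ is a probability measure gives $m_p(\nu) = r^{-p} m_p(\mu)$, $\int \log\lvert x^2 - y^2 \rvert \, \diff\nu^{\otimes 2} = \int \log\lvert x^2 - y^2 \rvert \, \diff\mu^{\otimes 2} - 2 \log r$, and $\int \log(x) \, \diff\nu = \int \log(x) \, \diff\mu - \log r$, whence for $m_p(\mu) \leq 1$ and $r \geq m_p(\mu)^{1/p}$ one gets $\mathcal{I}_{c, p}(\nu) = \mathcal{I}_{c, p}(\mu) + \log r$, so that $\mathcal{I}_{c, p}(\nu) + \mathcal{I}_U(r) = \mathcal{I}_{c, p}(\mu)$ for every such admissible $r$; while if $m_p(\mu) > 1$ there is no $r \in (0, 1]$ with $\mathcal{I}_{c, p}(\nu) < \infty$, so $\mathcal{J}(\mu) = \infty = \mathcal{I}_{c, p}(\mu)$. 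Hence $\mathcal{J} = \mathcal{I}_{c, p}$, which is the rate function claimed in Proposition~\ref{sa:ldp_singulaerwerte_kegm}. The main obstacle is precisely this last computation — tracking how the dilation shifts the logarithmic energy and the linear term and interacts with the moment constraint $m_p \leq 1$ — together with making the product LDP and the continuity of $T$ at $r = 0$ fully rigorous; each of these is routine but needs to be stated with some care.
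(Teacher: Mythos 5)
Your proof is correct and follows essentially the same route as the paper: representing $s(X^{(n)}) \GlVert U^{1/(\beta m n)} s(\Theta^{(n)})$, combining the two LDPs into a product LDP, applying the contraction principle to the dilation map, and then computing the resulting infimum by tracking how dilation shifts the logarithmic energy and the linear term against $\mathcal{I}_U$. The paper handles the product LDP and continuity of the contraction map by appealing to \cite[Proposition~6.2]{KPTh2020_2}, whereas you spell those steps out (in particular continuity of $T$ at $r = 0$), but the substance is identical.
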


\begin{proof}
Again the argument runs parallel to \cite[Proposition~6.2]{KPTh2020_2}, so we adopt their notations with the obvious adaptations, and thus the contraction principle yields the LDP with speed \(\beta m n\) and the good rate function
\begin{equation*}
\mathcal{I}(\mu) := \inf_{(\nu, u) \in \inv{F}\{\mu\}} \bigl( \mathcal{I}_{c, p}(\nu) - \log(u) \bigr),
\end{equation*}
where \(\mathcal{I}_{c, p}\) is the rate function from Proposition~\ref{sa:ldp_singulaerwerte_kegm}. The cases \(m_p(\mu) > 1\) and \(m_p(\mu) \leq 1 \wedge \mu_p(\nu) > 1\) via the same reasoning lead to \(\mathcal{I}(\mu) = \infty\), and in the remaining case \(m_p(\mu) \leq 1 \wedge m_p(\nu) \leq 1\) these conditions also force \(u \in [m_p(\mu)^{1/p}, 1]\) and thence
\begin{align*}
\mathcal{I}(\mu) &= \inf_{(\nu, u) \in \inv{F}\{\mu\}} \biggl( -\frac{c}{2} \int_{\RZ_{\geq 0}^2} \log\lvert x^2 - y^2 \rvert \, \diff\nu^{\otimes 2}(x, y) - (1 - c) \int_{\RZ_{\geq 0}} \log(x) \, \diff\nu(x)\\
&\mspace{120mu} + \frac{\log(\ez p)}{p} + B_{c, p} - \log(u) \biggr)\\
&= \inf_{(\nu, u) \in \inv{F}\{\mu\}} \biggl( -\frac{c}{2} \int_{\RZ_{\geq 0}^2} \log\Bigl( \frac{\lvert x^2 - y^2 \rvert}{u^2} \Bigr) \, \diff\mu^{\otimes 2}(x, y) - (1 - c) \int_{\RZ_{\geq 0}} \log\Bigl( \frac{x}{u} \Bigr) \, \diff\mu(x)\\
&\mspace{120mu} + \frac{\log(\ez p)}{p} + B_{c, p} - \log(u) \biggr)\\
&= -\frac{c}{2} \int_{\RZ_{\geq 0}^2} \log\lvert x^2 - y^2 \rvert \, \diff\mu^{\otimes 2}(x, y) - (1 - c) \int_{\RZ_{\geq 0}} \log(x) \, \diff\mu(x) + \frac{\log(\ez p)}{p} + B_{c, p},
\end{align*}
proving \(\mathcal{I} = \mathcal{I}_{c, p}\).
\end{proof}

\subsection{Proof of Theorem~\ref{sa:ldp_singulaerwerte_unendl}}
\label{sec:ldp_beweis_unendl}

Recall from Theorem~\ref{sa:gleichvert_schattenq_kugel_sphaere}, part~3, the representation \(X^{(n)} \GlVert V \diag(((Y_i^{(n)})^{1/2})_{i \leq m}) \adj{U}\) whenever \(X^{(n)} \sim \Gleichv(\Kug{\Schatten{\infty}{}, \beta}{m \times n})\), where \(Y^{(n)}\) has Lebesgue density \eqref{eq:dichte_y}. Hence \((s_{\pi(i)}(X^{(n)})^2)_{i \leq m} \GlVert Y^{(n)}\), where \(\pi \sim \Gleichv(\Perm{m})\) is independent of \(X^{(n)}\), and via Lemma~\ref{lem:permutation} we know \(\frac{1}{m} \sum_{i = 1}^m \delta_{s_i(X^{(n)})^2} \GlVert \frac{1}{m} \sum_{i = 1}^m \delta_{Y_i^{(n)}} =: \nu_n\). So we state an LDP for \((\nu_n)_{n \in \NZ}\) first.

\begin{Sa}\label{sa:ldp_singulaerwerte_quad_unend}
If \(\lim_{n \to \infty} \frac{m}{n} = c \in (0, 1]\), then \((\nu_n)_{n \in \NZ}\) satisfies a large deviations principle at speed \(\beta m n\) with good rate function
\begin{equation*}
\mathcal{J}_{c, \infty}(\mu) = \begin{cases} -\frac{c}{2} \int_{[0, 1]^2} \log\lvert x - y \rvert \, \diff\mu^{\otimes 2}(x, y) - \frac{1 - c}{2} \int_{[0, 1]} \log(x) \, \diff\mu(x) + B_{c, \infty} & \text{if } \supp(\mu) \subset [0, 1], \\ \infty & \text{else,} \end{cases}
\end{equation*}
where \(B_{c, \infty}\) is the same as in Theorem~\ref{sa:ldp_singulaerwerte_unendl}; and \(\mathcal{J}_{c, \infty}\) has a unique minimizer \(\nu_{c, \infty}\), which is given explicitly by its Lebesgue density,
\begin{equation*}
\frac{\diff\nu_{c, \infty}(x)}{\diff x} = \frac{1 + c}{2 c \pi x} \, \frac{\bigl( x - (\frac{1 - c}{1 + c})^2 \bigr)^{1/2}}{(1 - x)^{1/2}} \Ind_{[(\frac{1 - c}{1 + c})^2, 1]}(x).
\end{equation*}
\end{Sa}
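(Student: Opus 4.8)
The plan is to prove Proposition~\ref{sa:ldp_singulaerwerte_quad_unend} in three stages, mirroring Step~1 of the proof of Theorem~\ref{sa:ldp_singulaerwerte} (i.e.\ Proposition~\ref{sa:ldp_y}) but adapted to the hard\-/wall situation forced by $p = \infty$.

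\emph{First stage: the large deviations principle.} Note that the density \eqref{eq:dichte_y} with $p = \infty$ is, on $\RZ_{> 0}^m$, proportional to $\Ind_{[0, 1]^m}(y) \prod_{i = 1}^m y_i^{\beta (n - m + 1)/2 - 1} \prod_{1 \leq i < j \leq m} \lvert y_i - y_j \rvert^\beta$; equivalently, by Remark~\ref{bem:prob_darst} together with Weyl integration on $\Sym_{m, \beta}$, it is the eigenvalue density of the matrix\-/variate beta random matrix $\BetaMatr{m, \beta}(\frac{\beta n}{2}, \frac{\beta (m - 1)}{2} + 1)$. This is a $\beta$\-/Coulomb gas on $[0, 1]$ with confining potential $Q(x) := -\frac{1 - c}{2 c} \log x$ on $(0, 1]$ and $Q \equiv +\infty$ elsewhere. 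I would invoke the general large deviations principle for such ensembles --- exactly in the form used for finite $p$ in the proof of Proposition~\ref{sa:ldp_y} via \cite[Theorem~5.5.1]{HiaiPetz2000}, but now with the potential taking the value $+\infty$ off $[0, 1]$, which is a routine extension --- keeping careful track of the coefficients: since $\frac{m}{n} \to c$ one has $\frac{1}{\beta m n} \sum_{i < j} \log\lvert y_i - y_j \rvert \to \frac{c}{2} \int_{[0,1]^2} \log\lvert x - y \rvert \, \diff\mu^{\otimes 2}(x, y)$ and $\frac{\beta (n - m + 1)/2 - 1}{\beta m n} \sum_i \log y_i \to \frac{1 - c}{2} \int_{[0,1]} \log x \, \diff\mu(x)$, while the box $[0, 1]^m$ produces the $\supp(\mu) \subset [0, 1]$ dichotomy; the speed is then rescaled from $n^2$ to $\beta m n$ as in Proposition~\ref{sa:ldp_y}. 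This yields the claimed LDP at speed $\beta m n$ with the rate function $\mathcal{J}_{c, \infty}$ as stated, together with existence and finiteness of $B_{c, \infty} := \lim_{n \to \infty} \frac{1}{\beta m n} \log(Z_{m, n, \infty, \beta})$; strict convexity of the logarithmic energy on $\WMasz([0, 1])$ gives uniqueness of the minimizer $\nu_{c, \infty}$.

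\emph{Second stage: the explicit constant.} Here the hard\-/wall case is luckier than the soft one (contrast Remark~\ref{bem:ldp_y}), because $Z_{m, n, \infty, \beta}$ is an honest Selberg integral, with parameters $\alpha = \frac{\beta (n - m + 1)}{2}$, Selberg\-/$\beta = 1$, $\gamma = \frac{\beta}{2}$; indeed it coincides, up to the harmless factor $2^{-m}$, with the integral already evaluated in the proof of Theorem~\ref{sa:volumen_b}, so that $2^{-m} c_n Z_{m, n, \infty, \beta} = \KugVol{\Schatten{\infty}{}, \beta}{m \times n}$ with $c_n$ the prefactor of Proposition~\ref{sa:integraltrafo}, part~2. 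I would therefore divide the volume\-/radius asymptotics of Theorem~\ref{sa:volumen_b} by the asymptotics of $(2^{-m} c_n)^{1/(\beta m n)}$ computed in the proof of Proposition~\ref{sa:volumen_bsp}; letting $n \to \infty$ with $\frac{m}{n} \to c$ and collecting the $\log c$, $\log(1 + c)$ and $\log(1 - c)$ terms gives, after routine simplification, $B_{c, \infty} = \frac{c}{2} \log(c) - \frac{(1 - c)^2}{4 c} \log(1 - c) - \frac{(1 + c)^2}{4 c} \log(1 + c)$. (Alternatively one may feed the closed\-/form Selberg value straight into Lemma~\ref{lem:produkt_gamma}, as was done for $B_{c, 2}$ in Remark~\ref{bem:minimierer_p2}.)

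\emph{Third stage: identifying the minimizer.} The measure $\nu_{c, \infty}$ is the equilibrium measure on $[0, 1]$ in the external field $\frac{1 - c}{2 c} Q$, i.e.\ for the one\-/sided Jacobi\-/type weight $w(x) = x^{(1 - c)/(2 c)}$ on $[0, 1]$ --- the limiting law of the eigenvalues of the MANOVA/Jacobi $\beta$\-/ensemble (a Wachter\-/type distribution). I would either cite the corresponding equilibrium measure (e.g.\ from the Jacobi\-/weight discussion in \cite{SaffTotik1997}) or, to remain self\-/contained, verify directly that
\[
\frac{\diff\nu_{c, \infty}(x)}{\diff x} = \frac{1 + c}{2 c \pi x} \, \frac{(x - (\frac{1 - c}{1 + c})^2)^{1/2}}{(1 - x)^{1/2}} \, \Ind_{[(\frac{1 - c}{1 + c})^2, 1]}(x)
\]
is a probability density and satisfies the Euler--Lagrange conditions: there is a constant $\ell$ with $2 \int \log\lvert x - y \rvert \, \diff\nu_{c, \infty}(y) + \frac{1 - c}{c} \log x = \ell$ for $x$ in the support and $\leq \ell$ for $x \in [0, 1]$. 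The integral is handled by computing the Stieltjes transform $G(z) = \int \frac{\diff\nu_{c, \infty}(y)}{z - y}$, which for a density of the form $\frac{A}{x} ((x - a)/(1 - x))^{1/2}$ on $[a, 1]$ is an explicit algebraic function of the type $B/z + C \sqrt{(z - a)(z - 1)}/z$; then $\frac{\diff}{\diff x}$ of the logarithmic potential matches $-\frac{1 - c}{2 c x}$ on $(a, 1)$, and the exterior inequality follows from the sign of $G$ on $[0, 1] \setminus [a, 1]$. One also records the edge behaviour ($(x - a)^{1/2}$ at the soft edge $a = (\frac{1 - c}{1 + c})^2$, hard edge $(1 - x)^{-1/2}$ at $1$) and the degenerate case $c = 1$, where $a = 0$ and the density collapses to the arcsine density $\frac{1}{\pi \sqrt{x(1 - x)}}$, consistent with \cite{KPTh2020_2}.

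I expect the main obstacle to be the third stage: supplying a clean reference for, or a clean self\-/contained verification of, the explicit equilibrium measure --- in particular the principal\-/value/Stieltjes\-/transform computation of the logarithmic potential of this algebraic density and the check of the exterior inequality. The first two stages are, respectively, a direct transcription of the soft\-/potential argument of Proposition~\ref{sa:ldp_y} and a bookkeeping exercise built on already\-/proved asymptotics.
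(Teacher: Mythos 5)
Your proposal is correct and mirrors the paper's own argument: adapt the Coulomb-gas LDP of Hiai--Petz to the hard-wall potential (the paper spells out the adaptation lemma by lemma and adds a short lemma showing $\mathcal{J}_{c,\infty} = \infty$ off $\WMasz([0,1])$), evaluate the Selberg integral and feed it through Lemma~\ref{lem:produkt_gamma} for $B_{c,\infty}$, and identify the minimizer via the equilibrium measure for the weight $w(x) = x^{(1-c)/(2c)}\Ind_{[0,1]}(x)$. The obstacle you anticipate in the third stage does not materialize: the paper simply cites \cite[Example~IV.5.3]{SaffTotik1997} with $\theta = \frac{1-c}{1+c}$, which gives the claimed density directly and makes the Stieltjes-transform verification unnecessary.
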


Before giving a proof for Proposition~\ref{sa:ldp_singulaerwerte_quad_unend} we observe that it implies Theorem~\ref{sa:ldp_singulaerwerte_unendl} almost immediately through an application of Lemma~\ref{lem:ldp_bildmasz}.

Now we prove Proposition~\ref{sa:ldp_singulaerwerte_quad_unend}. First notice that, in contrast to Proposition~\ref{sa:ldp_y}, \cite[Theorem~5.5.1]{HiaiPetz2000} is not applicable directly because they only allow continuous external potentials defined on all \(\RZ_{\geq 0}\), but in our case we have
\begin{equation*}
\Ind_{[0, 1]^m}(y) = \ez^{-\beta n \sum_{i = 1}^m Q(y_i)}
\end{equation*}
with
\begin{equation*}
Q(y) := \begin{cases} 0 & \text{if } y \in [0, 1], \\ \infty & \text{else.} \end{cases}
\end{equation*}
Yet it turns out that their proof can be adapted. Since it requires only minor modifications we present only those and refer the reader to \cite{HiaiPetz2000} for the details.

First we show that the rate function must be infinite at any \(\mu \in \WMasz(\RZ_{\geq 0})\) with \(\supp(\mu) \setminus [0, 1] \neq \emptyset\).

\begin{Lem}\label{lem:ratenfunktion_unendl}
Let \(\mu \in \WMasz(\RZ_{\geq 0})\) with \(\supp(\mu) \setminus [0, 1] \neq \emptyset\). Then
\begin{equation*}
\inf_{G} \limsup_{n \to \infty} \frac{1}{\beta m n} \log \Wsk[\nu_n \in G] = -\infty,
\end{equation*}
where the infimum is taken over basic neighborhoods \(G\) of \(\mu\).
\end{Lem}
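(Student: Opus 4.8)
The plan is to show that if $\mu$ places any mass arbitrarily close to, or beyond, some point $t > 1$, then the event $\{\nu_n \in G\}$ forces at least one coordinate $Y_i^{(n)}$ to exceed $1$, which is impossible: the density~\eqref{eq:dichte_y} for $p = \infty$ is supported on $[0,1]^m$ (recall the convention $\ez^{-\beta n \lVert y \rVert_\infty^\infty} = \Ind_{\Kug{\infty,1}{m}}(y)$). Hence $\Wsk[\nu_n \in G] = 0$ for a suitable basic neighborhood $G$, and the claimed $\limsup$ is $-\infty$ trivially.

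Concretely, I would first fix a point $t_0 \in \supp(\mu) \setminus [0,1]$; since $\supp(\mu)$ is closed and $[0,1]$ is closed, there is $\epsilon_0 > 0$ with $t_0 - \epsilon_0 > 1$, and by definition of the support $\mu((t_0 - \epsilon_0, t_0 + \epsilon_0)) =: \delta_0 > 0$. Choose a bounded continuous test function $f \colon \RZ_{\geq 0} \to [0,1]$ with $f = 1$ on $[t_0 - \epsilon_0/2, t_0 + \epsilon_0/2]$ and $f = 0$ outside $(t_0 - \epsilon_0, t_0 + \epsilon_0)$ (so in particular $f$ vanishes on $[0,1]$), and let $a := \int f \, \diff\mu \geq \mu([t_0 - \epsilon_0/2, t_0 + \epsilon_0/2]) > 0$ (shrinking $\epsilon_0$ if necessary to keep this positive, which is possible since $t_0$ is in the support). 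Then set $G := \{\nu \in \WMasz(\RZ_{\geq 0}) \Colon \lvert \int f \, \diff\nu - a \rvert < a/2\}$, a basic weak neighborhood of $\mu$.

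For any $n$, if $\nu_n \in G$ then $\frac{1}{m}\sum_{i=1}^m f(Y_i^{(n)}) > a/2 > 0$, so at least one index $i$ has $f(Y_i^{(n)}) > 0$, forcing $Y_i^{(n)} \in (t_0 - \epsilon_0, t_0 + \epsilon_0) \subset (1, \infty)$. But $Y^{(n)}$ has a density supported on $[0,1]^m$, so $\Wsk[\nu_n \in G] = 0$ for every $n$, giving $\limsup_{n \to \infty} \frac{1}{\beta m n}\log\Wsk[\nu_n \in G] = -\infty$, and the infimum over all basic neighborhoods is then also $-\infty$.

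This argument is essentially routine; the only point requiring a little care is ensuring that the chosen test function $f$ truly separates the mass near $t_0$ from $[0,1]$ and that $a$ can be taken strictly positive — both follow directly from $t_0 \in \supp(\mu)$ and $t_0 > 1$. No estimation of the partition function $Z_{m,n,\infty,\beta}$ is needed here, in contrast to the corresponding step for finite $p$; the support constraint does all the work. I do not anticipate a genuine obstacle in this lemma, so I would keep the write-up short and move on to the matching lower bound and the identification of the minimizer, where the real work of proving Proposition~\ref{sa:ldp_singulaerwerte_quad_unend} lies.
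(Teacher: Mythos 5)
Your proposal is correct and rests on the same core observation as the paper's proof: since the density~\eqref{eq:dichte_y} for \(p = \infty\) is supported on \([0,1]^m\), the random empirical measure \(\nu_n\) is always supported in \([0,1]\), so it suffices to exhibit a single basic weak neighborhood \(G\) of \(\mu\) that no probability measure supported in \([0,1]\) can enter, giving \(\Wsk[\nu_n \in G] = 0\) identically. The paper reaches this by invoking the weak lower semicontinuity of \(\nu \mapsto \nu((1,\infty))\), whereas you unpack that fact by hand with an explicit bump function vanishing on \([0,1]\); this is a cosmetic rather than conceptual difference, and your write-up is sound (the parenthetical about "shrinking \(\epsilon_0\)" is unnecessary, since \(a>0\) already follows from \(t_0 \in \supp(\mu)\) and \(f \equiv 1\) near \(t_0\)).
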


\begin{proof}
It suffices to show that there exists a weak neighborhood \(G\) of \(\mu\) such that \(\supp(\nu) \setminus [0, 1] \neq \emptyset\) for all \(\nu \in G\); since \(\supp(\nu_n) \subset [0, 1]\) this implies \(\nu_n \notin G\) for all \(n \in \NZ\) and hence the statement follows.

Because \((1, \infty)\) is open, the evaluation map
\begin{equation*}
\WMasz(\RZ_{\geq 0}) \ni \nu \mapsto \nu((1, \infty))
\end{equation*}
is weakly lower semicontinuous; by the premises on \(\mu\) we know \(\mu((1, \infty)) > 0\), and hence there exists a (basic) weak neighborhood \(G\) of \(\mu\) such that, for any \(\nu \in G\),
\begin{equation*}
\nu((1, \infty)) > \frac{\mu((1, \infty))}{2} > 0.
\end{equation*}
This implies \(\supp(\nu) \setminus [0, 1] \neq \emptyset\) for all \(\nu \in G\).
\end{proof}

From now on we only consider \(\mu \in \WMasz(\RZ_{\geq 0})\) such that \(\supp(\mu) \subset [0, 1]\).

In order to adapt the proof of \cite[Theorem~5.5.1]{HiaiPetz2000}, define the kernel functions on \(\RZ_{\geq 0}^2\),
\begin{gather*}
F(x, y) := \begin{cases} -\frac{c}{2} \log\lvert x - y \rvert - \frac{1 - c}{4} \log(x y) & \text{if } (x, y) \in [0, 1]^2, \\ \infty & \text{else,} \end{cases}\\
\tilde{F}_n(x, y) := \begin{cases} -\frac{m}{2 n} \log\lvert x - y \rvert - \frac{\beta (n - m + 1) - 2}{4 \beta n} \log(x y) & \text{if } (x, y) \in [0, 1]^2, \\ \infty & \text{else,} \end{cases}
\end{gather*}
and for any \(\alpha > 0\) their cutoffs \(F_\alpha := \min\{F, \alpha\}\) and analogously \(\tilde{F}_{n, \alpha}\). Then \(F\) and \(\tilde{F}_n\) are bounded from below and lower semicontinuous, and \(F_\alpha\) and \(\tilde{F}_{n, \alpha}\) are bounded and lower semicontinuous. Therefore for any \(F' \in \{F, \tilde{F}_n, F_\alpha, \tilde{F}_{n, \alpha}\}\) the map
\begin{equation*}
\WMasz(\RZ_{\geq 0}) \ni \nu \mapsto \int_{\RZ_{\geq 0}^2} F' \, \diff\nu^{\otimes 2}
\end{equation*}
is a well\-/defined weakly lower semicontinuous functional.

\cite[Lemma~5.5.2]{HiaiPetz2000} remains true unchanged. The existence of the unique minimizers \(\mu_0\), \(\tilde{\mu}_n\) needs to be argued differently: write
\begin{equation*}
\int_{\RZ_{\geq 0}^2} F \, \diff\mu^{\otimes 2} = \frac{c}{2} \int_{\CZ} \log\Bigl( \frac{1}{\lvert x - y \rvert w(x) w(y)} \Bigr) \diff\mu^{\otimes 2}(x, y)
\end{equation*}
with the weight function
\begin{equation*}
w \colon \left\{ \begin{aligned} \CZ &\to \CZ \\ x &\mapsto x^{(1 - c)/(2 c)} \Ind_{[0, 1]}(x) \end{aligned} \right.,
\end{equation*}
then \(w\) is an admissible weight in the sense of \cite[Definition~I.1.1]{SaffTotik1997}, and by \cite[Theorem~I.1.3]{SaffTotik1997} the energy functional admits a unique minimizer \(\nu_{c, \infty}\); even more, \cite[Example~IV.5.3]{SaffTotik1997} with \(\theta := \frac{1 - c}{1 + c}\) gives the Lebesgue density of \(\nu_{c, \infty}\), and it equals that given in Proposition~\ref{sa:ldp_singulaerwerte_quad_unend}. The analogous argument is valid for \(\tilde{F}_n\), but the only relevant information about \(\tilde{\mu}_n\) which we need is that \(\supp(\tilde{\mu}_n) \subset [0, 1]\), and this already proves that \((\tilde{\mu}_n)_{n \in \NZ}\) is tight, which is the first part of \cite[Lemma~5.5.3]{HiaiPetz2000}, and the argument for the second part does not change.

\cite[Lemma~5.5.4]{HiaiPetz2000} is adapted easily; note that integration extends simply over \([0, 1]^m\), and \(Q = 0\). In truth we know more as
\begin{equation*}
Z_{m, n, \infty, \beta} = \int_{[0, 1]^m} \prod_{i = 1}^m y_i^{\beta (n - m + 1)/2 - 1} \prod_{1 \leq i < j \leq m} \lvert y_j - y_i \rvert^\beta \, \diff y
\end{equation*}
is just a Selberg integral whose explicit value we know and whose asymptotics can be tackled with Lemma~\ref{lem:produkt_gamma}; this yields the limit \(B_{c, \infty} = \lim_{n \to \infty} \frac{1}{\beta m n} \log(Z_{m, n, \infty, \beta})\) stated in Theorem~\ref{sa:ldp_singulaerwerte_unendl}.

The only thing to be noted in the proof of \cite[Lemma~5.5.5]{HiaiPetz2000} is that \(\nu \mapsto \int_{\RZ_{\geq 0}^2} F_\alpha \, \diff\nu^{\otimes 2}\) is only weakly lower semicontinuous, but this suffices to ensure
\begin{equation*}
\inf_G \biggl( -\inf_{\nu \in G} \int_{\RZ_{\geq 0}^2} F_\alpha \, \diff\nu^{\otimes 2} \biggr) \leq -\int_{\RZ_{\geq 0}^2} F_\alpha \, \diff\mu^{\otimes 2},
\end{equation*}
which is all we need. Indeed, for any \(M \in \RZ\) with \(M < \int_{\RZ_{\geq 0}^2} F_\alpha \, \diff\mu^{\otimes 2}\) there exists a basic neighborhood \(G\) of \(\mu\) such that, for all \(\nu \in G\), \(\int_{\RZ_{\geq 0}^2} F_\alpha \, \diff\nu^{\otimes 2} > M\). This implies \(\inf_{\nu \in G} \int_{\RZ_{\geq 0}^2} F_\alpha \, \diff\nu^{\otimes 2} \geq M\), hence \(\sup_G \inf_{\nu \in G} \int_{\RZ_{\geq 0}^2} F_\alpha \, \diff\nu^{\otimes 2} \geq M\), and letting \(M \to \int_{\RZ_{\geq 0}^2} F_\alpha \, \diff\mu^{\otimes 2}\) and taking the negatives proves the claim.

The only subtlety in the proof of \cite[Lemma~5.5.6]{HiaiPetz2000} is that the exponent \(\frac{\beta (n - m + 1)}{2} - 1\) may be positive or negative in the case \(c = 1\) (in the case \(c < 1\) it is positive up to finitely many \(n\)), so \(y_i^{\beta (n - m + 1)/2 - 1}\) must be bounded from below by \(\min\bigl\{ (a_i^{(n)})^{\beta (n - m + 1)/2 - 1}, (b_i^{(n)})^{\beta (n - m + 1)/2 - 1} \bigr\}\); yet this does not influence the validity of the following arguments.

Lastly the exponential tightness of \((\nu_n)_{n \in \NZ}\) is a simple consequence of \(\supp(\nu_n) \subset [0, 1]\) for all \(n \in \NZ\), and this concludes the proof of Proposition~\ref{sa:ldp_singulaerwerte_quad_unend}.

\subsection*{Acknowledgement}
Michael Juhos and Joscha Prochno have been supported by the German Research Foundation (DFG) under project 516672205 and the Austrian Science Fund (FWF) Project P32405 \textit{Asymptotic Geometric Analysis and Applications}. Zakhar Kabluchko has been supported by the German Research Foundation under Germany’s Excellence Strategy EXC~2044~-- 390685587, \textit{Mathematics M\"unster: Dynamics~-- Geometry~-- Structure} and by the DFG priority program SPP~2265 \textit{Random Geometric Systems}.

\bibliographystyle{abbrv}
\bibliography{schatten}

\vspace{2\bigskipamount}

\begin{flushleft}
\textsc{Michael Juhos:} Faculty of Computer Science and Mathematics, University of Passau, Innstra{\ss}e~33, 94032~Passau, Germany\\
\textit{E\=/mail:} \texttt{michael.juhos@uni-passau.de}

\vspace{0.5\bigskipamount}

\textsc{Zakhar Kabluchko:} Faculty of Mathematics, University of M\"unster, Orl\'eans\-/Ring~10, 48149~M\"unster, Germany\\
\textit{E\=/mail:} \texttt{zakhar.kabluchko@uni-muenster.de}

\vspace{0.5\bigskipamount}

\textsc{Joscha Prochno:} Faculty of Computer Science and Mathematics, University of Passau, Innstra{\ss}e~33, 94032~Passau, Germany\\
\textit{E\=/mail:} \texttt{joscha.prochno@uni-passau.de}
\end{flushleft}

\end{document}